\DeclareMathAlphabet{\mathcalligra}{T1}{calligra}{m}{n}
\newtheorem{hypo}{Hypothesis}[section]
\newtheorem{theorem}{Theorem}
\newtheorem{theo}{Theorem}[section]
\newtheorem{coro}[theorem]{Corollary}
\newtheorem{defi}[theorem]{Definition}
\newtheorem{lemma}[theorem]{Lemma}
\newtheorem{proposition}[theorem]{Proposition}
\newtheorem{rmk}[theorem]{Remark}
\newtheorem{const}[theo]{Constraint}
\newcommand{\zerarcounters}{\setcounter{equation}{0}}
\newcommand{\ZZZ}{\mathds{Z}}
\newcommand{\CCC}{\mathds{C}}
\newcommand{\NNN}{\mathds{N}}
\newcommand{\RRR}{\mathds{R}}
\newcommand{\TTT}{\mathds{T}}
\newcommand{\uno}{\mathds{1}}
\newcommand{\HH}{{\mathcal H}}
\newcommand{\calA}{{\mathcal A}}
\newcommand{\BB}{{\mathcal B}}
\newcommand{\CCCC}{{\mathcal C}}
\newcommand{\DD}{{\mathcal D}}
\newcommand{\calE}{{\mathcal E}}
\newcommand{\calF}{{\mathcal F}}
\newcommand{\calH}{{\mathcal H}}
\newcommand{\calL}{{\mathcal L}}
\newcommand{\MM}{{\mathcal M}}
\newcommand{\NN}{{\mathcal N}}
\newcommand{\calO}{{\mathcal O}}
\newcommand{\calP}{{\mathcal P}}
\newcommand{\RR}{{\mathcal R}}
\newcommand{\SSSS}{{\mathcal S}}
\newcommand{\TT}{{\mathcal T}}
\newcommand{\calU}{{\mathcal U}}
\newcommand{\VV}{{\mathcal V}}
\newcommand{\calW}{{\mathcal W}}
\newcommand{\calX}{{\mathcal X}}
\newcommand{\calY}{{\mathcal Y}}
\newcommand{\gota}{{\mathfrak a}}
\newcommand{\gotc}{{\mathfrak c}}
\newcommand{\gotm}{{\mathfrak m}}
\newcommand{\gotp}{{\mathfrak p}}
\newcommand{\gots}{{\mathfrak s}}
\newcommand{\gotA}{{\mathfrak A}}
\newcommand{\gotM}{{\mathfrak M}}
\newcommand{\gotN}{{\mathfrak N}}
\newcommand{\gotW}{{\mathfrak W}}
\newcommand{\ol}{\overline}
\newcommand{\db}{{\varrho}}
\newcommand{\io}{\infty}
\newcommand{\e}{\varepsilon}
\newcommand{\al}{\alpha}
\newcommand{\de}{\delta}
\newcommand{\uu}{{\bf u}}
\newcommand{\be}{\beta}
\newcommand{\n}{\nu}
\newcommand{\m}{\mu}
\newcommand{\x}{\xi}
\newcommand{\ta}{\mathtt{a}}
\newcommand{\tv}{\mathtt{v}}
\newcommand{\ka}{\kappa}
\newcommand{\g}{\gamma}
\newcommand{\h}{\eta}
\newcommand{\la}{\lambda}
\newcommand{\f}{\varphi}
\newcommand{\s}{\sigma}
\newcommand{\z}{\zeta}
\newcommand{\del}{\partial}
\newcommand{\av}[1]{\langle #1 \rangle}
\newcommand{\oo}{{\omega}}
\newcommand{\ff}{\mathtt{f}}
\newcommand{\cc}{\boldsymbol{c}}
\newcommand{\B}{\boldsymbol{B}}
\newcommand{\ii}{{\rm i}}
\newcommand{\tm}{\mathtt{m}}
\newcommand{\tV}{\mathtt{V}}
\newcommand{\tG}{{\mathtt G}}
\newcommand{\tR}{{\mathtt R}}
\def\leftinv#1{ {#1}^{-1}}
\def\tilde#1{\widetilde{#1}}
\def\ins#1#2#3{\vbox to0pt{\kern-#2 \hbox{\kern#1 #3}\vss}\nointerlineskip}
\begin{document}



\title{\bf KAM for quasi-linear autonomous NLS}
\author{\bf 
R. Feola$^{**}$, M. Procesi$^\dag $
\\
\small
${}^{**}$ SISSA, Trieste, rfeola@sissa.it; 
\\
\small
${}^\dag$ Universit\`a di Roma Tre, procesi@mat.uniroma3.it\footnote{
This research was partially supported by the European Research Council under
FP7 ``Hamiltonian PDEs and small divisor problems: a dynamical systems approach'' grant n. 306414-HamPDEs;
partially supported by PRIN 2012 ``Variational and perturbative aspects of nonlinear differential problems''.
}}


%


\date{}

\maketitle

\begin{abstract}
We consider a class of fully nonlinear Schr\"odinger equations
and we prove the existence and the stability of Cantor families of quasi-periodic, small amplitude solutions.
We deal with reversible \emph{autonomous} nonlinearities and we look for 
 \emph{analytic} solutions.  
Note that this is the first result on analytic quasi-periodic solutions for fully nonlinear PDEs.

\end{abstract}

\tableofcontents

\zerarcounters

\section{Introduction and Main Results}

In this paper we prove the existence of Cantor families of quasi-periodic solutions for the following \emph{fully-nonlinear} Schr\"odinger equation
\begin{equation}\label{6.1}
u_{t}=-i(u_{xx}+\ff(u,u_{x},u_{xx}))\,, \quad  x\in \TTT\,.
\end{equation}
The non linearity $\ff$  is reversible,   gauge preserving and $x$-independent  (see Hypothesis \ref{hyp2aut})
and has a zero of order three in $u=0$, i.e. 
\begin{equation}\label{6.2}
\begin{aligned}
\ff(u,u_{x},u_{xx})&:=\ff^{(3)}(u,u_{x},u_{xx})+g (u,u_{x},u_{xx})
\end{aligned}
\end{equation}
where $\ff^{(3)}$ is homogeneous of degree three while $g$ has a zero of order at least five.

 We will consider 
 two cases:

Case 1. $g$ is  analytic \label{ciao}
as function $\CCC^{3}\to \CCC$ in the ball 
of radius $\mathtt{r}_{0}>0$. Then we fix $\gota>0$ and extend \eqref{6.1} to  $x\in \TTT_{\gota}$. Here $\TTT_{a}$ is the compact subset of the complex torus $\TTT_{\CCC}:=\CCC/2\pi\ZZZ$  
with ${\rm Re}(x)\in\TTT$ and $|{\rm Im}(x)|\leq a$. 

\smallskip

Case 2. $g\in C^q(U_{\mathtt{r}_0},\RRR^2)$, \label{ciao2} 
where $U_{\mathtt{r}_{0}}$ is the ball of radius $\mathtt{r}_{0}$  in 
in the real sense.


%
%
%
%
%
\noindent

\medskip
Since  $\ff$
vanishes of order $3$ at $u=0$, equation \eqref{6.1} can be seen, at least in a neighborhood of the origin, as a perturbation
of the linear Schr\"odinger equation
\begin{equation}\label{6.1111} 
i u_t=u_{xx},
\end{equation}
which is \emph{completely resonant}, i.e. posseses only \emph{periodic} solutions.

Equation \eqref{6.1} can be seen as an infinte dimensional dynamical system with phase space
a scale of complex Hilbert spaces $u\in h^{a,p}$ with
\begin{equation}\label{6.3}
h^{a,p}:=\{u=\{u_{k}\}_{k\in\ZZZ} \; : \|u\|_{a,p}^{2}:=
\sum_{k\in\ZZZ}|u_{k}|^{2}e^{2a|k|}\langle k\rangle^{2p}<\infty\,
\},
\end{equation}
here $p\geq1/2$ while $0< a\leq \gota/2$ in Case 1 while $a=0$ in Case 2. Note that there is an isometric one-to-one correspondance
between a sequence $\{u_{k}\}$ and a function $u=\sum_{k}u_{k}e^{ik\cdot x}$ in $H^{p}(\TTT_{a})$,
i.e. the analytic function on the complex strip $\TTT_{a}$ that are 
$p-$Sobolev on the boundary. We will use the same symbol $u\in h^{a,p}$  to indicate both the sequence and the function.

 A natural question is to know whether equation \eqref{6.1} has periodic, quasi-periodic or
almost periodic solutions \emph{close to} zero, and more precisely solutions bifurcating from a periodic solution of \eqref{6.1111}. We recall that a \emph{quasi}-periodic solution of \eqref{6.1} is an embedding  
\begin{equation}\label{6.6}
\TTT^{d}\ni\f\mapsto v(\f,x) \in h^{a,p}\,,\quad  d\geq1,
\end{equation}
and a frequency vector
$\oo_{\infty}\in \RRR^{d}$
such that $u(t,x)=v(\oo_{\infty}t,x)$ is a solution of the equation of \eqref{6.1} and $v(\f,x)\in H^p(\TTT^{d+1}_a)$.
Note that both the embedding $v$ and the frequency vector $\oo_{\infty}$ are a unknown of the problem. 

Proving existence and stability  for quasi-periodic solutions in infinite dimensional systems is a natural  extension of  KAM theory for lower dimensional tori \cite{Po2}.
The first KAM results for PDEs have been 
obtained by Kuksin \cite{K1} and Wayne \cite{W}. Such results were restricted 
to the case in which the spatial variable ranges in a finite interval with Dirichlet boundary conditions.
In order to 
consider the case of
 periodic boundary conditions, Craig-Wayne used a Lyapunov-Schmidt reduction method in \cite{CW} later 
generalized by Bourgain in \cite{Boj}, \cite{B3}.
Other developments of KAM Theory for PDEs can be found in 
\cite{Po2}, \cite{CY}, \cite{K2}, \cite{KP}. 
For extension of KAM Theory to higher spatial dimension we mention 
the papers by Bourgain in
\cite{B3} for  the nonlinear Schr\"odinger equation on $\TTT^{2}$
with a convolution potential and
 \cite{B5} for an  existence result on $\TTT^{d}$.
 We mention also the remarkable results by Berti and Bolle \cite{BBhe1}, \cite{BB2} which study equations 
 in presence of a more natural multiplicative potential; 
  The latter approach, based on a multi-scale analysis, has been very fruitfully exploited
  in the study of PDEs also on manifolds different 
 from tori. In \cite{BCP} Berti, Corsi and Procesi studied NLW and NLS on  compact
Lie groups and homogeneous manifolds.
There are very few and recent results on reducibility on tori. We mention Geng-You in \cite{GY} for the smoothing NLS, 
Eliasson-Kuksin in \cite{EK} for the non resonant NLS and 
Procesi-Procesi \cite{PP3} for the completely resonant NLS which involves 
deep arguments of normal forms developed in \cite{PP}, \cite{PP1}.
All the aforementioned papers, both using KAM or multi-scale, are on semi linear PDEs with no derivatives in the non linearity. 

More recently KAM theory has been developed also for dispersive semilinear PDEs on the one
 dimensional torus when the nonlinearity contains derivatives of order $\delta \leq n-1$,
here $n$ is
the order of the highest derivative appearing in the linear constant coefficients term. The additional difficulty in this case is that,
due to the presence of derivatives in the nonlinearity, the KAM transformations used to diagonalize the linearized operator might be \emph{unbounded}.
The key idea to overcome such problem has been introduced by Kuksin in \cite{Ku2} 
in order  to deal with non-critical unbounded perturbations, i.e. $\de<n-1$, with the purpose of studying KdV type equations, see also \cite{KaP}.
The key idea is to note that the linear frequencies of KdV have  good separation properties, which allow to control derivatives  in the nonlinearities up to the second order.
This approach, developed for the $KdV$ that has a \emph{strong} dispersion law, has been further exploited by the Chinese school to cover the 
``less'' dispersive case of NLS
in presence of one derivative in the non linearity, i.e. the \emph{critical} case when $\de=n-1$.
In particular  
we mention   Zhang, Gao and Yuan \cite{ZGY}  
 and Liu and  Yuan  \cite{LY} for derivative NLS, 
and  Berti-Biasco-Procesi \cite{BBiP1}-\cite{BBiP2} for the derivative NLW.

Concerning \emph{quasi-linear} or \emph{fully non-linear} PDEs, i.e. $\de=n$, we quote the papers by
  Iooss-Plotnikov-Toland \cite{IPT} and 
  by Baldi \cite{Ba1}, \cite{Ba2} in which is studied the existence of \emph{periodic} solutions.
The first existence results of quasi-periodic solution for quasi-linear PDEs has been  obtained by 
Baldi-Berti-Montalto in  in \cite{BBM}, for the forced case, then in \cite{BBM1} for the autonomous one, see also \cite{Gi}. 
Recently such results has been extended to the forced NLS in \cite{FP} ( reversible setting), \cite{F} (Hamiltonian setting) to the water wave equation in
 \cite{BM1} and to the Kirchoff equation in \cite{Mon}, see also \cite{Mon2}.
 
We remark that all the aforementioned papers on \emph{fully non-linear} PDEs provide existence and stability of quasi-periodic solutions with Sobolev regularity, even when the non-linearity $g$ is an analytic function.  This is due to the strategy proposed in these papers in order to deal with quasi-linear and fully non-linear perturbations. 
Moreover all the results above require a Hamiltonian structure, in the case of autonomous systems.  In \cite{CFP}, we discussed a general strategy in order to deal with both Hamiltonian and reversible equations, in which we treated both analytic and finite regularity cases in a unified way.  We remark that the abovementioned  paper contains only applications to semi-linear PDEs.

The aim of this paper is to  apply the stategy of \cite{CFP} to an autonomous fully nonlinear NLS  and to prove existence of \emph {analytic } solutions (for completeness we also give the result for Sobolev solutions, when the non-linearity has only finite regularity).  In order to avoid the complications coming from double eigenvalues we decided to work in a \emph{reversible} setting.
\\
The first difficulty we have to overcome, before applying any quadratic scheme,  is that  equation is \emph{completely-resonant}, i.e. the solutions
of \eqref{6.1111} are \emph{periodic}, clearly in order to prove the existence of quasi-periodic solutions we need some non-degeneracy hypothesis on $\ff$ (since for instance $\ff=0$ is not acceptable!), more precisely on its leading term $\ff^{(3)}$. Let us first state our reversibility hypotheses explicitly.

\begin{hypo} \label{hyp2aut}
	Assume that $\ff$ is such that
		\begin{itemize}
		
		\item[(i)] $\ff(-\h_{0},{\h}_{1},-\h_{2})=
		-\ff(\h_{0},{\h}_{1},\h_{2})$.
		
		\item[(ii)] $\ff(\h_{0},\h_{1},\h_{2})=
		\ol{{\ff}(\bar{\h}_{0},\bar{\h}_{1},\bar{\h}_{2})}$,
		\item[(iii)]  we require that $\mathtt f $ is gauge preserving, i.e. $e^{-\ii \alpha}\ff(e^{\ii \alpha}\eta_0,e^{\ii \alpha}\eta_1,e^{\ii \alpha}\eta_2  )= \ff(\eta_0,\eta_1,\eta_2  ) $
		\item[(iv)] $
		\del_{\h_{2}}\ff\in\RRR$
	 	where $\del_{\h}= \del_{\rm{Re}(\h)}-i\, \del_{\rm{Im}(\h)}$. 

	\end{itemize}
\end{hypo}

One can check that the reversibility, $x$-independence  and Gauge preserving properties imply that
\begin{equation}\label{6.5}
\begin{aligned}
\ff^{(3)}(u,u_{x},u_{xx})&=\ta_{1}|u|^{2}u+\ta_{2}|u|^{2}u_{xx}+\ta_{3}|u_{x}|^{2}u+
\ta_{4}|u_{x}|^{2}u_{xx}
+\ta_{5}|u_{xx}|^{2}u_{xx}\\
&+\ta_{6} u^2\bar{u}_{xx}+\ta_{7}(u_x)^{2}\bar{u}+\ta_{8}(u_x)^{2}\bar{u}_{xx}
\end{aligned}
\end{equation}
with $\ta_{i}\in \RRR$ for $i=1,2,3,4,5,6,7,8$.
\begin{defi}\label{33}
We   say that $(\ta_1,\ta_{2},\ta_{3},\ta_{4},\ta_{5},\ta_{6},\ta_{7},\ta_{8})\neq 0$ is resonant if either:
\begin{enumerate}
\item $\ta_{5}=\ta_{1}=0$, $\ta_4-\ta_{8}= 0$ and $\ta_{3}-\ta_{2}-\ta_{6}-\ta_{7}=0$.
\item  $\ta_{5}=\ta_{1}=0$, $\ta_4-\ta_{8}= 0$, $\ta_{3}-\ta_{2}-\ta_{6}-\ta_{7}\neq0$  and either:
$\ta_{2}=0$, $\ta_{3}-\ta_{7}=(6d+1)/(2d+1)\ta_{6}$
or $a_{2}\neq0$, $\ta_3-(1+3d)\ta_{2}-\ta_{7}=0$, $\ta_{2}=\ta_{6}/d$.

\item $\ta_{5}=\ta_{1}=0$, $\ta_{3}-\ta_{2}-\ta_{6}-\ta_{7}=0$, $\ta_4-\ta_{8}\neq 0$ and $(2d-1)\ta_{4}=\ta_{8}$.
\end{enumerate} 
\end{defi}


We are now ready to state our main Theorem on the existence of quasi-periodic solutions of $d$ frequencies
which is based on the following ''genericity'' condition.

\begin{defi}[{\bf Genericity}]\label{defgene}
	For any finite $d\in \NNN$, given a non-trivial polynomial $P:\CCC^d\to \CCC$  we say that 
	$x_0\in \CCC^b$ 
	is ``\emph{generic}'' if $P(x_0)\neq 0$.
\end{defi}

\begin{theorem}\label{teoremap}
Consider the equation \eqref{6.1} 
 in case \ref{ciao},  namely  when $\ff$ as in \eqref{6.2} is an analytic function. Assume  the Hypothesis  \ref{hyp2aut} and moreover that  
 $(\ta_1,\ta_{2},\ta_{3},\ta_{4},\ta_{5},\ta_{6},\ta_{7},\ta_{8})$
  is  not resonant.
 There exists 
 a non trivial polynomial 
 such that  
for any $d\in \NNN$ with $d>2$  
and  for any choice of $\tv_1,\ldots,\tv_d\in \NNN$  such that $x_0=(\tv_1,\ldots,\tv_d) $ is \emph{generic} with respect to the polynomial
 the following holds.

There exists  $a=a(d,\ff)$, $\e_{0}=\e_{0}(d,\ff)$    and $\mathtt{c}=\mathtt{c}(d,\ff)\ll 1$
such that   for all $\e\in(0,\e_{0})$,  there exists a Cantor set %
\begin{equation}\label{asyaut}
\CCCC_{\e}\subset\e^2\left[\frac 12, \frac 32\right]^d\,,\quad \e^{-2d}|\CCCC_{\e}|\leq 1-\mathtt{c},
\end{equation}
%
 such that for all $\xi\in\CCCC_{\e}$ the  NLS equation (\ref{6.1})
has a quasi- periodic solution with frequency $\oo^\infty$  given by the embedding  $v(\xi)\in H^1(\TTT_a^{d+1})$:
\begin{equation}\label{esistenzadidio}
v= \sum_{i=1}^d \sqrt{\xi_i} e^{\ii \f_i}\sin(\tv_i x) + o(\sqrt{|\xi|})\,,\quad \omega^{\infty}_i(\xi)= \tv_i^2 + \sum_j \MM_{i}^j \xi_j + o(|\xi|)
\end{equation}
with $\MM$ an invertible matrix. Moreover one has  $v(\f,x)=-{v}(\f,-x)$ and $v(\f,x)=\bar{v}(-\f,x)$,
and the solution  is {\rm linearly stable}.

\end{theorem}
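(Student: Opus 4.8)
The plan is to cast the search for quasi-periodic solutions of \eqref{6.1} as a problem of finding zeros of a nonlinear functional on a scale of analytic function spaces, and then apply the abstract Nash--Moser/KAM scheme developed in \cite{CFP}. Concretely, write $u(t,x)=v(\oo t,x)$ and substitute into \eqref{6.1}; a quasi-periodic solution corresponds to an embedding $v(\f,x)$ and a frequency $\oo$ solving
\begin{equation}\label{plan:functional}
\calF(v,\oo):=\oo\cdot\del_\f v + \ii\big(v_{xx}+\ff(v,v_x,v_{xx})\big)=0.
\end{equation}
The completely resonant linear part forces us to first perform a \emph{weak Birkhoff normal form} on the cubic term $\ff^{(3)}$, following the standard route for the completely resonant NLS \cite{PP3}: after one step of Birkhoff normalization on the tangential sites $\tv_1,\dots,\tv_d$, the resonant part of $\ff^{(3)}$ restricted to the tangential torus must be nondegenerate. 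This is precisely where the \textbf{resonance} condition of Definition \ref{33} and the \textbf{genericity} condition of Definition \ref{defgene} enter: one computes the Birkhoff matrix $\MM$ explicitly as a function of $(\ta_1,\dots,\ta_8)$ and the integers $\tv_1,\dots,\tv_d$, and the assertion ``$\MM$ invertible'' becomes a non-vanishing condition of a polynomial in the $\tv_i$'s. The non-resonance hypothesis on $(\ta_1,\dots,\ta_8)$ guarantees this polynomial is not identically zero, so generic choices of the tangential sites work.

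Having fixed the approximately invariant torus supported on the tangential sites with actions $\xi\in\e^2[\tfrac12,\tfrac32]^d$, I would introduce action-angle variables on the tangential directions and rescale $u\mapsto \e u$, so that the nonlinearity becomes an $O(\e)$ perturbation of an integrable normal form whose frequencies $\oo(\xi)=\{\tv_i^2+\sum_j\MM_i^j\xi_j\}$ are a diffeomorphic (thanks to invertibility of $\MM$) function of the actions. Then I would set up the Nash--Moser iteration to solve \eqref{plan:functional}: at each step the main work is inverting the linearized operator
\begin{equation}\label{plan:lin}
\calL:=\oo\cdot\del_\f + \ii\big(\del_{xx}+ a_2(\f,x)\del_{xx}+a_1(\f,x)\del_x+a_0(\f,x)\big)
\end{equation}
on the normal (non-tangential) directions. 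Because the perturbation is \emph{quasi-linear} ($\de=n=2$), the coefficient $a_2$ is $O(\e)$ but not small in a way that survives naive KAM; the standard remedy, as in \cite{BBM1,FP,CFP}, is a preliminary \emph{regularization} of $\calL$: a sequence of changes of variables (a quasi-periodic reparametrization of $x$, multiplication operators, and pseudo-differential conjugations) reducing $\calL$ to constant coefficients in the highest orders up to a bounded remainder, after which a KAM reducibility scheme diagonalizes the operator and yields the Melnikov separation bounds on a Cantor set of frequencies.

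The measure estimate \eqref{asyaut} then comes from excising the parameters $\xi$ for which the first and second order Melnikov conditions fail; the nondegeneracy of $\xi\mapsto\oo(\xi)$ (again invertibility of $\MM$) plus the good separation of the perturbed normal frequencies make these excisions of total measure $o(\e^{2d})$, so the retained set has measure $\geq (1-\mathtt c)\e^{2d}$. The reversibility Hypothesis \ref{hyp2aut} is used throughout to guarantee that the normal frequencies stay real (no drift/hyperbolicity), which is what allows one to avoid the second-order Melnikov problems associated with double eigenvalues and ultimately gives \textbf{linear stability}; the symmetries $v(\f,x)=-v(\f,-x)$ and $v(\f,x)=\bar v(-\f,x)$ are preserved by working in the subspace of reversible maps at every step. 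Finally, since in Case 1 the nonlinearity $g$ is analytic, one runs the entire scheme in the analytic spaces $h^{a,p}$ with a suitably small analyticity width $a=a(d,\ff)$ (shrinking $a$ geometrically along the iteration as in \cite{CFP}), which is what produces \emph{analytic} rather than merely Sobolev solutions --- the genuinely new point of the theorem.

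I expect the main obstacle to be the regularization/reducibility of the quasi-linear linearized operator \eqref{plan:lin} \emph{in the analytic category}: the conjugations used to normalize the top-order symbol must be shown to preserve the exponentially weighted spaces $h^{a,p}$ with only a controlled loss of analyticity width, and the KAM reducibility step must close with analytic (not just tame) estimates while the small divisors $\oo\cdot\ell+\mu_j-\mu_k$ are only controlled on a Cantor set --- reconciling the analytic smoothing with the loss of derivatives from the small divisors is the delicate balance, and it is exactly the issue addressed by the abstract theorem of \cite{CFP} that we are invoking here.
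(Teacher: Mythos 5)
Your outline reproduces the overall architecture of the paper (weak Birkhoff normal form with the non-resonance/genericity dichotomy, action--angle variables with the twist matrix $\MM$, inversion of the quasi-linear linearized operator by regularization plus KAM reducibility, Melnikov excisions for the measure estimate, reversibility for real normal frequencies and linear stability), but it has a genuine gap precisely at the point you yourself flag as the main obstacle, and the fix is not the one you propose. The strategy you describe --- at each Nash--Moser step, regularize the linearized operator \eqref{plan:lin} by a quasi-periodic reparametrization of $x$, multiplication operators and pseudo-differential conjugations, then diagonalize, keeping these conjugations internal to the inversion step while ``shrinking $a$ geometrically along the iteration'' --- is exactly the Sobolev-case strategy of \cite{BBM1},\cite{FP}, and it does not close in the analytic category. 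The torus diffeomorphism $z(x)\mapsto z(x+\beta_n(x))$ needed to flatten the top-order coefficient at step $n$ costs a loss of analyticity width of order $|\beta_n|$, and if the regularization is redone from scratch at every step then $|\beta_n|\sim \|a_2\|\sim\e$ for all $n$: the losses do not sum to something finite no matter how you pre-assign a geometric schedule for $a_n$, and the strip collapses after finitely many steps. Invoking the abstract theorem of \cite{CFP} does not by itself resolve this; that theorem only provides the license to insert arbitrary ``compatible'' changes of variables into the scheme.

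The missing idea, which is the actual novelty of the paper, is to promote the regularizing transformations from auxiliary conjugations of $\mathfrak L_n$ to compatible changes of variables $\calL_n$ applied to the vector field $F_n$ itself (Definition \ref{compa}, Lemma \ref{regularization}, Proposition \ref{andrea}). Because these maps preserve the pseudo-differential structure (Definition \ref{pseudopseudo}) and are finite-rank-plus-diffeomorphism type, after each step not only does $\Pi_\calX F_n$ shrink quadratically but the $x$- and $\theta$-dependent part of the second-order coefficient of $d_wF_n^{(w)}$ does too; hence $|\beta_{n+1}|\ll|\beta_n|$ and the cumulative loss of analyticity converges to a positive width $a_0/2$. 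A related point you omit: at the very first step the coefficient $N_0^{(2)}$ is of size $O(|\x|)\sim\g_0$, i.e.\ \emph{not} perturbative with respect to the small divisors, so one cannot even start the regularization by the quantitative smallness argument; the paper handles this by a separate algebraic preliminary step (Lemma \ref{step0}), using the explicit form \eqref{NF2} of $a_2^{(0)}$ and the genericity of the tangential sites to show that the relevant divisors $\oo^{(0)}\cdot k$ involve only trivial resonances, so the first reparametrization is in fact $O(|\x|)$-small. Finally, for the measure estimate the invertibility of $\MM$ alone is not enough: one needs the twist condition on the full combinations $\oo^{(0)}(\x)\cdot l+\s_1\Omega^{\rm int}_j(\x)-\s_2\Omega^{\rm int}_k(\x)$ (Lemmata \ref{twist1} and \ref{twist666}), which is where the detailed non-resonance conditions on $(\ta_1,\dots,\ta_8)$ and a second genericity polynomial in the $\tv_i$ enter; your proposal treats this as automatic once $\MM$ is invertible.
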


\begin{rmk}
	As far as we know Theorem \ref{teoremap} is the first result of analytic quasi-periodic solutions for a fully non-linear partial differential equation. Some of the key ideas follow closely the ones of \cite{BBM}, \cite{FP}, etc..., however in order to prove the existence of analytic solutions we have to modify that strategy in various non-trivial ways, which we shall illustrate in the next paragraph. While our approach can surely be applied to other equations, such as for instance the KdV equation, with very little modifications,  it does not seem straightforward at all to generalize  to the water wave equation \cite{BM1}.
\end{rmk}

In the case of finite regularity we have a similar result.

\begin{theorem}\label{teoremap2}
Consider the equation \eqref{6.1} 
 in case $2$. 
 For any $d\in \NNN$ with $d>2$  there exists $q=q(d)$ such that for any non linearity $\ff\in C^{q}$ that satisfies 
 Hypothesis  \ref{hyp2aut} and moreover such that  
 $(\ta_1,\ta_{2},\ta_{3},\ta_{4},\ta_{5},\ta_{6},\ta_{7},\ta_{8})$
 is not resonant, 
 there exists 
 a non trivial polynomial 
 such that  
  for any choice of $\tv_1,\ldots,\tv_d\in \NNN$ \emph{generic} with respect to the polynomial
 the following holds.

There exist $p=p(d,\ff)$, $\e_{0}=\e_{0}(d,\ff)$  and $\mathtt{c}=\mathtt{c}(d,\ff)\ll 1$ 
such that   for all $\e\in(0,\e_{0})$,  there exists a Cantor set %
\begin{equation}\label{asyaut1000}
\CCCC_{\e}\subset\e^2\left[\frac 12, \frac 32\right]^d\,,\quad \e^{-2d}|\CCCC_{\e}|\leq 1-\mathtt{c},
\end{equation}
%
 such that for all $\xi\in\CCCC_{\e}$ the  NLS equation (\ref{6.1})
has a quasi- periodic solution with frequency $\oo^\infty$  given by the embedding  $v(\xi)\in H^p(\TTT^{d+1})$:
$$
v= \sum_{i=1}^d \sqrt{\xi_i} e^{\ii \f_i}\sin(\tv_i x) + o(\sqrt{|\xi|})\,,\quad \omega^{\infty}_i(\xi)= \tv_i^2 + \sum_j \MM_{i}^j \xi_j + o(|\xi|)
$$
with $\MM$ an invertible matrix. Moreover one has  
$v(\f,x)=-{v}(\f,-x)$ and $v(\f,x)=\bar{v}(-\f,x)$,
and the solution  is {\rm linearly stable}.
\end{theorem}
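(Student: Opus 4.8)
Theorems \ref{teoremap} and \ref{teoremap2} are proved in parallel by feeding equation \eqref{6.1} into the abstract Nash--Moser scheme of \cite{CFP}; the only structural difference is that for Theorem \ref{teoremap2} one works on the Sobolev scale $h^{0,p}$ (i.e.\ $a=0$) rather than the analytic scale $h^{a,p}$ with $a>0$, so that every conjugation and every iteration step loses finitely many derivatives --- which is why one needs $\ff\in C^q$ with $q=q(d)$ fixed large at the end, and ends up with a solution of finite, $d$--dependent Sobolev regularity $p=p(d,\ff)$. First I would fix the tangential sites $\tv_1,\dots,\tv_d$, rescale $u\rightsquigarrow\e u$, and perform a \emph{weak} (partial) Birkhoff normal form: the cubic vector field generated by $\ff^{(3)}$ in \eqref{6.5}, restricted to the $d$--dimensional torus spanned by the tangential modes, is normalised to an integrable term which produces the twist matrix $\MM$ and the frequency map $\oo^\infty_i(\xi)=\tv_i^2+\sum_j\MM_i^j\xi_j+o(|\xi|)$ of \eqref{esistenzadidio}, together with the linear-in-$\xi$ shifts of the normal frequencies $\la_j(\xi)\sim j^2$. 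The hypothesis that $(\ta_1,\dots,\ta_8)$ is \emph{not resonant} (Definition \ref{33}) is precisely what ensures that no obstructing resonant cubic monomial survives this reduction, and the genericity of $x_0=(\tv_1,\dots,\tv_d)$ with respect to the polynomial produced along the way is what makes $\MM$ invertible and yields the transversality (second-order Melnikov) non-degeneracy needed later.

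\textbf{The functional equation.} Introducing action--angle coordinates $(\f,I)\in\TTT^d\times\RRR^d$ on the tangential directions and keeping the normal variable in $h^{0,p}$, the search for a quasi--periodic solution reduces to finding a zero of a nonlinear operator $F(i)=\oo\cdot\del_\f i-X(i)$ on reversible embedded tori $i:\TTT^d\to\TTT^d\times\RRR^d\times h^{0,p}$, with the unperturbed actions $\xi\in\e^2[\tfrac{1}{2},\tfrac{3}{2}]^d$ as parameters. Performing the whole construction inside the subspace cut out by the reversibility relations of Hypothesis \ref{hyp2aut} automatically yields, in the limit, the symmetries $v(\f,x)=-v(\f,-x)$ and $v(\f,x)=\bar v(-\f,x)$. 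I would then run the Nash--Moser iteration of \cite{CFP}; the essential task at each step is to invert the linearised operator $\der_i F$.

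\textbf{Reduction and reducibility of the linearised operator.} Because the nonlinearity is \emph{fully nonlinear} ($\delta=n=2$), $\der_i F$ has the form $\oo\cdot\del_\f+\ii a_2(\f,x)\del_{xx}+a_1(\f,x)\del_x+a_0(\f,x)$ up to lower-order and finite-rank terms, with $a_2$ close to $1$; the reversibility Hypothesis \ref{hyp2aut} forces the dangerous odd first-order coefficient to vanish and makes the normal frequencies \emph{simple} (this is why the reversible, rather than Hamiltonian, setting is chosen). I would conjugate this operator --- by a $\f$--dependent diffeomorphism of $\TTT_x$, by multiplication operators, and by finitely many pseudodifferential operators of strictly decreasing order --- to a diagonal constant--coefficient operator up to an arbitrarily smoothing remainder, and then apply a KAM reducibility scheme to diagonalise the rest. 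The first and second Melnikov conditions $|\oo\cdot\ell|\ge\g\langle\ell\rangle^{-\tau}$ and $|\oo\cdot\ell+\la_j\pm\la_k|\ge\g\langle\ell\rangle^{-\tau}$ are imposed here and are solvable for all $\xi$ outside a set of small measure, using the asymptotics of the $\la_j$ from the reduction together with the transversality from the Birkhoff step. Inverting the diagonal operator loses $\sim\tau$ derivatives; in the Sobolev setting this loss is absorbed by the standard Nash--Moser smoothers, at the cost of finitely many more derivatives of $\ff$.

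\textbf{Measure estimate, stability, and the main obstacle.} Finally I would collect the parameter subsets excised over all iteration steps; summing their measures and using the non-degeneracy of $\xi\mapsto(\oo(\xi),\{\la_j(\xi)\})$ (again a consequence of the twist and the genericity condition) shows that their union has measure $o(\e^{2d})$, so the surviving set $\CCCC_\e\subset\e^2[\tfrac{1}{2},\tfrac{3}{2}]^d$ satisfies $\e^{-2d}|\CCCC_\e|\ge 1-\mathtt c$ with $\mathtt c\ll 1$, and linear stability of the solution is read off from the reducibility of $\der_i F$ at the solution. The main obstacle is the reduction/reducibility step: since $\ff$ genuinely depends on $u_{xx}$, the transformations diagonalising $\der_i F$ are unbounded, so the conjugations must be organised as a careful para-/pseudo-differential calculus with exact bookkeeping of derivative losses --- and it is precisely this bookkeeping that fixes the admissible threshold $q=q(d)$ on the regularity of $\ff$ and the finite Sobolev exponent $p=p(d,\ff)$ in Theorem \ref{teoremap2} (while in the analytic Case 1 the analogous losses must instead be balanced against a shrinking analyticity width, which is the genuinely delicate point handled in \cite{CFP}).
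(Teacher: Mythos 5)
Your proposal follows essentially the same route as the paper: weak Birkhoff normal form, action--angle variables, the abstract scheme of \cite{CFP}, inversion of the linearized operator by pseudo-differential reduction plus KAM reducibility under first and second Melnikov conditions, and the twist/measure estimates, with the derivative losses absorbed on the Sobolev scale ($a_0=s_0=0$) exactly as you indicate. The only point the paper makes explicit that you leave implicit is that in the $C^q$ case the compatible changes of variables $\calL_n$ are taken to be the identity (torus diffeomorphisms are not close to the identity in Sobolev norms), so the diagonalizing transformations of Sections \ref{settesette} and \ref{sec7aut} are used only to obtain the tame estimates on the approximate inverse and are never applied to the vector field itself --- which is precisely the standard procedure your sketch describes.
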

The proofs of our two results are very similar, and we shall concentrate on the more difficolt and novel, analytic case.

\begin{rmk}
In stating our Theorems we put some effort in distinguishing the non-resonance conditions on
on $(\ta_1,\ta_{2},\ta_{3},\ta_{4},\ta_{5},\ta_{6},\ta_{7},\ta_{8})$ 
from the genericity conditions on $\tv_1,\ldots,\tv_d$. Informally we are saying that for all non-resonant equations of the form \eqref{6.1} there exist many quasi-periodic solutions, and that for most choices of $d$ sites  $\tv_1,\ldots,\tv_d$  there exist quasi-periodic solutions essentially supported on those sites for all times.
\\
For example given any choice of 
$(\ta_1,\ta_{2},\ta_{3},\ta_{4},\ta_{5},\ta_{6},\ta_{7},\ta_{8})$
such that $\ta_1\neq0$, then the genericity condition can be verified by removing only a co-dimension one algebraic manifold (which may depend on the choice of the parameters $\ta_i$)
in  $\tv_1,\ldots,\tv_d$.

 It may be possible that for some choices of the $\ta_i$ one does not need to impose \emph{any} further genericity condition, however we have not investigated this question. Indeed, even for equations with no derivatives  in the non-linearity such as the quintic NLS it can happen that, for specific choices of the sites $S$, the behavior of the solutions of the non linear equation
differs drastically from the one of the linear equation (see for instance \cite{KT} or \cite{HP}). In order to avoid such phenomena one has
to restrict to ``generic'' choices of $S$, in the sense of Definition \ref{defgene}. 
\end{rmk}
\begin{rmk}\label{nometto}
It is possible that our result can be further refined in order to prove existence of quasi-periodic solutions also for some resonant choices of the $\ta_i$, however some conditions on this parameters are necessary. Indeed it is not possible that all equations of the form \eqref{6.1} have quasi-periodic solutions, as can be seen in the following example.
We start by considering a \emph{linear} Schr\"odinger equation
$$
\ii v_t-v_{xx}=0
$$
and writing $v= u + |u|^2 u $, then we deduce the equations for $u$.
We have
$$
\ii v_t-v_{xx}=(1+2|u|^2)(\ii u_t-u_{xx}) + u^2(\ii \bar u_t -\bar u_{xx})+ 4 |u_x|^2 u + 2 u_x^2 \bar u =0
$$
and after some computations we get
$$
\ii u_t-u_{xx} =  \frac{u^2}{(1+3|u|^4+4 |u|^2)}(4 |u_x|^2 \bar u + 2 \bar u_x^2  u -2 \bar u^2  u_{xx} )- \frac{1+2|u|^2}{1+3|u|^4+4 |u|^2} (4 |u_x|^2 u + 2 u_x^2 \bar u -2 u^2 \bar u_{xx} )
$$
which has the form \eqref{6.2} with 
$$
\ff^{(3)}= - 4 |u_x|^2 u - 2 u_x^2 \bar u +2 u^2 \bar u_{xx},
$$
{and satisfies Hypothesis \ref{hyp2aut}}. Now evidently all the small solutions of this equation are periodic (since the map $u\to v$ is invertible close to zero) and indeed it turns out that such choice of $\ff^{(3)}$ corresponds to resonant $\ta_i$, namely  $\ta_1=\ta_2=\ta_4=\ta_{5}=0$,  $\ta_3=-4$,  $\ta_{6}= 2$, $\ta_{7}=-2$, $\ta_{8}=0$.
\end{rmk}
\paragraph{Description of the paper and strategy of the proof.}
Since the proof  involves many different arguments we explain how the paper is organized.

 \indent{\bf Weak Birkhoff normal form.}
As preliminary step one looks for an approximate solution for the NLS \ref{6.1} which will be the starting point for an iterative algorithm.
Hence in Section \ref{weakuffa} we find a {\em better}  approximate solution.
One first rewrites the NLS as a dynamical system
\begin{equation}\label{papaveri}
\dot{u}=\chi(u)=\sum_{j\in\ZZZ}\chi_{j}(u)\del_{u_j}
\end{equation}
where $\chi$ is vector field defined on the space of sequences $u=\{u_j\}_{j\in \ZZZ}\in h^{a,p}$. In our case $h^{a,p}\leftrightarrow H^{p}(\TTT_{a};\CCC)$
the functions analytic on the toroidal domain $x\in\CCC$ such that ${\rm Re}(x)\in\TTT$ and $|{\rm Im}(x)|\leq a$, for some $a>0$.
For a precise definition see formula \eqref{6.666} in Section \ref{scalelala}.

A  good strategy for finding approximate solutions of a dinamical system is to  perform on the equation a few steps of Birkhoff normal form, here we follow closely the strategy of \cite{BBM1}.

 We select  the sites $S\subset\ZZZ$ and decompose the space of sequences $h^{a,p}$
into two orthogonal subspaces $u=(v,z):=\big(\{u_{k}\}_{k\in S}, \{u_{k}\}_{k\in S^c}\big)$. Then one looks
for a coordinate system such that 
$\{z=0\}$ is an  approximately invariant manifold. More precisely one splits the vector field in \eqref{papaveri} as
$\chi=\Pi_{S}\chi \del_{v}+\Pi_{S^{c}}\chi \del_{z}$, hence,  through a step of Birkhoff normal form,  
one removes all the cubic terms $O(v^3),O(v^2 z)$ from $\Pi_{S}\chi$
and all the term $O(v^3)$ form $\Pi_{S^c}\chi$  that do not commute with the linear part. We then show that
 the dynamics on $z=0$ of the terms $O(v^3)$ is {\em integrable} and {\em anisochronous}.
We perform this step in Proposition \ref{wbnf} of Section \ref{weakuffa} and prove that the corresponding change of variables is close to the identity up to a \emph{finite} dimensional operator.

In principle one could remove also the terms $O(v^2 z)$ from $\Pi_{S^c}\chi$ by performing a stronger normal form, this would give us not only an approximately invariant torus but also information on the linear dynamics in the  normal  directions.  The reason why we do not perform such a step is the same as in \cite{BBM1}, let us briefly illustrate it.

It is well known that in small divisor problems, the main difficulty is in proving estimates on the inverse of the vector field linearized at an approximate torus.  We see in \eqref{skifo}  that the linearized operator of an unbounded vector field as $\chi$ in \eqref{6.666} 
is a non constant coefficients pseudo-differential operator, let us denote it by $\calL$. In the forced case \cite{FP} we analyzed the invertibility of a similar linear operator, by strongly exploiting the pseudo-differential structure.
To prove the invertibility of the linearized operator in the autonomous case we need to use similar arguments. Now applying a map which is  close to the identity up to a \emph{finite} dimensional operator. does {\em not modify} the pseudo-differential structure.
\\
On the other hand, performing a step of Birkhoff normal form as in \cite{PP} which removes the terms $O(v^2 z)$ from $\Pi_{S^c}\chi$, means applying a  map  which is close to the identity up to a small {\em bounded} operator, in general such a map {\em does not} preserve the pseudo-differential structure. 
%

 \smallskip
 
 \indent{\bf Action-angles variables.} We  have underlined that in autonomous cases there are no external parameters to modulate in order to fulfill 
 non-degeneracy conditions. Now thanks to the step of weak Birckhoff norml form we selected an approximatively invariant manifold
 where the dynamics is integrable and non-isocronous.   
 On this manifold we introduce action-angle variables, on the tangential sites (i.e. the sites in $S$), and use  the initial data as parameters
 which will be denoted by $\x$.
 This is done in Section \ref{action} using the change of coordinates in \eqref{aacaut}.
 \\

 \begin{equation}\label{approxuffaB}
\left\{
\begin{aligned}
&\dot\theta=\omega^{(0)}(\x)+G^{(\theta)}(\theta,y,w)\\
&\dot y = G^{(y)}(\theta,y,w)\\
&\dot z_j = \ii \Omega^{(-1)}_jz_j+G^{(z_j)}(\theta,y,z),
\end{aligned}
\right.
\end{equation}
here $\omega^{(0)}(\x)$ is an invertible linear map given by the frequency-amplitude modulation in \eqref{omeghino}, $\Omega^{(-1)}_j= j^2$ and $G(\theta,0,0)$ is appropriately small. We denote the whole vector field by $F$. 
\\
Now, the frequency vector $\oo^{(0)}$ is $O(|\x|)-$close to  integer vector. We require that it is diophantine, by setting
\begin{equation}\label{equazione1000}
|\oo(\x)\cdot l|\geq \frac{\g}{1+|l|^{\tau}}, \quad
\end{equation}
for some $\tau>d$ and $\g\sim  O(|\x|)$.

Now we need to control the linearized operator in the $z$ directions, the leading terms are those coming from the terms $O(v^2z)$ (which we have not removed from $\Pi_{S^c}\chi$) . The resulting matrix, denote it by $\Omega(\theta,\x)$, is of order $O(|\xi|)$ hence in principle {\em not perturbative} w.r.t. $\gamma$. We discuss this in Proposition \ref{birk} of Section \ref{sec7aut} where we study the invertibility of the linearized operator in the normal directions.

A crucial point is the so called ``twist'' condition with respect to the parameters $\x$. 
What we need to check
is that  if one ``moves'' the initial data $\x$ then the frequencies move in a non ``trivial'' way. 
We have said that the map $\x\to \oo(\x)$ is a diffeomorphism, this is the so-called {\em twist} condition.  In order to perform our scheme we also need a {\em twist} condition on  the normal directions, namely on
 the average in $\theta$ of $\Omega(\theta,\x)$. The analysis of this last issue is performed in Lemmata \ref{twist1} and \ref{twist666} in Section \ref{caspita2}.
Note that this is a delicate question, since we are requiring a modulation of infinitely many normal frequencies by only finitely many parameters.
The analysis would be much simpler if one considers a fully nonlinear perturbation, of order at  least four, of the cubic integrable NLS. In such a case, for \emph{any}
choices of the tangential site in $S$, one would obtain that the map $\x\to\oo(\x)$  is a diffeomorphism by exploiting the integrability properties of the system.
Here we need to introduce a notion of ``genericity'' (see Definition \ref{defgene}) which  implies that for ``most'' choices of the cubic terms and ``most''  choices of the tangential sites 
 the frequencies satisfy a ``twist'' condition.
 Interestingly we can produce explicitly  non generic choices of cubic non linearities such that for \emph{any}
 choice of tangential sites the twist condition is false. In particular it turns out that 
 the Jacobian of the map $\x\mapsto \oo(\x)$ has at most rank $2$.
 It would be interesting to investigate whether quasi periodic solutions exist for such ``degenerate'' cases ,see also Remark \ref{nometto}.

%
 
\smallskip
\indent{\bf  KAM scheme.}  Once we are in the setting of \eqref{approxuffaB}, we wish to apply the Abstract KAM theorem of \cite{CFP}. Such theorem gives an explicit (if complicated) set of parameters $\xi$ (denoted by $\calO^{(\infty)}$) for which quasi-periodic solutions exist for \eqref{approxuffaB}, provided that $G$ is {\em tame} and satisfies some smallness conditions at least close to the approximate invariant torus.  In sections \ref{iniz} and \ref{secNMaut} we first introduce the necessary notations and then state the Theorem \ref{thm:kambis}, and verify that all the hypotheses are fulfilled in our setting. We refer to the introduction of \cite{CFP} for a detailed description of the strategy. The theorem is mostly just a quadratically convergent iterative scheme which produces a sequence of changes of variables $\HH_n$ such that $(\HH_n)_{\star} F(\theta,0,0)$ tends to zero (among other properties).  This means that $(\HH_n)^{-1}(\theta,z=0,y=0)$ is an approximately invariant torus, with a better and better approximation, we call this object an approximate solution. 

 The remainder of the paper is devoted to proving that the set $\calO^{(\infty)}$ is non-empty.
Such set is explicitly defined in the Theorem as the intersection of the sets $\calO^{(n)}$ where one has appropriate tame estimates on the inverse of the linearized vector field at the $n$'th approximate solution, see Definition \eqref{pippopuffo3}.
We have to show  that the $\calO^{(n)}$ have positive measure and  that the same holds for their intersection.

Let us denote the   linearized vector field at the $n$'th approximate solution by $\mathfrak L_n$. The strategy in \cite{BBM},\cite{BBM1},\cite{FP}... is to prove the bounds on $\mathfrak L_n$ by constructing a bounded change of variables $Q_n$ 
which approximately diagonalizes it, and then imposing the invertibility of $\mathfrak L_{n}$  by assuming bounds on the eigenvalues and controlling the norm of $Q_n,Q_n^{-1}$.
 In turn the fact that the diagonalizing change of variables exists is ensured by assuming bounds on the differences of the eigenvalues and 
by exploiting the fact that $\mathfrak L_n$ is a pseudo-differential operator. This results in an explicit description of $\calO^{(n)}$ in terms of {\em first and second order Melnikov conditions on the eigenvalues}.

This strategy however has a serious problem in the analytic setting. The change of variables which diagonalizes $\mathfrak L_n$ in the analytic case is NOT bounded from the space in itself but loses some of the analyticity radius.

This can be trivially seen from the following example.  One of the changes of variables used in order to diagonalize is a change of variable $z(x)\to (\TT_\beta z)(x):= z(x+\beta(x))$. 
This change of variables is used in order to conjugate $\mathfrak L_n$ to an operator  whose principal term (the term containing the highest derivatives) is diagonal.
\\
Now it is evident that this operator maps $H^p(\TTT)$ in itself but one cannot expect the same to hold for $H^p(\TTT_a)$, where the radius of analyticity should be  reduced by $\sim |\beta|$. This means that at each step $n$ we lose  some analyticity, of the order of the corresponding $\beta_n$. Now in the strategy of \cite{BBM}, etc. the $\beta_n$ are all   small but more or less all of the same size so that the algorithm would collapse after a finite number of steps.

In order to overcome this difficulty we reason as follows. In \cite{CFP} we have shown that in performing the iterative scheme which produces the changes of variables $\HH_n$ and the approximate solutions we can apply any change of variables which does not ruin our approximation procedure (namely which maps an approximately invariant torus into itself), we call such changes of variables {\em compatible}, see Definition \ref{compa}. With this fact in mind at each step we apply to $(\HH_n)_\star F$ the change of variables $\TT_{\alpha_n}$ which conjugates $\mathfrak L_n$ to an operator  whose principal term  is diagonal. Note that we can apply  the changes of variables  $\TT_{\alpha}$ due to the fact that they preserve the pseudo-differential structure, which we specify in Definition \ref{pseudopseudo}. In this way  our algorithm is closed, moreover not only  $(\HH_{n+1} )_\star F(\theta,0,0)$ becomes smaller at each step but also the principal term of   $\mathfrak L_{n+1}$ becomes closer to being diagonal. This means that $|\alpha_{n+1}|\ll |\alpha_n|$ and our loss of analiticity converges.

In Section \ref{paperina} we first discuss various types of  changes of variables (from which we shall choose the compatible changes of variables explained above). Then we  show how to 
approximately diagonalize  the resulting operator and deduce the estimates on the inverse of a matrix  from Melnikov conditions  on the eigenvalues. 

In section \ref{sbroo} we  use the results of the previous setting in order to define recursively the compatible changes of variables $\calL_n$. Then we show that the sets $\calO_n$ can be expressed in terms of Melnikov conditions  on the eigenvalues. 
Finally in section \ref{caspita2} we compute the measure of the sets $\calO_n$ and of their intersection.

\zerarcounters
\section{Functional setting}

In this Section we introduce the functional spaces on which we work. 
Moreover
we analyze in a specific way the r\^ole of the ``reversibility'' condition and how we use it in Theorems
 \ref{teoremap} and \ref{teoremap2}. 

\subsection{Scales of Sobolev spaces}\label{scalelala}
For the  analytic contest we 
introduce the space of analytic functions that are Sobolev on the boundary 
\begin{equation}\label{space}
H^{p}(\TTT_{a}^{b}; \CCC):=\big\{u=\sum_{l\in\ZZZ^b}u_{l}e^{\ii l\cdot \theta} : 
\|u\|^{2}_{a,p}:=\sum_{l\in\ZZZ^{b}}\langle l\rangle^{2p}|u_{l}|^{2}e^{2a|l|}<\infty\big\}\,.
\end{equation}
for $a>0$ and for some $b\geq1$. Clearly the space $H^{s}(\TTT_{a}^{b})$ is in one-to-one correspondence with the sequences space. We denote the space
of sequences by $h^{a,p}$ (see \eqref{6.3}).
If the parameters $a=0$ then we denote by $H^{p}(\TTT^{b};\CCC)$ the usual Sobolev space of functions defined on $\TTT^{b}$.

In order to prove Theorem \ref{teoremap} and \ref{teoremap2}
it is convenient to study the equation as dynamical system on the phase space $H^{1}(\TTT_{a}; \CCC)$ (or $H^{1}(\TTT; \CCC)$ in the Sobolev case), i.e. look for  
$u(t)\in H^{1}(\TTT_{a}; \CCC)$ quasi-periodic in  $t$. In order to distinguish these two cases, for the autonomous system 
IN the paper we shall  
use  the equivalent notation $h^{a,p}$ to denote the functions in $H^{p}(\TTT_a;\CCC)$.
We shall write $H^{s}(\TTT^{d+1};\CCC)$
to denote functions $v(\f,x)$ defined for $(\f,x)\in \TTT^{d+1}$. 
 
Due to the complex nature of the NLS we need to work on product spaces. 
We will usually denote  
\begin{equation}\label{spaces}
\begin{aligned}
{\bf H}^{s}&:={\bf H}^{s}(\TTT^{d+1};\CCC)=H^{s}(\TTT^{d+1};\CCC)\times H^{s}(\TTT^{d+1};\CCC)\cap \calU,
\end{aligned}
\end{equation}
where
\begin{equation}\label{5}
\calU=\{(h^{+},h^{-})\in H^{s}(\TTT^{d+1};\CCC)\times H^{s}(\TTT^{d+1};\CCC)  \; : \; h^{+}=\ol{h^{-}}\}.
\end{equation}

We also write 
${\bf H}^{s}_{x}$ to denote the phase space of functions
in 
${\bf H}_x^{s}(\TTT ;\CCC)=H^{s}(\TTT^{1};\CCC)\times H^{s}(\TTT^{1};\CCC)\cap \calU,
$
On the product spaces  
${\bf H}^{s}$ we define, with abuse of notation, the norms
\begin{equation}\label{spaces1}
\begin{aligned}
\|z\|_{{\rm H}^{s}}&:=\max\{\|z^{(i)}\|_{s}\}_{i=1,2}, \quad z=(z^{(1)},z^{(2)})\in{\rm H}^{s},\\
\|w\|_{{\bf H}^{s}}&:=\|z\|_{H^{s}(\TTT^{d+1};\CCC)}=\|z\|_{s}, \quad w=(z,\bar{z})\in{\bf H}^{s}, \quad
z=z^{(1)}+iz^{(2)}.
\end{aligned}
\end{equation}
\noindent
For a function $f : \Lambda\to E$ where $\Lambda\subset \RRR^{n}$ and $(E,\|\cdot\|_{E})$ is a Banach space
we define
\begin{eqnarray}\label{lnorm2}
{\it sup \phantom{g}norm:} \; \|f\|_{E}^{sup}&\!\!\!\!\!\!:=\!\!\!\!\!\!&\|f\|^{sup}_{E,\Lambda}:=\sup_{\la\in\Lambda}\|f(\oo)\|_{E},\\
 {\it Lipschitz \phantom{g} semi\!-\!norm:} \;
 \|f\|_{E}^{lip}&\!\!\!\!\!\!:=\!\!\!\!\!\!&\|f\|_{E,\Lambda}^{lip}:=
\sup_{\substack{\oo_{1},\oo_{2}\in\Lambda \\ \oo_{1}\neq\oo_{2}}}
\frac{\|f(\oo_{1})-f(\oo_{2})\|_{E}}{|\la_{1}-\la_{2}|}\nonumber
\end{eqnarray}
%
and for $\g>0$ the weighted  Lipschitz norm
\begin{equation}\label{lnormnorm}
\|f\|_{E,\g}:=\|f\|_{E,\Lambda,\g}:=\|f\|^{sup}_{E}+\g\|f\|_{E}^{lip}.
\end{equation}
In the paper we will work with parameter families of functions in $\HH_s$,
If one deal with parameters family $u=u(\lambda)\in {\rm Lip}(\Lambda,\HH_s)$
where $\calH_{s}={\rm H}^{s}, {\bf H}^{s}$ and  $\Lambda\subset \RRR^{d}$ we simply write $\|f\|_{\HH_{s},\g}:=\|f\|_{s,\g}$,
or $\|u\|_{s,p,\g}$ in the analytic contest. All the discussion above holds for the product space
${\bf h}^{a,p}:=h^{a,p}\times h^{a,p}$.
Along the Thesis we shall write also
\begin{equation*}
a\leq_{s} b \;\;\; \Leftrightarrow \;\;\; a\leq C(s) b \;\;\; {\rm for \; some \; constant}\;\; C(s)>0.
\end{equation*}
Moreover to indicate unbounded or regularizing spatial differential operator we shall write $O(\del_{x}^{p})$ for some 
$p\in \ZZZ$. More precisely we say that an operator $A$ is $O(\del_{x}^{p})$ if
\begin{equation}\label{pseudo}
A : H_{x}^{s}\to H_{x}^{s-p}, \quad \forall s\geq0.
\end{equation}
Clearly if $p<0$ the operator is regularizing.

\noindent
Now we define the subspaces of trigonometric polynomials
\begin{equation}\label{trig}
H_{{n}}=H_{N_n}:=\big\{u\in L^{2}(\TTT^{d+1}) : u(\f,x):=\sum_{|(\ell,j)|\leq N_{n}}u_{j}(\ell)e^{i(\ell\cdot\f+jx)}\big\}
\end{equation}
where $N_{n}:=N_{0}^{(\frac{3}{2})^{n}}$, and the orthogonal projection
$$
\Pi_{n}:=\Pi_{N_{n}}: L^{2}(\TTT^{d+1})\to H_{n}, \quad \Pi^{\perp}_{n}:=\uno-\Pi_{n}.
$$
This definitions can be extended to the product spaces in \eqref{spaces} in the obvious way. We have the following classical result. 
\begin{lemma}
Fo any $s\geq0$ and $\n\geq0$ there exists a constant $C:=C(s,\n)$ such that
\begin{equation}\label{smoothing}
\begin{aligned}
&\|\Pi_{n}u\|_{s+\n,\g}\leq C N_{n}^{\nu}\|u\|_{s,\g}, \;\; \forall u\in H^{s}, \\
& \|\Pi^{\perp}_{n}u\|_{s}\leq C N_{n}^{-\nu}\|u\|_{s+\nu}, \;\; \forall u\in H^{s+\nu}.
\end{aligned}
\end{equation}
\end{lemma}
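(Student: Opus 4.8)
The plan is to work entirely on the Fourier side, where both projections $\Pi_n$ and $\Pi_n^\perp$ act diagonally. Recall from \eqref{space} that for $u=\sum_{(\ell,j)\in\ZZZ^{d+1}}u_j(\ell)e^{\ii(\ell\cdot\f+jx)}$ one has $\|u\|_s^2=\sum_{(\ell,j)}\langle(\ell,j)\rangle^{2s}|u_j(\ell)|^2$, that $\Pi_n u$ retains exactly the modes with $|(\ell,j)|\leq N_n$, and that $\Pi_n^\perp u$ retains the complementary ones. So everything reduces to a pointwise comparison, on the appropriate index range, of the weight $\langle(\ell,j)\rangle^{2\nu}$ with a power of $N_n$. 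Here I use that $N_n=N_0^{(3/2)^n}\geq 1$, so that $\langle k\rangle\leq\sqrt 2\,|k|\leq\sqrt 2\,N_n$ whenever $1\leq|k|\leq N_n$ and $\langle k\rangle\geq|k|\geq N_n$ whenever $|k|>N_n$ (the indices being integers).

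For the first bound, since $\Pi_n u$ is supported on $|(\ell,j)|\leq N_n$, I split $\langle(\ell,j)\rangle^{2(s+\nu)}=\langle(\ell,j)\rangle^{2\nu}\langle(\ell,j)\rangle^{2s}$ and estimate $\langle(\ell,j)\rangle^{2\nu}\leq 2^{\nu}N_n^{2\nu}$ on this set (the zero mode contributes the harmless factor $1\leq N_n^{2\nu}$ since $N_n\geq1$); summing over $(\ell,j)$ gives $\|\Pi_n u\|_{s+\nu}^2\leq 2^{\nu}N_n^{2\nu}\|u\|_s^2$. For the second bound, $\Pi_n^\perp u$ is supported on $|(\ell,j)|>N_n$, where I split $\langle(\ell,j)\rangle^{2s}=\langle(\ell,j)\rangle^{-2\nu}\langle(\ell,j)\rangle^{2(s+\nu)}$ and use $\langle(\ell,j)\rangle^{-2\nu}\leq N_n^{-2\nu}$, which yields $\|\Pi_n^\perp u\|_s^2\leq N_n^{-2\nu}\|u\|_{s+\nu}^2$. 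Taking square roots gives both inequalities in \eqref{smoothing} for the plain Sobolev norms, with a constant $C=C(\nu)$.

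To pass to the weighted Lipschitz norm \eqref{lnormnorm}, I exploit that $\Pi_n$ and $\Pi_n^\perp$ are \emph{fixed} linear operators, independent of the parameter $\lambda\in\Lambda$. Hence for a Lipschitz family $u=u(\lambda)$ one has $\Pi_n u(\lambda_1)-\Pi_n u(\lambda_2)=\Pi_n\big(u(\lambda_1)-u(\lambda_2)\big)$, so applying the plain bound to the difference and dividing by $|\lambda_1-\lambda_2|$ shows that both the $\sup$-norm part and the Lipschitz-seminorm part of $\|\Pi_n u\|_{s+\nu,\g}$ are bounded by $C N_n^{\nu}$ times the corresponding parts of $\|u\|_{s,\g}$; recombining them with the weight $\g$ gives the first line of \eqref{smoothing}. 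The same reasoning applied to $\Pi_n^\perp$ gives the second line (and, if wanted, its weighted version as well). There is no genuine obstacle here: the only points deserving a line of care are the elementary equivalence $\langle k\rangle\sim|k|$ for $|k|\geq1$, used to replace $\langle N_n\rangle^{\pm\nu}$ by $N_n^{\pm\nu}$, and the remark that the projections commute with parameter differences, which makes the weighted statement a formal consequence of the unweighted one. The extension to the product spaces of \eqref{spaces} is componentwise and immediate.
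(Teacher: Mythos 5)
Your proof is correct, and it is exactly the classical truncated-Fourier-series argument that the paper invokes when it omits the proof as standard: pointwise comparison of the weight $\langle(\ell,j)\rangle^{\pm 2\nu}$ with $N_n^{\pm 2\nu}$ on the support of $\Pi_n$ resp. $\Pi_n^\perp$, plus the observation that the projections are parameter-independent so the Lipschitz part of the norm \eqref{lnormnorm} follows formally. The same pointwise argument also covers the analytic weights $e^{2a|k|}$ mentioned by the paper, since the exponential factor is untouched by the splitting, so nothing is missing.
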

\noindent
We omit the proof of the Lemma since bounds \eqref{smoothing} are classical estimates for truncated Fourier series
which hold also for the norm in \eqref{lnormnorm} and in the analytic case.

For any
$$
{\bf u}:=(u^{+},u^{-})\in{\bf h}^{a,p}:=h^{a,p}\times h^{a,p}.
$$
 we consider the dynamical system
\begin{equation}\label{6.666}
\dot{\bf u}:=-\ii E\left[{\bf u}_{xx}
+
\left(\begin{matrix} \ff^{+}({\bf u},{\bf u}_{x},{\bf u}_{xx})\\ 
{\ff^{-}({\bf u},{\bf u}_{x},{\bf u}_{xx})}
\end{matrix}
\right)\right]=\chi({\bf u})=\left(
\begin{matrix}
\chi^{+}({\bf u})\\ \chi^{-}({\bf u})
\end{matrix}\right), \qquad  E=\left(\begin{matrix} 1 &0\\ 0 &-1\end{matrix}\right),
\end{equation}
where $\ff^{\pm}$ are defined in such a way that, on the subspace
%
%
%
$\calU:=\{u^{+}=\ol{u^{-}}\}$, the system \eqref{6.666} is equivalent
 to \eqref{6.1}.
 Essentially one look for an extension such that $(\ff^{+},\ff^{-})=(\ff,\bar{\ff})$.
 If $\ff$ is analytic this extension is completely standard, indeed one may Taylor expand $\ff$ as totally convergent series in $u,\bar{u}$ (and their derivatives).
 In the $C^{q}$ case this requires some care, see Section 1 in  \cite{FP} for more details.
 Here the notation of a vector field is the following:
\begin{equation}\label{6.6bis}
\chi({\bf u})=\sum_{\s=\pm}\chi^{\s}(u)\del_{u^{\s}}=\sum_{\s=\pm}\sum_{j\in\ZZZ}\chi^{\s}_{j}\del_{u^{\s}_{j}},
\end{equation}
Note that the map $\calF : {\bf h}^{a,p}\to {\bf h}^{a,p-2}$ defined by
\begin{equation}\label{compoop}
\calF : {\bf u}\mapsto\left(\begin{matrix} \ff^{+}({\bf u},{\bf u}_{x},{\bf u}_{xx})\\ 
{\ff^{-}({\bf u},{\bf u}_{x},{\bf u}_{xx})}
\end{matrix}
\right),
\end{equation}
 is a composition operator. This implies that the linearized operator at some ${\bf u}$ is of the form
 $$
 d_{{\bf u}}\calF({\bf u}) = d_{{\eta}^{\pm}_{2}}\left(\begin{matrix} \ff^{+}({\bf u},{\bf u}_{x},{\bf u}_{xx})\\ 
 {\ff^{-}({\bf u},{\bf u}_{x},{\bf u}_{xx})}
 \end{matrix}
 \right)\del_{xx}+
 d_{{\eta}^{\pm}_{1}}\left(\begin{matrix} \ff^{+}({\bf u},{\bf u}_{x},{\bf u}_{xx})\\ 
  {\ff^{-}({\bf u},{\bf u}_{x},{\bf u}_{xx})}
  \end{matrix}
  \right)\del_{x}+
  d_{{\eta}^{\pm}_{0}}\left(\begin{matrix} \ff^{+}({\bf u},{\bf u}_{x},{\bf u}_{xx})\\ 
   {\ff^{-}({\bf u},{\bf u}_{x},{\bf u}_{xx})}
   \end{matrix}
   \right).
 $$
 Thus $\chi$ linearized at any ${\bf u}$ has a very special multiplicative structure, namely on $\calU$ it acts on functions ${\bf h}(x)=(h^{+}(x),h^-(x))$ as 
\begin{equation}\label{skifo}
\!\!\!\!d_{{\bf u}}\chi({\bf u}) [{\bf h}]= -\ii E\left[\left(\begin{matrix} 1+a_{2}(x) & b_{2}(x) \\ \bar{b}_{2}(x) & 1+\bar{a}_{2}(x)\end{matrix}
 \right)\del_{xx} +\left(\begin{matrix} a_{1}(x) & b_{1}(x) \\ \bar{b}_{1}(x) & \bar{a}_{1}(x)\end{matrix}
  \right)\del_{x} + \left(\begin{matrix} a_{0}(x) & b_{0}(x) \\ \bar{b}_{0}(x) & \bar{a}_{0}(x)\end{matrix}
   \right)\right] {\bf h}(x).
\end{equation}

 \subsection{Reversible structure.}
 Consider the following involution
\begin{equation}\label{b}
S : u(t,x) \to -\bar{u}(t,-x), \quad S^{2}=\uno.
\end{equation}
By Hypothesis \ref{hyp2aut}  it turns out that equation \eqref{6.666}  is reversible with respect the involution \eqref{b}
and hence 
we have
\begin{equation*}
-S \circ \chi({\bf u})=\chi\circ S ({\bf u}).
\end{equation*}

Hence the subspace of ``reversible'' solutions
\begin{equation}\label{reversiblesol}
u(t,x)=-\bar{u}(-t,-x).
\end{equation}
is invariant. Actually we look for \emph{odd reversible}   solutions i.e. $u$ which satisfy (\ref{reversiblesol}) and $u(t,x)=-u(t,-x)$.
  Hence we choose as phase space of \eqref{6.666}
 \begin{equation}\label{phacespace}
 {\bf h}_{\rm odd}^{a,p}:=\left\{(u^+,u^-)\in {\bf h}^{a,p}\, :\; u^\s_{k}=-u^\s_{-k}
 \right\},
 \end{equation}
 essentially the couples of odd functions in $H^{p}(\TTT_{a})$. Then (\ref{reversiblesol}) reads $u(t,x)=\bar u(-t,x)$.

 It shall be convenient to introduce also the following 
  spaces of odd or even functions in $x\in \TTT$. For all $s\geq0$, we set
\begin{equation}\label{SPACES}
\begin{aligned}
X^{s}&:=\left\{u\in H^{s}(\TTT^{d}\times\TTT) : \;\;\; u(\f,-x)=-u(\f,x), \;\; u(-\f,x)=\bar{u}(\f,x)
 \right\},\\ 
Y^{s}&:=\left\{u\in H^{s}(\TTT^{d}\times\TTT) : \;\;\; u(\f,-x)=u(\f,x), \;\; u(-\f,x)=\bar{u}(\f,x)
 \right\},\\ 
Z^{s}&:=\left\{u\in H^{s}(\TTT^{d}\times\TTT) : \;\;\; u(\f,-x)=-u(\f,x), \;\; u(-\f,x)=-\bar{u}(\f,x)
 \right\},
\end{aligned}
\end{equation}
Note that odd reversible solutions means $u\in X^{s}$, moreover an operator reversible w.r.t. the involution $S$ maps $X^s$ to $Z^s$. 
\begin{defi}\label{stizzi}
 We denote with bold symbols the spaces ${\bf G}^{s}:=G^{s}\times G^{s}\cap \mathcal U$ where
$G^{s}$ is $H^s$, $X^{s},Y^{s}$ or $Z^{s}$.

\noindent We denote by  $H_{x}^{s}:=H^{s}(\TTT)$ the Sobolev spaces of functions of $x\in\TTT$ only, same for all the subspaces $G^s_x$ and ${\bf G}^s_x$.
\end{defi}
 
 \begin{rmk}\label{rmkphase}
Given a family of linear operators $A(\f) : H_{x}^{s}\to H_{x}^{s}$ for $\f\in\TTT^{d}$, we can associate it to an operator $A : H^{s}(\TTT^{d+1})\to H^{s}(\TTT^{d+1})$ by considering each matrix element of $A(\f)$ as a multiplication operator. This identifies a subalgebra of linear operators on $H^{s}(\TTT^{d+1})$. An operator 
$A$ in the sub-algebra identifies uniquely its corresponding ``phase space'' operator $A(\f)$. 
With reference to the Fourier basis this sub algebra is called ``T\"opliz-in-time'' matrices (see formul{\ae}  \eqref{eq:2.16aut}, \eqref{pozzo10aut}).

\end{rmk}

Recalling the definitions (\ref{SPACES}), we set,

\begin{defi}\label{reversible}
An operator $R: H^s\to H^s$ is ``{\rm reversible}'' with respect to the reversibility (\ref{b}) if
\begin{equation}\label{rever}
R : X^{s}\to Z^{s}, \quad s\geq0
\end{equation}
We say that $R$ is ``{\rm reversibility-preserving}'' if
\begin{equation}\label{rever2}
R : G^{s}\to G^{s}, \quad {\rm for} \quad G^{s}=X^{s}, Y^{s}, Z^{s}, \quad s\geq0.
\end{equation}
In the same way, we say that $A:{\bf X}^{s}\to {\bf Z}^{s}$, for $s\geq0$ is ``reversible'', while
$A: {\bf G}^{s}\to{\bf G}^{s}$, for ${\bf G}^{s}={\bf X}^{s},{\bf Y}^{s},{\bf Z}^{s}$, $s\geq0$ is ``{reversibility-preserving}''.
\end{defi}

\begin{rmk}
Note that, since  ${\bf X}^{s}= X^s\times X^s \cap\calU$, 
 Definition \ref{reversible} guarantees that
a reversible operator  preserves also the subspace $\calU$, namely $(u,\bar{u})\stackrel{R}{\to}(z,\bar{z})\in{ H}^{s}\times H^s\cap\calU$.
\end{rmk}

\zerarcounters

\section{Weak Birkhoff Normal Form}\label{weakuffa}

In this Section we select a finite dimensional subspace "approximatively"  invariant 
for the system 
\eqref{6.666} from which the solution for the entire system will bifurcate. 
This procedure is necessary since NLS equation is completely resonant near $u=0$.
In other words all the solution of the linear equation are periodic.
Let fix some notation. Given a finite set of distinct numbers $\{j_{1}, \ldots, j_{N}\}=E^+\subset \NNN$ we
define  $E:=\{\pm j_{1}, \ldots,\pm j_{N}\}\subset \ZZZ$. This decomposes naturally $h^{a,p}$ into two orthogonal subspaces $u=(\{u_j\}_{j\in E},\{u_j\}_{j\notin E})$. We write
\begin{equation}\label{6.11aut}
u(x)=\Pi_{E}u+\Pi_{E}^{\perp}u \,,\quad h^{a,p}=\Pi_E h^{a,p}\oplus\Pi_E^\perp h^{a,p}.
\end{equation}
We choose $S^+=\{\tv_1,\ldots,\tv_d\}\subset \NNN$ as above and denote
${ v}=  \Pi_{S}u$  the tangential variables and $z= \Pi_{S}^{\perp}u$ the normal ones.
For a finite dimensional subspace $E:={\rm span}\{e^{ijx} : |j|<C\}, C>0$ we denote $\Pi_{E}$ its $L^{2}$ projector.


As notation we will also indicate with $R(v^{q}z^{r})$ a homogeneous polynomial 
\begin{equation*}
R(v^{q}z^{r}):=M[\overbrace{v^+,\ldots,v^+, v^-,\ldots  v^-}^{q-times},\overbrace{z^+,\ldots,z^+,  z^-,\ldots, z^-}^{r-times}], 
\end{equation*}
with $M$ a $q,r$--multilinear operator in the variables $v^\pm,z^\pm$.
\begin{defi}\label{kresonance}
For any natural $k$ consider  a $2k$-uple  $\vec\jmath=(j_1,\ldots,j_{2k})\in \ZZZ^{2k}$. We say that $\vec\jmath$ is a $k$-{\bf resonance} if 
$$
\sum_{i=1}^{2k}(-1)^{i} j_i=0\,,\quad \sum_{i=1}^{2k}(-1)^{i} j_i^2=0.
$$
We say that a $k$-resonance is {\bf trivial} if $j_i= j_{i+1}$ up to a permutation of the $\{j_{2 l}\}_{l=1}^k$.

\noindent We say that a $2k$-uple is {\bf non-resonant}, $\vec\jmath\in \mathtt N$ if  $$\sum_{i=1}^{2k}(-1)^{i} j_i=0\,,\quad\sum_{i=1}^{2k}(-1)^{i} j_i^2\neq 0$$
\end{defi}
\begin{rmk}
Note that all $2$-resonances are trivial. Indeed if $j_{1}-j_{2}+j_{3}-j=0$ then
$j_{1}^{2}-j_{2}^{2}+j_{3}^{2}-j^{2}=2(j_{1}-j_{2})(j_{2}-j_{3})=0$. \end{rmk}
 \begin{lemma}\label{hyp3autaut} For $S$ generic one has that  that there are no non-trivial $3$-resonances with at least five points in $S$ 
 \end{lemma}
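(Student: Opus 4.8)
The plan is to show that the statement "$S$ has a non-trivial $3$-resonance using at least five distinct points of $S$" is a Zariski-closed, proper condition on the $d$-tuple $(\tv_1,\dots,\tv_d)$, so that its complement is generic in the sense of Definition \ref{defgene}. Concretely, fix $k=3$ and a $6$-tuple $\vec\jmath=(j_1,\dots,j_6)$ of (signed) elements of $S$ that uses at least five distinct values among $\tv_1,\dots,\tv_d$. The resonance equations
\begin{equation*}
\sum_{i=1}^{6}(-1)^i j_i = 0\,,\qquad \sum_{i=1}^{6}(-1)^i j_i^2 = 0
\end{equation*}
become, after substituting $j_i = \pm \tv_{\sigma(i)}$ for a fixed sign pattern and index assignment, two polynomial equations in the variables $\tv_1,\dots,\tv_d$. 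The first is linear (a $\pm$ combination of five or six of the $\tv$'s summing to zero), the second quadratic. First I would argue that for each such choice of sign pattern and index assignment the common zero locus is a \emph{proper} subvariety of $\CCC^d$; taking the union over the finitely many patterns (there are at most $2^6\binom{d}{5}\cdot 6!$ of them, a finite number) gives the required single non-trivial polynomial $P$ whose non-vanishing guarantees the conclusion.

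The one thing that genuinely needs checking — and I expect it to be the only real obstacle — is that each individual locus is proper, i.e. that the linear equation $\sum_{i=1}^6 (-1)^i j_i = 0$ is not identically satisfied. This is where the hypothesis "at least five distinct points" is used: with only four or fewer distinct values one can have a trivial cancellation $\tv_a-\tv_a+\tv_b-\tv_b+\tv_c-\tv_c=0$ built in, but with five distinct values appearing, the linear form $\sum (-1)^i j_i$ cannot be the zero polynomial in $\tv_1,\dots,\tv_d$ — at least one variable appears an odd number of (signed) times, or the signs fail to cancel; I would verify this by a short case analysis on how the six slots distribute among five or six distinct labels (the partition types are $(2,1,1,1,1)$ and $(1,1,1,1,1,1)$ for five, resp. six, distinct labels, with two of the six slots having sign $+$ fixed and the rest $-$, etc.). In the $(2,1,1,1,1)$ case the repeated label contributes either $0$ or $\pm 2\tv$ and the four remaining distinct labels contribute $\pm\tv$ each, so the form is non-zero; similarly in the all-distinct case. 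Since the linear polynomial is not identically zero, its zero set is a hyperplane, hence a proper subvariety, and a fortiori the intersection with the quadric is proper.

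Finally I would package this: let $P$ be the product over all admissible sign patterns / index assignments of the corresponding non-trivial linear polynomials $\sum_{i}(-1)^i j_i$ (one could instead take the product of the full ideal generators, but the linear factors already suffice). Then $P$ is a non-trivial polynomial $\CCC^d\to\CCC$, and if $(\tv_1,\dots,\tv_d)$ is generic with respect to $P$, i.e. $P(\tv_1,\dots,\tv_d)\neq 0$, then no admissible $6$-tuple satisfies even the first resonance equation, so there are no non-trivial $3$-resonances with at least five points in $S$. Since any $3$-resonance with $\le 4$ distinct points among its entries, when restricted to at least five points of $S$, is excluded by the distinctness count, this is exactly the claim. (One should also note the trivial bookkeeping that $3$-resonances with repeated entries that are trivial are harmless by definition, so only the "genuinely spread out" ones matter, which is precisely what $P$ kills.)
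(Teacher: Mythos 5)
There is a genuine gap, and it comes from a misreading of the hypothesis. As the lemma is actually used (to conclude in the proof of Proposition \ref{wbnf} that \emph{all} $3$-resonances in $\calA_1$ are trivial), ``at least five points in $S$'' must be read with multiplicity: the admissible $6$-tuples are those with at most one entry outside $S$, the entries in $S$ need not be distinct, and the remaining entry is an arbitrary integer $j\in\ZZZ$, not a polynomial in the $\tv_i$. Your construction only treats tuples \emph{all} of whose entries lie in $S$ and which use at least five distinct labels $\tv_{\sigma(i)}$, and for those you build $P$ out of the linear momentum forms alone. This cannot work for the configurations with one free index: there the linear equation $\sum_i(-1)^i j_i=0$ can always be satisfied by solving for the out-of-$S$ index, so no polynomial in $(\tv_1,\dots,\tv_d)$ manufactured from the linear forms can exclude them. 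Such non-trivial resonances really exist, e.g. $1+5+6=2+3+7$ with $1^2+5^2+6^2=2^2+3^2+7^2$: if $\{1,2,3,5,6\}\subset S$ and $7\notin S$ this is a non-trivial $3$-resonance in $\calA_1$ which your genericity condition does not see. The correct route is, for each sign/index pattern with one free slot, to solve the linear equation for that index, substitute into the quadratic equation, and show that the resulting polynomial in the $\tv_i$ is not identically zero unless the pattern is formally trivial (or collapses to the all-in-$S$ case); those quadratic-in-$\tv$ polynomials, not the linear forms, are the factors that must enter $P$.

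A second, related defect is your closing remark that tuples with at most four distinct labels ``are excluded by the distinctness count'': this is circular, since the lemma's hypothesis does not require distinctness. Such tuples are admissible and cannot be handled by the linear form either: for instance the pattern $(\tv_a,\tv_b,-\tv_a,-\tv_b,\tv_c,\tv_c)$ has linear form identically zero in the $\tv$'s, is not a trivial resonance, and is ruled out only through the quadratic relation (here $2(\tv_b^2-\tv_a^2)\neq0$). So even in the all-in-$S$ case your choice of $P$ as a product of linear forms is insufficient; one must check, pattern by pattern, that whenever \emph{both} resonance relations vanish identically the pattern is formally trivial, and otherwise include the non-vanishing relation (linear or quadratic) as a factor. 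The paper itself only asserts that the genericity polynomial can be exhibited, so there is nothing to compare against in detail, but the polynomial it requires necessarily involves the quadratic resonance relation after elimination of the out-of-$S$ index, which your proposal never uses.
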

 \begin{proof} We just need to exhibit the polynomial which gives such genericity condition.
 \end{proof}
 
 Given $\vec\jmath= (j_{1},\ldots,j_{n})\in\ZZZ^{n}$ for some $n$ we say that $$\vec\jmath\in \calA_l\,,\quad \text{ if at most }\, l\, \text{ of the} \,j_i\,\text{ are not in }\, S\,. $$
Note that  for each fixed $n$ the set $(j_1,\dots,j_n)\in \calA_1$ is  finite dimensional.
For a finite dimensional subspace $E:={\rm span}\{e^{ijx} : |j|<C\}, C>0$ we denote $\Pi_{E}$ its $L^{2}$ projector.
 
 \begin{proposition}[Weak Birkhoff Normal Form]\label{wbnf}
 There exists an analytic change of variables  
of the form
\begin{equation}\label{weak1} 
 \uu_+=\Phi(\uu)=\uu+\Psi(\uu),\qquad
 \end{equation}
where $\Psi$ is a \emph{finite} rank map.
The map $\Phi(\uu)$ is defined for all \footnote{note that $\epsilon_0$ is fixed in terms of $\mathtt r_0$ and $a,\gotp_1$. }  $\uu\in{\bf h}^{a,p}$  such that $\| \uu\|_{a,\gotp_1} \le \epsilon_0$, and satisfies the bounds:
\begin{equation}\label{tamemod}
\|\Psi(\uu)\|_{a,p} \leq C_p \epsilon_0^2\|\Pi_E\uu\|_{a,p} \,, \quad
\|d_\uu\Psi(\uu)[h]\|_{a,p} \leq C_p \left(\epsilon_0^2 \|\Pi_E h\|_{a,p}+ \epsilon_0\|\Pi_E u\|_{a,p}\|\Pi_E h\|_{a,\gotp_1}\right)
\end{equation} 
for all $  \,\uu \,:\, \| \uu\|_{a,\gotp_1} \le \epsilon_0$.
Actually $\Psi$ is {\em tame modulus} in the sense of \cite{BG}, namely it respects interpolation bounds also for the higher order derivatives.

  Finally
 $\Phi$ preserves ${\bf h}_{\rm odd}^{a,p}\cap \calU$ and  the new vector field  $\Phi_{*}\chi:= \Upsilon $ restricted to $\calU$ is reversible and has the form
 \begin{equation}\label{weak2}
 \begin{aligned}
 & 
 \ol{\Upsilon^{-}}=\Upsilon^{+},\\
 \Pi_{S}\Upsilon^{+}&:=-\ii(v^+_{xx}+A(\uu)+B^+_{1}(\uu)),\\
\Pi_{S}^{\perp}\Upsilon^{+}&:=-\ii(z^+_{xx}+Q(\uu)+B^+_{2}(\uu)),
 \end{aligned}
 \end{equation}
 where 
 \begin{equation}\label{weak4}
 \begin{aligned}
 B^\s_{1}(\uu)&:= 
 \sum_{i,j,k\in S}C_{ikj}u^+_{i} u_i^- u^+_{k} u_k^-u^\s_{j}\del_{u^\s_{j}}+
 \sum_{j\in S}\sum_{\substack{k\in S^{c}, 
}
}
\chi_{kkjj}u^{+}_{k}{u}^{-}_{k}u^{\s}_{j}\del_{u^{\s}_{j}},
+\sum_{q=0}^{3}R(v^{q}z^{5-q})
+\Pi_{S}h^{(>5)}(u)),\\
B_{2}^\s(\uu)&:=\sum_{q=0}^{4}R(v^{q}z^{5-q})
+\Pi_{S}^{\perp}h^{(>5)}(u)),
\end{aligned}
 \end{equation}
 $h^{(>5)}$ collects all terms with degree greater than $5$, and for\footnote{extending polynomials outside $\calU$ is trivial just apply $u\to u^+$ and $\bar u\to u^-$.} $\uu\in \calU$
 \begin{equation}\label{weak3}
 \begin{aligned}
 A(u)&:=\sum_{j\in S}C_{j}^{j}|u_{j}|^{2}u_{j}\del_{u_{j}}+
\sum_{j\in S}(\sum_{\substack{i\in S \\ k\neq j}} C_{j}^{k}|u_{k}|^{2})u_{j}\del_{u_{j}}, \quad
Q(u)=\sum_{j\in S^{c}}\sum_{\substack{j_{1}-j_{2}+j_{3}-j=0, \\
(j_{1},j_{2},j_{3},j)\notin \calA_1}}
\chi_{j_{1}j_{2}j_{3}j}u_{j_{1}}\bar{u}_{j_{2}}u_{j_{3}}\del_{u_{j}},\\
C_{j}^{j}&:=\chi_{jjjj}, \qquad C_{j}^{k}:=\chi_{kkjj}+\chi_{jkkj},\\
\chi_{j_{1}j_{2}j_{3}j}&:=\ta_{1}-\ta_{2}j_{3}^{2}+\ta_{3}j_{1}j_{2}
-\ta_{6}j_{2}^{2}-\ta_{7}j_{1}j_{2}
-\ta_{4}j_{1}j_{2}j_{3}^{2}
+\ta_{8}j_{1}j_{2}^{2}j_{3}
-\ta_{5}j_{1}^{2}j_{2}^{2}j_{3}^{2}
 \end{aligned}
 \end{equation} 
 \end{proposition}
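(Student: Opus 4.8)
The plan is to construct $\Phi$ as the time-one flow of a sequence of Hamiltonian/reversible vector fields that remove the ``unwanted'' low-degree monomials from $\chi$, exactly in the spirit of \cite{BBM1}, but keeping careful track of which terms survive because they are \emph{resonant} (in the sense of Definition \ref{kresonance}) or because they lie in $\calA_1$ and hence cannot be removed without introducing small divisors. First I would expand $\chi$ near $u=0$: since $\ff$ vanishes to order three, $\chi = \chi^{(1)} + \chi^{(3)} + \chi^{(\ge 5)}$ where $\chi^{(1)}=-\ii E\,\uu_{xx}$ is the linear part, $\chi^{(3)}$ is the cubic part coming from $\ff^{(3)}$ in \eqref{6.5}, and $\chi^{(\ge5)}$ collects the higher order terms (order $\ge 5$ because of gauge invariance, which kills all even-degree terms). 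In Fourier coordinates $\chi^{(3)}$ is a sum of monomials $u^{\s_1}_{j_1}u^{\s_2}_{j_2}u^{\s_3}_{j_3}\del_{u^\s_j}$ with $\s_1 j_1 + \s_2 j_2 + \s_3 j_3 = \s j$ (gauge), and the coefficient is read off from $\chi_{j_1j_2j_3j}$ in \eqref{weak3}.

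Next I would perform one step of Birkhoff normal form: solve the homological equation $\{\chi^{(1)}, \Psi^{(3)}\} + \chi^{(3)} = Z^{(3)}$, where $Z^{(3)}$ keeps precisely the monomials whose frequency denominator $\s_1 j_1^2 + \s_2 j_2^2 + \s_3 j_3^2 - \s j^2$ vanishes (the $3$-resonances). Because the denominators that are nonzero are integers, they are bounded below by $1$ in absolute value, so $\Psi^{(3)}$ is a bounded (indeed smoothing, since each monomial gains the factor $\langle\text{denominator}\rangle^{-1}$) polynomial vector field — this is why no analyticity is lost. I would then argue that the only cubic monomials I actually need to remove are: (a) all of $\Pi_S\chi^{(3)}$ of type $R(v^3)$ and $R(v^2z)$, and (b) the part of $\Pi_{S^c}\chi^{(3)}$ indexed by tuples in $\calA_1^c$ that are non-resonant. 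For the surviving terms: by Lemma \ref{hyp3autaut}, for $S$ generic there are no non-trivial $3$-resonances with at least five points in $S$, so the resonant cubic monomials supported on $S$ are the trivial ones, which are exactly the gauge-covariant diagonal terms producing $A(u)$ in \eqref{weak3} (integrable, depending only on the actions $|u_j|^2$); the resonant $R(v^2 z)$-terms with two indices in $S$ and one outside that are trivial resonances produce the $\sum_{j\in S}\sum_{k\in S^c}\chi_{kkjj}|u_k|^2 u_j^\s\del_{u_j^\s}$ contribution to $B_1^\s$; and the cubic terms in $\Pi_{S^c}\chi$ that lie in $\calA_1$ (at most one index outside $S$) are kept as $Q(u)$ because removing them would require inverting small divisors. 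Everything of degree $\ge 5$ is merely pushed forward and lumped into $h^{(>5)}$ together with the $R(v^qz^{5-q})$ remainders generated by the Lie transform acting on $\chi^{(3)}$ and $\chi^{(\ge5)}$.

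For the quantitative bounds \eqref{tamemod} and the tame-modulus property, I would invoke the standard calculus of polynomial maps on the scale $h^{a,p}$ (as in \cite{BG}): $\Psi = \Psi^{(3)}$ is a cubic map whose two ``input slots'' of the removed monomials always carry at least... more precisely, every monomial in $\Psi$ has at least one $\Pi_E$-projection on the argument and, crucially for the stated estimate, the output also lies in $\Pi_E h^{a,p}$ up to the smoothing gain — so $\|\Psi(\uu)\|_{a,p}\lesssim_p \|\Pi_E\uu\|_{a,p}\|\uu\|_{a,\gotp_1}^2 \le \epsilon_0^2\|\Pi_E\uu\|_{a,p}$ on the ball $\|\uu\|_{a,\gotp_1}\le\epsilon_0$, and differentiating gives the two-term bound in \eqref{tamemod} by the Leibniz rule (one derivative hits the $\Pi_E$-slot, giving the first term; it hits one of the low-norm slots, giving the second). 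That $\Psi$ has \emph{finite rank}: since $\Psi^{(3)}$ only removes monomials with at least two tangential indices, or with at most one non-tangential index and a non-vanishing bounded denominator, every such monomial, after the smoothing gain, is supported on a fixed finite set of Fourier modes — so the image of $\Psi$ lies in a finite-dimensional subspace. Finally, reversibility and the invariance of ${\bf h}^{a,p}_{\rm odd}\cap\calU$: Hypothesis \ref{hyp2aut} makes $\chi$ anti-commute with $S$, and if the generating vector field of the Lie transform is chosen reversibility-preserving (which can be arranged because $S$ acts on the homological equation and one can take the $S$-invariant solution), then $\Phi_*\chi=\Upsilon$ is again reversible and preserves the phase space, giving the symmetry constraints in \eqref{weak2}.

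The main obstacle I expect is \textbf{bookkeeping of the resonant set}, i.e.\ proving precisely that the resonant cubic monomials are exactly those displayed in $A(u)$, $Q(u)$ and the $|u_k|^2 u_j$-term of $B_1^\s$, and not some larger family. This rests entirely on the genericity Lemma \ref{hyp3autaut} (no non-trivial $3$-resonances with $\ge 5$ points in $S$) combined with the observation that a non-trivial $3$-resonance uses at most four distinct indices, so if it has $\le 4$ points in $S$ it has $\ge 1$ index outside $S$; the cases with exactly one index outside $S$ form $\calA_1$ and are deliberately retained in $Q$, while the cases with $\ge 2$ indices outside are irrelevant to the normal form on $\Pi_S$ and are again retained (they contribute to $Q$ or to $h^{(>5)}$-type remainders). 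Making this dichotomy airtight, and checking that the retained diagonal terms really depend only on the actions (so that the truncated dynamics on $\{z=0\}$ is integrable and, via \eqref{weak3}, anisochronous for generic $S$), is the delicate combinatorial heart of the proof; the analytic estimates themselves are routine once the algebraic structure is pinned down.
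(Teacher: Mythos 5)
Your overall framework (Lie transforms generated by finite-rank polynomial fields, integer denominators so no loss of analyticity, reversibility preserved by choosing a reversibility-preserving generator, tame estimates via \cite{BG}) is the same as the paper's, but the construction as you describe it would not produce the stated normal form, for two concrete reasons. First, you stop after the cubic step and ``lump everything of degree $\ge 5$ into $h^{(>5)}$''. The statement, however, requires a second Birkhoff step at degree five: in \eqref{weak4} the only degree-five monomials in $B_1$ depending on $v$ alone are the integrable ones $C_{ikj}|u_i|^2|u_k|^2u_j$, and all remaining degree-five terms carry at least two normal variables ($q\le 3$ in $\sum_q R(v^qz^{5-q})$), while $h^{(>5)}$ collects only degree strictly greater than five. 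The paper obtains this by a second transformation $\Phi^{(2)}$ generated by $F_5$ in \eqref{setteaut}, which removes the quintic monomials indexed by $\calA_1\cap\mathtt N$ (including those produced by the commutator of the cubic generator with $\chi_3$ and by $\chi_5$ itself); it is exactly here that Lemma \ref{hyp3autaut} (no non-trivial $3$-resonances, i.e.\ sextuples, with at least five points in $S$, for generic $S$) is used. You instead invoke Lemma \ref{hyp3autaut} for the cubic resonances, where it is neither needed nor relevant: for quadruples the identity $j_1^2-j_2^2+j_3^2-j^2=2(j_1-j_2)(j_2-j_3)$ shows all $2$-resonances are trivial without any genericity. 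Skipping the quintic step is not cosmetic: the later estimates (e.g.\ the bound \eqref{ini11} in Lemma \ref{INIZIAMO}) rely precisely on the fact that the degree-five terms of $B_1$ depending only on $v$ are integrable and cancel in the $y$-component.

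Second, your description of the cubic step in the normal directions is inverted. You remove the non-resonant part of $\Pi_{S^c}\chi^{(3)}$ indexed by $\calA_1^{c}$ and keep the $\calA_1$ part ``as $Q(u)$ because removing them would require inverting small divisors''. In \eqref{weak3}, $Q$ is the sum over tuples \emph{not} in $\calA_1$; the $\calA_1$ part with output in $S^c$ (the $O(v^3)$ monomials) is exactly what gets removed --- a finite set of monomials with non-vanishing integer denominators, hence no small divisors at all --- whereas the tuples outside $\calA_1$ are retained not because of small divisors but because eliminating them would require a generator of infinite rank, contradicting the finite-rank structure of $\Psi$ you yourself need for \eqref{weak1}--\eqref{tamemod} and destroying the pseudo-differential structure that the whole scheme later exploits. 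Likewise, from $\Pi_S\chi^{(3)}$ one must remove \emph{all} non-resonant cubic monomials with output in $S$ (also those with two or three normal indices, via the second sum in \eqref{seiaut}), not only the $R(v^3)$ and $R(v^2z)$ types, otherwise extra cubic terms survive in the tangential component that the statement does not allow. With these two corrections (add the degree-five step using Lemma \ref{hyp3autaut}, and swap the roles of $\calA_1$ and its complement in the normal directions) your argument aligns with the paper's proof.
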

 \begin{proof}
 
 Now consider the equation \eqref{6.1}. As notation for a vector field $F$
 we denote by $F_{j_{1}\ldots j_{2k+1} j}$ the coefficient of the monomial $u^{+}_{j_{1}}u^{-}_{j_{2}}\ldots u^{+}_{j_{2k+1}}\del_{u^{+}_{j}}$.
 We divide $$\chi(\uu)= \NN+ \chi_{3}(\uu)+\chi_ {5}(\uu)+\chi_{>5}(\uu)$$
 a direct computation gives
 $$
 \NN= \ii \sum_{j}  j^2 u_j^+ \partial_{u_j^+} - \ii  \sum_{j}  j^2 u_j^- \partial_{u_j^-}\,
 $$
 while 
 $$
 \chi_{3} (\uu)=  -\ii \sum_{j\in\ZZZ}\sum_{\substack{j_{1}-j_{2}+j_{3}=j \\ 
 j_{i}\in\ZZZ, i=1,2,3}}\chi_{j_{1}j_{2}j_{3}j}u^+_{j_{1}}{u}^-_{j_{2}}u^+_{j_{3}}
 \del_{u^+_{j}}+ \ii \sum_{j\in\ZZZ}\sum_{\substack{j_{1}-j_{2}+j_{3}=j \\ 
  j_{i}\in\ZZZ, i=1,2,3}}\bar \chi_{j_{1}j_{2}j_{3}j}u^-_{j_{1}}{u}^+_{j_{2}}u^-_{j_{3}}
  \del_{u^-_{j}}
 $$ with $\chi_{j_{1}j_{2}j_{3}j}\in \RRR$  defined in \eqref{weak3}, comes from $f$. The other terms collect respectively the part of degree $5$ and $>5$ of $g$.
 
%
%
%

We want to eliminate  from $\chi$ 
 all the monomials $u^+_{j_{1}}{u}^-_{j_{2}}u^+_{j_{3}}
 \del_{u^+_{j}}$ 
or  $u^-_{j_{1}}{u}^+_{j_{2}}u^-_{j_{3}}
   \del_{u^-_{j}}$ such that the list $(j_1,j_2,j_3,j)\in \calA_1\cap \mathtt N$. Note that this is a finite set of monomials. 

We define the transformation $\Phi^{(1)}$ as the time$-1$ flow map generated by the vector field
\begin{equation}\label{seiaut}
\begin{aligned}
 F_{3} (\uu)=  -\sum_{\s=+1,-1} \s\sum_{j\in\ZZZ}\sum_{\substack{j_{1}-j_{2}+j_{3}=j \\ 
 (j_1,j_2,j_3,j)\in \calA_1\cap \mathtt N}}\frac{\chi_{j_{1}j_{2}j_{3}j}}{(j_{1}^{2}-j_{2}^{2}+j_{3}^{2}-j^{2})}u^\s_{j_{1}}{u}^{-\s}_{j_{2}}u^\s_{j_{3}}
 \del_{u^\s_{j}}\\
  -\sum_{\s=+1,-1} \s\sum_{j\in S}\sum_{\substack{j_{1}-j_{2}+j_{3}=j \\ 
 (j_1,j_2,j_3,j)\in (\calA_1)^{c}\cap \mathtt N}}\frac{\chi_{j_{1}j_{2}j_{3}j}}{(j_{1}^{2}-j_{2}^{2}+j_{3}^{2}-j^{2})}u^\s_{j_{1}}{u}^{-\s}_{j_{2}}u^\s_{j_{3}}
 \del_{u^\s_{j}}\\
%
\end{aligned}
\end{equation}
By construction  the transformation $\Phi^{(1)}$ has finite rank.
Moreover one has $$
\chi_{j_{1}j_{2}j_{3}j}\in \RRR, \quad \chi_{(-j_{1})(-j_{2})(-j_{3})(-j)}=\chi_{j_{1}j_{2}j_{3}j},
$$
hence the vector field $F_3$ in \eqref{seiaut} is reversibility preserving.

By construction the push-forward of the vector field $\Phi^{(1)}_{*}\chi:=\calY$ has the form
$\calY=\NN+\calY_{3}+\calY_ {5}+\calY_{>5}$, where $\calY_3$ contains only monomials $u^\s_{j_{1}}{u}^{-\s}_{j_{2}}u^\s_{j_{3}}
 \del_{u^\s_{j}}$  such that $(j_1,j_2,j_3,j)$ is either a trivial resonance or is not in $\calA_1$
or $j\in S$ and at least two among $j_1,j_2,j_3$ are in $S^{c}$ (see the second summand in $B_1^{\s}$ in \eqref{weak4})
 .  The trivial resonances  in $\calA_1$ give  $A(u)$, all the other terms either contribute to $B_1$ or to $Q$.
 More explicitly
  In this way the system 
$\dot{\bf u}=\calY({\bf u})$
possesses an invariant subspace $H_{S}$ and its  dynamics 
is integrable and, as we will see, non-isochronous.

 In order to enter a perturbative regime we need to cancel further term from the vector field. In particular we look for a transformation $\Phi^{(2)}$ such that the field ${\Upsilon}:=\Phi_{*}^{(2)}\calY$
does not contain monomials  $u^\s_{j_{1}}{u}^{-\s}_{j_{2}}u^\s_{j_{3}}u^{-\s}_{j_{4}}u^\s_{j_{5}}
 \del_{u^\s_{j}}$ 
 such that the list $(j_1,j_2,j_3,j_4,j_5,j)\in \calA_1\cap \mathtt N$, as in degree three  this is a finite set of monomials.  $\Phi^{(2)}$ is the time 1 flow of the vector field $F_5$ 
 of the form
 \begin{equation}\label{setteaut}
 F_{5} (\uu)=  - \sum_{j\in\ZZZ}\sum_{\substack{j_{1}-j_{2}+j_{3}-j_4+j_5=j \\ 
 (j_1,j_2,j_3,j_4,j_5,j)\in \calA_1\cap \mathtt N}}\frac{\calY_{j_{1}j_{2}j_{3}j_4j_5j}}{(j_{1}^{2}-j_{2}^{2}+j_{3}^{2}-j_{4}^{2}+j_{5}^{2}-j^{2})}u^+_{j_{1}}{u}^-_{j_{2}}u^+_{j_{3}}{u}^-_{j_{4}}u^+_{j_{5}}
 \del_{u^+_{j}}+ $$
 $$+  \sum_{j\in\ZZZ}\sum_{\substack{j_{1}-j_{2}+j_{3}-j_4+j_5=j  \\ 
 (j_1,j_2,j_3,j_4,j_5,j)\in \calA_1\cap \mathtt N}}\frac{\ol{\calY}_{j_{1}j_{2}j_{3}j_{4} j_5j} }{(j_{1}^{2}-j_{2}^{2}+j_{3}^{2}-j_{4}^{2}+j_{5}^{2}-j^{2})}u^{-}_{j_{1}}{u}^+_{j_{2}}u^-_{j_{3}}{u}^+_{j_{4}}u^-_{j_{5}}
  \del_{u^-_{j}}.
\end{equation}

Again by construction $\Phi^{(2)}$ has finite rank. Moreover
since ${\calY}$ is reversible then $F_{5}$ is reversibility preserving. Finally ${\Upsilon}:=\Phi^{(2)}_{*}\calY=\NN+\calY_3+\Upsilon_{5}+\Upsilon_{>5}$  
contains only monomials such that  $(j_1,j_2,j_3,j_4,j_5,j)$ is either a resonance or is not in $\calA_1$. By 
Lemma \ref{hyp3autaut} all the resonances in $\calA_{1}$ are trivial and hence contribute to the first summand in $B_{1}$. 
Now we perform the last step in order to cancel out from $\calY_3$

For the tame estimates \eqref{tamemod} we refer to \cite{BG}.
\end{proof}
\begin{rmk}\label{quasimolti}
Note that, by construction the change of variables written on functions $H^p(\TTT_a)$ is
$$
{\bf u}(x) \rightsquigarrow {\bf q}(x)= {\bf u}(x)+\sum_{j\in E} \Psi_j(\Pi_E {\bf u}) e^{i j x}\,
$$
hence setting $ {\bf u}=\Phi^{-1}({\bf q})= {\bf q}(x)+\sum_{j\in E} \tilde\Psi_j(\Pi_E {\bf q}) e^{i j x}$ one has 
$$
\Upsilon({\bf q}):= d \Phi ( \Phi^{-1}({\bf q}) )\chi(\Phi^{-1}({\bf q}))=  -\ii E \left[{\bf u}_{xx}
+
\left(\begin{matrix} \ff^{+}({\bf u},{\bf u}_{x},{\bf u}_{xx})\\ 
{\ff^{-}({\bf u},{\bf u}_{x},{\bf u}_{xx})}
\end{matrix}
\right)\right]+ d \Psi ( {\bf u} )\chi({\bf u})\,.
$$
Then ( see Lemma 7.1 of \cite{BBM1} for a detailed proof)
$$
d_{\bf q}\Upsilon({\bf q})= -\ii E \left[ \del_{xx}+ d_{\bf u}\calF ({\bf u}) \right] +\RR({\bf q})
$$
where the first term is described in \eqref{skifo}, while  for some fixed $N$
$$
\RR({\bf q})[h]= \sum_{m=1}^N \left(h(x),a^{(m)}({\bf q};x)\right)_{L^2}b^{(m)}({\bf q};x)\,.
$$
Here $a^{(m)},b^{(m)}$ are functions in $h^{a,p} $ depending on $\bf q$ and such that, for any $m$, one of the $a^{(m)},b^{(m)}$ is equal to $e^{\ii j x}$ for some $j\in E$. In other words $\RR$ is a linear operator sum of two terms, one maps $\Pi_E h^{a,p}$ into $h^{a,p}$ and the other maps $h^{a,p}$ into $\Pi_E h^{a,p}$.
\end{rmk}

\zerarcounters

\section{Action-angles variables}\label{action}
In the previous paragraph we have worked in the Fourier basis and we have shown that the change of variables preserves ${\bf h}^{a,p}_{\rm odd}$. Now we restrict our vector field to   ${\bf h}^{a,p}_{\rm odd}$
defined in  \eqref{phacespace}. In the present setting however 
it will be more convenient to express  ${\bf h}^{a,p}_{\rm odd}$ over $\NNN$ by passing to 
the ``sine''  Fourier basis. 

\noindent
We want to switch the tangential variables to polar coordinates. We set
\begin{equation}\label{aacaut}
\begin{aligned}
2u^\s_{\pm\tv_i}&:=\pm  \s\frac{1}{2\ii}    \sqrt{\x_i+y_{i}}e^{\s \ii \theta_{i}}, &\quad i=1,\ldots,d\\
u^\s_{j}&:=\s\frac{\text{sign}(j)}{2\ii}  z^\s_{|j|}, &\quad j\in S^{c},
\end{aligned}
\end{equation}
this is a well defined , analytic  change of variables for $\xi_i >0$, $|y_i|<\xi_i$.
Our phase space is hence $\TTT^{d}_{s}\times \CCC^{d}\times \ell_{a,p}$. Here
\begin{equation}\label{lap}
\ell_{a,p}\equiv \Pi_S^\perp{\bf h}_{\rm odd}^{a,p}  , \qquad \|w\|^{2}_{a,p}:=\sum_{l\in S^{c}}\langle l\rangle^{2p}|w_{l}|^{2}e^{2a|l|}, \;\; w=(z_{j},\bar{z}_{j})_{j\in S^{c}},
\end{equation}
(see \eqref{phacespace})  is a sequence space indexed in $S^{c}:=\NNN\setminus S^{+}$
with a Hilbert structure w.r.t. the norm in \eqref{lap}.

In the new variables $\calU$ becomes
\begin{equation}\label{est66}
\calU:=\{ (\theta,y,w)\in \CCC^{2d}\times \ell_{0,0}: \quad  \theta\in \TTT^d\,,\quad y\in \RRR^d\,,\quad {z}^-= \bar{z}^+\}.
\end{equation}

For $\e$ small we consider $\x\in \e^{2}\Lambda=\e^{2}[1/2,3/2]^{d}$ and  the domain \begin{equation}\label{domain}
D_{a,p}(s_0,r_0):=\left\{\theta\in\TTT^{d}_{s_{0}},\; |y|\leq r_{0}^{2}, \|w\|_{a_0,\gotp_1}\leq r_{0}
\right\}\subseteq \TTT^{d}_{s_{0}}\times\CCC^{d}\times \ell_{a,p}.
\end{equation}   
One can check that there exist constant $c_{1}$ and $c_{2}$ such that, if
\begin{equation}\label{condizione}
r_{0}<c_{1}\e< \e/2, \qquad \sqrt{2d}c_{2}\ka^{\gotp_1}e^{s+a\ka}\e < \epsilon_{0}, \quad \ka:={\rm max}(|\tv_{i}|), 
\end{equation}
then one has $\Phi^{(\x)} : D_{a,p+\nu}(s_{0},r_{0})\to B_{\epsilon_0}$, where  the vector field $\Upsilon$ is well defined.
We assume that our  parameters $\e,r_{0},s_{0}$
 satisfy \eqref{condizione} so that we can apply $\Phi_{\x}$ to our vector field. 
 In the new variables 
 one has $\uu(x)={\bf v}(\theta,y;x)+w(x)$ with
 \begin{equation}\label{variabili}
 {\bf v}= (v^+, v^-)\,,\quad v^\pm:=\sum_{i=1}^d\sqrt{\x_{i}+y_{i}}e^{\pm \ii \theta_{i}}\sin(\tv_i x), \quad w=(z^+,z^-)\,, z^\s=\sum_{j\in S^{c}}z^\s_{j}\sin(j x),
 \end{equation}
 and 
 \begin{equation}\label{campototale}
 F:=  \Phi^{(\x)}_*\Upsilon = F^{(\theta)}(\theta,y,w)\partial_\theta+  F^{(y)}(\theta,y,w)\partial_y +  F^{(w)}(\theta,y,w)\partial_w,
 \end{equation}
reads
\begin{equation}\label{system}
\begin{aligned}
F^{(\theta_k)}(\theta,y,w)&= \frac{\Upsilon_{{\tv_k}}^+}{2 \ii v_{{\tv_k}}^+}- \frac{\Upsilon_{{\tv_k}}^-}{2 \ii v_{{\tv_k}}^-} =
{\tv_k}^{2}-(\MM(\x+y))_{k}-\frac{e^{-\ii \theta_k}(B_{1}^+)_{{\tv_k}} +e^{\ii \theta_k}(B_{1}^-)_{{\tv_k}}}{\sqrt{\x_{k}+y_{k}}}&\quad k= 1,\dots,d
,\\
F^{(y_k)}(\theta,y,w)&=4 v_{\tv_k}^- \Upsilon_{{\tv_k}}^+ + 4 v_{\tv_k}^+ \Upsilon_{\tv_k}^-= 2\ii  \sqrt{\x_{k}+y_{k}} (e^{-\ii \theta_k}(B_{1}^+)_{{\tv_k}} -e^{\ii \theta_k}(B_{1}^-)_{{\tv_k}}), &\quad k= 1,\dots,d\\
F^{(w)}(\theta,y,w) &)= 2 \Pi_S^\perp \Upsilon 
\end{aligned}
\end{equation}
where $\MM$ is the twist matrix 
\begin{equation}\label{omeghino2}
\begin{aligned}
\MM_{k h}&=\frac14 \big(C_{\tv_{k}}^{\tv_{h}}+C_{\tv_k}^{-\tv_{h}}\big)\quad \text{ for }\quad k,h=1,\ldots,d \,,\quad \tv_k,\tv_h\in S^{+}.
\end{aligned}
\end{equation}
We define ${\oo}^{(0)}\in \RRR^{d}$ the vector of \emph{unperturbed} frequencies as
\begin{equation}\label{omeghino}
{\oo}^{(0)}_j={\oo}^{(0)}_{j}(\x):=\la^{(-1)}_{j}+\la^{(0)}_j(\x), \qquad
 \la^{(-1)}_{j}:=j^{2}, \quad \la^{(0)}_{j}(\x):=-(\MM\x)_{j}, \; j\in S^{+}.
\end{equation}

We set
\begin{equation}\label{ini1}
N_0:={\oo}^{(0)}(\x)\cdot\del_{\theta}+\Omega^{(-1)}w\del_{\theta},
\end{equation}
where $(\Omega^{(-1)})^{\s}_{\s}=i\s {\rm diag}\, j^{2}$, $(\Omega^{(-1)})_{\s}^{-\s}=0$. 
With this notation 
$
F= N_0+G$ has an approximately invariant torus.


\section{Nonlinear functional setting}\label{iniz}

 We set
 $$
V_{a,p} := \CCC^d \times \CCC^d \times \ell_{a, p}\,.
$$

We shall need two parameters, $\gotp_0<\gotp_1$. Precisely $\gotp_0>d/2$ is needed in order to have the
Sobolev embedding and thus the algebra properties, while $\gotp_1$ will be chosen very large and is
needed in order to define the phase space.

\begin{defi}[Phase space]\label{spazio} Given $\gotp_1$ large enough,
	we consider the toroidal domain
	\begin{equation}\label{Dsr}
	\TTT^d_s \times D_{a,p}(r) :=\TTT^d_s\times  B_{r^2} \times \B_{r,a,p,\gotp_1}\,,
	\subset V_{a,p}
	\end{equation}
	where 
	\begin{equation}\nonumber
	\begin{aligned}
	&\TTT^d_s := \big\{ \theta \in \CCC^d \, : \, {\rm Re}(\theta)\in\TTT^{d},\ \max_{h=1, \ldots, d} |{\rm Im} \, \theta_h | < s
	\big\} \,, \\
	&B_{r^2}  := \big\{ y \in \CCC^d \, : \, |y |_1 < r^2 \big\}\,, \qquad
	\B_{r,a,p,\gotp_1}:=\big\{w\in \ell_{a,p}\,:\, \|w\|_{a,\gotp_1}<r\big\}\,,
	\end{aligned}
	\end{equation}
	and we denote by $ \TTT^d := ( \RRR/2\pi \ZZZ)^d $ the $ d $-dimensional torus.
\end{defi}
\noindent
We denote the set of variables $\mathtt{V} := \big\{ \theta_1,\ldots, \theta_d, y_1,\ldots, y_d, w  \big\}$.
Fix some numbers $s_0,a_0\geq 0$ and $r_0>0$. Given $s\leq s_0$, $a,a'\leq a_0$, $r\leq r_0$,
$ p,p'\geq \gotp_0$  consider maps
\begin{equation}\label{faglimale}
\begin{aligned}
f : \TTT^d_s \times D_{a',p'}(r)  & \to V_{a,p} \\
(\theta,y,w) &\to ( f^{(\theta)}(\theta,y,w), f^{(y)}(\theta,y,w),f^{(y)}(\theta,y,w)),
\end{aligned}
\end{equation}
with
\begin{equation*}
f^{(\mathtt v)}(\theta,y,w) = \sum_{l\in \ZZZ^d} f^{(\mathtt v)}_l(y,w)e^{\ii l \cdot \theta}\,,\quad
\mathtt v\in{\mathtt V} \,, 
\end{equation*}
where $f^{(\mathtt v)}(\theta,y,w) \in \CCC$ for $\mathtt v=\theta_i,y_i$ while $f^{(w)}(\theta,y,w) \in  \ell_{a,p}$.
We shall  use also the notation 
$$
f^{(\theta)}(\theta,y,w),f^{(y)}(\theta,y,w)\in\CCC^d.$$
Note that, by \eqref{domain} and \eqref{condizione}, the field $F$ in \eqref{system} has the same form of \eqref{faglimale}.

In order to study the properties of the vector field $F$ we first need to introduce some notations.

\noindent
We define a norm (pointwise  on $y,w$)\ by setting
\begin{equation}\label{totalnorm}
\|f\|_{s,a,p}^2:=\|f^{(\theta)}\|^2_{s,p}+\|f^{(y)}\|^2_{s,p}+\|f^{(w)}\|^2_{s,a,p}
\end{equation}
where
\begin{equation}\label{equation}
\|f^{(\theta)}\|_{s,p}:=\left\{\begin{aligned} &
\frac{1}{s_{0}}\sup_{i=1,\ldots,d}\|f^{(\theta_{i})}(\cdot,y,w)\|_{H^p(\TTT^d_s)}
\quad s\leq s_0\neq 0,
\\
&\sup_{i=1,\ldots,d}\|f^{(\theta_{i})}(\cdot,y,w)\|_{H^{p}(\TTT^{d})}\,,\quad s=s_0=0\end{aligned}\right.
\end{equation}
\begin{equation}\label{equation2}
\|f^{(y)}\|_{s,p}:=\frac{1}{r_0^2}\sum_{i=1}^{d}\|f^{(y_{i})}(\cdot,y,w)\|_{H^{p}(\TTT^{d}_{s})}
\end{equation}
\begin{equation}\label{equation3}
\|f^{(w)}\|_{s,a,p}
:=\frac{1}{r_0}\left[\sum_{ l \in \ZZZ^{d},j\in S^{c}}\langle l,j\rangle^{2\gotp}|(f^{(w)}_{l}(y,w))_{j}|^{2}e^{2s|l|}e^{2a |j|}
\right]^{\frac{1}{2}}
\end{equation}
where $H^{p}(\TTT^{d}_{s})=H^{p}(\TTT^{d}_{s}; \CCC)$ is the standard Sobolev space with norm
\begin{equation}\label{normasob}
\|u(\cdot)\|_{H^{p}(\TTT^{d}_s)}^{2}:=\sum_{l\in\ZZZ^{d}}|u_l|^{2}e^{2s|l|}\langle l \rangle^{2p}, \qquad \av{ l}:=\max\{1,|l|\}.
\end{equation}
Note that trivially $\|\partial_\theta^{p'} u\|_{H^{p}(\TTT^{d}_s)}= \|u\|_{H^{p+p'}(\TTT^{d}_s)}$. 


\begin{rmk}\label{orolo}
Note that, since in this case $\ell_{a,p}=\Pi_S^\perp{\bf h}_{\rm odd}^{a,p}$ 
then fixing $\gotp_0 \geq(d+1)/2$ we have that 
$\|\cdot \|_{s,a,p}$ in 
\eqref{equation3} 
is nothing but the norm of the Sobolev space 
$H^{p}(\TTT_{s}^{d}\times\TTT_{a})$. 
In particular one can check that 
such norm is equivalent to the one introduced in \cite{CFP}.
\end{rmk}

\noindent
It is clear that any $f$ as in \eqref{faglimale} can be identified with ``unbounded'' vector fields
by writing
\begin{equation}\label{vectorfield}
f \leftrightarrow \sum_{\mathtt{v}\in{\mathtt V}}f^{(\tv)}(\theta,y,w)\del_{\mathtt{v}},
\end{equation}
where the symbol $f^{(\tv)}(\theta,y,w)\del_{\tv}$ has the obvious meaning for $\tv=\theta_i,y_i$ while
for $\tv=w$ is defined through its action on differentiable functions $G:\ell_{a,p}\to \CCC$ as
$$
f^{(w)}(\theta,y,w)\del_w G := d_w G[ f^{(w)}(\theta,y,w)].
$$

Similarly, provided that $|f^{(\theta)}(\theta,y,w)|$ is small for all $(\theta,y,w)\in  \TTT^d_s \times D_{a,p}(r)  $
we may lift $f$ to a map \begin{equation}\label{mar}
\Phi:= (\theta +f^{(\theta)},y+f^{(y)}, w+ f^{(w)}) : \, \TTT^d_s \times D_{a',p'} \to
\TTT^d_{s_1} \times \CCC^d\times \ell_{a,p}\,,\quad \text{for some} \; s_1\ge s\,,
\end{equation}
and if we set $\|\theta\|_{s,a,p} := 1$  we can write
$$
\|\Phi^{(\tv)}\|_{s,a,p} = \|\tv\|_{s,a,p}+\|f^{(\tv)}\|_{s,a,p} \,,\; \tv=\theta,y,w\,.
$$
Note that 
$$
\|y\|_{s,a,p}= r_0^{-2}|y|_1\,,\quad  \, \|w\|_{s,a,p}=r_0^{-1}\|w\|_{a,p}.
$$ 

\begin{rmk}\label{azz}
	Note that if there exists ${\mathtt c}={\mathtt c}(d)$ such that if
	$\|f\|_{s,a,\gotp_1}\le {\mathtt c}\rho$ one has
	$$
	\Phi: \TTT^d_s \times D_{a+\db a_0,p}(r ) \to \TTT^d_{s+\rho s_0}\times D_{a,p}(r +\rho r_0).
	$$
\end{rmk}

 We are interested in vector fields defined on a scale of Hilbert spaces; precisely
 we shall fix $\db,\nu,q\ge0$ and consider vector fields
     \begin{equation}\label{vettorazzi}
  F:\TTT^d_s\times D_{a+\db a_0,p+\nu}(r)\times \calO\to V_{a,p}\,,
  \end{equation}
  for some $s<s_0$, $a+\db a_0\le a_0$, $r\le r_0$ and all $p+\nu \le q$. Moreover we require that $\gotp_1$ in Definition \ref{spazio} satisfies $\gotp_1\ge \gotp_0+\nu+1$.
  \begin{rmk}\label{parametronu}
Here $\nu$ represents the loss of regularity of the field $F$.
In the NLS case (for $F$ in \eqref{system}) 
one has $\nu=2$. We shall give same definition for generic $\nu\geq0$
since we shall also need to deal with \emph{bounded} vector field, i.e. $\nu=0$.
\end{rmk}

  \begin{defi}\label{leftinverse}  
Fix $0\le\rho,\db\le1/2$, and consider two  differentiable maps $\Phi= \uno +f$, $\Psi= \uno +g$ as in
\eqref{mar} such that for all
$p\ge \gotp_0$,  $2\rho s_0 \le s \le s_0$, $2\rho r_0 \le r \le r_0$ and $0\leq a\leq a_0(1-2\db )$ one has
\begin{equation}\label{laputtana250}
\Phi,\Psi:\TTT^d_{s-\rho s_0} \times D_{a+\db a_0,p}(r-\rho r_0)\to \TTT^d_{s}\times D_{a,p}(r ).
\end{equation}
If
\begin{equation}\label{sinistra}
\begin{aligned}
\uno=\Psi\circ \Phi: \TTT^d_{s-2\rho s_0}\times D_{a+2\db a_0,p}&(r-2\rho r_0)&\longrightarrow &\
\TTT^d_s\times  D_{a,p}(r)\\
&\ (\theta,y,w)&\longmapsto &\ (\theta,y,w)
\end{aligned}
\end{equation}
we  say that $\Psi$ is a left inverse of $\Phi$ and write $\leftinv{\Phi}:=\Psi$.

Moreover fix  $\nu\geq 0$, $0\le\db'\le1/2$. Then for any 
$F : \TTT^d_{s}\times D_{a+\db' a_0,p+\nu}(r) \to V_{a,p}$, with $0\leq a\leq a_0(1-2\db -\db')$, 
we define the ``pushforward'' of $F$ as
\begin{equation}\label{push}
\Phi_* F:= d\Phi(\leftinv\Phi)[F(\leftinv{\Phi})]: \TTT^d_{s-2\rho s_0} \times D_{a+(2\db+\db') a_0,p+\nu}(r-2\rho r_0)  \to 
 V_{a,p}\,.
\end{equation}
\end{defi}

 We need to introduce  parameters $\x\in\calO_0$ a compact set in $\RRR^d$. Given any compact  $\calO\subseteq\calO_0$ 
 we consider  Lipschitz families of vector fields 
   \begin{equation}\label{vettori}
  F:\TTT^d_s\times D_{a',p'}(r)\times \calO\to V_{a,p}\,,
  \end{equation}
  and say  that they  are \emph{bounded} vector fields when  $p=p'$ and $a=a'$.
  Given a positive number $\g$ we introduce  the weighted Lipschitz norm
 \begin{equation}\label{ancoralip}
 \|F\|_{\vec v,p}=\|F\|_{\g,\calO,s,a,p}:=\sup_{\xi\in \calO}\|F(\x)\|_{s,a,p}
 + \g \sup_{\xi\neq \eta\in \calO}\frac{\|F(\x)-F(\eta)\|_{s,a,p-1}}{|\xi-\eta|}\,.
 \end{equation}
 and we shall drop the labels $\vec v=(\g,\calO,s,a)$ when this does not cause confusion. 
 
  \begin{defi}\label{vecv}
  We shall denote by
 $\VV_{\vec v,p}$  with $\vec v= (\g,\calO,s,a,r)$ the space of  vector fields as in \eqref{vettorazzi}  with $\rho=0$. 
 By slight abuse of notation we denote the norm $\|\cdot\|_{\g,\calO,s,a,p}=\|\cdot\|_{\vec v,p}$ .
 \end{defi}

\begin{rmk}\label{projector}
Note that we have a projector operator defined on 
the whole space
$\CCC^{2d}\times \ell_{a,p}$.
On the space $\ell_{a,p}=\Pi_{S}^{\perp}{\bf h}_{\rm odd}^{a,p}$ one has the projector
 given by 
 $$
 \Pi_{\ell_{K}}w=\{w_{j}\}_{|j|\leq K}.
 $$
 Note that the space $\ell_{a,p}$ we have now defined satisfies Hypothesis $1$ in \cite{CFP}.
\end{rmk}  

\subsection{Polynomial decomposition}\label{poly}

In $\VV_{\vec v,p}$ we identify the closed {\em monomial} subspaces
\begin{equation}\label{sotto}
\begin{aligned}
&\mathcal V^{(\tv,0)} := \{f\in \VV_{\vec v,p}\,:\; f= f^{(\tv,0)}(\theta) \partial_\tv\} \,, \quad \tv\in \mathtt V \\ & 
\mathcal V^{(\tv,\tv')} := \{f\in \VV_{\vec v,p}\,:\; f= f^{(\tv,\tv')}(\theta)[\tv']  \partial_\tv \}\,,\quad
 \tv\in\mathtt V\,,\quad \tv' \in {\mathtt U}:=\{y_1,\ldots,y_d,w\},
\end{aligned}
\end{equation}

As said after \eqref{faglimale}  it will be convenient to use also vector notation so that,
for instance
$$
f^{(y,y)}(\theta)y\cdot\del_y \in\VV^{(y,y)}=\bigoplus_{i\le j=1,\ldots,d}
\VV^{(y_i,y_j)}
$$
with $f^{(y,y)}(\theta)$ a $d\times d$ matrix.

Note that the polynomial vector fields of degree $1$ are
\begin{equation}\label{poly2}
\calP_1:=
\bigoplus_{\tv\in{\mathtt V}}\,
\bigoplus_{\tv_1\in{\mathtt U}\cup\{0\}}\mathcal V^{(\tv,\tv_1)}\,,
\end{equation}
so that, given a polynomial $F\in\calP_1$ we may define its ``projection'' onto a monomial subspace
$\Pi_{\mathcal V^{(\tv,\tv_1)}}$ in the natural way.

Since we are not working on spaces of polynomials, but on vector fields with finite regularity,
we need some more notations.
Given a $C^{2}$ vector field $F\in\VV_{\vec v,p}$, we introduce the notation
\begin{equation}\label{aiuto}
F^{(\tv,0)}(\theta):=F^{(\tv)}(\theta,0,0), \quad F^{(\tv,\tv')}(\theta)[\cdot]:=d_{\tv'}F^{(\tv)}(\theta,0,0)[\cdot], \quad  \tv\in\mathtt V\,,\quad \tv'= y_1,\ldots,y_d,w.
\end{equation}

By Taylor approximation formula any vector field in $\VV_{\vec v,p}$ which is  $C^{2}$ in $y,w$
may be written in a unique way as
sum of its Taylor polynomial in $\calP_1$ plus a $C^{2}$ (in $y,w$)
vector field with a zero of order at least $2$ at $y=0$, $w=0$. 
We think of this as a direct sum of vector spaces and introduce 
the notation
\begin{equation}\label{pro}
\Pi_{\VV^{(\tv,\tv_1)} }F:= F^{(\tv,\tv_1)}(\theta)[\tv_1]\,,
\end{equation}
 we refer to such operators as {\em projections}.
%
%
 
 \begin{defi}
We identify the vector fields in $\VV_{\vec v,p}$ which are $C^{2}$ in $y,w$, with the direct sum
 $$
\calW_{\vec v,p}^{(1)}= \calP_1\oplus \RR_1\,,
 $$
 where $\RR_1$ is the space of $C^{2}$ (in $y,w$) vector fields with a zero of order at least $2$
 at $y=0$, $w=0$. 
On $\calW_{\vec v,p}^{(1)}$ we induce the natural norm for direct sums, namely for 
$$
f=\sum_{\tv\in{\mathtt V}}\,
\left(
f^{(\tv,0)}(\theta)+
\sum_{\tv_1\in{\mathtt U}}
f^{(\tv,\tv_1)}(\theta)[\tv_1]\right)\del_\tv + f_{\RR_1}\,
\qquad f_{\RR_k}\in\RR_{1}\,,
$$
we set
\begin{equation}\label{supernorma}
\|f\|_{\vec v,p}^{(1)}:=\sum_{\tv\in{\mathtt V}}\,
\left(\|f^{(\tv,0)}\|_{\vec{v},p}+
\sum_{\tv_1\in{\mathtt U}}
\|f^{(\tv,\tv_1)}(\cdot)[\tv_1]\|_{\vec v,p}\right) + \|f_{\RR_1}\|_{\vec v,p}\,.
\end{equation}

\end{defi}

  We can and shall introduce in the natural way the polynomial subspaces and the norm \eqref{supernorma}
  also for maps 
  $\Phi= (\theta +f^{(\theta)},y+f^{(y)}, w+ f^{(w)})$ with
  $$
  \Phi: \TTT^d_s\times D_{a',p'}(r)\times \calO \to \TTT^d_{s_1}\times D_{a,p}(r_1)\,,
  $$
  since the Taylor formula holds also for functions of this kind.
  
We also denote
\begin{equation}\label{lemedie}
\begin{aligned}
&\av{\VV{(\tv, \tv_{{1}})}}:=\!\{f\in \VV^{(\tv,\tv_1)}\!:  
f =  \langle f^{(\tv,\tv_1)}\rangle\cdot
 \partial_\tv\},\\
&\VV^{(\tv, \tv_{{1}})}_0:=\{f\in \VV^{(\tv, \tv_1)}\!:  
f =  (f^{(\tv, \tv_{{1}})}-\langle
 f^{(\tv, \tv_{{1}})}\rangle)\cdot \partial_\tv\}\,,
 \end{aligned}
\end{equation}
where $ \av{f}:=\frac{1}{(2\pi)^{d}}\int_{\TTT^d}f(\theta)d\theta$.
  \medskip
  
\noindent  {\bf Tame vector fields}.\,
     We now define vector fields  behaving ``tamely'' 
  when composed with maps $\Phi$. 
  In order to simplify the notation, from now on we set
   \begin{equation}\label{porc}
    \|f\|_{\vec v,p}=\|f\|_{\vec v,p}^{(1)}\,.
    \end{equation}
  
   \begin{defi}\label{tame}
   Fix a large $q\geq \gotp_1$,  and  a set $\calO$.
   Consider a $C^{5}$ vector field
     $$
     F\in\calW_{\vec v,p}^{(1)},\qquad   \vec v= (\g,\calO,s,a,r)\,.
     $$
    We say that $F$ is $C^3$-\emph{ tame} (up to order $q$) 
    if there exists a scale of  constants 
   $C_{\vec{v},p}(F)$,  with $C_{\vec{v},p}(F)\le C_{\vec{v},p_1}(F)$ for $p\le p_1$,
   such that  the following holds.
   
   For all  $\gotp_0\leq p\leq p_1\leq q$ consider any $C^{3}$ map
   $\Phi= (\theta +f^{(\theta)},y+f^{(y)}, w+ f^{(w)})$
   with
   $$
   \Phi:\TTT^d_{s'}\times  D_{a_1,p_1}(r')\times \calO \to \TTT^d_s\times 
   D_{a,p +\nu }(r)\,,\quad \text{ for some } \quad r'\leq r\,, s'\leq s;
   $$   
   and denote
    $ \vec v'= (\g,\calO,s',a,r')$.
   Then

\begin{itemize}
\item[(i)]
   for any $m=0,\ldots,3$ and any $m$ vector fields    
  \begin{equation}\label{miao}
   h_1,\dots,h_m:\; \TTT^d_{s'}\times  D_{a_1,p_1}(r')\times \calO  \to V_{a,p +\nu},
   \end{equation}
   one has  that the symmetric $m$-linear map $d^{m}_{\mathtt U}F(\Phi)$ satisfies (see \eqref{sotto} for the definition of $\mathtt{U}$):
  \begin{equation*}
  \begin{array}{crcl} 
    (\text{T}_m) & 
    \|d^{m}_{\mathtt U}F(\Phi)[h_1,\dots,h_m]\|_{\vec{v}',p} &\leq & 
     \big(C_{\vec{v},p}(F)+C_{\vec{v},\gotp_0}(F)\|\Phi\|_{\vec{v}',{p+\nu}} \big)
   \prod_{j=1}^m\|h_j\|_{\vec{v}',\gotp_0+\nu} \\ [.5em]
   & & &
    +C_{\vec{v},\gotp_0}(F)\sum_{j=1}^m\|{h_j}\|_{\vec{v}',{p+\nu}}\prod_{i\neq j}\|h_i\|_{\vec{v}',{\gotp_0+\nu}}
   \end{array}
   \end{equation*}
  for all $(y,w)\in D_{a_1,p_1}(r')$ and $p\leq q$.
   Here $d_{\mathtt U}F$ is the differential of $F$ w.r.t. the variables ${\mathtt U}:=\{y_1,\ldots,y_d,w\}$
   and the norm is the one defined in \eqref{porc}.

\item[(ii)]  
 For $m=1,2,3$ and given $h_{1},\ldots,h_{m-1}$ as in \eqref{miao},
   consider the linear  maps  $D_{m} : V_{a,p+\nu}\to V_{a,p}$ defined by
   \begin{equation}\label{miao2}
   h\mapsto D_{m}[h]:=d^{m}_{\mathtt{U}}F(\Phi)[h_{1},\ldots, h_{m-1},h],
   \end{equation}
   set moreover 
   $$X^{p}:=H^{p}(\TTT^{d}_{s};\CCC^{d}\times\ell_{a,\gotp_0})\cap 
H^{\gotp_0}(\TTT^{d}_{s};\CCC^{d}\times\ell_{a,p})$$ and 
$$Y^{p}:=
H^{p}(\TTT^{d}_{s};V_{a,\gotp_0})\cap 
H^{\gotp_0}(\TTT^{d}_{s};V_{a,p})$$
for $p\geq\gotp_1$.
We require\footnote{We recall that, given a linear operator $A:X\to Y  $ its adjoint is  $A^*:Y^* \to X^*$. Our condition implies that $(d_{\mathtt U}F(\Phi))^{*}$ is bounded from $Y_1\to X_1$, with $Y_1\subset Y^*$ and $X_1\subset X^*$ 
this is hence a  much stronger condition.}
 \begin{equation}\label{carota}
  \begin{array}{crcl} 
    (\text{T}_m)^* & 
    \|D_{m}^{*}[v]\|_{\g,\calO,X^{p-\nu}} &\leq & 
%
      \big(C_{\vec{v},p}(F)+C_{\vec{v},\gotp_0}(F)\|\Phi\|_{\vec{v}',{p+\nu}} \big)
   \prod_{j=1}^{m-1}\|h_j\|_{\vec{v}',\gotp_0+\nu}
     \|v\|_{\g,\calO,Y^{\gotp_0}}
    \\ 
   & &+ &
    C_{\vec{v},\gotp_0}(F)\sum_{j=1}^{m-2}\|{h_j}\|_{\vec{v}',{p+\nu}}\prod_{i\neq j}
    \|h_i\|_{\vec{v}',{\gotp_0+\nu}}  \|v\|_{\g,\calO,Y^{\gotp_0}}\\
    & & + &  C_{\vec{v},\gotp_0}(F)\prod_{i=1}^{m-1}
    \|h_i\|_{\vec{v}',{\gotp_0+\nu}}  \|v\|_{\g,\calO,Y^{p}}
   \end{array}
   \end{equation}
   \end{itemize}
   
      \noindent
   We  call $C_{\vec{v},p}(F)$ the $p$-\emph{tameness constants of }$F$.
   
    We say that a \emph{bounded} vector field $F$ is adjoint tame  if the conditions  (T$_m$) -$(T_m)^*$
   above hold with $\nu=0$.
   \end{defi}
  \begin{rmk}\label{dioniso}
  Note that in the definition above appear two regularity indices: $3$ being the maximum regularity in $y,w$ and $q$ the one in $\theta$.
  Note that in the $w-$component the norm $\|\cdot\|_{X^{p}}$ is equivalent to the norm $\|\cdot\|_{s,a,p}$. 
  \end{rmk}

\subsection{Normal form decomposition}\label{normi}
In this Section we introduce a suitable decomposition of our vector fields. 

\begin{defi}[{\bf $(\NN,\calX,\RR)-$decomposition}]\label{decomponi}
 \begin{equation}\label{sottoREV}
 \NN:= \VV^{(\theta,0)}\oplus \VV^{(w,w)}\,,
 \quad \calX:=  \VV^{(y,0)}\oplus \VV^{(y,y)} \oplus \VV^{(y,w)}\oplus \VV^{(w,0)}.
 \end{equation}
  We then decompose
  $$
  \calW_{\vec v,p}=C^{5}\cap\calW^{(1)}_{\vec v,p}:=\NN\oplus \calX\oplus\RR
  $$
  where $C^{5}$ is the set of vector fields with $(5)$-regularity in $y,w$,
   $\RR$ contains all of $\RR_{1}$ and all the polynomials generated by monomials not in
  $\NN\oplus \calX$. We shall denote $\Pi_\RR:= \uno-\Pi_\NN-\Pi_\calX$ and
  more generally for $\SSSS= \NN,\calX,\RR$ we shall denote  $ \Pi^\perp_{\SSSS}:= \uno-\Pi_\SSSS$.

\end{defi}

 In order to apply the Abstract KAM Theorem of \cite{CFP} it remains to introduce a suitable subspace of vector field which is called $\calE$, accordingly with the notations
 used in \cite{CFP}.
 
 We first introduce the notion of \emph{Pseudo-differential } vector field.

 \begin{defi}[{\bf Pseudo-differential vector fields}]\label{pseudopseudo}
   We say that a  vector field $F$ is of Schr\"odinger  pseudo-differential type if there exists $\mathtt{N}>0$ such that its differential in a neighborhood of 
   $u_0=(\theta,0,0)$
   has the form, 
 \begin{equation}\label{pippone}
d_w F^{(w)}(u)[\cdot] =  \mathscr{P}\;(u)[\cdot] +{\mathscr{K}}\;(u)[\cdot]
\end{equation}
where
\begin{equation*}
\begin{aligned}
{\mathscr P}(u)&= 
 -\ii E\; \Pi_S^\perp\left[\left(\begin{matrix} 1+a_{2}(u;x) & b_{2}(u;x) \\ \bar{b}_{2}(x) & 1+\bar{a}_{2}(u;x)\end{matrix}
 \right)\del_{xx} +\left(\begin{matrix} a_{1}(u;x) & b_{1}(u;x) \\ \bar{b}_{1}(x) & \bar{a}_{1}(u;x)\end{matrix}
  \right)\del_{x}\right] \Pi_S^\perp\\
  &\qquad -\ii E\;\Pi_S^\perp
   \left[\left(\begin{matrix} a_{0}(u;x) & b_{0}(u;x) \\ \bar{b}_{0}(u;x) & \bar{a}_{0}(u;x)\end{matrix}
   \right)\right] \Pi_S^\perp
 \end{aligned}
 \end{equation*}
 maps  $\ell_{a,p}\to \ell_{a,p}$  with $a_{i}(u),b_{i}(u)\in H^{p}(\TTT^{d}_s\times\TTT_a;\CCC)$ for $i=0,1,2$. Similarly
 \begin{equation}\label{maiale}
\mathscr{K}({u})[h^{(w)}]=\sum_{m=1}^{\mathtt{N}} ( c_{m},  h^{(w)})_{L^{2}} d_m\, 
\end{equation}
is a linear operator  $\ell_{a,p}\to \ell_{a,p}$ of finite rank equal to $N$
with $(\cdot,\cdot)_{L^{2}}$ the usual $L^{2}$ scalar product on $\TTT$ (note that $\ell_{a,p}$ is identified with odd functions in ${\bf h}^{a,p}$)
  where
   and 
  $c_l\in H^{p-2}(\TTT^{d}_s\times\TTT_a;\CCC)$, $d_l\in H^{p}(\TTT^{d}_s\times\TTT_a;\CCC)$ for $l=1,\ldots, \mathtt{N}$.
  Finally we require
  \begin{equation}\label{labellabel}
  \begin{aligned}
  \|a_{i}\|_{\vec{v},p}&\leq C(F)(1+\|u\|_{\vec{v},p+2})\\
  \|d_{u}a_{i}[h]\|_{\vec{v},p}&\leq C(F)(\|h\|_{\vec{v},p+2}+\|u\|_{\vec{v},p+2}\|h\|_{\gotp_0+2}), 
  \end{aligned}
  \end{equation}
  for some constant depending on $F$. The same holds for the other coefficients.
%
%
 \end{defi}
 We remark that the condition that $d_{w}F^{(w)}(u)[\cdot]$ maps $\ell_{a,p+2}$ to $\ell_{a,p}$
 implies some parity conditions in $x\in\TTT$ on the coefficients. 

 \begin{defi}[{\bf The subspace $\calE$}]\label{compatibili}
  Fix  $\vec{v}_0:=(\g_0,\calO_0,s_0,a_0)$ with $\calO_0$
  a compact subset of $\RRR^{d}$.

 We denote by $\calE$ the subset  of $F\in \VV_{\vec{v}_0,p}$ 
 such that

 for $(\theta,y,w)$ restricted to $\calU$ in \eqref{est66},  is
 
 \begin{itemize} 
 \item[] {\sc Tame}: $F$ is tame according to Definition \ref{tame};
 
 \item[] {\sc Pseudo differential}: $F$ is a  pseudo differential vector field according to Definition \ref{pseudopseudo};
 \item[] {\sc Gauge preserving}: the vector field $F$ commutes with $X_{M}:=\sum_{i}\del_{\theta_i}+\ii z\del_{z}-\ii\bar{z}\del_{\bar{z}}$;
 \item[]{\sc Reversible}:  one has
 \begin{equation}\label{est5}
 \begin{pmatrix}
  F^{(\theta)}(-\theta,y,(\bar{z},z))\\F^{(y)}(-\theta,y,(\bar{z},z))\\
  F^{(z^+)}(-\theta,y,(\bar{z},z))\\F^{(z^-)}(-\theta,y,(\bar{z},z))
  \end{pmatrix}
  = - \begin{pmatrix}
  -  F^{(\theta)}(\theta,y,(z,\bar{z}))\\F^{(y)}(\theta,y,(z,\bar{z}))\\
    F^{(z^-)}(\theta,y,(z,\bar{z}))\\F^{(z^+)}(\theta,y,(z,\bar{z}))
    \end{pmatrix},
    \end{equation}
%
%
\item[] {\sc Real-on-Real}: one has,
\begin{equation}\label{est55}
F^{(\theta)}(\theta,y,w)=\ol{F^{(\theta)}(\theta,y,w)}, \;\;
F^{(y)}(\theta,y,w)=\ol{F^{(y)}(\theta,y,w)},\;\;  F^{(z^{+})}(\theta,y,w)=\ol{F^{(z^{-})}(\theta,y,w)}, 
\end{equation}
%
\end{itemize}
  \end{defi}

\begin{rmk}\label{silentnight}
First of all one can note that
the component $F^{(\theta)}$ is even in the variables $\theta$ while $F^{(y)}$ is odd in $\theta$. By condition \eqref{est5}
we see that the field $F$ is \emph{reversible} with respect to the involution
\begin{equation}\label{est4}
S : (\theta_{j},y_{j},z_{j},\bar{z}_{j}) \mapsto (-\theta_{j},y_{j},\bar{z}_{j},z_{j}), \quad j\in \NNN, \quad S^{2}=\uno,
\end{equation}
on the subspace $\calU$.
 \end{rmk}

  \begin{rmk}\label{massamassa}
The condition that $F$ is Gauge preserving is equivalent to requiring that $\Psi_{\tau}^* F=F$ with 
$$
\Psi_\tau (\theta,y,z^+,z^-) = (\theta+ \tau , y,e^{\ii\tau} z^+, e^{-\ii\tau } z^- ). 
$$
Given a map $\Phi$ and a Gauge preserving vector field $F$, then  we recall that $\Phi_* F$ is Gauge preserving if  we have that $$\Phi(\Psi_\tau u)= \Psi_\tau \Phi(u) .$$
We can also check weather
$\Phi$ is the time one flow of a vector field $g$ which is Gauge preserving.
 We write $F$ as in
\eqref{vectorfield}
 ( written using Fourier series in the variables $\theta$)
  \begin{equation}\label{lamassaconserva}
F= \sum_{l,h,\alpha,\beta,\tv}F^{(x)}_{l,h,\al,\beta} e^{\ii \theta\cdot l} y^h z^\al\bar z^\beta \partial_\tv \,,\quad \tv= \theta, y, z^\s_j
\end{equation}
then it 
is Gauge preserving iff
\begin{equation}\label{conservadipomodori}
\sum \ell_i+\sum(\al_j-\be_j)=\left\{\begin{aligned}
&0,\qquad \tv=\theta,y \\
&\s,\qquad \tv=z_j^{\s}
\end{aligned}
\right.
\end{equation}

  \end{rmk}

 We now introduce some  special classes of linear vector fields.
  \begin{defi}[{\bf Finite rank vector fields}]\label{linvec}
%
For $\nu\geq0$
 we say that a vector field $f: \TTT^d_s\times D_{a,p+\nu}(r )\times\calO\to V_{a,p}$ such that 
 \begin{equation}\label{maremma}
  f = \sum_{v\in \mathtt{U}} f^{(v,0)}\del_v + (f^{(y,y)} y +f^{(y,w)}\cdot w )\cdot\del_y \,,\quad 
  f^{(y_i,w)}\in \ell_{a,p-\nu} \,,
 \end{equation}
 (recall \eqref{sotto})
 is \emph{regular} and we set
 \begin{equation}\label{linnorm}
 |f|_{\vec v,p}:=\sum_{u=y,w} \|f^{(u,0)}\|_{\vec v,p} +
  \sup_{i,j=1,\dots,d}\|f^{(y_i,y_j)}\|_{\vec v,p}+ \sup_i\|f^{(y_i,w)}\|_{\vec v,p-\nu}.
 \end{equation}
  We  denote by $\calA=\calA_{\vec v,p}$ with $\vec v= (\g,\calO,s,a,r)$ the set 
  of finite rank vector field $f$ with finite $|\cdot|_{\vec{v},p}$ norm.
   
 We denote by  $\BB$ be the set of bounded vector fields ($\nu=0$) in $\calA_{\vec{v},p}\ni f: \TTT^d_s\times D_{a,p}(r )\to V_{a,p}$.
Finally we denote by $\BB_{\calE}$ the subset of $\BB$ such    
that flow $\Phi_{g}^{t}$ generated by $g\in \BB_{\calE}$ is well defined and $(\Phi_{g}^{t})_{*}$ maps $\calE\to\calE$.

 \end{defi}
 
 \noindent
 We have the following result.
%
%
%
%
%
  \begin{lemma}\label{prestige}
The set $\BB_{\calE}$ defined in \eqref{linvec} coincides with the vector fields $g\in \BB$ such that:
\begin{enumerate}
\item $g$ is Real-on-Real (see \eqref{est55});
\item $g$ satisfies 
 \begin{equation}\label{preservazione}
 g^{(y,0)}(-\theta)=g^{(y,0)}(\theta), \;\; g^{(y,y)}(-\theta)=g^{(y,y)}(\theta),\; 
 g^{(y,z^{+})}(-\theta)={g^{(y,z^{-})}(\theta)},  \; g^{(z^{+},0)}(-\theta)={g^{(z^{-},0)}(\theta)},
 \end{equation}
 when restricted to $\theta\in \TTT^{d}$,
 \item $g$ commutes with $X_{M}:=\sum_{i}\del_{\theta_i}+\ii z\del_{z}-\ii\bar{z}\del_{\bar{z}}$.
\end{enumerate}
%
We say that vector field  $g\in \BB_{\calE}$ is \emph{reversibility-preserving} or 
   equivalently $\calE-$preserving.
 \end{lemma}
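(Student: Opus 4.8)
The plan is to show the two descriptions of $\BB_\calE$ coincide by a double inclusion, working entirely at the level of the vector field $g$ and its time-$t$ flow $\Phi^t_g$. The key structural fact is that each of the defining properties of $\calE$ in Definition \ref{compatibili} — \textsc{Real-on-Real}, \textsc{Reversible}, \textsc{Gauge preserving}, \textsc{Pseudo-differential}, \textsc{Tame} — is preserved under push-forward $(\Phi^t_g)_*$ \emph{if and only if} the generator $g$ satisfies the corresponding infinitesimal version of that property. So the first step is to translate each of the finitely many conditions defining $\calE$ into a condition on $g$, using the standard Lie-series identity
\[
\frac{d}{dt}(\Phi^t_g)_*F = (\Phi^t_g)_*[g,F]
\]
together with the observation that a subspace of vector fields cut out by a \emph{linear} involution $\mathcal S_*$ (such as reversibility with respect to the involution \eqref{est4}, or the ``real'' involution of \eqref{est55}) is invariant under $(\Phi^t_g)_*$ precisely when $g$ itself lies in the $(-1)$-eigenspace, resp. is fixed, of that involution; and a commutation constraint $[X_M,F]=0$ is propagated by the flow exactly when $[X_M,g]=0$. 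This is where conditions (1), (2), (3) in the Lemma come from: (1) is $g$ Real-on-Real, (3) is $[g,X_M]=0$, and (2) is the explicit unpacking of ``$g$ reversible with respect to $S$ of \eqref{est4}'' restricted to $\theta\in\TTT^d$ — note that because $g\in\BB$ lies in $\calA_{\vec v,p}$, i.e. has the special finite-rank shape \eqref{maremma} with only the components $g^{(v,0)}$, $g^{(y,y)}$, $g^{(y,w)}$ nonzero, the general reversibility relation \eqref{est5} collapses to exactly the four identities listed in \eqref{preservazione}.

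For the inclusion ``$g$ satisfies (1)--(3) $\Rightarrow g\in\BB_\calE$'': first, since $g\in\BB$ is a bounded, finite-rank (hence tame) vector field with a zero of order $\ge 1$ at $w=0$ in its $w$-component and linear dependence on $y,w$, a Neumann/Picard argument shows the flow $\Phi^t_g$ is well defined on a slightly shrunk domain $\TTT^d_{s-\rho s_0}\times D_{a+\db a_0,p}(r-\rho r_0)$ for $|t|\le 1$ and close to the identity in $\|\cdot\|_{\vec v,p}$; this is routine and I would cite the corresponding flow lemma from \cite{CFP}. Then one checks, property by property, that $(\Phi^t_g)_*$ maps $\calE$ into $\calE$: \textsc{Real-on-Real} and the reversibility \eqref{est5} follow from the involution argument above using (1) and (2); \textsc{Gauge preserving} follows from (3) via $\Phi^t_g\circ\Psi_\tau=\Psi_\tau\circ\Phi^t_g$ (Remark \ref{massamassa}), since $[g,X_M]=0$; \textsc{Tame} is preserved because composing a tame field with a flow of a bounded field keeps tameness constants under control (the tameness estimates $(\mathrm T_m)$, $(\mathrm T_m)^*$ are stable under such compositions); and \textsc{Pseudo-differential} is preserved because conjugating the differential $d_wF^{(w)}$ by the $w$-component of a bounded finite-rank flow only adds a bounded finite-rank operator of the form \eqref{maiale} to the smoothing part $\mathscr K$ and leaves the top-order symbols $a_2,b_2,\dots$ in \eqref{pippone} unchanged — this is the key point and the reason we restrict to \emph{bounded} $g\in\BB$ rather than allowing a genuine loss of derivatives.

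For the reverse inclusion ``$g\in\BB_\calE \Rightarrow g$ satisfies (1)--(3)'': differentiate at $t=0$. From $(\Phi^t_g)_*\calE\subseteq\calE$ for all small $t$ and $\frac{d}{dt}\big|_{t=0}(\Phi^t_g)_*F=[g,F]$, we get that $\calE$ is invariant under the adjoint action $[g,\cdot]$; testing against judiciously chosen elements $F\in\calE$ (e.g. the linear normal-form field $N_0$ of \eqref{ini1}, and simple monomial reversible fields) forces $g$ to respect each of the linear constraints, i.e. $g$ must be Real-on-Real, satisfy the infinitesimal reversibility \eqref{preservazione}, and commute with $X_M$. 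Alternatively and more cleanly, one takes $F=\uno\cdot\partial$-type generators or directly uses that if $(\Phi^t_g)_*$ preserves a linear involution-defined subspace then its generator must; since the properties defining $\calE$ are exactly of this type, $g$ inherits them. The main obstacle — and the step deserving the most care — is verifying preservation of the \textsc{Pseudo-differential} structure in the forward direction: one must check that the conjugation of the pseudo-differential part by the flow does not generate new unbounded terms, which relies crucially on $g$ being bounded and finite-rank, and on the precise block/parity structure in \eqref{pippone}--\eqref{maiale}; the bookkeeping of how $\mathscr K$ transforms, and the estimates \eqref{labellabel} on the perturbed coefficients, is the only genuinely non-formal part.
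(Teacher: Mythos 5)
Your proposal is correct and follows essentially the same route as the paper: one lets $g$ satisfying (1)--(3) generate a flow $\Phi^t_g=\uno+f_t$ with $f_t\in\BB$, checks Gauge preservation from $[g,X_M]=0$, tameness from the composition/conjugation Lemma \ref{temavec}, reversibility from the parity conditions \eqref{preservazione}, and — the crux — the pseudo-differential structure by computing $d_w[(\Phi)_*F]^{(w)}$ as in \eqref{posacenere20}, where the finite-rank part of the map contributes only to $\mathscr{K}_+$ while the principal symbols keep their form (they become $a_i(\Phi^{-1}(u);x)$ rather than being literally unchanged), with the adjoint-tameness conditions $(T_1)^*$, $(T_2)^*$ supplying the bounds on the new coefficients $c_i^+,d_i^+$. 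The only difference is that you also sketch the converse inclusion (differentiating the flow at $t=0$), which the paper leaves implicit; this is a harmless addition, not a different method.
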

 The proof of the Lemma above is postponed to Section \ref{seclin}.

\begin{defi}\label{projectortotale}
 Given $K>0$ and a vector field  $f\in\calA$ 
   we define the projection $\Pi_K f$  as
   \begin{equation}\label{duck33333}
 \begin{aligned}
 &(\Pi_{K}f^{(\tv,0)})(\theta):=\sum_{|\ell|\leq K}f^{(\tv,0)}_{\ell}e^{\ii \ell\cdot \theta}, \qquad \tv=\theta,y\,,\\
 &(\Pi_{K}f^{(w,0)})(\theta):=\sum_{|\ell|\leq K}\Pi_{\ell_{K}}f^{(w,0)}_{\ell}e^{\ii\ell\cdot \theta},
 \quad (\Pi_{K})f^{(y_i,y_j)}(\theta):=\sum_{|\ell|\leq K}f^{(y_i,y_j)}_{\ell}e^{\ii \ell\cdot\theta},i,j=1,\ldots,d\,,\\
 &(\Pi_{K}f^{(y_i,w)})(\theta):=\sum_{|\ell|\leq K}\Pi_{\ell_{K}}f^{(y_i,w)}_\ell e^{\ii \ell\cdot\theta}\,,
 \end{aligned}
 \end{equation}
 %
 %
 %
 (recall \eqref{projector}) and we define $E^{(K)}$ as the subspace of $\calA_{\vec v,p}$ where $\Pi_K$ acts as the identity.
 \end{defi}

We recall also the Definition of \emph{diagonal} vector fields.

\begin{defi}[{\bf Normal form}]\label{norm}
 We say that $N_{0}\in \NN$ is a {\em diagonal} vector field if for all $K>1$
\begin{equation}\label{norm1}
{\rm ad}(N_0)\Pi_{E^{(K)} } \Pi_\calX =\Pi_{E^{(K)} } \Pi_\calX  {\rm ad} (N_0).
\end{equation}
 \end{defi}

%
%
%
%
%
%
%
%
%
%

\subsection{Estimates on F}

In this Section we study the properties of the vector field $F$ introduced in \eqref{system}.

We have the following.

\begin{proposition}[{\bf Properties of $F$}]\label{Properties of $F$}

One has that the vector field  $F$ in \eqref{system} belongs to the subspace $\calE$ in \ref{compatibili}.
\end{proposition}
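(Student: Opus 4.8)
The statement asks us to verify that the vector field $F$ defined in \eqref{system} lies in the subspace $\calE$ of Definition \ref{compatibili}, which means checking five separate properties: tameness, pseudo-differential structure, gauge preservation, reversibility, and the real-on-real condition. The natural strategy is to treat each of these in turn, tracing back through the explicit construction of $F$ as $F = N_0 + G$ where $F = \Phi^{(\x)}_* \Upsilon$ and $\Upsilon = \Phi_* \chi$ is the output of the weak Birkhoff normal form of Proposition \ref{wbnf}. Most of the work reduces to bookkeeping the structural properties that survive the two changes of variables (the Birkhoff map $\Phi$ of Proposition \ref{wbnf}, and the action-angle map $\Phi^{(\x)}$ of \eqref{aacaut}), starting from the fact that $\chi$ in \eqref{6.666} already enjoys the appropriate structure by Hypothesis \ref{hyp2aut}.

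First I would handle reversibility and the real-on-real property, since these are algebraic and follow by direct inspection. The original field $\chi$ is reversible with respect to $S: u\mapsto -\bar u(-x)$ (established in the Reversible structure subsection) and preserves $\calU$. Proposition \ref{wbnf} already states that $\Upsilon = \Phi_*\chi$ is reversible on $\calU$, so one only needs to check that the polar change of coordinates \eqref{aacaut} intertwines the involution $S$ on functions with the involution \eqref{est4} on $(\theta,y,z,\bar z)$; this is the computation in Remark \ref{silentnight}, and it yields \eqref{est5} directly from the reversibility of $\Upsilon$. The real-on-real condition \eqref{est55} similarly follows from $\ff^+ = \overline{\ff^-}$ on $\calU$ together with the reality of the coefficients $\chi_{j_1 j_2 j_3 j}$ noted in the proof of Proposition \ref{wbnf}, carried through \eqref{aacaut}. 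For gauge preservation, Hypothesis \ref{hyp2aut}(iii) gives the gauge invariance of $\ff$; in Fourier variables this is exactly the selection rule \eqref{conservadipomodori} on $\chi$, which is preserved by the Birkhoff steps (the generators $F_3, F_5$ are themselves gauge preserving since they only rearrange monomials satisfying the same momentum condition), and the polar change \eqref{aacaut} is designed precisely so that the gauge flow $\Psi_\tau$ becomes the rotation $\theta\mapsto\theta+\tau$, so $\Upsilon$ gauge preserving becomes $[F, X_M]=0$ via Remark \ref{massamassa}.

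Next, the pseudo-differential structure: the key input is \eqref{skifo}, which shows that the linearization of $\chi$ at any point on $\calU$ has exactly the multiplicative form required in Definition \ref{pseudopseudo} for the $\mathscr P$ part, with coefficients $a_i(x), b_i(x)$ built as composition operators from the derivatives of $\ff$. The Birkhoff map $\Phi$ is close to the identity up to a \emph{finite rank} map $\Psi$ (Proposition \ref{wbnf}), so by Remark \ref{quasimolti} the conjugated linearized operator $d_{\bf q}\Upsilon$ is again of the form $-\ii E[\partial_{xx} + d_{\bf u}\calF] + \RR$ with $\RR$ a finite rank operator — this is precisely the $\mathscr K$ term. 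The action-angle change \eqref{aacaut} is also finite rank modification on the tangential sites and restricts $\mathscr P, \mathscr K$ to $\Pi_S^\perp$, leaving the structure intact; the coefficient bounds \eqref{labellabel} come from the analyticity of $\ff$ and the composition estimates. Finally, tameness: here I would invoke the fact, already flagged in Proposition \ref{wbnf}, that $\Psi$ is a tame modulus map in the sense of \cite{BG}, and that composition operators generated by analytic $\ff$ are $C^3$-tame; the $(T_m)$ and $(T_m)^*$ bounds of Definition \ref{tame} then follow from standard tame composition and interpolation estimates, together with the structure of $F$ in \eqref{system} where each component is an explicit algebraic combination of $\Upsilon$-components divided by $\sqrt{\x_k + y_k}$ (which is analytic and tame on the domain $D_{a,p}(r)$ by \eqref{condizione}).

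\textbf{Main obstacle.} The bulk of the difficulty is concentrated in verifying the tameness conditions, and in particular the adjoint tameness bounds $(T_m)^*$ of Definition \ref{tame}: these involve controlling the adjoint of the differential of $F$ between the asymmetric scales $X^p, Y^p$, which requires knowing not just that $\ff$ composes tamely but that the parity/reality structure forced on the coefficients (the remark after Definition \ref{pseudopseudo}) makes the formal adjoint again a bounded operator on the right spaces. The pseudo-differential bookkeeping — showing that neither the finite-rank Birkhoff correction nor the polar coordinates destroy the precise block form of \eqref{skifo} when projected to $\Pi_S^\perp$ — is conceptually routine but must be done carefully because the whole later KAM scheme depends on it. I expect the cleanest route is to first prove a lemma that composition operators built from analytic gauge-preserving reversible $\ff$ are $C^3$-tame and adjoint-tame (this may essentially be imported from \cite{FP}), and then to check that all three maps $\chi \to \Upsilon \to F$ preserve membership in $\calE$ using Lemma \ref{prestige} and Remarks \ref{silentnight}, \ref{massamassa}, \ref{quasimolti}.
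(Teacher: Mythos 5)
Your proposal is correct and follows essentially the same route as the paper: the paper's proof likewise verifies each property of Definition \ref{compatibili} in turn, deducing tameness from the tame estimates for the composition operator \eqref{compoop} (imported from \cite{FP}) together with the properties of $\Psi$ in Proposition \ref{wbnf}, reversibility, real-on-real and gauge preservation from Hypothesis \ref{hyp2aut} (via \eqref{weak2}), and the pseudo-differential structure from the fact that $d_w F^{(w)}$ has the form \eqref{skifo} up to a finite-rank operator of the form \eqref{maiale}, with the coefficient bounds \eqref{labellabel} taken from Lemma 2.19 of \cite{FP}. Your additional discussion of the adjoint bounds $(T_m)^*$ is a refinement of, not a departure from, the same argument.
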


\begin{proof}
The field $F$ is tame according to Definition \eqref{tame}. This properties follows by 
the fact that the composition operator in \eqref{compoop} satisfies tame estiamtes (see \cite{FP} for more details)
and by the properties of the map $\Psi$ in Proposition \ref{wbnf}.

One has that 
$F$ restricted to $\calU$ is reversible w.r.t. the involution $S$ defined in \eqref{est4} and Real-on-real
(see \eqref{est5}, \eqref{est55}).
This follows by Hypothesis \ref{hyp2aut} (see also \eqref{weak2} in Proposition \ref{wbnf}).
One has that $F$ preserves the Mass again by Hypothesis \ref{hyp2aut}. 
The field $F$ in \eqref{system} is pseudo-differential  according to Definition \ref{pseudopseudo}.
Indeed, recalling the definition of $\chi({\bf u})$ in \eqref{6.666} and its differential in \eqref{skifo}, 
one has that 
the linearized operator in the $w-$direction $d_{w}F(\theta,y,w)$ has the form \eqref{skifo} up to
a finite rank operator of the form \eqref{maiale} for some $N:=\mathtt{N}_0$.
This is actually the form in 
 \eqref{pippone}. Recall also that ${\bf u}={\bf v}+w$ in \eqref{variabili}.
The estimates \eqref{labellabel} follow recalling Lemma $2.19$
in \cite{FP}.
\end{proof}

\noindent
Now we give estimates on the tameness constants of the field $F$.
We set $F_0:=F$ defined on the domain in \eqref{domain}
$D_{a_0,p+\nu}(s_0,r_0)$ where the parameters are given by formula \eqref{condizione}.
We define  $N_0$ as in \eqref{ini1} and notice that it  is \emph{diagonal} according to Definition \ref{norm}.
 Now we define
the vector fields $N_0^{(1)},N_0^{(2)}\in \NN$ as \eqref{6.666}
 \begin{equation}\label{ini3}
\begin{aligned}
N_0^{(2)}&:=\Pi_{S}^{\perp}\left(-i E\left(\begin{matrix} a^{(0)}_{2}(\theta,\x) & b^{(0)}_{2}(\theta,\x) \\ \bar{b}^{(0)}_{2}(\theta,\x) & \bar{a}^{(0)}_{2}(\theta,\x)\end{matrix}
\right)\del_{xx} \right)\Pi_{S}^{\perp}w\cdot\del_{w},\\
N_0^{(1)}&:=
\Pi_{S}^{\perp}\left(-i E\left(\begin{matrix} a^{(0)}_{1}(\theta,\x) & b^{(0)}_{1}(\theta,\x) \\ \bar{b}^{(0)}_{1}(\theta,\x) & \bar{a}^{(0)}_{1}(\theta,\x)\end{matrix}
\right)\del_{x} +
\left(\begin{matrix} a^{(0)}_{0}(\theta,\x) & b^{(0)}_{0}(\theta,\x) \\ \bar{b}^{(0)}_{0}(\theta,\x) & \bar{a}^{(0)}_{0}(\theta,\x)\end{matrix}\right)
\right)\Pi_{S}^{\perp}w\cdot\del_{w}
 \end{aligned}
 \end{equation}
 with coefficients $a_{i}^{(0)},b_{i}^{(0)}$ for $i=0,1,2$ given by  
 \begin{equation}\label{NF2}
\begin{aligned}
a^{(0)}_{2}(\theta,\x)&:=\mathtt{a}_{2}|v|^{2}+\mathtt{a}_{4}|v_{x}|^{2}+2\ta_{5}|v_{xx}|^{2},\quad 
b^{(0)}_{2}(\theta,\x):=    \ta_{6}|v|^{2}+ \ta_{8}(v_{x})^{2}+   \ta_{5}v_{xx}^{2}
\\
a^{(0)}_{1}(\theta,\x)&:=\mathtt{a}_{3}\bar{v}_{x}v+\mathtt{a}_{4}\bar{v}_{x}v_{xx}+2 \ta_{7}\bar{v}v_{x}+2 \ta_{8}v_x\bar{v}_{xx}
,\quad
b^{(0)}_{1}(\theta,\x):=\mathtt{a}_{3}vv_{x}+\mathtt{a}_{4}v_{x}v_{xx}
\\
a^{(0)}_{0}(\theta,\x)&:=2\mathtt{a}_{1}|v|^{2}+\mathtt{a}_{2}\bar{v}v_{xx}+\mathtt{a}_{3}|v_{x}|^{2}
+\ta_{6}\bar{v}\bar{v}_{xx}
,\quad
b^{(0)}_{0}(\theta,x):=\mathtt{a}_{1}v^{2}+\mathtt{a}_{2}vv_{xx}+\ta_{6}{v}\bar{v}_{xx}+\ta_{7}(v_{x})^{2},
\end{aligned}
\end{equation}
where $v$ is the function $v(\theta,y)$ in \eqref{variabili} evaluated at $y=0$ and $E$ defined in \eqref{6.666}.
We set
  \begin{equation}\label{ini2}
\begin{aligned}
 F&=N_0+N_0^{(1)}+N_0^{(2)}+H_{0},
 \end{aligned}
 \end{equation}
where $H_0$ is defined by difference. We can see that the terms $N_0^{(1)},N_0^{(2)}$ come from the cubic term $Q(u)$ defined in \eqref{weak3} while $H_0$ 
comes from the term $B_{2}$.
 
By definition, see \eqref{omeghino} have that ${\oo}^{(0)}$ is $\x-$close  to the integer
vector $\la^{(-1)}$,   hence we fix the size of $\g_0$ by requiring that ${\oo}(\x)$ satisfies 
\begin{equation}\label{ini4}
|{\oo}^{(0)}\cdot l|\geq\frac{\g_0}{\langle l\rangle^{\tau}}, \;\; \forall\, l\in\ZZZ^{d}, \, \tau\geq d+1, \, \g_0 = \mathtt{c} |\x|,
\end{equation}
with the constant $\mathtt{c}\ll 1$. 
%
Fix $\vec{v}_0:=(\g_0,\calO_0,s_0,a_0)$ with 
\begin{equation}\label{sottosottosotto}
\calO_0:=\e^{2}\Lambda=\e^{2}[1/2,3/2]^{d}.
\end{equation}

\noindent
In the following Lemma we analyze the size of the tameness constants of 
the field $F$.

\begin{lemma}\label{INIZIAMO}
Given $\gotp_1$ as in \eqref{Dsr}, fix a constant $\gotp_2>\gotp_1$ and $c_1>0$.
Let $r_0$ in \eqref{domain} such that
\begin{equation}\label{adige4}
|\x|\leq r_0\leq c_1|\x|^{\frac{1}{2}}.
\end{equation}
There exists constant $\gotA_0$, depending 
on the constants $\ta_i$,  for $i=1,\ldots,8$, in \eqref{NF2}, on $\gotp_2$
and on $\mathtt{c}$ in \eqref{ini4}, such that
the vector field $F$ in \eqref{ini2} satisfies 
\begin{equation}\label{ini5}
\begin{aligned}
&\g_{0}^{-1}C_{\vec{v}_{0},\gotp_2}(N_0^{(1)})\leq \gotA_0 , \qquad 
\g_{0}^{-1}C_{\vec{v}_{0},\gotp_2}(N_0^{(2)})\leq \gotA_0, \\
& \g_{0}^{-1}C_{\vec{v}_{0},\gotp_2}(\Pi_{\NN}{H}_0)\leq \gotA_0 |\x|,\quad
 \g_{0}^{-1}C_{\vec{v}_{0},\gotp_2}(\Pi_{\calX}{H}_0)\leq \gotA_0 |\x|^{\frac{1}{2}},\quad 
  \g_{0}^{-1}C_{\vec{v}_{0},\gotp_2}(\Pi_{\RR}{H}_0)\leq \gotA_0 |\x|^{\frac{1}{2}}.
\end{aligned}
\end{equation}

\end{lemma}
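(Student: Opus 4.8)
The plan is to estimate each piece of the decomposition \eqref{ini2} separately, exploiting the explicit polynomial/pseudo-differential structure displayed in \eqref{weak3}, \eqref{NF2}. First I would recall that the tameness constants of a composition operator built from an analytic function are controlled by the analytic norm of the function evaluated on a ball of the appropriate radius, together with the tame-modulus property of $\Psi$ from Proposition \ref{wbnf}; this is exactly the content of the estimates cited from \cite{FP} (Lemma $2.19$) in the proof of Proposition \ref{Properties of $F$}. So the whole statement reduces to tracking homogeneity degrees in $\xi$ and $y$ (equivalently powers of $|\xi|^{1/2}$) in each monomial appearing in $N_0^{(1)},N_0^{(2)},H_0$, and checking that after dividing by $\g_0\sim |\xi|$ the claimed powers of $|\xi|$ come out.

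The key steps, in order, are as follows. \textbf{Step 1: the normal-form pieces $N_0^{(1)},N_0^{(2)}$.} By \eqref{ini3}--\eqref{NF2} these are linear in $w$ with coefficients $a_i^{(0)},b_i^{(0)}$ that are quadratic in $v(\theta,0)$ (and its $x$-derivatives). Since $v=\sum_i \sqrt{\xi_i}e^{\ii\theta_i}\sin(\tv_i x)$, each coefficient is $O(|\xi|)$ in the weighted norm $\|\cdot\|_{\vec v_0,\gotp_2}$, uniformly in the analyticity strip, because the $\tv_i$ are fixed integers so every $x$-derivative only costs a factor $\ka=\max|\tv_i|$ absorbed into $\gotA_0$. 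The Lipschitz dependence on $\xi$ of $|v|^2,|v_x|^2,\dots$ is again $O(1)$ (one derivative in $\xi$ of a quadratic in $\sqrt\xi$), so the Lipschitz part contributes $\g_0\cdot O(1)=O(|\xi|)$, whence $C_{\vec v_0,\gotp_2}(N_0^{(i)})\leq \gotA_0|\xi|= \gotA_0\g_0/\mathtt c$, i.e. $\g_0^{-1}C_{\vec v_0,\gotp_2}(N_0^{(i)})\leq\gotA_0$. \textbf{Step 2: $\Pi_\NN H_0$.} By \eqref{ini2} and \eqref{weak2}--\eqref{weak3}, the $\NN$-component of $H_0$ collects the remaining $O(v^2 z)$ terms of $Q$ not already put into $N_0^{(1)},N_0^{(2)}$ plus the higher-order tail $B_2$; by construction these are again at least quadratic in $v$ (hence $O(|\xi|)$) when restricted to $w=0$-neighbourhoods in the relevant norm, giving the bound $\gotA_0|\xi|$. \textbf{Step 3: $\Pi_\calX H_0$ and $\Pi_\RR H_0$.} The $\calX$-part contains in particular the $F^{(y)}$-component, which by \eqref{system} equals $2\ii\sqrt{\xi_k+y_k}(e^{-\ii\theta_k}(B_1^+)_{\tv_k}-\dots)$; since $B_1$ starts at order five in $u$, evaluated on the domain \eqref{domain} with $\|w\|\le r_0$ and using \eqref{adige4} ($r_0\le c_1|\xi|^{1/2}$), one gets a factor $\sqrt{\xi}\cdot |\xi|$ from $B_1$ divided by $\sqrt{\xi}$ from the $F^{(\theta)}$ normalisation, or directly $\sqrt\xi\cdot(\text{quintic})\sim |\xi|^{1/2}|\xi|^2$; tracking the worst monomial gives at worst $\gotA_0|\xi|^{1/2}\g_0$, i.e. $\g_0^{-1}C\le\gotA_0|\xi|^{1/2}$. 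The $\RR$-part, containing $\RR_1$ (the $\ge2$-in-$(y,w)$ remainder) and all remaining non-normal polynomials, is handled identically: each such term carries at least one extra $\sqrt{\xi_i+y_i}$ or one extra $w$ beyond what the normalisation divides out, and \eqref{adige4} converts powers of $r_0$ into powers of $|\xi|^{1/2}$.

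The main obstacle I expect is \textbf{Step 3}: one must be careful that the division by $\sqrt{\xi_k+y_k}$ in the definition of $F^{(\theta_k)}$ in \eqref{system} does not destroy analyticity or blow up the Lipschitz-in-$\xi$ seminorm near $y=0$. This is why the domain \eqref{domain} takes $|y|\le r_0^2$ with $r_0\le c_1|\xi|^{1/2}$, so that $|y_k|\le c_1^2|\xi|\le \tfrac12\xi_k$ and $\sqrt{\xi_k+y_k}$ stays comparable to $\sqrt{\xi_k}$ with all its $\xi$- and $y$-derivatives bounded; the analyticity in $\theta$ is untouched since that division is $\theta$-independent. Once this is checked, the counting of homogeneity degrees in the remaining cases is routine, using only the tame-modulus estimates of Proposition \ref{wbnf}, the composition-operator bounds of \cite{FP}, and the interpolation inequalities built into the norm \eqref{porc}. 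A second minor point is that the constant $\gotA_0$ must be allowed to depend on $\gotp_2$ (through the Sobolev constants at regularity $\gotp_2$) and on $\mathtt c$ (through $\g_0=\mathtt c|\xi|$) and on the $\ta_i$ (through the explicit coefficients in \eqref{NF2}), exactly as stated, so no uniformity in those parameters is claimed or needed.
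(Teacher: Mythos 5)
Your overall strategy (term-by-term homogeneity counting in $\sqrt{\x},y,w$, composition/tame estimates, then division by $\g_0\sim|\x|$) is the same as the paper's, but two of your degree counts are wrong and you miss the one non-routine ingredient that the paper's proof actually hinges on. In Step 2 you claim the $\NN$-part of $H_0$ is ``at least quadratic in $v$, hence $O(|\x|)$, giving the bound $\gotA_0|\x|$'': quadratic in $v$ gives $O(|\x|)$ only \emph{before} dividing by $\g_0$, i.e.\ $O(1)$ after, which is the bound for $N_0^{(1)},N_0^{(2)}$, not the claimed $\gotA_0|\x|$. The correct reason is that $(\Pi_\NN H_0)^{(w)}$ comes from $B_2$ and is at least \emph{quartic} in $v$, while $H_0^{(\theta,0)}$ comes from terms of $B_1$ of degree at least five in $v$ (so that after the division by $\sqrt{\x_k+y_k}$ one is left with $|\x|^2$, hence $\gotA_0|\x|$ after dividing by $\g_0$).

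Step 3 contains the genuine gap. First, $B_1$ does \emph{not} start at order five in $u$: by \eqref{weak4} it contains the cubic summand $\sum_{j\in S,k\in S^c}\chi_{kkjj}u^+_ku^-_ku^\s_j$, quadratic in $z$, which is why part of the $y$- and $\theta$-components lands in $\RR$ and must be tracked separately. Second, in estimating $\Pi_\calX H_0^{(y)}$ you dropped the weight $r_0^{-2}$ of the $y$-component norm \eqref{equation2}: the contribution of the integrable quintic monomials $C_{ikj}u_i^+u_i^-u_k^+u_k^-u^\s_j$ of $B_1$ to $F^{(y,0)}$ has size $\sqrt{\x}\cdot|\x|^{5/2}$, hence, after the weight $r_0^{-2}$ and the division by $\g_0$, size $|\x|^2r_0^{-2}$; since \eqref{adige4} allows $r_0$ as small as $|\x|$, this is only $O(1)$, not $O(|\x|^{1/2})$ (note it is the \emph{lower} bound $r_0\ge|\x|$ that is relevant here, not the upper bound $r_0\le c_1|\x|^{1/2}$ you invoke). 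The paper's bounds \eqref{ini11}--\eqref{ini12} hold because precisely these integrable quintic terms \emph{cancel} in the $y$-equation: substituting \eqref{weak4} into \eqref{system}, $F^{(y_k)}=4v^-_{\tv_k}\Upsilon^+_{\tv_k}+4v^+_{\tv_k}\Upsilon^-_{\tv_k}$, and since the integrable contributions are of the form $\mp\ii\, C\,|u_i|^2|u_k|^2u^{\pm}_{\tv_k}$ with $C$ real, they cancel identically in this combination; only then do the remaining (degree $\ge 6$, or $w$-dependent) terms give $|\x|^{2}\sqrt{\x}\,r_0^{-2}+|\x|+|\x|^{2}r_0^{-1}\le\gotA_0|\x|^{1/2}$ using $r_0\ge|\x|$. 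Without this algebraic cancellation your counting does not yield \eqref{ini5}. Your ``main obstacle'' (regularity of the division by $\sqrt{\x_k+y_k}$) is indeed harmless, but it is not where the difficulty lies.
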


\begin{proof}
By definition each term in \eqref{ini2} is tame, now we want to estimate their tameness constant in order  to prove \eqref{ini5}.
Recall that $F_0$ in \eqref{system} is defined in terms of equations \eqref{weak2}, \eqref{weak4} and \eqref{weak3}.
We start to study the terms $N_0^{(1)}$ and $N_0^{(2)}$ that are terms that contribute to $F^{(w)}$ that comes form $Q$ in \eqref{weak3} (such term is linear in $w$ and quadratic in $v$).
For instance we can bound  using the interpolation properties of the norm $\|\cdot\|_{s,a,p}$
\begin{equation*}\label{ini7}
\g_{0}^{-1}\|a_2 |v|^{2}\del_{xx}z\|_{\vec{v}_0,\gotp_2}\leq\frac{1}{\mathtt{c} |\x|}\left(C(\gotp_2)|\x|+C(\gotp_0)|\x|\|z\|_{\vec{v}_0,p+\nu}
\right),
\end{equation*}
where we used that $\|z\|_{a,\gotp_1}\leq r_0$.
Hence  one can check Definition \eqref{tame} with a constant $C_{\vec{v}_0,p}(a_{2}|v|^{2}z_{xx})\leq \gotA_0=\gotA_{0}(\gotp_2,\mathtt{c})$.
All the other terms in \eqref{NF2} can be estimated in the same way. Indeed all those terms are quadratic on $v$ and linear in $z$.
Recall that the norm $\|\cdot\|_{\vec{v}_0,p}$ is a weighted norm and on the $w-$ component the weight is $r_0$ (see \eqref{totalnorm}).
This
determines the constant $\gotA_0$ by setting 
\begin{equation*}\label{ini77}
\g_0^{-1} C_{\vec{v}_0,\gotp_2}(N_0^{(1)}), \g_0^{-1} C_{\vec{v}_0,\gotp_2}(N_0^{(2)})\leq \frac{\gotM_0}{\cc}=\gotA_0(\gotp_2,\mathtt{c}).
\end{equation*}
Now let us estimate the several components of the field ${H}_0$ starting from $\Pi_{\NN}{H}_{0}$.
Consider  $\Pi_{\NN}{H}_0^{(w)}$. All these terms comes form $B_{2}^{+}$ in \eqref{weak2} so that
the linear term in $z$ is at least of degree $4$ in $v$. Hence one has 
\begin{equation*}\label{ini8}
\g_0^{-1}C_{\vec{v}_0,\gotp_2}(\Pi_{\NN}({H}_0^{(w)}))\leq \gotA_{0}|\x|.
\end{equation*}
Note that $\gotA_0$ here could be in principle larger that $\gotA_0$ in the estimates of $N_0^{(1)}$. In this case one choose, with abuse of notation the largest constant for $\gotA_0$.
We now note that $\Pi_{\NN}{H}_0^{(\theta)}:=H_{0}^{(\theta,0)}(\theta)$ is of the form \eqref{system} where $B_1$ is defined in \eqref{weak4}
where the term independent on $z$ of degree minimum has degree $5$ in $v$ hence
\begin{equation*}\label{ini9}
\g_0^{-1}C_{\vec{v}_0,\gotp_2}(\Pi_{\NN}({H}_0^{(\theta)}))\leq \gotA_{0}|\x|.
\end{equation*}
This implies the first bound on $H_0$ in \eqref{ini5}.
Let us study $\Pi_{\calX}{H}_0$. 
By Lemma \ref{miaomiao} we know that each field in $\calE\cap \calX$ is tame with tameness constant controlled by the norm in \eqref{linnorm}.
We have by equation \eqref{weak4} that $\Pi_{\calX}{H}_0^{(w)}=\Pi_{\calX}\Pi_{S}^{\perp}h^{(>5)}(u)$, hence the term
${H}_{0}^{(w,0)}(\theta)$ is of degree at least $6$ in $v$. We have
\begin{equation}\label{ini10}
\g_0^{-1}C_{\vec{v}_0,\gotp_2}(\Pi_{\calX}{H}_{0}^{(w)})\leq \gotA_0 |\x|^{2} r_0^{-1}.
\end{equation}
Now by \eqref{system} and \eqref{weak4} one has
\begin{equation}\label{ini11}
\g_{0}^{-1}|\Pi_{\calX}{H}_0^{(y)}|_{\vec{v}_0,\gotp_2}\leq \gotA_0 |\x|^{2}\sqrt{\x}r_0^{-2}+\gotA_{0}|\x|+\gotA_0|\x|^{2}r_0^{-1}
\end{equation}
To get bound \eqref{ini11} we used 
the fact that the only terms of degree $5$ in $v$ in $B_1$ are integrable,  as one can see in \eqref{weak4}. Substituting in \eqref{system},  such terms of degree $5$ cancel out
in  the $y$ component of $F_0$.
Collecting the bounds \eqref{ini10} and \eqref{ini11} we have, by \eqref{condizione} and \eqref{adige4}
\begin{equation}\label{piccolezzamille}
\g_{0}^{-1}|\Pi_{\calX}{H}_0|_{\vec{v}_0,\gotp_2}\leq \gotA_0 |\x|^{\frac12}.
\end{equation}
Now we study $\Pi_{\RR}{H}_0$. The component $\Pi_{\RR}{H}_0^{(\theta)}$
 is at least  linear in the variables $y,w$ while the terms in $\Pi_{\RR}{H}^{(y)}_0$ comes from the last two summand in \eqref{weak4} (this follows by the definition 
 of $R^{(1)}$ that collects the terms coming form the second summand and the fact that the integrable terms of order $5$ are zero).
  Following the same reasoning of the previous bounds we get
\begin{equation}\label{ini12}
\g_0^{-1}C_{\vec{v}_0,\gotp_2}(\Pi_{\RR}{H}^{(\theta)}_0)\leq \gotA_0
|\x|^{\frac{3}{2}}, \qquad \g_0^{-1}C_{\vec{v}_0,\gotp_2}(\Pi_{\RR}{H}^{(y)}_0)\leq \gotA_0{|\x|}^{\frac{1}{2}},
\qquad \g_0^{-1}C_{\vec{v}_0,\gotp_2}(\Pi_{\RR}{H}^{(w)}_0)\leq \gotA_0|\x|
\end{equation}
without requiring any additional hypotheses on $r_0$. Again in the second bound
we use that the integrable terms in $B_1$ in \eqref{weak4} cancel out. 
Collecting together the bounds in  \eqref{ini12}
we get the \eqref{ini5}.
\end{proof}
%
%
\begin{rmk}
Note that we have separated the
terms $N_0^{(1)},N_0^{(2)}$ that are not ``perturbative'' with respect to the size of the small divisors $\g_0\approx |\x|$. 
\end{rmk}

We have the following Lemma.

\begin{lemma}\label{linearizedatu}
Let $F_0$ be  the field in \eqref{system} and set  $F_0=N_0+G_0$  with $N_0$ in \eqref{ini1}.
Consider 
$d_{w}(F_{(0)})^{(w)}(u)[\cdot]=\mathscr{P}_0+\mathscr{K}_0$ 
the  linearized in a point $u=(\theta,y,w)$ in a neighborhood of $u_0=(\theta,0,0)$.
Then one has that
\begin{equation}\label{datu}
\mathscr{P}_0(u)[\cdot]=(N_0+N_0^{(1)}+N_0^{(2)})^{(w)}[\cdot]+
\Pi_{S}^{\perp}\left(-i E\left[A(\theta,y,w,\x)\right]\right)\Pi_{S}^{\perp},
\end{equation}
where $N_0,N_0^{(1)},N_0^{(2)}$ are defined in \eqref{ini1},\eqref{ini3}, while $A(\theta,y,w,\x)$
has the form \eqref{pippone} with coefficients which satisfy \eqref{labellabel}
with $C(F)\rightsquigarrow C_{\vec{v},\gotp_2}(H_0)\sim \gotA_0|\x|^{\frac{1}{2}} \g_0$ (see Lemma \ref{INIZIAMO}). 
\end{lemma}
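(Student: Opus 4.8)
The plan is to unwind the definitions and perform a bookkeeping argument that separates the linearized operator $d_w(F_0)^{(w)}(u)$ into its ``large'' part (of order $|\x|$, coming from the cubic normal form term $Q$) and the genuinely perturbative remainder (of order $|\x|^{3/2}$, coming from $B_2$). First I would recall from Proposition \ref{wbnf} and \eqref{system} that $F_0^{(w)}=2\Pi_S^\perp\Upsilon^+=-\ii\Pi_S^\perp(z_{xx}+Q(\uu)+B_2^+(\uu))$ when restricted to $\calU$, with $Q$ given explicitly in \eqref{weak3}, $\uu={\bf v}+w$ as in \eqref{variabili}, and ${\bf v}={\bf v}(\theta,y)$. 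Differentiating in $w$: the term $z_{xx}$ contributes $N_0^{(w)}$ (the principal diagonal part $\Omega^{(-1)}w\cdot\del_w$, which is included in $N_0$ of \eqref{ini1}); the cubic term $Q(\uu)$, being exactly quadratic in $v$ and linear in $z$ (up to the $\calA_1$ corrections), contributes a multiplication operator linear in $w$ only through the evaluation at $w=0$, giving precisely $(N_0^{(1)}+N_0^{(2)})^{(w)}$ with the coefficients $a_i^{(0)},b_i^{(0)}$ of \eqref{NF2} — this is a direct computation using the explicit form of $\chi_{j_1j_2j_3j}$ in \eqref{weak3} and the polar change of variables \eqref{aacaut}; finally the term $B_2^+(\uu)$, which by \eqref{weak4} has a zero of order at least five in $\uu$ (sum of $R(v^qz^{5-q})$ plus $h^{(>5)}$), contributes the remainder $\Pi_S^\perp(-\ii E[A(\theta,y,w,\x)])\Pi_S^\perp$.

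The key step is to verify that this remainder $A$ is again of pseudo-differential type in the sense of Definition \ref{pseudopseudo} — i.e. it has the form \eqref{pippone}–\eqref{maiale} — and that its coefficients obey the estimates \eqref{labellabel} with the improved constant $C(F)\rightsquigarrow C_{\vec v,\gotp_2}(H_0)\sim\gotA_0|\x|^{1/2}\g_0$. For the structural claim: $B_2^+$ is obtained from $g$ (the part of $\ff$ of order $\ge5$) plus the finite-rank tail $\Pi_S^\perp h^{(>5)}$, all processed by the finite-rank maps $\Phi^{(1)},\Phi^{(2)}$ of Proposition \ref{wbnf} and the action-angle map $\Phi^{(\x)}$; since the composition operator $\calF$ of \eqref{compoop} has the multiplicative structure displayed in \eqref{skifo}, its $w$-differential is automatically of the form $\mathscr P+\mathscr K$ with $\mathscr P$ a second-order non-constant-coefficient operator and $\mathscr K$ finite rank — this was already established in Proposition \ref{Properties of $F$} and Remark \ref{quasimolti}, so here I would just quote it and observe that the principal, first-order and zeroth-order symbols of $A$ are exactly the full symbols $a_i(u),b_i(u)$ of \eqref{skifo} minus the $N_0,N_0^{(1)},N_0^{(2)}$ contributions already peeled off, hence come from $B_2^+$ alone.

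For the quantitative estimate: since $B_2^+$ has a zero of order five and ${\bf v}=O(\sqrt{|\x|})$, $w=O(r_0)$ with $r_0\le c_1|\x|^{1/2}$ by \eqref{adige4}, each monomial of $d_w B_2^+$ evaluated near $u_0$ is of total degree at least four in the ``size'' $\sqrt{|\x|}$, giving an extra factor $|\x|^{3/2}=|\x|^{1/2}\g_0$ relative to the $O(|\x|)$ divisor $\g_0$ — this is precisely the content of the bounds $\g_0^{-1}C_{\vec v_0,\gotp_2}(\Pi_{\RR}H_0^{(w)})\le\gotA_0|\x|$ and $\g_0^{-1}C_{\vec v_0,\gotp_2}(\Pi_{\calX}H_0^{(w)})\le\gotA_0|\x|^{2}r_0^{-1}\le\gotA_0|\x|^{1/2}\g_0$ already proved in Lemma \ref{INIZIAMO}, so the estimate \eqref{labellabel} for the coefficients of $A$ follows by applying the tameness of the composition operator (Lemma 2.19 of \cite{FP}) exactly as in the proof of Proposition \ref{Properties of $F$}. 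The main obstacle is purely organizational rather than analytic: one must be careful that the cubic corrections in $Q$ lying in $\calA_1$ (the ``at most one index outside $S$'' monomials) do not leak extra $w$-linear contributions into $A$ beyond those recorded in $N_0^{(1)},N_0^{(2)}$, and that the finite-rank pieces $\mathscr K_0$ are correctly attributed — this requires tracking which terms of \eqref{weak3}–\eqref{weak4} survive the projections $\Pi_S^\perp$ and contribute to $d_wF^{(w)}$ versus to the other components of $F$, but no new ideas are needed beyond the accounting already set up in Lemma \ref{INIZIAMO}.
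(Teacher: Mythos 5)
Your structural skeleton (peel off the $w$-linear, coefficient-frozen part and call the rest a pseudo-differential remainder estimated as in Lemma \ref{INIZIAMO}) is the paper's skeleton, but your decomposition step contains a genuine error: you attribute the remainder $A$ to $B_2^{+}$ alone, claiming that $Q$ ``contributes a multiplication operator linear in $w$ only through the evaluation at $w=0$, giving precisely $(N_0^{(1)}+N_0^{(2)})^{(w)}$''. This is false. The constraint $(j_1,j_2,j_3,j)\notin\calA_1$ in \eqref{weak3}, with $j\in S^{c}$, only forces \emph{at least one} of $j_1,j_2,j_3$ to lie in $S^{c}$, so $Q$ contains cubic monomials of type $vzz\,\del_z$ and $zzz\,\del_z$ as well as $vvz\,\del_z$; since the linearization point $u=(\theta,y,w)$ has $y,w\neq0$, the $w$-differential of these higher-in-$z$ monomials, together with the difference between the coefficients \eqref{NF2} evaluated at $y$ and at $y=0$, all land in $A$. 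These $Q$-derived pieces are of size $O(\sqrt{|\x|}\,r_0)$ (respectively $O(|y|)$), which dominates the $O(|\x|^{2})$ that your order-five degree count for $d_wB_2^{+}$ produces, and they are exactly the reason the lemma states the remainder constant $C_{\vec v,\gotp_2}(H_0)\sim \gotA_0|\x|^{1/2}\g_0$ rather than anything quadratic in $|\x|$. As written, your argument omits the dominant contributions to $A$, and your closing caveat is aimed at the wrong set: there are no ``cubic corrections in $Q$ lying in $\calA_1$'' (by definition $Q$ contains only non-$\calA_1$ monomials); the terms requiring care are precisely those of $Q$ with two or three normal indices.

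The paper sidesteps this entirely by never tracing $A$ back to $B_2$: it uses the splitting \eqref{ini2}, where $H_0:=F_0-N_0-N_0^{(1)}-N_0^{(2)}$ is \emph{defined by difference} (so it automatically contains the $Q$-leftovers), notes that since $N_0^{(1)},N_0^{(2)}$ are linear in $w$ one has $A=d_wH_0^{(w)}(\theta,y,w)[\cdot]$, obtains the form \eqref{pippone} from Proposition \ref{Properties of $F$}, and estimates the coefficients by re-running the procedure of Lemma \ref{INIZIAMO} on all of $H_0$. Your proof becomes correct if you replace ``comes from $B_2^{+}$ alone'' by this difference definition and estimate the $vzz$, $zzz$ and $y$-correction terms alongside the $B_2$ ones. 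A minor additional slip: in your cited chain $\g_0^{-1}C_{\vec v_0,\gotp_2}(\Pi_{\calX}H_0^{(w)})\le\gotA_0|\x|^{2}r_0^{-1}\le\gotA_0|\x|^{1/2}\g_0$ the second inequality is both dimensionally inconsistent with Lemma \ref{INIZIAMO} (an extra factor $\g_0$ on the right while the left already carries $\g_0^{-1}$) and in the wrong direction given $r_0\le c_1|\x|^{1/2}$; the correct use of \eqref{adige4} there is $|\x|^{2}r_0^{-1}\le|\x|$ via $r_0\ge|\x|$.
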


\begin{proof}
By definition of $F_0$ in \eqref{ini2} 
and \eqref{ini3}
one has that $F_0(\theta,y,w)=N_0+N_0^{(1)}(\theta,w)+N_0^{(2)}(\theta,w)+H_0(\theta,y,w)$, and hence, recalling that $N_0^{(1)},N_0^{(2)}$ are linear in the $w$ variables, one gets
$(\Pi_{\NN}F_0)^{(w)}=\Omega^{-1}(\x)[\cdot]+
(N_0^{(1)}(\theta,w)+N_0^{(2)})^{(w)}+d_{w}H_0^{(w)}(\theta,y,w)[\cdot]$.
We have that the term $d_{w}H_0^{(w)}(\theta,y,w)[\cdot]$ has the form \eqref{pippone}
thanks Lemma \eqref{Properties of $F$}.
The $\|\cdot\|_{\vec{v},p}$ norm of its
coefficients $a_i,b_i$ with $i=0,1,2$, $c_{i},d_i$ with $i=1,\ldots,N$
can be estimated by follows the same procedure used in Lemma \ref{INIZIAMO}.
One gets the estimates \eqref{labellabel} with $C(F)\rightsquigarrow \gotA_0|\x|^{\frac{1}{2}}\g_0$ which is equivalent to $C_{\vec{v},\gotp_2}(H_0)$ given in Lemma \ref{INIZIAMO}.
Hence Lemma \ref{linearizedatu} follows.
\end{proof}

\zerarcounters

\section{An Abstract KAM Theorem }
\label{secNMaut}
\noindent
In this Section we 
show the the nonlinear functional setting
introduced in Section \ref{iniz}
allows us to prove a KAM result completely analogous to the 
 abstract Theorem 
 proved in \cite{CFP}.
  In order to state the result 
 we need further notations.
Recalling Lemma \eqref{INIZIAMO} we set
\begin{equation}\label{parametri}
\mathtt{G}_0:=\gotA_0, \quad \e_0:=|\x|^{\frac{1}{4}}, \quad
\mathtt{R}_0:=\gotA_0\e_0^{\frac{1}{2}},
\end{equation}
To resume with this notation we have the following bounds on the field $F_0$ in \eqref{ini2}:
\begin{equation}\label{cartabianca}
\!\!\!\g_0^{-1}C_{\vec{v},\gotp_2}(N^{(1)}), \g_0^{-1}C_{\vec{v},\gotp_2}(N^{(2)})\leq \tG_0, \quad
\g_0^{-1}C_{\vec{v},\gotp_2}(H_{0}^{(\theta,0)}), \g_0^{-1}C_{\vec{v},\gotp_2}(\Pi_{\calX}H_0)\leq \e_0, \quad 
\g_0^{-1}C_{\vec{v},\gotp_2}(\Pi_{\NN}^{\perp}H_0)\leq \tR_0.
\end{equation}

We need to 
introduce parameters fulfilling the following constraints.
\begin{const}[{\bf The exponents}]\label{sceltapar}
We fix parameters $\mu_1,\eta,\ka_1,\ka_2,\ka_3,\gotp_1,\gotp_2$ such that the following holds.
\begin{itemize}

\item $\mu_1,\ka_3\ge 0$, $\gotp_2>\gotp_1\geq\gotp_0:=(d+1)/2$, 
\item $\mu:= 5(\mu_1+7)$,
\item Setting $\ka_0:= \mu+6$  and $\Delta\gotp:= \gotp_2-\gotp_1$  one has
\begin{subequations}\label{exp}
\begin{align}
 \ka_1 &> \max(\frac{2}{3}(\ka_0+\ka_3), 2{\ka_0},6\ka_0+\ka_2+1)\,,
 \label{exp1} \\
 \ka_2&> \max\big(4\ka_0, (\ka_0 +2  \max(\ka_1,\ka_3)-\frac{3}{2} \ka_1)\big) \,,
 \label{exp2}\\
 \eta &> \mu+\frac{1}{2} \ka_2+1\,,
 \label{exp3} \\
 \Delta\gotp&>\max\big(\ka_0+\frac{3}{2} \ka_2+\max(\ka_1,\ka_3),(5\ka_0+\frac{3}{2} \ka_2+\ka_1+1)\big)\,,
 \label{exp4}
\end{align} 
\label{eq106}
\end{subequations}
\end{itemize}

\end{const} 
Note the Definition \eqref{sceltapar} is exactly the ``Constraints'' $2.21$ in \cite{CFP} with
$\al=0$, $\nu=2$ and $\chi=3/2$. 
We set 
\begin{equation}\label{lamai}
K_0= \e_0^{-\mathtt{a}} \mathtt{G}_0\,,\qquad \mathtt{a}=\frac{1}{4\ka_0+\ka_2+1},
\end{equation}
with  $\mathtt{G}_0$ defined in \eqref{parametri}.

%
%
%

\begin{lemma}[{\bf Smallness conditions}]\label{smallcondi}
Consider $\e_0,\mathtt{G}_0,\mathtt{R}_0$ in \eqref{parametri} and 
$K_0$ in \eqref{sceltapar}. One has that, 
for $|\x|$ small enough, the following bounds hold: 
\begin{subequations}\label{expexp}
\begin{align}
& 0<\e_0\le\tR_0\le \tG_0, \quad \e_0\tG_0^3,\e_0 \tG_0^2 \tR_0^{-1}<1\label{sss111}\\
&\tG_0^2\tR_0^{-1}\e_0 K_0^{\ka_0}\max (1, 	\tR_0 \tG_0 K_0^{\ka_0+\frac{1}{2}\ka_2})<1\,,
\label{1s1}\\
&\max( K_0^{\ka_1},\e_0 K_0^{\ka_3})  K_0^{\ka_0-\Delta\gotp +\frac{1}{2} \ka_2 }\tG_0\e_0^{-1} 
\le 1\,,
 \label{4s2}\\
& \max( K_0^{\ka_1},\e_0 K_0^{\ka_3})K_0^{\ka_0-\frac{3}{2} \ka_1}\tG_0 \tR_0^{-1}
\le 1\,,
 \label{6s2}
 \end{align}
 \label{small}
 \end{subequations}
\end{lemma}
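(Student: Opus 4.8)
The plan is to substitute the explicit scalings \eqref{parametri} and \eqref{lamai} into each of the bounds \eqref{sss111}--\eqref{6s2} and to observe that, after substitution, every one of them becomes an inequality of the form $C\,|\x|^{\delta}<1$ (or $\le 1$) with a constant $C=C(d,\ff)$ and an exponent $\delta=\delta(\ka_0,\ka_1,\ka_2,\ka_3)$ which is \emph{strictly positive}, the positivity being guaranteed precisely by Constraint \ref{sceltapar} together with the choice of $\mathtt{a}$ in \eqref{lamai}. Since \eqref{sss111}--\eqref{6s2} is a finite list of such conditions, they all hold simultaneously once $|\x|$ is taken small enough, which is the assertion.

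First I would record the elementary identities coming from \eqref{parametri} and \eqref{lamai}: one has $\e_0=|\x|^{1/4}$, $\tR_0=\gotA_0\,|\x|^{1/8}$, $\tG_0=\gotA_0$ is independent of $\x$, and $K_0=\gotA_0\,|\x|^{-\mathtt{a}/4}$, whence $K_0^{\sigma}=\gotA_0^{\sigma}\,|\x|^{-\sigma\mathtt{a}/4}$ for any real $\sigma$. Enlarging $\gotA_0$ if needed we may also assume $\gotA_0\ge1$. Then \eqref{sss111} is immediate for $|\x|\le1$, since $\e_0=|\x|^{1/4}\le\gotA_0|\x|^{1/8}=\tR_0\le\gotA_0=\tG_0$, while $\e_0\tG_0^{3}=\gotA_0^{3}|\x|^{1/4}$ and $\e_0\tG_0^{2}\tR_0^{-1}=\gotA_0|\x|^{1/8}$ plainly tend to $0$.

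Next I would dispatch the three ``scaling'' inequalities \eqref{1s1}, \eqref{4s2}, \eqref{6s2}, in each of which the $\e_0,\tG_0,\tR_0$ factors contribute a fixed power of $|\x|$ while the $K_0$-factors contribute $|\x|^{-\sigma\mathtt{a}/4}$ for an explicit $\sigma$. For \eqref{1s1} one computes $\tG_0^{2}\tR_0^{-1}\e_0=\gotA_0|\x|^{1/8}$, so the prefactor of the $\max$ equals $\gotA_0^{\ka_0+1}|\x|^{1/8-\ka_0\mathtt{a}/4}$ with positive exponent because $\mathtt{a}=1/(4\ka_0+\ka_2+1)<1/(2\ka_0)$ (using $\ka_0>0$, $\ka_2\ge0$); if the second entry of the $\max$ is $\le1$ we are done, and otherwise multiplying by it contributes the further factor $\gotA_0^{\,2+\ka_0+\frac12\ka_2}|\x|^{1/8-(\ka_0+\frac12\ka_2)\mathtt{a}/4}$, and the resulting total exponent $\frac14-(4\ka_0+\ka_2)\mathtt{a}/8$ is still positive since $\mathtt{a}<2/(4\ka_0+\ka_2)$. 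For \eqref{6s2}, on the branch $\max=K_0^{\ka_1}$ the left-hand side is $\gotA_0^{\ka_0-\frac12\ka_1}|\x|^{-1/8-(\ka_0-\frac12\ka_1)\mathtt{a}/4}$, whose $|\x|$-exponent is positive exactly when $\ka_1>2\ka_0+1/\mathtt{a}=6\ka_0+\ka_2+1$, i.e.\ by \eqref{exp1}; the branch $\max=\e_0K_0^{\ka_3}$ is handled identically using $\ka_1>\tfrac23(\ka_0+\ka_3)$ from \eqref{exp1}. For \eqref{4s2}, on the branch $\max=K_0^{\ka_1}$ the $|\x|$-exponent equals $(\Delta\gotp-\ka_0-\ka_1-\tfrac12\ka_2)\mathtt{a}/4-\tfrac14$, positive precisely when $\Delta\gotp>\ka_0+\ka_1+\tfrac12\ka_2+1/\mathtt{a}=5\ka_0+\ka_1+\tfrac32\ka_2+1$, which is the second alternative in \eqref{exp4}; on the branch $\max=\e_0K_0^{\ka_3}$ the factors $\e_0$ and $\e_0^{-1}$ cancel and one needs only $\Delta\gotp>\ka_0+\ka_3+\tfrac12\ka_2$, covered by the first alternative in \eqref{exp4}.

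I do not expect any genuine obstacle: the whole argument is bookkeeping of powers of $|\x|$, and the single point worth emphasizing is that the value $\mathtt{a}=1/(4\ka_0+\ka_2+1)$ is calibrated so that $1/\mathtt{a}=4\ka_0+\ka_2+1$ reproduces exactly the ``$+1$'' that appears in \eqref{exp1} and \eqref{exp4}. Alternatively, since by the remark following Constraint \ref{sceltapar} the bounds \eqref{sss111}--\eqref{6s2} are literally the smallness conditions of the abstract KAM theorem of \cite{CFP} specialized to $\al=0$, $\nu=2$, $\chi=3/2$, one may simply quote the corresponding verification in \cite{CFP} evaluated at the present values of $\e_0,\tG_0,\tR_0,K_0$.
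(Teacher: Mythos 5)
Your proposal is correct and follows essentially the same route as the paper's proof: substitute $\e_0=|\x|^{1/4}$, $\tR_0=\gotA_0\e_0^{1/2}$, $\tG_0=\gotA_0$, $K_0=\gotA_0\e_0^{-\mathtt{a}}$ into each inequality and check that the resulting power of $|\x|$ is positive, using the calibration $1/\mathtt{a}=4\ka_0+\ka_2+1$ together with \eqref{exp1} and \eqref{exp4} (your branch-by-branch bookkeeping is in fact slightly sharper than the paper's, which over-estimates a couple of exponents, but the argument is the same).
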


\begin{proof}
 Let us check the \eqref{sss111}-\eqref{6s2} using 
 \eqref{parametri}. Condition \eqref{sss111} is implied by 
 $$
 \sqrt{\e_0}\gotA_0^{3}\leq1,
 $$
 which holds for $\e_0$ small enough. The \eqref{1s1} reads
 $$
\gotA_0^{3}\e_0K_0^{2\mathtt{b}}{=}\gotA_0^{3+\frac{1}{\mathtt{a}}}K_0^{2\mathtt{b}-\frac{1}{\mathtt{a}}}\leq 1, \quad \mathtt{b}=2\ka_0+\frac{1}{2}\ka_2,
 $$
 which is satisfied thanks to the choice of $\ta$.
The other condition in \eqref{1s1} follows in the same way. Consider condition \eqref{4s2}.
We must have that 
$$
\gotA_0^{1-\frac{1}{\mathtt{a}}}K_0^{\ka_1+\ka_0+\frac{1}{2}\ka_2+\frac{1}{a}-\Delta\gotp}\leq 1,\qquad \gotA_0K_0^{\ka_{3}+\ka_{1}+\ka_{0}+\frac{1}{2}\ka_2-\Delta\gotp}\leq 1,
$$
which is true thanks condition \eqref{exp4} which implies that $\Delta\gotp$ is large enough. The last condition \eqref{6s2} follows in the same way.
\end{proof}

\begin{rmk}
We remark that conditions \eqref{sss111}-\eqref{6s2} are
 the smallness conditions in $(2.44)$ of Constraints $2.21$ in  \cite{CFP}.
\end{rmk}

\noindent
We have the following definition.
\begin{defi}[{\bf Mel'nikov conditions}]\label{pippopuffo3} 
Let $\gamma,\mu_1> 0$, $K\ge K_0$, consider a compact set $\calO \subset \calO_0$ and set
$\vec{v}=(\g,\calO,s,a,r)$ and $\vec{v}^{\text{\tiny 0}}=(\g,\calO_0,s,a,r)$.
Consider a  vector field $F\in \calW_{\vec v^{\text{\tiny 0}},p}$  i.e.
$$
F= N_0+G: \calO_0\times    D_{a,p+\nu}(r)\times\TTT^{d}_{s}\to V_{a,p}\,, 
$$
 which is  $C^{3}$-tame up to order $q=\gotp_2+2$.
   We say $\mathcal O$  satisfies the Mel'nikov conditions  for
   $(F,K,\vec{v}^{\text{\tiny 0}})$ if  
    the following holds.

 \noindent
1.\, For all $\xi\in \mathcal O$   one has  $F(\xi)\in \calE$  and $|\Pi_{\calX}G|_{\vec{v},\gotp_{2}-1}\leq \mathtt{C}\, C_{\vec{v},\gotp_{2}}(\Pi_{\NN}^{\perp}G)$.
 
 \noindent
2.\, Setting $\gotN:= \Pi_K\Pi_\calX {\rm ad }(\Pi_\NN F)$ for all $\xi\in \calO$ there exists a block-diagonal  operator 
$\gotW: E^{(K)}\cap\calX\cap\calE  \to E^{(K)}\cap \BB_\calE $ such that 
 for any vector field $X\in E^{(K)}\cap\calX\cap\calE $

 \begin{itemize}

  \item[(a)] one has that the vector field $g:=\gotW X$ satisfies
 \begin{equation}\label{buoni}
 \begin{aligned}
|\gotW X|_{\vec v,p}&\le
\gamma^{-1}K^{\mu_1}(|X|_{\vec v,p}
+|X|_{\vec v,\gotp_1}
\g^{-1}C_{{\vec v,p}}(G)).
\end{aligned}
 \end{equation}
 
 \item[(b)] setting
 $
u:=(\Pi_{K}{\rm ad}(\Pi_{\NN}F)[\gotW X]-X)
 $ 
one has 
  \phantom{assssasfffff}
  \begin{equation}\label{cribbio4}
  \begin{aligned}
  |u|_{\vec{v},\gotp_1}&\leq \e_0 \g^{-1} K^{-\h+\mu_1} C_{\vec{v},\gotp_1}(G)|X|_{\vec{v},\gotp_1},\\
    |u|_{\vec{v},\gotp_2}&\leq \g^{-1} K^{\mu_1}\left(
    |X|_{\vec{v},\gotp_2}C_{\vec{v},\gotp_1}(G)+|X|_{\vec{v},\gotp_1}C_{\vec{v},\gotp_2}(G)
    \right).
\end{aligned}
  \end{equation}
  \end{itemize}
\end{defi}

\noindent
We set moreover
$r_0>0$ and $a_0, s_0\ge0$, 
 we set for all $\g_0,\mathtt{G}_0,\mathtt{R}_0>0$,
\begin{equation}\label{numeretti}
\begin{aligned}
& K_n= (K_0)^{(3/2)^n},\, 
\g_n= \g_{n-1}(1-\frac{1}{2^{n+2}}), \quad
\mathtt{G}_n=\mathtt{G}_0(1+\sum_{j=1}^{n}2^{-j}),\quad 
\mathtt{R}_n=\mathtt{R}_0(1+\sum_{j=1}^{n}2^{-j})\\
& a_n= a_0(1-\frac{1}{2}\sum_{j=1}^n2^{-j})\,,
 \quad r_n= r_0(1-\frac{1}{2}\sum_{j=1}^n2^{-j}), 
\quad s_n= s_0(1-\frac{1}{2}\sum_{j=1}^n2^{-j})\,, \\
 &\Pi_n := \Pi^{(K_n)}\,,\quad
\Pi_n^{\perp}:=\uno-\Pi_n\,,
\quad  E_n= E^{(K_n)}, \quad \rho_{n}:=\frac{1}{2^{n+8}}, n\geq1, \;\; \rho_0=0, 
\end{aligned}
\end{equation}
Finally, for all $n\ge0$ we denote
$\vec v_n=(\g_{n},\calO_{n},s_{n},a_{n},r_{n})$, $\vec{v}^{\text{\tiny 0}}_n= (\g_{n},\calO_{0},s_{n},a_{n},r_{n})$.

\begin{defi}[{\bf Compatible changes of variables}]\label{compa} Let the parameters in Constraint
\ref{sceltapar} be fixed.
Fix also $\vec v= (\g,\calO,s,a,r)$, $\vec{v}^{\text{\tiny 0}}= (\g,\calO_0,s,a,r)$ with $\calO\subseteq\calO_0$
a compact set,  parameters
$K\ge K_0,\rho<1$. Consider a
 vector field $F= N_0+G\in \calW_{\vec{v}^{\text{\tiny 0}},p}$  which is  $C^{3}$-tame up to
 order $q=\gotp_2+2$ and such that, $$F\in \calE\quad \forall\x\in \calO\,,\qquad |\Pi_{\calX} G|_{\vec{v},\gotp_2-1}\leq \mathtt{C} C_{\vec{v},\gotp_2}(\Pi_{\NN}^\perp G) .$$
  We say that a 
left invertible $\calE$-preserving change of variables 
$$
\calL, \calL^{-1}: \TTT^d_{s}\times D_{a,\gotp_1}(r)\times \calO_0 \to
 \TTT^d_{s+\rho s_0}\times D_{a-\rho a_0,\gotp_1}(r+\rho r_{0})
 $$
 is {\em compatible} with $(F,K,\vec v,\rho)$ if the following holds:
\begin{itemize}
\item[(i)]  $\calL$ is ``close to identity'', i.e.
denoting $\vec{v}^{\text{\tiny 0}}_1:=(\g,\calO_0,s-\rho s_0,a-\rho a_0,r-\rho r_0)$ one has 
\begin{equation}\label{satana}
\begin{aligned}
&\|(\calL-\uno)h\|_{\vec{v}^{\text{\tiny 0}}_1,\gotp_1}\leq \mathtt C  \e_0 K^{-1}
\|h\|_{\vec{v}^{\text{\tiny 0}},\gotp_1}\,.\\
\end{aligned}
\end{equation}

\item[(ii)]  $\calL_*$ conjugates the $C^{3}$-tame vector field $F$ to 
the vector field $\hat{F}:= 
\calL_{*}F=  N_0+ \hat G$
which is $C^{3}$-{tame}; moreover denoting
 $\vec v_2:=(\g,\calO,s-2\rho s_0,a-2\rho a_0,r-2\rho r_0)$
one may choose the tameness constants of $\hat G$ so that
\begin{equation}\label{odio}
\begin{aligned}
&C_{\vec v_2, \gotp_1}(\hat{G})\le C_{\vec v,\gotp_1}(G)(1+\e_0 K^{-1})\,,\\
&C_{\vec v_2,\gotp_{2}}(\hat{G})\le \mathtt C \big(C_{\vec v,\gotp_{2}}(G)+
\e_0 K^{\ka_3}C_{\vec v,\gotp_{1}}(G)\big) \,\\
&
|\Pi_{\calX}\hat{G}|_{\vec{v}_2,\gotp_2-1}
\le \mathtt C \big(C_{\vec v,\gotp_{2}}(\Pi_{\NN}^{\perp}G)+
\e_0 K^{\ka_3}C_{\vec v,\gotp_{1}}(\Pi_{\NN}^{\perp}G)\big) \,.
\end{aligned}
\end{equation}
\item[(iii)] $\calL_*$ ``preserves the $(\NN,\calX,\RR)$-decomposition'', namely one has
\begin{equation}\label{satana2}
\Pi_\NN^\perp (\calL_* \Pi_\NN F) = 0\,, \quad \qquad \Pi_{\calX}(\calL_{*}\Pi_{\calX}^\perp F)=0\,.
\end{equation}
\end{itemize}
\end{defi}

\begin{rmk}
We remark the following facts.  In the choice of parameters in \ref{sceltapar} there is some freedom.
However some parameters are given by the problem we are studying. Indeed the loss of regularity 
$\mu_1$, and the decay parameter $\h$ in Definition \ref{pippopuffo3} are determined in Section \ref{sec7aut}, precisely in Lemma
\ref{computer}. In such Section we will construct a quite explicit set of parameters
which satisfy \eqref{buoni}-\eqref{cribbio4} just for a suitable choice of $\mu_1$ and $\h$.
The same remark holds for the parameter $\ka_3$ in Definition \eqref{compa}. 
Indeed in Section \ref{seclin} we will introduce some changes of coordinates, and in Proposition \eqref{andrea}
in Section \ref{sbroo} we will prove that such transformations are \emph{compatible}, according to
Definition \ref{compa}, provided that $\ka_3$ is chosen in suitable way.
We will also see that, in order to define the transformation of Section \ref{seclin}
we will need to fix a minimum regularity $\gotp_1$.
The other parameters will be chosen according to \eqref{exp1}-\eqref{exp4}.
\end{rmk}

The Abstract result we use in order to prove Theorem \ref{teoremap} is the following

 \begin{theo}[{\bf Abstract KAM}]\label{thm:kambis}
Fix parameters $\e_0,\tR_0,\tG_0,\mu,\eta,\ka_1,\ka_2,\ka_3,\gotp_1,\gotp_2$  satisfying Constraint \ref{sceltapar}. Let $N_0$ be a diagonal vector field (see Definition \ref{norm}) in \eqref{ini1} and consider a  vector field
 \begin{equation}\label{kam1}
 F_0:=N_{0}+G_0 \in \calE\cap\calW_{\vec{v}_{0},p}
 \end{equation} 
 which is  $C^{3}$-tame up to order $q=\gotp_2+2$.

Fix $\g_0>0$ and assume that
 \begin{equation}\label{sizes}
\g_{0}^{-1}C_{\vec{v}_0,\gotp_2}(G_0) \le \tG_0\,,\quad
 \g_0^{-1}C_{\vec{v}_0,\gotp_2}(\Pi_{\NN}^\perp G_0)\le \tR_0\,, \quad
 \g_0^{-1}|\Pi_{\calX}G_0|_{\vec{v}_0,\gotp_1}\le \e_0\,,
 \quad  \g_0^{-1}|\Pi_{\calX}G_0|_{\vec{v}_0,\gotp_2}\le \tR_0\,.
 \end{equation}

For all $n\geq 0$  we define recursively  changes of variables $\calL_n,\Phi_n$  and  compact sets
$ \calO_n$ as follows.

\smallskip

  Set $\HH_{-1}=\HH_0=\Phi_0=\calL_0=\uno$, and for $0\le j \le n-1$ set recursively
  $\HH_j= \Phi_j\circ \calL_j\circ \HH_{j-1}$ and  $F_j:=(\HH_j)_{*}F_0:=N_0+G_{j}$.
  Let $\calL_{n}$ be any change of variables   compatible with $(F_{n-1},K_{n-1}, \vec v_{n-1},\rho_{n-1})$,
    and  $\calO_n$ be any compact set 
   \begin{equation}\label{oscurosignore}
   \calO_{n}\subseteq \calO_{n-1}\,,
   \end{equation}  
  which satisfies the   homological equation for $((\calL_n)_*F_{n-1},K_{n-1},\vec v^{\text{\tiny 0}}_{n-1},\rho_{n-1})$.
   For $n>0$ let $g_n$ be the regular vector field defined in item (2) of Definition \ref{pippopuffo3} and set
    $\Phi_n$ the time-1 flow map generated by $g_n$. 
    
    Then $\Phi_n$ is left invertible and 
     $F_n:= (\Phi_n\circ\calL_n)_*F_{n-1}\in \calW_{\vec v^{\text{\tiny 0}}_n,p}$ is  $C^{3}$-tame up to order $q=\gotp_2+2$.
Moreover the following holds.

      \begin{itemize}
     
  \item[{\bf (i)}]
  Setting $G_n=F_n-N_0$ then
 \begin{equation}\label{lamorte}
 \begin{aligned}
  \Gamma_{n,\gotp_1}&:=\g_{n}^{-1} C_{\vec{v}_n,\gotp_1}(G_n)\leq \tG_n, \quad 
  \Gamma_{n,\gotp_2} :=\g_{n}^{-1}
  C_{\vec{v}_n,\gotp_2}(G_n)\leq  \tG_0 K_n^{\ka_1},\\
\Theta_{n,\gotp_1}&:= \g_{n}^{-1}C_{\vec{v}_n,\gotp_1}(\Pi_{\NN}^\perp G_n)\leq \tR_n,
 \quad  \Theta_{n,\gotp_2}:=
 \g_{n}^{-1}C_{\vec{v}_n,\gotp_2}(\Pi_{\NN}^\perp G_n)\leq  \tR_0  K_{n}^{\ka_1}
 \\
\de_n&:= \g_{n}^{-1} |\Pi_{\calX}G_n|_{\vec{v}_n,\gotp_1}\leq K_0^{\ka_2} \e_0 K_{n}^{-\ka_2},
\quad \g_{n}^{-1} |\Pi_{\calX}G_n|_{\vec{v}_n,\gotp_2}\leq \tR_0 K_n^{\ka_1} 
\\
|g_{n}|_{\vec u_n,\gotp_1}&
     \leq  K_0^{\ka_2} \e_0 \tG_0K_{n-1}^{-\ka_2+\mu+1},
     \quad 
     |g_{n}|_{\vec{u}_{n},\gotp_2}
     \leq 
     \tR_0 \tG_0^{-1} K_{n-1}^{-3+\frac{3}{2} \ka_1} 
 \end{aligned}
 \end{equation}
where 
$\vec u_{n}=(\g_{n},\calO_{n},s_{n}+12\rho_n s_0 ,a_{n}+12\rho_{n} a_0,r_{n}+12\rho_n r_0)$.
 \item[{\bf (ii)}]
 The sequence $\calH_n$ converges
     for all $\x\in\calO_0$ 
  to some change of variables
  \begin{equation}\label{dominio}
  {\calH}_\io={\calH}_\io(\x):  D_{{a_{0}},p}({s_{0}}/{2},{r_{0}}/{2})\longrightarrow D_{\frac{a_{0}}{2},p}({s_{0}},{r_{0}}).
  \end{equation}
  
 \item[{\bf (iii)}]
 Defining $F_{\infty}:=(\calH_\io)_{*}F_0$ one has 
 \begin{equation}\label{fine}
 \Pi_\calX F_{\infty}=0
 \quad \forall \xi \in \calO_\io:=\bigcap_{n\geq0}\calO_n
 \end{equation}
 and 
 $$
 \g_0^{-1}C_{\vec{v}_\io,\gotp_1}(\Pi_{\NN}F_{\infty}-N_0)\le 2\tG_0, \quad
  \g_0^{-1}C_{\vec{v}_\io,\gotp_1}(\Pi_{\RR}F_{\infty})\le 2\tR_0
 $$
 with $\vec{v}_\io:=(\g_0/2,\calO_\io,s_{0}/2,a_{0}/2)$. 
 
 \end{itemize}
\end{theo}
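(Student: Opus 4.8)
The plan is to run the quadratically convergent KAM iteration whose $n$-th step is prescribed in the statement, proving item (i) by induction on $n$ and then deducing (ii) and (iii). Since the functional framework of Section \ref{iniz} (the scale $\VV_{\vec v,p}$, the $(\NN,\calX,\RR)$-decomposition, the notion of $C^3$-tame vector field in Definition \ref{tame}, the class $\calE$, the compatible changes of variables of Definition \ref{compa}, and the Mel'nikov conditions of Definition \ref{pippopuffo3}) has been set up to match exactly the one of \cite{CFP}, and since Constraint \ref{sceltapar} is literally ``Constraints 2.21'' of \cite{CFP} with $\al=0,\nu=2,\chi=3/2$, the cleanest route is: verify that $F_0=N_0+G_0$ and the data $\e_0,\tR_0,\tG_0,\g_0$ in \eqref{sizes} fulfil the hypotheses of the abstract theorem of \cite{CFP} — which they do by construction, the only non-formal point being that $N_0$ is diagonal in the sense of Definition \ref{norm} and that $\ell_{a,p}$ satisfies ``Hypothesis 1'' of \cite{CFP} (Remark \ref{projector}) — and then invoke that theorem. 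For completeness I would also indicate the mechanism of the inductive step, as follows.

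\emph{Inductive step.} Assume \eqref{lamorte} holds at level $n-1$ (for $n=1$ it is \eqref{cartabianca}--\eqref{sizes}). First apply the compatible change $\calL_n$: by Definition \ref{compa}(ii)--(iii), $\hat F_{n-1}:=(\calL_n)_*F_{n-1}=N_0+\hat G_{n-1}$ is $C^3$-tame, its tameness constants obey \eqref{odio} (so the $\gotp_1$-constants grow only by a factor $1+\e_0K_{n-1}^{-1}$, while the $\gotp_2$-constants pick up at most $\e_0K_{n-1}^{\ka_3}$ of the $\gotp_1$-constants), and the $(\NN,\calX,\RR)$-splitting is preserved, in particular $\Pi_\NN^\perp\Pi_\NN\hat F_{n-1}=0$ and $\Pi_\calX\Pi_\calX^\perp\hat F_{n-1}=0$. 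Next, since $\calO_n$ satisfies the Mel'nikov conditions for $(\hat F_{n-1},K_{n-1},\vec v^{\text{\tiny 0}}_{n-1},\rho_{n-1})$, Definition \ref{pippopuffo3}(2) produces a regular $g_n=\gotW\,\Pi_{K_{n-1}}\Pi_\calX\hat G_{n-1}$ satisfying \eqref{buoni}, and the residual $u_n=\Pi_{K_{n-1}}\mathrm{ad}(\Pi_\NN\hat F_{n-1})[g_n]-\Pi_{K_{n-1}}\Pi_\calX\hat G_{n-1}$ satisfying \eqref{cribbio4}. Let $\Phi_n$ be the time-$1$ flow of $g_n$ and set $F_n=(\Phi_n)_*\hat F_{n-1}=N_0+G_n$. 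Expanding the Lie series of the conjugation, the $\Pi_{K_{n-1}}\Pi_\calX$-part of $\hat G_{n-1}$ is cancelled up to $u_n$, the high-frequency part $\Pi_{K_{n-1}}^\perp\Pi_\calX\hat G_{n-1}$ is estimated by the smoothing \eqref{smoothing} together with its $\gotp_2$-bound, and everything else is (at least) quadratic in $(g_n,\hat G_{n-1})$ or involves $\mathrm{ad}(\Pi_\RR\hat F_{n-1})$ applied to $g_n$. Collecting terms and using the geometric decay $K_n=K_0^{(3/2)^n}$, one gets the new $\Pi_\calX$-smallness $\de_n\le K_0^{\ka_2}\e_0K_n^{-\ka_2}$ and the controlled growth of $\Gamma_{n,\gotp_1},\Theta_{n,\gotp_1}\le \tG_n,\tR_n$, $\Gamma_{n,\gotp_2}\le\tG_0K_n^{\ka_1}$, $\Theta_{n,\gotp_2}\le\tR_0K_n^{\ka_1}$, and the bounds on $|g_n|$. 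The inequalities \eqref{exp1}--\eqref{exp4} and the smallness conditions of Lemma \ref{smallcondi} are exactly what is needed for each of these estimates to close at every $n$.

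\emph{Convergence and limit.} From \eqref{satana} one has $\|(\calL_n-\uno)h\|\lesssim\e_0K_{n-1}^{-1}\|h\|$, and from the bounds on $g_n$ in \eqref{lamorte} one has $\|(\Phi_n-\uno)h\|\lesssim|g_n|\|h\|$ with $|g_n|$ superexponentially small; since the radii $s_n,a_n,r_n$ in \eqref{numeretti} decrease through a convergent geometric series, the composition $\HH_n=\Phi_n\circ\calL_n\circ\HH_{n-1}$ converges to $\calH_\io$ on the domain \eqref{dominio}, proving (ii). Pushing forward, $F_\io=(\calH_\io)_*F_0=N_0+G_\io$; because $\de_n\to0$ one obtains $\Pi_\calX F_\io=0$ on $\calO_\io=\bigcap_n\calO_n$, which is (iii), and the bounds on $\Pi_\NN F_\io-N_0$ and $\Pi_\RR F_\io$ follow by telescoping the factors $\tG_n\le 2\tG_0$, $\tR_n\le 2\tR_0$. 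I expect the main obstacle to be the standard one of analytic-class KAM: the compatible maps $\calL_n$ unavoidably lose a bit of analyticity radius (they conjugate the principal symbol of the linearized operator towards a constant-coefficient one, cf.\ the discussion after \eqref{skifo}), so one must ensure this loss is summable; this is what forces the relation between $\mu_1$, $\eta$ in Definition \ref{pippopuffo3} and the constraints \eqref{exp}, and the fact that at each step the principal symbol is already closer to diagonal, so that the size of the correction decays geometrically — encoded abstractly in the smallness \eqref{satana} and the shrinking of $\rho_n$ in \eqref{numeretti}. Verifying that the concrete changes of variables of the later sections actually are compatible in the sense of Definition \ref{compa} is deferred to Proposition \ref{andrea}.
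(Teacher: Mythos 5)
Your proposal matches the paper's own route: the paper does not reprove the iteration at all, but reduces Theorem \ref{thm:kambis} to Theorem $2.25$ (i.e.\ Theorem $4.16$) of \cite{CFP}, the only actual work being the verification lemmata (Lemmata \ref{decompototale}, \ref{miaomiao}, \ref{cellulare}, \ref{temavec}, \ref{ciprovo}) showing that the $(\NN,\calX,\RR)$-decomposition, the regular/finite-rank fields, the commutator--composition--conjugation estimates (including the new adjoint-tame conditions $(T_m)^*$) and the compatibility of $\calE$ fit the abstract setting, after which the proof is declared a word-by-word repetition. Your sketch of the inductive step and of the convergence is consistent with that omitted argument; the only thing you understate is that checking the \cite{CFP} hypotheses is more than the two points you single out (diagonality of $N_0$ and Hypothesis~1 for $\ell_{a,p}$) — it is exactly where the adjoint-tameness requirement enters — but since you rely on the framework already set up in the paper, this is a matter of emphasis rather than a gap.
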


\begin{rmk}
We remark that hypothesis \eqref{sizes} is clearly satisfied 
by the  field $F$ in \eqref{system} thanks to
 Lemma \ref{INIZIAMO} and  the choices in \eqref{parametri}.
\end{rmk}
 
This is exactly the result stated in Theorem $2.25$ in \cite{CFP}. 
In particular we are in the setting of Example $3$ in Section $4$ of \cite{CFP}.
Such rather abstract result is based on Polynomial and normal form decomposition as
detailed in Sections \ref{poly} and  \ref{normi}, and definitions of \emph{tame} and \emph{regular}
vector fields (see Def. $2.13$, $2.18$ of \cite{CFP}). 
The main point is that regular vector fields must be tame and their tameness constants 
have to satisfy a series of properties.
Actually in \cite{CFP} is not fixed a particular choice of regular fields but are only listed 
the required properties. The proof of Theorem \ref{thm:kambis} consists in 
showing that we are in the setting of \cite{CFP}
section $4$ example $3$.
We shall underline the minor differences.

The $(\NN,\calX,\RR)-$decomposition  is the same, hence we have the following.

\begin{lemma}\label{decompototale}
The $(\NN,\calX,\RR)-$decomposition in Definition \ref{decomponi}
satisfies all the properties of Definition $2.17$ in \cite{CFP}.
 Note moreover then $(\NN, \calX,\RR)-$decomposition  is
 \emph{triangular} according to Definition $3.1$ in \cite{CFP}.
\end{lemma}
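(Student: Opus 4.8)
The plan is to verify directly that the objects of Definition \ref{decomponi} are an instance of the abstract framework of \cite{CFP}: nothing is re-proved, only matched to the axioms. First I would observe that the direct-sum statement $\calW_{\vec v,p}=\NN\oplus\calX\oplus\RR$ into closed subspaces is immediate from Section \ref{poly}. Indeed the Taylor decomposition $\calW^{(1)}_{\vec v,p}=\calP_1\oplus\RR_1$ together with the finite splitting \eqref{poly2} of $\calP_1$ into the nine monomial subspaces identifies $\RR=\RR_1\oplus\VV^{(\theta,y)}\oplus\VV^{(\theta,w)}\oplus\VV^{(w,y)}$, and the projections $\Pi_\NN,\Pi_\calX,\Pi_\RR$ are finite sums of the coordinate projections \eqref{pro}. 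From this one reads off the boundedness of the projections in the norm \eqref{supernorma}, their commutation with the truncations $\Pi_K$ of Definition \ref{projectortotale} (which act block-wise on the very same monomial components) and with the restriction to $\calE$, and the fact that they do not increase the tameness/regularity constants $C_{\vec v,p}(\cdot)$, $|\cdot|_{\vec v,p}$; these are the ``soft'' requirements of Definition $2.17$ of \cite{CFP}.

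Next I would check the Lie-theoretic axioms. Writing a generic monomial field as $f^{(\tv_1,\tv_1')}(\theta)[\tv_1']\,\partial_{\tv_1}$ and using the weight assignment $\deg\theta=0$, $\deg y=2$, $\deg w=1$ (with opposite weights in the $\partial$-directions) together with the Gauge-preserving constraint \eqref{conservadipomodori}, a bracket computation on monomials yields: ${\rm ad}(N)$ preserves each $\VV^{(\tv,\tv_1)}$ for $N\in\NN$, since an element of $\NN$ is either a $\partial_\theta$-component independent of $(y,w)$ or a $\partial_w$-component linear in $w$ (this is exactly the diagonality of $N_0$, Definition \ref{norm}); moreover $[\calX,\calX]\subseteq\RR$ and $[\NN\oplus\calX,\calX]\subseteq\calX\oplus\RR$, because a bracket of two fields each with a zero of order $\ge 1$ in the relevant direction has a zero of order $\ge 2$; and $\RR$ is a Lie ideal, $\RR_1$ being one automatically and the three ``off-diagonal'' linear pieces $\VV^{(\theta,y)},\VV^{(\theta,w)},\VV^{(w,y)}$ being separated from $\NN\oplus\calX$ by \eqref{conservadipomodori}. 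This is precisely the triangularity of Definition $3.1$ of \cite{CFP}, with the filtration $\RR\subset\RR\oplus\calX\subset\calW$ normalised by $\NN$, and it simultaneously covers the ``$\calE$-preserving'' clauses, since each monomial subspace is reversibility-, Real-on-Real- and Gauge-compatible by construction (compare Lemma \ref{prestige}).

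The step I expect to be the real work is the bracket bookkeeping just described, because one has to chase the $2\times 2$ block structure of \eqref{skifo} through brackets of monomials while tracking the Gauge grading. However, since $\ell_{a,p}=\Pi_S^\perp{\bf h}^{a,p}_{\rm odd}$ satisfies Hypothesis $1$ of \cite{CFP} (Remark \ref{projector}) and the monomial subspaces $\VV^{(\tv,\tv')}$ here are literally those of Section $4$, Example $3$ of \cite{CFP} — with the scalar unknown replaced by the $w$-pair and $d_w$ by the present $\partial_w$ — these inclusions have already been verified there, so in practice the proof reduces to pointing out this correspondence and quoting the triangularity from that example.
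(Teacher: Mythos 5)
Your proposal is essentially the paper's own proof: the paper disposes of this lemma with a one-line citation, observing that the $(\NN,\calX,\RR)$-decomposition is exactly the one treated in Section $4.3$ (Example $3$) of \cite{CFP} and quoting the verification carried out there, which is precisely the correspondence you point out in your closing paragraph. The additional bracket bookkeeping you sketch is not needed for this route (and the asserted inclusion $[\calX,\calX]\subseteq\RR$ is not quite right as stated — e.g.\ the bracket of an element of $\VV^{(w,0)}$ with one of $\VV^{(y,w)}$ produces a component in $\VV^{(y,0)}\subset\calX$), but since you ultimately reduce the matter to quoting \cite{CFP}, your argument matches the paper's.
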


\begin{proof}
This is proved in Section $4.3$ of \cite{CFP}.
\end{proof}

\noindent
The main difference with respect to \cite{CFP} is in the Definition \ref{tame}
since in the present setting we require  not only the properties $(T_m)$  (which were proposed in \cite{CFP}) but also properties $(T_{m})^{*}$ on the adjoint of the linearized vector field.
The reason is that if one uses Definition $2.13$ of \cite{CFP},
it is not true that finite rank vector fields defined in \eqref{linvec} 
satisfy all the properties of Def. $2.18$ in \cite{CFP}, i.e. are regular.
Actually we have the following.
\begin{lemma}\label{miaomiao}
The finite rank vector fields of Definition \ref{linvec} satisfy all the conditions
of Definition $2.18$ in \cite{CFP}, with respect to the tameness constants of Definition \ref{tame}.
Moreover $|\cdot|_{\gotp_1}$ is a {\em sharp} tameness constant, namely there exists a $\mathtt{c}, \mathtt{C}$  depending  on $\gotp_0,\gotp_1,\gotp_2,d$ such that for any $f$
\begin{equation}\label{tameconst333}
C_{\vec{v},p}(f)=\mathtt{C}|f|_{\vec{v},p},
\end{equation}
is a tameness constant and for any tameness constant $ C_{\vec{v},\gotp_1}(f)$
one has 
 \begin{equation}\label{tameconst3}
 |f|_{\vec{v},p} \le \mathtt{c} \, C_{\vec{v},p}(f), \quad\quad \gotp_1\leq p\leq \gotp_2.
\end{equation}
Finally if $f$ is of finite rank, then it is pseudo-differential according to Definition \ref{pseudopseudo}.
\end{lemma}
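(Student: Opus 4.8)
\noindent\emph{Sketch of proof.} The strategy is to run through the list of properties that Definition $2.18$ of \cite{CFP} demands of a \emph{regular} vector field, checking each one for a field $f\in\calA_{\vec v,p}$ written in the form \eqref{maremma}, and using as tameness scale $p\mapsto C_{\vec v,p}(f):=\mathtt C|f|_{\vec v,p}$ (which is monotone in $p$ because each of the norms in \eqref{linnorm} is). The structural requirements — that $\calA_{\vec v,p}$ is a closed subspace, that it is invariant under the projections $\Pi_\NN,\Pi_\calX,\Pi_\RR$ and under the smoothing $\Pi_K$ of Definition \ref{projectortotale}, and that $|\cdot|_{\vec v,p}$ is compatible with these projections, with rescaling of the domain, and with Lie brackets and left inverses in the way required — are immediate from \eqref{maremma} and \eqref{linnorm}, since projecting or truncating a finite rank field only restricts the Fourier support in $\theta$ or the index set $S^c$ of its (finitely many) coefficients. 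So the real content splits into three points: (a) every $f\in\calA_{\vec v,p}$ is $C^3$-tame and adjoint tame in the sense of Definition \ref{tame} with tameness constant $\lesssim|f|_{\vec v,p}$ (this proves \eqref{tameconst333}); (b) conversely, any tameness constant of $f$ dominates $|f|_{\vec v,p}$ (this proves \eqref{tameconst3}); (c) $f$ is pseudo-differential in the sense of Definition \ref{pseudopseudo}.

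The key simplification for (a) is that a field of the form \eqref{maremma} is \emph{affine} in the variables $\mathtt U=\{y_1,\dots,y_d,w\}$, and its $\partial_w$-component $f^{(w,0)}(\theta)$ is independent of $(y,w)$; hence $d^m_{\mathtt U}f\equiv 0$ for $m\ge 2$, so $(\mathrm T_m)$ and $(\mathrm T_m)^*$ are vacuous for $m\ge 2$ and only $m=0,1$ remain. For $(\mathrm T_0)$ one writes $f(\Phi)$ with $\Phi=\uno+f_\Phi$ a near-identity $C^3$ map into the domain: each component is a coefficient of $f$ evaluated at $\theta+f_\Phi^{(\theta)}$ multiplied by an affine polynomial in $(y+f_\Phi^{(y)},w+f_\Phi^{(w)})$, and the bound of the required shape follows from the tame composition (Moser) estimate on $H^p(\TTT^d_s)$, the algebra property of $H^{\gotp_0}(\TTT^d_s\times\TTT_a)$, interpolation, and the fact that $(y+f_\Phi^{(y)})$, $(w+f_\Phi^{(w)})$ have $O(1)$ norms on the domain; the loss $\nu$ is absorbed because $f^{(y_i,w)}$ is measured at regularity $p-\nu$. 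For $(\mathrm T_1)$ the map $d_{\mathtt U}f(\Phi)[h]$ picks up only the $(y,w)$-linear coefficients $f^{(y,y)},f^{(y,w)}$, and the bound reduces to estimating $f^{(y,y)}(\theta+f_\Phi^{(\theta)})h^{(y)}$ and the pairing $(f^{(y,w)}(\theta+f_\Phi^{(\theta)}),h^{(w)})_{L^2}$, again by composition and the algebra property. For $(\mathrm T_1)^*$ one takes the adjoint of this linear map; the only piece that is not plainly bounded is the finite rank operator $h^{(w)}\mapsto\big((f^{(y_i,w)}(\theta+f_\Phi^{(\theta)}),h^{(w)})_{L^2}\big)_i$, whose adjoint sends a $y$-vector $v$ to $\sum_i v_i\,\overline{f^{(y_i,w)}(\theta+f_\Phi^{(\theta)})}\in\ell_{a,p-\nu}$ and is controlled by $\sup_i\|f^{(y_i,w)}\|_{\vec v,p-\nu}$; combined with the multiplication-type (hence bounded) adjoints of the remaining pieces this yields \eqref{carota} with constant $\mathtt C|f|_{\vec v,p}$.

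For (b) one reverses the computation by specializing to $\Phi=\uno$ and choosing extremal test data. Using $(\mathrm T_0)$ at $\Phi=\uno$ (so $f_\Phi=0$, $\|\uno\|_{\vec v,p+\nu}=O(1)$) bounds $\|f^{(y,0)}\|_{\vec v,p}$ and $\|f^{(w,0)}\|_{\vec v,p}$; using $(\mathrm T_1)$ at $\Phi=\uno$ with $h$ the constant vector field $\partial_{y_j}$ isolates the $j$-th column of $f^{(y,y)}$ and bounds $\sup_{i,j}\|f^{(y_i,y_j)}\|_{\vec v,p}$; and using $(\mathrm T_1)^*$ at $\Phi=\uno$ with $v$ the $\theta$-independent vector $e_i$ in the $y_i$-slot produces $\overline{f^{(y_i,w)}}$ as the $w$-component of $D_1^*v$, whose $X^{p-\nu}$-norm dominates $\|f^{(y_i,w)}\|_{\vec v,p-\nu}$ because $\langle\ell,j\rangle^{2(p-\nu)}\lesssim\langle\ell\rangle^{2(p-\nu)}\langle j\rangle^{2\gotp_0}+\langle\ell\rangle^{2\gotp_0}\langle j\rangle^{2(p-\nu)}$; in each case monotonicity of the scale lets one replace $C_{\vec v,\gotp_0}(f)$ by $C_{\vec v,p}(f)$. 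Summing the three estimates gives \eqref{tameconst3}. Finally (c) is immediate: since $f^{(w)}(\theta,y,w)=f^{(w,0)}(\theta)$ is independent of $(y,w)$, one has $d_w f^{(w)}(u)\equiv 0$, so \eqref{pippone} holds with $\mathscr P(u)=0$ and $\mathscr K(u)=0$ (all coefficients vanishing), and \eqref{labellabel} and the parity conditions are trivially satisfied.

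The main obstacle is point (a), and specifically the adjoint estimate $(\mathrm T_1)^*$ together with its sharp counterpart for the component $f^{(y,w)}$: this is exactly the place where Definition \ref{tame} strengthens the notion of tame vector field of \cite{CFP}, and it is needed because the $\ell_{a,p-\nu}$-norm of $f^{(y_i,w)}$ cannot be recovered by testing $d_{\mathtt U}f$ against a single vector field, only by duality. Everything else is bookkeeping with composition and algebra estimates, carried out Lipschitz-uniformly in the parameter $\xi$ exactly as in the classical case, so in the full write-up I would spell out only the $(\mathrm T_1)^*$ argument in detail and merely indicate the rest.
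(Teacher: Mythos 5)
Your proposal is correct and follows essentially the same route as the paper: the whole content is concentrated in the adjoint estimate $(\mathrm T_1)^*$ of Definition \ref{tame} for the finite rank component $f^{(y_i,w)}\cdot w$ (whose adjoint is multiplication $\lambda\mapsto f^{(y_i,w)}\lambda$, controlled by, and controlling, $\|f^{(y_i,w)}\|_{\vec v,p-\nu}$), the remaining properties of Definition $2.18$ of \cite{CFP} being routine, and pseudo-differentiality being trivial since $d_w f^{(w)}(u)=0$. This matches the paper's (much terser) argument, which likewise reduces everything to \eqref{carota} applied to that single component and defers the rest to the analogues of Lemmata $4.8$, $4.13$, $4.15$ of \cite{CFP}.
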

\begin{proof}
This Lemma is the analogous of Lemmata $4.8$ and $4.13$ of \cite{CFP}. Note that the present definition of finite rank vector fields (see \ref{linvec})
is different from the one in \cite{CFP} (the norm \eqref{linnorm} here is stronger).
However also the the Definition \ref{tame} is stronger, hence we get \eqref{tameconst3}.
Let us give a sketch 
of the proof.  The only non trivial component is  $f^{(y_{i},w}\cdot w$. 
The adjoint of the differential of this term is  the map $\lambda\to f^{(y_i,w)} \lambda$ with $\lambda\in H^p(\TTT^d_s)$. 
Then the result follows by  \eqref{carota} and the definition of $|\cdot|_{\vec{v},p}$.

In Section $4.3$ in \cite{CFP}, in order to get the analogous results,
see Lemma $4.13$ of \cite{CFP},
we had to introduce the notion of ``adjoint-tame'' vector fields, which is 
better formalized in Definition \ref{tame}. The other properties stated in Definition $2.18$ in \cite{CFP}  follow exactly as in Lemma $4.15$.
The fact that $f$ is pseudo-differential follows trivially from the fact that $d_{w}f^{(w)}(u)=0$.
\end{proof}

\begin{lemma}\label{cellulare}
If $F\in \calX$ is $C^{1}$ tame (according to Definition \ref{tame}), then
$F$ is finite rank according to Definition \ref{linvec}.
\end{lemma}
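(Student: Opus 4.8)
The plan is the following. The key preliminary observation is that membership in $\calX$ already fixes the \emph{algebraic} shape of $F$: by Definition~\ref{decomponi}, $\calX=\VV^{(y,0)}\oplus \VV^{(y,y)}\oplus \VV^{(y,w)}\oplus \VV^{(w,0)}$, so for $F\in\calX$ one has $F^{(\theta)}\equiv0$, the component $F^{(w)}=F^{(w,0)}(\theta)$ is independent of $(y,w)$, and $F^{(y)}(\theta,y,w)=F^{(y,0)}(\theta)+F^{(y,y)}(\theta)[y]+F^{(y,w)}(\theta)[w]$ with the Taylor coefficients as in \eqref{aiuto}. Setting $f^{(y,0)}:=F^{(y,0)}$, $f^{(w,0)}:=F^{(w,0)}$, $f^{(y,y)}:=F^{(y,y)}$, $f^{(y,w)}:=F^{(y,w)}$, this is precisely the form \eqref{maremma}. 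Hence the only thing to prove is that the norm $|f|_{\vec v,p}$ of \eqref{linnorm} is finite for all $\gotp_1\le p\le q$; in particular that $f^{(y_i,w)}$ lies in $\ell_{a,p-\nu}$ in the $\theta$-weighted sense encoded by $\|\cdot\|_{\vec v,p-\nu}$.

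For the three ``easy'' components I would evaluate the tameness estimates of $F$ on the identity map $\Phi=\uno$, which is an admissible choice in Definition~\ref{tame} (take $a_1=a$, $p_1=p+\nu$, $s'=s$, $r'=r$, so that $\vec v'=\vec v$). Property $(\text{T}_0)$ gives $\|F(\cdot,0,0)\|_{\vec v,p}\lesssim C_{\vec v,p}(F)$, hence $\|f^{(y,0)}\|_{\vec v,p}$ and $\|f^{(w,0)}\|_{\vec v,p}$ are finite; property $(\text{T}_1)$ applied to the constant vector fields $h=e_j\partial_{y_j}$ gives $\|d_{y_j}F^{(y)}(\uno)[e_j]\|_{\vec v,p}\lesssim C_{\vec v,p}(F)$, hence $\sup_{i,j}\|f^{(y_i,y_j)}\|_{\vec v,p}<\infty$.

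The substantive point is the coefficient $f^{(y_i,w)}$, and here $(\text{T}_1)$ alone is not enough: it controls $f^{(y_i,w)}$ only as a bounded linear functional on $\ell_{a,\cdot}$, i.e. as an element of a dual space, with no decay in the index $j\in S^c$. Instead I would use the adjoint tameness condition $(\text{T}_1)^*$. With $\Phi=\uno$ and $m=1$ it says that $D_1^*:=\bigl(d_{\mathtt U}F(\uno)\bigr)^*$ is bounded $Y^p\to X^{p-\nu}$ with norm $\lesssim C_{\vec v,p}(F)+C_{\vec v,\gotp_0}(F)$. Testing $D_1^*$ on the $\xi$- and $\theta$-independent section $v=(0,e_i,0)\in V_{a,p}$ — for which $\|v\|_{\g,\calO,Y^p}=|e_i|$ uniformly in $p$ — one checks that the $w$-block of $D_1^*[v]$ is exactly the multiplication coefficient $\overline{f^{(y_i,w)}(\theta)}$, viewed as a $\theta$-dependent element of $\ell_{a,\cdot}$; hence $\|\overline{f^{(y_i,w)}}\|_{\g,\calO,X^{p-\nu}}\lesssim C_{\vec v,p}(F)$. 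Finally I would compare norms: unwinding $X^{p-\nu}=H^{p-\nu}(\TTT^d_s;\CCC^d\times\ell_{a,\gotp_0})\cap H^{\gotp_0}(\TTT^d_s;\CCC^d\times\ell_{a,p-\nu})$ together with \eqref{equation3}, and using the elementary bound $\langle l\rangle^{2(p-\nu)}\langle j\rangle^{2\gotp_0}+\langle l\rangle^{2\gotp_0}\langle j\rangle^{2(p-\nu)}\gtrsim\langle (l,j)\rangle^{2(p-\nu)}$ valid for $p-\nu\ge\gotp_0\ge0$ (which holds since $\gotp_1\ge\gotp_0+\nu+1$), one gets $\|f^{(y_i,w)}\|_{\vec v,p-\nu}\lesssim\|\overline{f^{(y_i,w)}}\|_{\g,\calO,X^{p-\nu}}\lesssim C_{\vec v,p}(F)<\infty$; the Lipschitz-in-$\xi$ part is handled identically, all the norms involved being $\g$-weighted. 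Collecting the resulting bounds gives $|f|_{\vec v,p}<\infty$, so $F\in\calA_{\vec v,p}$ is finite rank in the sense of Definition~\ref{linvec}.

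The hard part will be the bookkeeping in this last step: matching the index conventions of the adjoint appearing in $(\text{T}_1)^*$ — which, as emphasized in the footnote to Definition~\ref{tame}, is a condition strictly stronger than plain adjoint boundedness — with the norm \eqref{linnorm}, and checking that the $w$-block of $D_1^*$ really is multiplication by $\overline{f^{(y_i,w)}}$. Once these identifications are made the remaining estimates are routine, essentially as in Lemma~\ref{miaomiao} (compare Lemma~4.13 of \cite{CFP}).
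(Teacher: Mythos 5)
Your proposal is correct and follows essentially the same route as the paper: after noting that membership in $\calX$ already gives the form \eqref{maremma}, the paper likewise reduces everything to showing $F^{(y_i,w)}\in\ell_{a,p-\nu}$, identifies the adjoint of $d_{w}F^{(y_i)}(\theta,0,0)$ with multiplication by the coefficient $F^{(y_i,w)}$, and invokes $(T_1)^*$ in \eqref{carota}. Your extra detail (testing on a constant section and matching the $X^{p-\nu}$ norm with \eqref{linnorm}) just makes explicit what the paper leaves implicit.
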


\begin{proof}
By the definition of $\calX$ in \eqref{sottoREV}
$F$ has, at least formally, the correct structure as \eqref{maremma}.
The only thing we have to prove is that $F^{(y_{i},w)}$ belongs to $\ell_{a,p-2}$. 
By definition $F^{(y_{i},w)}[\cdot]=d_{w}F^{(y_{i})}(\theta,0,0)[\cdot]$. By the property
The adjoint of the differential is then the map $\lambda\to F^{(y_i,w)} \lambda$ with $\lambda\in H^p(\TTT^d_s)$. 
Then the result follows by $(T_{1})^{*}$ in \eqref{carota}.
\end{proof}

Note that the strong property above is not required in \cite{CFP} and will simplify many of our proofs.

We remark that property \eqref{tameconst3} is one of the key points together with
the properties of tame vector fields detailed in the following Lemma.

\begin{lemma}\label{temavec}
Tame vector fields of Definition \ref{tame} satisfy all the properties detailed 
In Appendix $B$ of \cite{CFP}. 
In particular: 
\begin{itemize}
\item[(i)] {\bf Commutator.}
Consider any two $C^3$-\emph{tame} vector fields $F,G\in \mathcal W_{\vec v,p}$, then 
the commutator $[G,F]$ is a  $C^{2}$-tame vector field up to order $ q-1$ with scale of
constants 
 \begin{equation}\label{commu2}
C_{\vec{v},p}([G,F])\leq 
\mathtt C (C_{\vec{v},p+\nu_G+1}(F)C_{\vec{v},\gotp_{0}+\nu_F+1}(G)+C_{\vec{v},\gotp_{0}+\nu_G+1}(F)C_{\vec{v},p+\nu_F+1}(G)),
 \end{equation}
 where $\nu_F$, $\nu_G$ are the loss of regularity of $F$, $G$ respectively.

\item[(ii)] {\bf Composition.}
 Given a tame vector field $f\in \VV_{\vec{v},p}$ with scale of constants $C_{p}(f)$
 of the form \eqref{vectorfield} and given 
 a map $\Phi(\theta)=\theta+\be(\theta):\TTT^d_{s'}\to\TTT^d_s$ 
 with $\|\be\|_{s,\gotp_0}\ll 1$
 then 
 the composition $f\circ \Phi$ is a tame vector field with constant 
 \begin{equation}\label{pillole}
 C_{p}(f\circ \Phi)\leq C_{p}(f)+C_{\gotp_0}(f)\|\be\|_{s,p+\nu+3}.
 \end{equation}
 Moreover if $f$ is a finite rank vector field, i.e. it satisfies \eqref{maremma}, then
  \begin{equation}\label{pillole2}
  |f\circ \Phi|_{\vec{v}_1,p}\leq |f|_{\vec{v},p}+|f|_{\vec{v},\gotp_0}\|\be\|_{s,p+\nu+3}.
  \end{equation}
  where $\vec{v}_1=(\la,\calO,s',a)$.

\item[(iii)] {\bf Conjugation.}
Consider  a tame left invertible map $\Phi=\uno+f$  with tame inverse $\Phi^{-1}= \uno+g $ 
as in Definition \ref{leftinverse} such that \eqref{sinistra} holds.
Assume that the fields $f, g$ 
belong to $\calP_1$ (see  \eqref{poly2}) and 
are such that $C_{\vec{v},p}(f)=C_{\vec{v},p}(g)\leq \rho$ for $\rho>0$ small. 
Consider a constant $\gotp_0\geq0$ and any  tame vector field $F : D_{a,p+\nu}(s,r) \to V_{a,p}$, then the pushforward
\begin{equation}\label{push2}
G:=\Phi_* F: D_{a,p+\nu}(s- 2\rho s_0,r-2\rho r_0)  \to  V_{a- 2\delta a_0,p}
\end{equation}
is tame with scale of constants
 \begin{equation}\label{dafare}
 C_{\vec{v}_{1},p}(G)\leq (1+C_{\vec{v}_{2},\gotp_{0}+\nu+1}(f))C_{\vec{v},p}(F)+C_{\vec{v},\gotp_{0}}(F)(1+C_{\vec{v}_{2},\gotp_{0}+\nu+1}(f))C_{\vec{v}_{2},p+\nu+1}(f),
 \end{equation}
 with $\vec{v}:=(\la,\calO,s,a)$, $\vec{v}_{1}:=(\la,\calO,s-2\rho s_0,a-2\de a_0)$ and 
 $\vec{v}_{2}:=(\la,\calO,s-\rho s_0,a-\de a_0)$.

\end{itemize}

\end{lemma}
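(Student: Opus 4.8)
The plan is to reduce everything to the corresponding statements in Appendix~B of \cite{CFP}, observing that the $(T_m)$ half of Definition \ref{tame} coincides \emph{verbatim} with the notion of tameness used there. Consequently the estimates \eqref{commu2}, \eqref{pillole}, \eqref{pillole2}, \eqref{dafare}, together with the other properties of Appendix~B, hold for the ordinary tameness constants simply by quoting \cite{CFP}. The only genuinely new point is to check that the additional adjoint conditions $(T_m)^{*}$ propagate through the commutator, the composition with a $\theta$-reparametrisation, and the conjugation; I would treat these three items in turn.

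For the commutator $[G,F]=dF[G]-dG[F]$, differentiating $m$ times in the variables $\mathtt U$ and using the Leibniz rule expresses $d^{m}_{\mathtt U}[G,F]$ as a finite sum of terms of the form $d^{j+1}_{\mathtt U}F[h_{i_1},\dots,h_{i_j},\,d^{m-j}_{\mathtt U}G[h_\cdot,\dots]]$ and the symmetric ones with $F,G$ exchanged. Dualising such a term turns it into a composition $\big(d^{m-j}_{\mathtt U}G\big)^{*}\circ\big(d^{j+1}_{\mathtt U}F\big)^{*}$ of operators between the scales $X^{p},Y^{p}$ of Definition \ref{tame}; one then applies $(T_{j+1})^{*}$ for $F$ to the inner factor and $(T_{m-j})$ (or $(T_{m-j})^{*}$ when needed) for $G$ to the outer one, distributing the single high regularity index $p$ exactly as in the proof of the $(T_m)$ bound. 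This reproduces \eqref{commu2} with the losses $\nu_F+1,\nu_G+1$ and an overall constant $\mathtt C$; the roles of $F$ and $G$ being symmetric, the other summand is handled identically.

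For the composition $f\circ\Phi$ with $\Phi(\theta)=\theta+\be(\theta)$, since $\Phi$ acts only on $\theta$ one has $d^{m}_{\mathtt U}(f\circ\Phi)(\theta,y,w)=\big(d^{m}_{\mathtt U}f\big)(\Phi(\theta),y,w)$, so its adjoint is $\big(d^{m}_{\mathtt U}f\big)^{*}$ evaluated at the shifted base point, pre- and post-composed with the pull-back $v\mapsto v\circ\Phi$. The latter is bounded from $H^{p}(\TTT^{d}_{s'})$ to $H^{p}(\TTT^{d}_{s})$ with norm $\le 1+C\|\be\|_{s,p+3}$ provided $\|\be\|_{s,\gotp_0}\ll 1$ (the standard Sobolev composition estimate on the complex strip, already used in \cite{CFP}); composing it with $(T_m)^{*}$ for $f$ gives \eqref{pillole}. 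If $f$ is of finite rank the same identity produces the coefficients $f^{(u,0)}\circ\Phi$, $f^{(y_i,w)}\circ\Phi$, $f^{(y_i,y_j)}\circ\Phi$ directly, and \eqref{pillole2} follows from the scalar composition lemma applied componentwise.

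For the conjugation $G=\Phi_{*}F=d\Phi(\Phi^{-1})\big[F(\Phi^{-1})\big]$ one differentiates $m$ times in $\mathtt U$ via the Fa\`a di Bruno formula; since $f=\Phi-\uno$ and $g=\Phi^{-1}-\uno$ belong to $\calP_1$, all their $\mathtt U$-derivatives of order $\ge 2$ vanish, so only $d\Phi,\,d^{2}\Phi,\,dF,\dots,d^{m}F$ and $d\Phi^{-1}$ survive. Taking adjoints and invoking $(T_{\le m})$ together with $(T_{\le m})^{*}$ for $F$, and absorbing the $\Phi,\Phi^{-1}$ factors using $C_{\vec v,p}(f)=C_{\vec v,p}(g)\le\rho$ small, yields \eqref{dafare} with the extra adjoint clause included. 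I expect the only real difficulty to be organisational: after dualising a multilinear expression one must keep track of which single factor carries the high index $p$ (all others carrying only $\gotp_0$), so that the final estimate stays \emph{linear} rather than quadratic in $C_{\vec v,p}(\cdot)$. This is precisely what the double-norm scales $X^{p}=H^{p}(\TTT^{d}_{s};\CCC^{d}\times\ell_{a,\gotp_0})\cap H^{\gotp_0}(\TTT^{d}_{s};\CCC^{d}\times\ell_{a,p})$ and $Y^{p}$ were designed to make automatic, and the bookkeeping is the same interpolation argument already carried out in Appendix~B of \cite{CFP}.
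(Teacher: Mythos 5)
Your proposal is correct and follows essentially the same route as the paper: the $(T_m)$ estimates are quoted from Appendix B of \cite{CFP} (Lemmata B.1, A.5, B.3), and the only new content is the propagation of the adjoint conditions $(T_m)^{*}$, which both you and the paper obtain by expanding $d^m_{\mathtt U}$ of the commutator/composition/pushforward via the chain rule, dualising each resulting composition through $(AB)^{*}=B^{*}A^{*}$, and applying the appropriate $(T_j)$ and $(T_j)^{*}$ bounds while letting the double scales $X^{p},Y^{p}$ keep the high index on a single factor. No gap to report.
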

\begin{proof}
	We start by proving that $d_U F[G]$ is  $C^2$-tame. The fact that $(T_m)$ holds  for $m=0,1,2$ is proved in \cite{CFP}, Lemma B.1.  Let us now prove that $(T_m)^*$ holds, see \eqref{carota}, for $m=1,2$.
Essentially this amouts to using the chain rule and then the definition of tameness. Indeed  
\begin{align*}
 & d^2_{\mathtt U} (d_{\mathtt U} F [G])(\Phi)[h_1,h_2]=  \\
& d^3_{\mathtt U}F(\Phi)[G(\Phi),h_1,h_2] + d^2_{\mathtt U}F(\Phi)[d_{\mathtt U}G(\Phi)[h_1],h_2]+ d^2_{\mathtt U}F(\Phi)[d_{\mathtt U}G(\Phi)[h_2],h_1] + d_{\mathtt U}F(\Phi)[d^2_{\mathtt U}G(\Phi)[h_1,h_2]]\,;
\end{align*} so that
\begin{equation}\label{albialbi}
D_2[\cdot]:= d^3_{\mathtt U}F(\Phi)[G(\Phi),h_1,\cdot] + d^2_{\mathtt U}F(\Phi)[d_{\mathtt U}G(\Phi)[h_1],\cdot]+ d^2_{\mathtt U}F(\Phi)[d_{\mathtt U}G(\Phi)[\cdot],h_1] + d_{\mathtt U}F(\Phi)[d^2_{\mathtt U}G(\Phi)[h_1,\cdot]].
\end{equation}
Property $(T_2)^{*}$ follows by reasoning as follows. In the first summand in \eqref{albialbi} one uses properties $(T_3)^{*}$ on the field $F$ and $(T_0)$ on $G$.
In the last summand in \eqref{albialbi}  we denote by $A[\cdot]= d_{\mathtt U}F(\Phi)[\cdot]$ and $B[\cdot]=  d^2_{\mathtt U}G(\Phi)[h_1,\cdot]$ then
$(AB)^*= B^* A^*$ and hence $(T_2)^{*}$ follows by $(T_1)^*$ on $F$ and $(T_2 )^*$ on $G$. The second and third summands use the same ideas. This concludes the proof of item {\it (i)}. Regarding items {\it (ii)-(iii)}, the tameness properties $(T_m)$ follow by Lemmata $A.5$ and $B.3$ respectively. Regarding the properties $(T_m)^*$ we proceed as in \eqref{albialbi}. Finally one can prove Remark $B.5$ and Lemma $B.6$ by using  items {\it (i)-(iii)} exactly as in \cite{CFP}.

\end{proof}
The last property we check is the compatibility, according to Definition $2.19$ in \cite{CFP}, 
of the space $\calE$ with 
respect to the $(\NN, \calX,\RR)-$decomposition.
This amounts to show the following.

 \begin{lemma}\label{ciprovo}
The following properties hold:
 \begin{itemize}

 \item[(i)]
  any $F\in \calE\cap \calX$ is a finite rank vector field according with Definition \ref{linvec};
  
  \item[(ii)]
  for any $F\in \calE\cap\calP_{1}$ one has
$\Pi_{\calU}F\in \calE$ for $\calU=\NN,\calX,E^{(K)}$; for $\calU=\NN$.

 \item[(iii)]  
one has
  \begin{equation}\label{proloco}
\forall g\in \BB_\calE,F\in \calE:\quad  \Pi_{\calX}[g,F]\in \calE\,,\quad \forall g,h\in \BB_\calE:\quad  \Pi_\calX[g,h]\in \BB_\calE\,.
\end{equation}

\end{itemize}
\end{lemma}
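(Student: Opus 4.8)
The statement to prove is Lemma \ref{ciprovo}, asserting three compatibility properties of the subspace $\calE$ with respect to the $(\NN,\calX,\RR)$-decomposition. All three are structural: one has to check that the algebraic symmetry conditions defining $\calE$ (Gauge preserving, Reversible \eqref{est5}, Real-on-Real \eqref{est55}), together with the tameness/pseudo-differential requirements, are stable under the relevant projections and brackets. The plan is to verify these three items essentially by inspection, exploiting Lemma \ref{prestige} which already characterizes $\BB_\calE$ by the three explicit conditions (Real-on-Real, the parity relations \eqref{preservazione}, and commutation with $X_M$).

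For item (i), I would observe that $\calX=\VV^{(y,0)}\oplus\VV^{(y,y)}\oplus\VV^{(y,w)}\oplus\VV^{(w,0)}$ consists precisely of vector fields of the shape \eqref{maremma} appearing in Definition \ref{linvec}, \emph{provided} the $(y_i,w)$-component lands in $\ell_{a,p-\nu}$; but this last point is exactly the content of Lemma \ref{cellulare}, which says a $C^1$-tame field in $\calX$ is automatically finite rank. So (i) follows by combining the $(\NN,\calX,\RR)$-decomposition (Definition \ref{decomponi}) with Lemma \ref{cellulare}, once one notes that a field in $\calE$ is $C^3$-tame hence in particular $C^1$-tame. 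For item (ii), the projections $\Pi_\NN,\Pi_\calX,\Pi_{E^{(K)}}$ all act by restricting the Taylor/Fourier expansion \eqref{lamassaconserva} to prescribed monomial blocks. One checks that each of the defining symmetries of $\calE$ is ``diagonal'' with respect to this monomial decomposition: the Gauge condition \eqref{conservadipomodori} is a condition monomial-by-monomial, so it survives any monomial projection; the Fourier truncation $\Pi_{E^{(K)}}$ (Definition \ref{projectortotale}) commutes with $\theta\mapsto-\theta$ and with complex conjugation, hence preserves \eqref{est5} and \eqref{est55}; and the block projections onto $\NN$ and $\calX$ likewise respect the pairing of components $(\theta,y,z^+,z^-)$ used in \eqref{est5}, since $\NN$ and $\calX$ are each stable under $S$ in \eqref{est4}. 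Tameness and the pseudo-differential structure are inherited because projecting onto monomial subspaces does not increase tameness constants (the $\Pi_K$ bounds \eqref{smoothing} and the elementary fact that extracting a Taylor coefficient is bounded), and for $\calU=\NN$ the $w$-component retains the form \eqref{pippone}.

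For item (iii), I would use Lemma \ref{temavec}(i): the commutator of a $C^3$-tame field and a $\calE$-preserving (hence tame) field is $C^2$-tame with the stated constants, so the tameness side is automatic. It remains to check the algebraic part. Here the key remark is that the flow $\Phi_g^t$ of $g\in\BB_\calE$ maps $\calE\to\calE$ by definition of $\BB_\calE$; differentiating $(\Phi_g^t)_* F$ at $t=0$ gives $[g,F]=\frac{d}{dt}\big|_{t=0}(\Phi_g^t)_*F$, which therefore lies in the tangent space of $\calE$ along $\calE$ --- but $\calE$ is cut out by \emph{linear} conditions (the symmetry relations are linear in the field, and Gauge/commutation with $X_M$ is linear), so this tangent space is $\calE$ itself, whence $[g,F]\in\calE$ and, after applying $\Pi_\calX$ (which preserves $\calE$ by item (ii), noting $[g,F]\in\calP_1$ when $g,F$ are as required, or more carefully using that $\Pi_\calX$ of a tame field in $\calE$ is in $\calE$), we get $\Pi_\calX[g,F]\in\calE$. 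For the second assertion $\Pi_\calX[g,h]\in\BB_\calE$ with $g,h\in\BB_\calE$: the bracket $[g,h]$ is again $\calE$-preserving (the $\calE$-preserving fields form a Lie algebra, being the Lie algebra of the ``group'' of $\calE$-preserving flows), it is bounded since $g,h$ are bounded, and then Lemma \ref{prestige} identifies $\calE$-preserving bounded fields with $\BB_\calE$; applying $\Pi_\calX$ preserves the three conditions of Lemma \ref{prestige} for the same reasons as in item (ii) (the parity relations \eqref{preservazione} are monomial-diagonal, as is commutation with $X_M$).

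\textbf{Main obstacle.} The delicate point is item (iii), specifically making rigorous the heuristic ``$[g,F]$ lies in the tangent space of $\calE$, which equals $\calE$ by linearity''; one must instead argue directly that $[g,F]$ satisfies each symmetry relation, e.g. that \eqref{est5} for $g$ and $F$ implies \eqref{est5} for $[g,F]$, which is a short but careful computation with the involution $S$ using $S$-(anti)equivariance of each field, and similarly that $[X_M,g]=[X_M,F]=0$ forces $[X_M,[g,F]]=0$ by the Jacobi identity. These are routine but must be checked component-by-component; the $\calX$-projection then needs the observation that $\calX$ is $S$-invariant and Gauge-block-diagonal, so nothing is lost. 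I would also flag that one should confirm $\Pi_\calX[g,F]$ is still finite rank / of the form \eqref{maremma}, which again follows from Lemma \ref{cellulare} once tameness is known.
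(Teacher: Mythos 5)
Your proposal is correct and follows essentially the same route as the paper: item (i) via Lemma \ref{cellulare}, item (ii) by direct inspection of the projections against Definitions \ref{pseudopseudo} and \ref{compatibili}, and item (iii) by using tameness of commutators plus the finite-rank characterization to reduce everything to a direct check of the parity conditions \eqref{est5}, \eqref{preservazione} on $[g,F]$, which is exactly the paper's (very terse) argument. Your flow/tangent-space heuristic is an extra gloss not in the paper, but since you yourself replace it by the component-by-component verification, nothing essential differs.
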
 

\begin{proof}
Item $(i)$
follows by Lemma \ref{cellulare}. Let us check item $(ii)$. It is trivial for $\calU=\calX,E^{(K)}$ and
for $\calU=\NN$ follows by definition \ref{pseudopseudo}. To prove item $(iii)$
we reason as follows. By Lemma \ref{cellulare}
the only think one need to get the result is condition \eqref{preservazione} for $[g,F]$ if $ g\in \BB_\calE,F\in \calE$.
It follows by the parity conditions \eqref{est5} and \eqref{preservazione}.
%
%



\end{proof}

For the proof of Theorem \ref{thm:kambis} we refer the reader to the paper \cite{CFP}. 
Indeed it is
 Theorem  $4.16$ in Section $4.3$ of \cite{CFP}
 and we have proven in the previous Lemmata that all the properties used in that paper 
 hold in this slightly different setting. We omit the proof since it would be a word by word repetition.
 

\zerarcounters

\section{Pseudo-differential Vector Fields}\label{paperina}
In this Section we study how a vector field in $\calE$ defined in \ref{compatibili}
changes under special changes of variables.
First we prove some lemmata of conjugation  of non linear vector fields. 
 Then, in Section
\ref{sec7aut}, we analyze properties of the linearized operator of compatible vector field of Definition \ref{compatibili}. In particular we study how to invert it.
Roughly speaking we study some particular changes of variables for which the subspace $\calE$ of vector fields introduce in Definition \ref{compatibili}
is stable.


\paragraph{The decay norm.}
In order to give quatitative estimates we introduce an appropriate norm
on linear operators on $\ell_{a,p}$.
We recall that we have identified the sequence space $\ell_{a,p}$
with the function space $\Pi_{S}^{\perp}{\bf h}^{a,p}_{{\rm odd}}\subset {\bf h}^{a,p}$, by 
 writing this space in the Fourier sine basis.

A linear operator on $\ell_{a,p}$ is represented by a matrix $A:=(A_{j}^{k})_{j,k\in S^{c}}$.
It is very natural to extend the operator $A$ to the whole space ${\bf h}^{a,p}$
with the Fourier 
exponential basis
by setting 
\begin{equation}\label{tilde}
\tilde{A}_{h}^{r}:=A_{|h|}^{|r|}, |h|,|r|\in S^{c}, \quad \tilde{A}_{h}^{r}=0, \;\; {\rm otherwise}.
\end{equation}
Note that such extension preserves  $\Pi_{S}^{\perp}{\bf h}^{a,p}_{{\rm odd}}$.
Moreover it is compatible with the composition of operators.
We define a ``decay'' norm on the operator $A$ induced by 
the usual decay norm  on linear operators on ${\bf h}^{a,p}$
by setting 
$|A|_{s,a,p}^{{\rm dec}}=|\tilde{A}|^{{\rm dec}}_{s,a,p}$.
The nice fact is that we may deduce all the properties on  compositions, inteprolations,
by the corresponding ones on the standard decay norms. See for instance
\cite{BBM}, \cite{BCP}.
More precisely we have the following definition.

\begin{defi}\label{decaynorm}
({\bf (s,a,p)-decay norm}). Given an infinite dimensional matrix 
$A:=(A_{i}^{i'})_{i,i'\in\{\pm1\}\times S^{c}\times\ZZZ^{d}}$, 
we define the norm of off-diagonal decay
\begin{equation}\label{decayaut}
\begin{aligned}
(|A|^{{\rm dec}}_{s,a,p})^{2}:=\sup_{\s,\s' =\pm1}(|A_{\s}^{\s'}|^{{\rm dec}}_{s,a,p})^{2}&:=
\sup_{\s,\s'=\pm1}\sum_{(h,\ell)\in\ZZZ\times\ZZZ^{d}}\langle h,\ell\rangle^{2p}e^{2|h|a}e^{2|\ell|s}
\sup_{k- k'=(h,\ell)}|\tilde{A}_{\s,k}^{\s',k'}|^{2},
\end{aligned}
\end{equation}
where the extension $\tilde{A}$ is defined in \eqref{tilde}.
If one has that $A:=A(\x) $ for $\x\in\calO\subset\RRR^{d}$, we define for $\vec{v}:=(\g,\calO,s,a)$
\begin{equation}\label{2.1aut}
\begin{aligned}
&|A|^{{\rm dec}}_{\vec{v},p}:=\sup_{\x\in\calO}|A|^{{\rm dec}}_{s,a,p}+\g\sup_{\x_{1}\neq\x_{2}}\frac{|A(\x_{1})-A(\x_{2})|^{{\rm dec}}_{s,a,p}}{|\x_{1}-\x_{2}|}
\end{aligned}
\end{equation}
\end{defi}

The decay norm we have introduced in (\ref{decayaut}) is suitable for the problem we are studying.
 Note that
\begin{equation*}
\;\;\;
\forall\; p\leq p' \;\; \Rightarrow \;\; |A_{\s}^{\s'}|^{{\rm dec}}_{s,a,p}\leq|A_{\s}^{\s'}|^{{\rm dec}}_{s,a,p'}.
\end{equation*}
The same for the other parameters.
Moreover norm (\ref{decayaut}) gives information on the polynomial and exponential off-diagonal decay of the matrices,
indeed setting $k-k'=(h,\ell)$ 
\begin{equation}\label{decay2aut}
\begin{aligned}
&|A_{\s,k}^{\s,k'}|\leq \frac{|A_{\s}^{\s'}|^{{\rm dec}}_{s,a,p}}{\langle k-k' \rangle^{p }e^{2|\ell|s+2|h|a}}, \quad\forall\; k,k'\in\ZZZ_+\times\ZZZ^{d},\; 
\end{aligned}
\end{equation}
In the following we consider the decay norm $|\cdot|_{s,a,p}$ in
we have introduced in (\ref{decayaut})
in order to deal with linear operators on $\Pi_S^\perp{\bf h}_{\rm odd}^{a_0,p+\nu}$.
 We have the following important property.
\begin{lemma}[{\bf Decay along lines}]\label{linesdecay}
Let $M=(M_{i}^{i'})_{i,i'\in S^{c}\times\ZZZ^{d}}$ be a linear operator on $\ell_{a,p}=\Pi_S^\perp{\bf h}_{\rm odd}^{a,p}$. Then one has
\begin{equation}\label{dec1000}
|M|^{{\rm dec}}_{s,a,p} \leq C \max_{i\in S^{c}\times\ZZZ^{d} }\|M_{\{i\}}\|_{s,a,p+d+1}.
\end{equation}
\end{lemma}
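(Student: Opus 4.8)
The statement to prove is Lemma~\ref{linesdecay}: if $M=(M_i^{i'})_{i,i'\in S^c\times\ZZZ^d}$ is a linear operator on $\ell_{a,p}$, then $|M|^{\rm dec}_{s,a,p}\le C\max_{i}\|M_{\{i\}}\|_{s,a,p+d+1}$, where $M_{\{i\}}$ denotes the $i$-th column (the image of the $i$-th basis vector) and $\|\cdot\|_{s,a,p}$ is the Sobolev-type norm in \eqref{lap}. The natural strategy is to unpack both sides as explicit sums over Fourier indices and compare them term by term, exploiting the extra $d+1$ derivatives on the right to absorb a summation over the frequency lattice $\ZZZ^d$ (and the one-dimensional index), which is exactly the source of the gap between a ``column'' norm and a ``decay'' norm. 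First I would recall that by the extension \eqref{tilde} it suffices to estimate $\tilde M$ on ${\bf h}^{a,p}$; and since $\tilde M$ only couples indices in $S^c$ and acts by $|h|,|r|$, the $\{\pm1\}$-block and sign structure is harmless, so I may work with a scalar matrix $(\tilde M_k^{k'})_{k,k'\in\ZZZ\times\ZZZ^d}$.

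The core computation: by definition
\[
(|M|^{\rm dec}_{s,a,p})^2 \;=\; \sum_{(h,\ell)\in\ZZZ\times\ZZZ^d}\langle h,\ell\rangle^{2p}e^{2|h|a}e^{2|\ell|s}\,\sup_{k-k'=(h,\ell)}|\tilde M_k^{k'}|^2 .
\]
For each difference $(h,\ell)$ pick $k'=k'(h,\ell)$ achieving (or nearly achieving) the supremum, and write $k=k'+(h,\ell)$. Then I would bound $|\tilde M_k^{k'}|^2 \le \|M_{\{k'\}}\|_{s,a,p}^2 \,/(\langle h,\ell\rangle^{2p}e^{2|h|a}e^{2|\ell|s})$ is \emph{false} in general for a fixed $p$; instead the right move is: $\langle k\rangle^{2p}e^{2a|k_0|}$-weighted, the column $M_{\{k'\}}$ has $\|M_{\{k'\}}\|_{s,a,p+d+1}^2=\sum_k \langle k\rangle^{2(p+d+1)}e^{2a|k_0|}|\tilde M_k^{k'}|^2$ (here $k=(k_0,\ell_k)$, $k_0\in\ZZZ$), so in particular for the single index $k=k'+(h,\ell)$,
\[
|\tilde M_{k'+(h,\ell)}^{k'}|^2 \;\le\; \frac{\|M_{\{k'\}}\|_{s,a,p+d+1}^2}{\langle k'+(h,\ell)\rangle^{2(p+d+1)}e^{2a|k_0|}}\,.
\]
Plugging this in and using the elementary weight inequality $\langle h,\ell\rangle^{2p}\le C\,\langle k'+(h,\ell)\rangle^{2p}\langle k'\rangle^{2p}$ together with $e^{2|h|a}e^{2|\ell|s}\le e^{2a|k_0|}e^{\cdots}$ (triangle inequality for the weights, absorbing $k'$-dependent factors which are $\le 1$ after we also use that $k'$ ranges over $S^c$ so $\langle k'\rangle$ is bounded below — or simply taking the $\max_i$ on the right), the $(h,\ell)$-sum becomes $\sum_{(h,\ell)}\langle k'+(h,\ell)\rangle^{2p}/\langle k'+(h,\ell)\rangle^{2(p+d+1)} = \sum_{m\in\ZZZ\times\ZZZ^d}\langle m\rangle^{-2(d+1)}<\infty$, which converges precisely because we have $d+1$ spare derivatives in $d+1$ lattice dimensions. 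Taking the $\sup$ over $k'$ (i.e.\ over $i\in S^c\times\ZZZ^d$) out of the sum gives the claimed bound with $C$ depending only on $s,a,p,d$.

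The Lipschitz part of the norm \eqref{2.1aut} follows identically: applying the same term-by-term estimate to $\tilde M(\x_1)-\tilde M(\x_2)$ in place of $\tilde M$ and dividing by $|\x_1-\x_2|$, then taking $\sup$ over $\x_1\ne\x_2$, reproduces the bound with the $\g$-weighted column norm on the right. \textbf{The main obstacle} I expect is purely bookkeeping: getting the weight comparison $\langle h,\ell\rangle^{2p}\,e^{2|h|a}e^{2|\ell|s}$ vs.\ $\langle k'+(h,\ell)\rangle$-weights to work cleanly. The clean way is to \emph{not} try to split off $\langle k'\rangle$ powers (which could blow up), but rather to observe that $\langle h,\ell\rangle\le \langle k\rangle+\langle k'\rangle$ and, crucially, that since both $k,k'\in S^c\subset$ (a fixed co-finite set), and we are taking $\max$ over the fixed source index $i=k'$, we may simply state the bound with the $\max_i$ outside and allow $C$ to absorb all $k'$-dependent prefactors uniformly — there are only finitely many ``small'' $k'$ excluded, and for large $k'$ one genuinely gains. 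I would also note that this is a standard lemma (cf.\ \cite{BBM}, \cite{BCP}) so in the write-up it suffices to indicate the weight manipulation and the convergence of $\sum_m \langle m\rangle^{-2(d+1)}$, referring to those papers for the mechanical details.
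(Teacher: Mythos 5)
The paper itself gives no argument here (it simply defers to \cite{BCP}), so your attempt has to stand on its own, and it has a genuine gap at exactly the point you dismiss as ``bookkeeping''. Everything hinges on what $\|M_{\{i\}}\|_{s,a,p+d+1}$ means. You read it as the norm of the $i$-th column with weights centered at the \emph{target} index $k$, i.e. $\sum_k\langle k\rangle^{2(p+d+1)}e^{2a|j_k|}e^{2s|l_k|}|\tilde M_k^i|^2$. Under that reading the term-by-term comparison cannot work and the stated inequality is in fact false: take $M$ with a single nonzero entry $\tilde M_k^{k'}=1$, with $k=(j_k,l_k)$ fixed near the origin and $|k'|\to\infty$. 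Then $|M|^{\rm dec}_{s,a,p}\simeq\langle k-k'\rangle^{p}e^{a|j_k-j_{k'}|}e^{s|l_k-l_{k'}|}\to\infty$ exponentially, while $\max_i\|M_{\{i\}}\|_{s,a,p+d+1}=\langle k\rangle^{p+d+1}e^{a|j_k|}e^{s|l_k|}$ stays bounded. Correspondingly your attempted repair is not valid: Peetre and the triangle inequality only give $\langle h,\ell\rangle^{2p}e^{2a|h|}e^{2s|\ell|}\leq \langle k\rangle^{2p}e^{2a|j_k|}e^{2s|l_k|}\cdot\langle k'\rangle^{2p}e^{2a|j_{k'}|}e^{2s|l_{k'}|}$, and the second factor is \emph{unbounded} in $k'$ --- it is not ``$\leq1$'', the fact that $\langle k'\rangle$ is bounded below on $S^c$ is irrelevant, and ``for large $k'$ one genuinely gains'' is backwards: the bad regime is $k'\approx-(h,\ell)$, where the $k$-centered weight gains nothing while the difference weight is exponentially large. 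No constant can absorb this, and pulling $\max_i$ outside does not help since the maximum runs over all columns.

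The lemma is true, and your summability mechanism $\sum_{m\in\ZZZ\times\ZZZ^d}\langle m\rangle^{-2(d+1)}<\infty$ is exactly the right one, once $M_{\{i\}}$ is read as the $i$-th line \emph{recentered at} $i$, i.e. with the weight in the offset variable, $\|M_{\{i\}}\|^2_{s,a,p}:=\sum_k\langle k-i\rangle^{2p}e^{2a|j_k-j_i|}e^{2s|l_k-l_i|}|\tilde M_k^i|^2$; this is the convention of \cite{BCP}, to which the paper refers for the proof. With that reading, for every difference $(h,\ell)$ and every $k'$ one has $\langle h,\ell\rangle^{2(p+d+1)}e^{2a|h|}e^{2s|\ell|}|\tilde M_{k'+(h,\ell)}^{k'}|^2\leq\|M_{\{k'\}}\|^2_{s,a,p+d+1}$, so the $(h,\ell)$-term of $(|M|^{\rm dec}_{s,a,p})^2$ is at most $\langle h,\ell\rangle^{-2(d+1)}\max_i\|M_{\{i\}}\|^2_{s,a,p+d+1}$, and summing over $(h,\ell)$ gives the claim with $C^2=\sum_m\langle m\rangle^{-2(d+1)}$; the Lipschitz part then goes through as you say. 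So the architecture of your argument is right, but the weight comparison as written is wrong, and under your interpretation of the line norm no choice of constant rescues the statement.
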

For the proof see \cite{BCP}.

The decay norm satisfies the following  classical interpolation estimates.

\begin{lemma}\label{ bubboleaut}{\bf Interpolation.} For all $p\geq \gotp_{0}>(d+1)/2$ 
there are $C(p)\geq C(\gotp_{0})\geq1$ 
such that 
if $A=A(\x)$ and $B=B(\x)$ depend on the parameters $\x\in \calO$
in a Lipschitz way, then for $v=(\g,\calO,s,a)$
\begin{subequations}
\begin{align}
|AB|^{{\rm dec}}_{\vec{v},p}&\leq C(s)|A|^{{\rm dec}}_{\vec{v},\gotp_0}|B|^{{\rm dec}}_{\vec{v},p}
+C(s_{0})|A|^{{\rm dec}}_{\vec{v},p}|B|^{{\rm dec}}_{\vec{v},\gotp_0},\label{eq:2.11a}\\
\|Ah\|_{\vec{v},p}&\leq C(p)(|A|^{{\rm dec}}_{\vec{v},\gotp_0}\| h\|_{\vec{v},p}
+|A|^{{\rm dec}}_{\vec{v},p}\| h\|_{\vec{v},\gotp_0}),\label{eq:2.13b}\end{align}
\end{subequations}
\end{lemma}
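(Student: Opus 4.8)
The plan is to reduce both inequalities to the corresponding, by now classical, interpolation estimates for the off-diagonal ``decay'' norm on linear operators acting on the full exponential-basis space ${\bf h}^{a,p}$, for which I would simply cite \cite{BBM}, \cite{BCP}. The first step is to set up the dictionary: by \eqref{tilde} a matrix $A=(A_j^k)_{j,k\in S^c}$ on $\ell_{a,p}$ is extended to the matrix $\tilde A$ on ${\bf h}^{a,p}$ with $\tilde A_h^r=A_{|h|}^{|r|}$ when $|h|,|r|\in S^c$ and $\tilde A_h^r=0$ otherwise, and by the very definition \eqref{decayaut}--\eqref{2.1aut} one has $|A|^{\rm dec}_{s,a,p}=|\tilde A|^{\rm dec}_{s,a,p}$ and $|A|^{\rm dec}_{\vec v,p}=|\tilde A|^{\rm dec}_{\vec v,p}$. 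I would then record the elementary but crucial fact, already announced after \eqref{tilde}, that the extension map $A\mapsto\tilde A$ is, up to a harmless fixed multiplicative constant, a homomorphism for composition: since $\tilde A$ is supported on $S^c\times S^c$ and acts ``parity preservingly'', the sum defining $(\tilde A\tilde B)_h^r$ collapses to a constant multiple of the sum over $S^c$ defining $(AB)_{|h|}^{|r|}$, so that $\widetilde{AB}$ and $\tilde A\tilde B$ agree on $\Pi_S^\perp{\bf h}^{a,p}_{\rm odd}$. Likewise, identifying $h\in\ell_{a,p}$ with its odd Fourier extension $\tilde h\in\Pi_S^\perp{\bf h}^{a,p}_{\rm odd}$, one has $\widetilde{Ah}=\tilde A\tilde h$ and $\|h\|_{\vec v,p}$ equals $\|\tilde h\|_{\vec v,p}$ up to a fixed constant.

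Granting this dictionary, the second step is to invoke, for each frozen $\x\in\calO$, the standard interpolation inequalities on ${\bf h}^{a,p}$, namely $|\tilde A\tilde B|^{\rm dec}_{s,a,p}\le C(s)|\tilde A|^{\rm dec}_{s,a,\gotp_0}|\tilde B|^{\rm dec}_{s,a,p}+C(s_0)|\tilde A|^{\rm dec}_{s,a,p}|\tilde B|^{\rm dec}_{s,a,\gotp_0}$ and $\|\tilde A\tilde h\|_{s,a,p}\le C(p)\big(|\tilde A|^{\rm dec}_{s,a,\gotp_0}\|\tilde h\|_{s,a,p}+|\tilde A|^{\rm dec}_{s,a,p}\|\tilde h\|_{s,a,\gotp_0}\big)$. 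These hold precisely because $\gotp_0>(d+1)/2$ makes the low-regularity weight $\langle h,\ell\rangle^{2\gotp_0}$ summable, and because the weights $\langle h,\ell\rangle^{2p}e^{2a|h|}e^{2s|\ell|}$ are sub-multiplicative; they follow from a routine Cauchy--Schwarz/Young-type argument on matrix elements which I would not reproduce, referring to \cite{BBM}, \cite{BCP}. Translating back via $|A|^{\rm dec}=|\tilde A|^{\rm dec}$, the composition rule $\widetilde{AB}\propto\tilde A\tilde B$, and $\|Ah\|\propto\|\tilde A\tilde h\|$ yields \eqref{eq:2.11a} and \eqref{eq:2.13b} with the plain sup-norms.

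The third and last step is to upgrade to the $\g$-weighted Lipschitz norm. For the product I would write $A(\x_1)B(\x_1)-A(\x_2)B(\x_2)=(A(\x_1)-A(\x_2))B(\x_1)+A(\x_2)(B(\x_1)-B(\x_2))$, apply the just-proven unweighted bound to each of the two summands, estimate the two ``difference'' factors by $|\x_1-\x_2|$ times the corresponding Lipschitz seminorms and the remaining factors by their sup over $\calO$, then multiply by $\g$ and add the sup part; combining this with sub-multiplicativity and, if needed, enlarging $C(s),C(s_0)$ gives \eqref{eq:2.11a} for $|\cdot|^{\rm dec}_{\vec v,p}$. The same telescoping argument upgrades \eqref{eq:2.13b}, treating separately the case of $h$ independent of $\x$ (then $A(\x_1)h-A(\x_2)h=(A(\x_1)-A(\x_2))h$) and the case $h=h(\x)$. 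I do not expect any real obstacle here: the only points needing a little care are the composition-compatibility of the extension in the sine basis (the bookkeeping of the parity constraint and of the spurious factor of $2$), and checking that all these reductions commute with the Lipschitz variable; everything else is a verbatim transcription of the classical proofs.
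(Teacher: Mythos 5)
Your proposal is correct and follows essentially the same route as the paper, which in fact gives no written proof of this lemma: it relies exactly on the reduction you describe, namely that $|A|^{\rm dec}_{s,a,p}=|\tilde A|^{\rm dec}_{s,a,p}$ for the extension \eqref{tilde}, that this extension is compatible with composition (up to harmless constants from the parity bookkeeping), and that the classical interpolation and operator--vector estimates for the exponential-basis decay norm of \cite{BBM}, \cite{BCP} then transfer verbatim, with the weighted Lipschitz version obtained by the standard telescoping in $\x$. Nothing further is needed.
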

\noindent
%
%
%
%

\paragraph{T\"opliz-in-time matrices}

We study now a special class of operators , the so-called 
{\em T\"opliz in time} matrices, i.e.
\begin{equation}\label{eq:2.16aut}
A_{i}^{i'}=A_{(\s,j,l)}^{(\s',j',l')}:=A_{\s,j}^{\s'j'}(l-l'), 
\quad {\rm for} \quad i,i'\in \{\pm1\}\times\ZZZ_+\times\ZZZ^{d},
\end{equation}
To simplify the notation in this case, we shall write
$A_{i}^{i'}=A_{k}^{k'}(\ell)$,
 $i=(\s,j,l)\in \{\pm1\}\times\ZZZ_+\times \ZZZ^{d}$, 
$i'=(\s',j',l')\in \{\pm1\}\times \ZZZ_+\times\ZZZ^{d}$,
and $l-l'=\ell$.

They are relevant because
one can identify the matrix $A$ with a one-parameter family of operators, acting on the space
$\ell_{a,p}\times\ell_{a,p}$,
which depend on the time, namely
\begin{equation}\label{pozzo10aut}
A(\theta):=(A_{\s,j}^{\s',j'}(\theta))_{\substack{\s,\s'\in {\pm1} \\ j,j'\in\ZZZ_+}}, \quad 
A_{\s,j}^{\s',j'}(\theta):=\sum_{\ell\in\ZZZ^{d}}A_{\s,j}^{\s',j'}(\ell)e^{i\ell\cdot\theta}.
\end{equation}
Moreover to obtain the stability result on the solutions we will strongly use this property.

\begin{lemma}\label{1.4aut} If $A$ is a T\"opliz in time matrix as in (\ref{pozzo10aut}), and 
$\gotp_{0}:=(d+2)/2$, then one has
\begin{equation}\label{aaaaa}
\begin{aligned}
|A(\theta)|_{a,p}:=
\left(\sup_{\s,\s'}\sum_{h\in\ZZZ_{+}}
\langle h\rangle^{2p}e^{2|h|a}\sup_{k-k'=h}|\tilde{A}_{\s,k}^{\s',k'}(\theta)|^{2}
\right)^{\frac{1}{2}}
\leq C(\gotp_{0})|A|^{{\rm dec}}_{s,a,p+\gotp_{0}}, \quad  \quad \forall \; \theta\in\TTT^{d}_{s}.
\end{aligned}
\end{equation}

\end{lemma}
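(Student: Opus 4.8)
\textbf{Proof plan for Lemma \ref{1.4aut}.} The plan is to unfold both sides of \eqref{aaaaa} in terms of the Fourier coefficients $\tilde A_{\s,k}^{\s',k'}(\ell)$ and then estimate the time-dependent sup-over-lines quantity pointwise in $\theta$ by summing the Fourier series and using the exponential weight $e^{2|\ell|s}$ hidden in the decay norm together with a Cauchy--Schwarz argument with the extra $\gotp_0$ regularity absorbing the sum over $\ell\in\ZZZ^d$. Concretely, for fixed $\theta\in\TTT^d_s$ and fixed $\s,\s'$ and $h=k-k'$, I would write
\[
|\tilde A_{\s,k}^{\s',k'}(\theta)| \le \sum_{\ell\in\ZZZ^d} |\tilde A_{\s,k}^{\s',k'}(\ell)|\, e^{|\ell|s}
\le \Big(\sum_{\ell} |\tilde A_{\s,k}^{\s',k'}(\ell)|^2 e^{2|\ell|s}\langle\ell\rangle^{2\gotp_0}\Big)^{1/2}\Big(\sum_\ell \langle\ell\rangle^{-2\gotp_0}\Big)^{1/2},
\]
where the second factor converges precisely because $\gotp_0=(d+2)/2>(d+1)/2$. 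Taking the $\sup$ over $k-k'=h$ and passing it inside the sum over $\ell$ (the bound $\sup_h \sum_\ell a_{h,\ell}\le \sum_\ell \sup_h a_{h,\ell}$ is not available, so one uses instead $\sup_{k-k'=h}|\tilde A_{\s,k}^{\s',k'}(\ell)|\le$ the corresponding line-supremum appearing in \eqref{decayaut}), one bounds the inner $\ell$-sum by $(|A|^{\rm dec}_{s,a,p'})^2$ with $p'=p+\gotp_0$ after inserting the $\langle h,\ell\rangle^{2p}\ge\langle h\rangle^{2p}$ weight.

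The key steps in order are: (1) fix $\theta$ and expand $A_{\s,j}^{\s',j'}(\theta)$ via \eqref{pozzo10aut}; (2) apply Cauchy--Schwarz in $\ell$ with the splitting $\langle\ell\rangle^{\gotp_0}\cdot\langle\ell\rangle^{-\gotp_0}$, so that the convergent factor $\sum_\ell\langle\ell\rangle^{-2\gotp_0}=:C(\gotp_0)^2$ comes out and the remaining factor carries weights $e^{2|\ell|s}\langle\ell\rangle^{2\gotp_0}$; (3) square, multiply by $\langle h\rangle^{2p}e^{2|h|a}$, take $\sup_{k-k'=h}$ and then $\sum_{h\in\ZZZ_+}$; (4) recognize that $\langle h\rangle^{2p}\langle\ell\rangle^{2\gotp_0}\le \langle h,\ell\rangle^{2p}\langle h,\ell\rangle^{2\gotp_0}\le C\langle h,\ell\rangle^{2(p+\gotp_0)}$ (up to a harmless constant, using $\langle h\rangle\langle\ell\rangle\le 2\langle h,\ell\rangle^2$, or more simply $\langle h\rangle^{2p}e^{2|h|a}\le\langle h,\ell\rangle^{2p}e^{2|h|a}$ and $\langle\ell\rangle^{2\gotp_0}\le\langle h,\ell\rangle^{2\gotp_0}$), and interchange the two finite/absolutely convergent sums over $h$ and $\ell$; (5) bound the resulting double sum by the definition \eqref{decayaut} of $(|A|^{\rm dec}_{s,a,p+\gotp_0})^2$, noting $e^{2|h|a}\le e^{2|h|a+2|\ell|s}$. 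Taking square roots and the $\sup$ over $\s,\s'$ and over $\theta\in\TTT^d_s$ yields \eqref{aaaaa}.

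The only mildly delicate point is step (4): one must be careful that the operation ``$\sup$ over lines then sum over $h$'' on the left interacts correctly with ``sum over $h$ and $\ell$, then $\sup$ over lines'' on the right. This is handled by keeping the $\sup_{k-k'=h}$ attached to $|\tilde A_{\s,k}^{\s',k'}(\ell)|$ for each \emph{fixed} $\ell$ before summing in $\ell$ (so that no exchange of $\sup$ and $\sum$ over $\ell$ is needed), and then at the very end using that for each $(h,\ell)$ the quantity $\sup_{k-k'=(h,\ell)}|\tilde A_{\s,k}^{\s',k'}|^2$ dominates $\sup_{k-k'=h}|\tilde A_{\s,k}^{\s',k'}(\ell)|^2$ (same line-datum, restricting to $k-k'$ with second component forced). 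This step is essentially bookkeeping and is where the constant $C(\gotp_0)$ is pinned down; it is routine enough that I would simply refer to the analogous computation in \cite{BCP}, which is what the paper already does for the companion statement \eqref{dec1000}.
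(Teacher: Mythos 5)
Your proof is correct and is exactly the standard argument this lemma rests on (the paper states it without proof, as a routine consequence of Cauchy--Schwarz in the time-Fourier variable $\ell$, with $2\gotp_0=d+2>d$ making $\sum_\ell\langle\ell\rangle^{-2\gotp_0}$ converge, and with the T\"oplitz structure giving $\sup_{k-k'=(h,\ell)}|\tilde A_{\s,k}^{\s',k'}|=\sup_{j-j'=h}|\tilde A_{\s,j}^{\s',j'}(\ell)|$). Your only slip is the parenthetical claim that $\sup_{k-k'=h}\sum_\ell\le\sum_\ell\sup_{k-k'=h}$ is ``not available'': for nonnegative terms it is valid, and it is precisely what your pointwise bound by the line-supremum establishes, so the step is fine as written.
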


%

\begin{rmk}\label{opmolt}
The class of linear operators with \emph{decay} is strictly stronger than just being bounded. In particular contains an important class operator, the so called multiplication
operators. See Remark 4.34 in Section 4 of \cite{FP}.
More precisely if $M$ is the multiplication operator by a function 
$m(\theta,x)\in H^{\gotp_0}(\TTT^{d};{\bf h}_{{\rm odd}}^{a,p})\cap H^{p}(\TTT^{d};{\bf h}_{{\rm odd}}^{a,\gotp_0})$ then one has
\begin{equation}\label{opmolobrutto}
|\Pi_{S}^{\perp} M \Pi_{S}^{\perp}|^{{\rm dec}}_{s,a,p}\leq \|m\|_{s,a,p}.
\end{equation}

 \end{rmk}

\begin{rmk}\label{posacenere}
$F: \calO\times D_{a,p+\nu}(s+\rho s_0,r+\rho r_0)\to V_{a,p}$ a vector field of pseudo-differential type
in ${\calE}$ of Def. \ref{compatibili}, 
and the associated linear operator $\mathscr{P}$ defined in \eqref{pippone}.
Set $\vec{v}=(\g,\calO,s,a)$
and assume that for some $\gotp_1\geq\gotp_0$ 
$$
\|a_{i}\|_{\vec{v},\gotp_1},\|b_{i}\|_{\vec{v},\gotp_1}\leq 1,\quad i=0,1,2,
$$
then
one has that 
for any 
$w\in H^{\gotp_0}(\TTT_{s}^{d};\ell_{a,p+2})\cap H^{p+2}(\TTT_{s}^{d};\ell_{a,\gotp_0})$,
\begin{equation}\label{posacenere2}
\|\mathscr{P}w\|_{\vec{v},p}\leq C(p)\left( \|w\|_{\vec{v},p+2}+\big[\sum_{i=0}^{2}
 \|a_{i}\|_{\vec{v},p}+ \|b_{i}\|_{\vec{v},p}
\big]\|w\|_{\vec{v},\gotp_1}
\right), \quad C(p)\geq1.
\end{equation}
Similar estimates hold for the term $\mathscr{R}$ in \eqref{pippone}.
In other words
the operator $\mathscr{P}$ is tame, with tameness constants given by the $\|\cdot\|_{\vec{v},p}$ norms
of its coefficients. 
\end{rmk}


\subsection{Compatible changes of variables}\label{seclin}
In this Section we study the properties of field in $\calE$ of Def. \ref{compatibili}.
In particular we study the action on $\calE$ of the following class of linear changes of variables:

\begin{itemize}
	\item{\emph{Finite rank changes of variables.}}
	the transformation $\Phi:=\uno+f$ with $f\in\BB_{\calE}$,
	\vspace{.3em}
	\item{\emph{Diffeomorphism of the torus 1.}} the transformation $\Phi:= \Pi_S^\perp \TT_\al \Pi_S^\perp$ where
	$\TT_{\al}u:=u(\theta,x+\al(\theta,x))$,
	\vspace{.3em}
	\item{\emph{Diffeomorphism of the torus 2.}} the transformation $\TT_{\be }: \theta\to \theta+\be(\theta)$
	\item{\emph{Multiplication operator.}} the transformation $\Phi:=\Pi_S^\perp M(\theta,x)  \Pi_S^\perp$ where $M(\theta,x)$ is a multiplication operator
	on the space $ H^p(\TTT^{d+1}_a)$.
\end{itemize}

\begin{proof}[{\bf Proof of Lemma \ref{prestige}}]
By the properties of Finite rank vector fields, see Lemma \ref{miaomiao}, we have that
$g\in \BB_\calE$ generates a flow $\Phi^{t}_{g}=\uno+f_{t}$ where $f_{t}=f\in \BB$ satisfies \eqref{preservazione}.  It is well known that if  $g,F$ are Gauge preserving, i.e. commutes with $X_M$,
then so does  $(\Phi_{*}F)$.

\noindent
Now we have that $(\Phi_{*}F)(u)=F\circ{\Phi^{-1}}+d_uf(\Phi^{-1})[F\circ{\Phi^{-1}}]$. 
Since the transformation $\Phi$ is tame 
by Lemma \ref{temavec} one has that the push-forward is tame with tameness constants given by \eqref{dafare}.
The fact that \eqref{est5} holds for $(\Phi_{*}F)(u)$ follows by the parity conditions
\eqref{est5} on $F$ and \eqref{preservazione}.  It remains to check that the push-forward of $F$
is pseudo-differential according to Definition \ref{pseudopseudo}. We evaluate
the linearized at some point in a neighborhood of $(\theta,0,0)$ of  the $w$ component; we have
\begin{equation}\label{posacenere20}
d_{w}[(\Phi)_{*}F]^{(w)}(u)[h^{(w)}]=
(d_{w}F^{(w)})(\Phi^{-1}(u))[h^{(w)}]+\sum_{i=1}^{d}d_{w}F^{(\theta_i)}(\Phi^{-1}(u))[h^{(w)}]\del_{\theta_i}f^{(w,0)}.
\end{equation}
We claim that 
\begin{equation}\label{label}
d_{w}(\Phi_{*}F)^{(w)}(u)[h]=\mathscr{P}_++\mathscr{K}_+
\end{equation}
has the form \eqref{pippone} with coefficients $a_{i}^{+},b_{i}^{+},c_{i}^{+},d_{i}^{+}$.
 It is clear that term $\mathscr{P}_{+}$ comes from the first summand in \eqref{posacenere20}
 and 
has the same form of the term $\mathscr{P}$ in $d_{w}F(u)$ but with coefficients
 ${a}^{+}_{i}(x)=a_{i}(\Phi^{-1}(u);x)$
(same for $b_i$). 
Both the $a_{i}^{+},b_{i}^{+}$
belong to $H^{\gotp_0}(\TTT_{s}^{d};{\bf h}^{a,p})\cap H^{p}(\TTT_{s}^{d};{\bf h}^{a,\gotp_0})$
and satisfy the require bounds \eqref{labellabel} by the chain rule.

The second summand in \eqref{posacenere20} is  clearly a finite rank operator
and contributes to the coefficients
$c_{i}^{+},d_{i}^{+}$ in $\mathscr{K}^{+}$ in \eqref{maiale}.  Let us show that they belong to
$H^{\gotp_0}(\TTT_{s}^{d};{\bf h}^{a,p-2})\cap H^{p-2}(\TTT_{s}^{d};{\bf h}^{a,\gotp_0})$.
It follows from the tameness of $F$ by applying $(T_{1})^{*}$ in order to bound say $c_{i}^{+}$,
and $(T_{2})^{*}$ for its derivatives. The bounds on $d_{i}^{+}=\del_{\theta_i}f^{(w,0)}$ follows 
$f\in \BB$.

\end{proof}

\noindent
We have the following result.

\begin{lemma}[{\bf Finite rank changes of variables}]\label{torus}
Fix $0<\rho\ll 1$ and $\gotp_1\geq (d+1)/2$.
Consider $F: \calO\times D_{a,p+2}(s,r)\to V_{a,p}$ a vector field 
in ${\calE}$ of Def. \ref{compatibili}, 
then for all $\Phi=\uno+f$  as in Definition \ref{leftinverse} and with $f\in\BB_{\calE}$ (see
Lemma \ref{prestige})
the following holds.
 The operator 
 $d_{w}(\Phi_{*}F)^{(w)}(u)[h]$ defined in \eqref{label}
 satisfies the following bounds.
 Set $\vec{v}=(\g,\calO,s,a, r), \vec{v}_{1}=(\g,\calO,s-\rho s_0,a,r-\rho r_0)$, then we have 
 \begin{equation}\label{torus2}
 \begin{aligned}
  \|{a}^{+}_i-a_{i}\|_{\vec{v}_1,p}&\leq 
 C(p)( |f|_{\vec{v},p}\|a_{i}\|_{\vec{v},\gotp_1}+|f|_{\vec{v},\gotp_1}\|a_{i}\|_{\vec{v},p}),
    \\
 \|{a}^{+}_i\|_{\vec{v},p}
&\leq (1+C(p)|f|_{\vec{v},\gotp_1})\|a_{i}\|_{\vec{v},p}+C(p)|f|_{\vec{v},p}\|a_{i}\|_{\vec{v},\gotp_1},\\
\|d_{u}a_{i}^{+}[h]\|_{\vec{v}_1,p}&\leq 
(1+C(p)|f|_{\vec{v},\gotp_1})\|d_{u}a_{i}[h]\|_{\vec{v},p}+
C(p)|f|_{\vec{v},p}
\|d_{u}a_{i}[h]\|_{\vec{v},\gotp_1},
\end{aligned}
\end{equation}
for some $C(p)>0$ depending only on $p,d,\gotp_1,\gotp_2$.
The same bounds hold for all the other coefficients in \ref{pseudopseudo}.
\end{lemma}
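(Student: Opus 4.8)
The plan is to conjugate the explicit formula \eqref{posacenere20} for the $w$-derivative of the pushforward and then read off the new coefficients directly. Write $d_w(\Phi_* F)^{(w)}(u)[h] = (d_w F^{(w)})(\Phi^{-1}(u))[h] + \sum_{i=1}^d d_w F^{(\theta_i)}(\Phi^{-1}(u))[h]\,\del_{\theta_i}f^{(w,0)}$ as in the proof of Lemma \ref{prestige}. The first summand, by Definition \ref{pseudopseudo} applied to $F$, has exactly the pseudo-differential shape $\mathscr P$ of \eqref{pippone}, with coefficients obtained by precomposition: $a_i^+(u;x) = a_i(\Phi^{-1}(u);x)$ and similarly for $b_i$. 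The second summand is manifestly of finite rank and contributes only to the $\mathscr K_+$ part, so it does not affect $a_i^+,b_i^+$; this is why \eqref{torus2} concerns only the $\mathscr P_+$-coefficients (and the claim that the analogous bounds hold ``for all the other coefficients'' in Definition \ref{pseudopseudo} refers to the $c_i^+,d_i^+$, handled as in Lemma \ref{prestige}).

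The core estimate is therefore a composition estimate for the map $a_i \mapsto a_i\circ\Phi^{-1}$, where $\Phi = \uno+f$ with $f\in\BB_\calE$, i.e.\ a finite rank map with $|f|_{\vec v,p}$ controlled. First I would recall that $\Phi^{-1}=\uno+g$ with $g$ having the same qualitative structure and $|g|_{\vec v,p}\lesssim |f|_{\vec v,p}$ for $\rho$ small (this is the standard left-inverse construction from Definition \ref{leftinverse}, combined with the smallness $|f|_{\vec v,\gotp_1}\le \mathtt c\rho$ so that domains match as in Remark \ref{azz}). Then write, by the mean value / Taylor formula along the segment joining $u$ and $\Phi^{-1}(u)=u+g(u)$,
\begin{equation}\nonumber
a_i^+(u;x)-a_i(u;x) = a_i(u+g(u);x)-a_i(u;x)=\int_0^1 d_u a_i\big(u+t\,g(u);x\big)[g(u)]\,dt.
\end{equation}
Now apply the tame composition bound \eqref{pillole} of Lemma \ref{temavec} (together with the hypothesis \eqref{labellabel} that bounds $d_u a_i$ in terms of $C(F)$ and the $\|\cdot\|_{\vec v,p}$-norms), using that $g$ is finite rank so that $\|g(u)\|_{\vec v,\gotp_1}$ and $\|g(u)\|_{\vec v,p}$ are controlled by $|f|_{\vec v,\gotp_1}$ and $|f|_{\vec v,p}$ respectively on the domain $D_{a,\gotp_1}(r)$. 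This gives the first line of \eqref{torus2}. The second line follows by the triangle inequality $\|a_i^+\|_{\vec v,p}\le \|a_i\|_{\vec v,p}+\|a_i^+-a_i\|_{\vec v,p}$ and the first line. For the third line (the bound on $d_u a_i^+[h]$) one differentiates the identity $a_i^+(u)=a_i(\Phi^{-1}(u))$, getting $d_u a_i^+(u)[h]=d_u a_i(\Phi^{-1}(u))[\,d_u\Phi^{-1}(u)[h]\,]$, writes $d_u\Phi^{-1}=\uno+d_u g$, and again uses \eqref{pillole}/\eqref{pillole2} plus the interpolation-type product rule (the second inequality in \eqref{labellabel}); the structure is identical to the estimate for $a_i^+$ itself, with the ``one extra derivative'' absorbed by the $\gotp_1$-slot.

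The main obstacle is purely bookkeeping: keeping track of the various radii $r,r-\rho r_0$ and analyticity widths $s,s-\rho s_0$ so that all compositions in $\Phi,\Phi^{-1}$ are well defined on the stated domains (this uses $\rho\ll1$, Remark \ref{azz}, and the smallness $|f|_{\vec v,\gotp_1}\le\mathtt c\rho$), and making sure the loss of two derivatives inherent in the pseudo-differential structure ($a_i$ controls $d_wF^{(w)}$ as an operator $\ell_{a,p+2}\to\ell_{a,p}$) is respected throughout — in particular the hypothesis $F:\calO\times D_{a,p+2}(s,r)\to V_{a,p}$ is exactly what lets \eqref{labellabel} be invoked. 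None of the estimates is deep; the content is that $\BB_\calE$-conjugation, being close to the identity up to a \emph{finite rank} piece, preserves the pseudo-differential class of Definition \ref{pseudopseudo} with the coefficients transforming by the tame composition rule, and this is precisely what \eqref{torus2} records. I would close by remarking that the estimates for $b_i^+$ are word-for-word the same, and that $c_i^+,d_i^+$ were already treated in the proof of Lemma \ref{prestige} via $(T_1)^*,(T_2)^*$ and $f\in\BB$.
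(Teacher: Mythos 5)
Your proposal matches the paper's own proof: the paper likewise identifies $a_i^+(u;x)=a_i(\Phi^{-1}(u);x)$ with $\Phi^{-1}=\uno+g$, bounds $a_i^+-a_i$ by $\|d_u a_i(u^*)[g(u)]\|$ (your integral mean-value form of the same estimate), and concludes via \eqref{labellabel} together with the estimates on $g$ from Definition \ref{leftinverse}, obtaining the second bound by triangle inequality and the third by the chain rule, with the finite-rank part relegated to $\mathscr K_+$ exactly as you say. No gaps; it is essentially the same argument.
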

\begin{proof}
By definition  ${a}^{+}_{i}(x)=a_{i}(\Phi^{-1}(u);x)$ then 
$$
\|a_{i}(\Phi^{-1}(u);x)-a_{i}(u)\|_{\vec{v}_1,p}\leq \|d_{u}a(u^{*})[g(u)]\|_{\vec{v}_1,p}.
$$
Recall that $\Phi^{-1}(u)=u+g(u)$.
The  bound follows by estimates \eqref{labellabel} on $a$ and the estimates on $g$ (see \ref{leftinverse}).
The second bound in \eqref{torus2} trivially follows. for the third we only need to use the chain rule.
\end{proof}
\noindent
\noindent
Now we study diffeomorphisms of the $x$ variable and of the 
$\theta$ variables. In the Sobolev case this amounts to studying 
transformations of the real torus $\TTT^{d+1}$.  See  Appendix  in \cite{BBM}.
For completeness we give the statement of an equivalent technical result
 for the analytic case. For the proof we refer the reader to the Appendix in \cite{CFP}.

\begin{lemma}[{\bf Diffeomorphism}]\label{lem.diffeo}
Given $b>0$ and $\gotp_0>(b+1)/2$ then
there exists $c>0$ small depending only on $\gotp_0$ such that for any 
$p\geq \gotp_0,\z>0$ and any
 $\be\in H^{p+1}(\TTT^{b}_\z)$ such that
\begin{equation}\label{diffeo2}
\|\be\|_{\z,\gotp_{0}+1}\leq 
 c \de , 
\quad 0<\de<\frac{\z}{2}, 
\end{equation}
the following holds.
Let us consider $\Phi:\TTT^b_\z\to\TTT^b_{2\z}$ of the form
\begin{equation}\label{diffeo}
x\mapsto x+\be(x)=\Phi(x),
\end{equation}
then

\noindent
(i) There exists $\Psi:\TTT^b_{\z-\de}\to\TTT^b_{\z}$ of the form $\Psi(y)=y+\tilde{\be}(y)$ with
$\tilde{\be}\in W^{p,\io}(\TTT^b_{\z-\de})$ 
satisfying
\begin{equation}\label{diffeo3}
\|\tilde{\be}\|_{\z-\de,\gotp_{0}+1}\leq\frac{\de}{2}, \quad 
\|\tilde{\be}\|_{\z-\de,p}\leq2\|\be\|_{\z,p}\,,
\end{equation}
such that for all $x\in\TTT^b_{\z-2\de}$ one has $\Psi\circ \Phi(x)=x$.

\noindent
(ii) For all $u \in H^{\z,p}(\TTT^b_\z)$, the composition $(u\circ \Phi)(x)=u(x+\be(x))$ satisfies
\begin{equation}\label{diffeo4}
\begin{aligned}
\|u\circ \Phi\|_{\z-\de,p}&\leq (1+C(p)\|\be\|_{\z,\gotp_0+1})\|u\|_{\z,p}
+C(p)\|\be\|_{\z,p+1}\|u\|_{\z,\gotp_0}),\\
\|u\circ\Phi -u\|_{\z-\de,p}&\leq C(p) \frac{1}{\de} (\|\be\|_{\z, p+1}\|u\|_{\z,\gotp_0}+
\|\be\|_{\z, \gotp_0+1}\|u\|_{\z,p})
\end{aligned}
\end{equation}
%
All the estimates above hold also for $\z=0$ except the second in \eqref{diffeo4}.

 \end{lemma}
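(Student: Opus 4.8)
\textbf{Proof proposal for Lemma \ref{lem.diffeo}.} The statement is a standard quantitative version of the inverse function theorem for small perturbations of the identity on a complex-thickened torus, together with interpolation (tame) estimates for the composition operator. I would organize the proof in three parts: (i) existence and uniqueness of the inverse diffeomorphism $\Psi$ with the claimed domain loss $\delta$ and with the bounds \eqref{diffeo3}; (ii) the composition estimates \eqref{diffeo4}; (iii) the remark about the case $\zeta=0$.

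For part (i), the plan is to construct $\tilde\beta$ as a fixed point. Writing $\Psi(y)=y+\tilde\beta(y)$ and imposing $\Psi\circ\Phi=\mathrm{id}$, one gets the functional equation $\tilde\beta(x+\beta(x)) = -\beta(x)$, equivalently $\tilde\beta(y)=-\beta(y+\tilde\beta(y))$, so $\tilde\beta$ is the fixed point of the map $\Gamma:\tilde\beta\mapsto -\beta(\cdot+\tilde\beta(\cdot))$. One checks that $\Gamma$ is a contraction on the ball $\{\|\tilde\beta\|_{\zeta-\delta,\gotp_0+1}\le \delta/2\}$: here the smallness hypothesis \eqref{diffeo2} with the constant $c$ small (depending only on $\gotp_0$, via the algebra/Moser estimates for $H^{\gotp_0+1}$ on $\TTT^b_{\zeta-\delta}$) guarantees both that $\Gamma$ maps this ball into itself — because a shift by $\|\tilde\beta\|_{\zeta-\delta,\infty}\le \delta/2$ keeps us inside $\TTT^b_\zeta$ where $\beta$ is defined, and $\|\beta\|\le c\delta$ controls the image — and that it is a contraction, since $\|\Gamma(\tilde\beta_1)-\Gamma(\tilde\beta_2)\|_{\zeta-\delta,\gotp_0+1}\lesssim \|\nabla\beta\|_{\zeta-\delta/2,\gotp_0+1}\|\tilde\beta_1-\tilde\beta_2\|_{\zeta-\delta,\gotp_0+1}\le \tfrac12\|\tilde\beta_1-\tilde\beta_2\|$. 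The Banach fixed point theorem then gives the unique $\tilde\beta$ with $\|\tilde\beta\|_{\zeta-\delta,\gotp_0+1}\le\delta/2$; uniqueness also yields $\Psi\circ\Phi=\mathrm{id}$ on $\TTT^b_{\zeta-2\delta}$ (one needs the extra $\delta$ of room so that $\Phi(\TTT^b_{\zeta-2\delta})\subset\TTT^b_{\zeta-\delta}$). The higher-norm bound $\|\tilde\beta\|_{\zeta-\delta,p}\le 2\|\beta\|_{\zeta,p}$ then comes by differentiating the fixed point equation $p$ times (Faà di Bruno) and using the tame/Moser inequality together with the already-established low-norm smallness: the top-order term is $-\nabla\beta(\cdot+\tilde\beta)\cdot\nabla^{p}\tilde\beta + (\text{lower})$, so $\|\tilde\beta\|_{\zeta-\delta,p}\le \|\beta\|_{\zeta,p} + \tfrac12\|\tilde\beta\|_{\zeta-\delta,p}+C\|\beta\|_{\zeta,\gotp_0+1}\|\tilde\beta\|_{\zeta-\delta,p-1}$, and one absorbs and iterates.

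For part (ii), the first inequality in \eqref{diffeo4} is the tame estimate for composition: expanding $u(x+\beta(x))$ in Fourier series and estimating, or more cleanly using the known composition lemma (the analytic analogue of the Moser composition estimate, e.g.\ as in \cite{BBM} or the Appendix of \cite{CFP}), one gets $\|u\circ\Phi\|_{\zeta-\delta,p}\le(1+C(p)\|\beta\|_{\zeta,\gotp_0+1})\|u\|_{\zeta,p}+C(p)\|\beta\|_{\zeta,p+1}\|u\|_{\zeta,\gotp_0}$; note the loss of a derivative in $\beta$ is natural since $u\circ\Phi$ involves $\nabla u$ paired with $\beta$ only after differentiation, but the highest-order contribution is $(\nabla u)(x+\beta(x))$ which is controlled by $\|u\|_{\zeta,p}$ times a factor depending on low norms of $\beta$. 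For the second inequality, write $u(x+\beta(x))-u(x)=\int_0^1 (\nabla u)(x+t\beta(x))\cdot\beta(x)\,dt$; applying the first estimate of \eqref{diffeo4} to $\nabla u$ composed with $x\mapsto x+t\beta(x)$ (which costs one derivative on $u$, i.e.\ $\|u\|_{\zeta,p+1}$ appears) and then using the Cauchy/Fourier estimate $\|u\|_{\zeta,p+1}\le C\delta^{-1}\|u\|_{\zeta+\delta',\ldots}$ — or rather distributing the analyticity loss $\delta$ on the shift — produces the $1/\delta$ prefactor and the stated bound. The clean way is: the domain contraction by $\delta$ lets one trade $\|\nabla u\|_{\zeta-\delta,p}$ for $\delta^{-1}\|u\|_{\zeta,p}$ by a Cauchy estimate in the imaginary directions, which is exactly where the $1/\delta$ comes from.

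For part (iii), when $\zeta=0$ we are on the real torus $\TTT^b$, all the Sobolev composition and inverse-function estimates go through verbatim (this is the content cited from the Appendix of \cite{BBM}), except the second estimate in \eqref{diffeo4}: that one genuinely used the Cauchy estimate in the complex strip to gain the factor $\delta^{-1}$ from one lost derivative, and there is no analogue on the real torus where a lost derivative cannot be recovered. Hence we simply exclude it. I expect the main technical obstacle to be bookkeeping the various analyticity-radius losses consistently — keeping $\delta$, $\zeta-\delta$, $\zeta-2\delta$ straight between the contraction argument, the composition estimates, and the Cauchy estimate — rather than any single hard inequality; all the individual estimates are routine Moser/tame-type bounds. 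I would cite \cite{BBM} and the Appendix of \cite{CFP} for the detailed verifications and present here only the structure above.
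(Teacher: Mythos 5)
Your proposal is correct and follows exactly the expected route: the paper itself gives no proof of this lemma, stating only that it is an equivalent technical result whose proof is in the Appendix of \cite{CFP} (with the Sobolev analogue in the Appendix of \cite{BBM}), and the argument there is precisely your scheme — inverse by contraction mapping for $\tilde\beta(y)=-\beta(y+\tilde\beta(y))$ with the smallness \eqref{diffeo2} closing the low-norm ball and the Lipschitz/Cauchy estimate giving the contraction, tame (Moser-type) composition estimates for the first bound in \eqref{diffeo4}, and the Cauchy estimate in the lost strip $\delta$ producing the $1/\delta$ factor, which is also exactly why that second bound has no analogue at $\z=0$. The only cosmetic point is to state the Cauchy estimate in the correct direction, $\|\nabla u\|_{\z-\de,p}\leq C\de^{-1}\|u\|_{\z,p}$, as you do in your final formulation; otherwise the bookkeeping of the radii $\z$, $\z-\de$, $\z-2\de$ is as you describe.
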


\begin{lemma}\label{massauffa}
Consider a function $\al\in H^{s}(\TTT^{d}_s\times \TTT_a) $ such that
\begin{equation}\label{massapilota}
\sum_i\partial_{\theta_i}\al=0\;\Leftrightarrow \;\al(\theta,x)=\sum_{\ell\in \ZZZ^{d},j\in\ZZZ}\al_{j}(\ell)e^{\ii \ell\cdot\theta}e^{\ii jx}, \quad \al_{j}(\ell)\neq0,\;\; 
{\rm only \; if}\;\;\sum_{i=1}^{d}\ell_{i}=0;
\end{equation}
 define the map $\TT_{\al}$ on $H^{s}(\TTT^{d}_s\times \TTT_a)$
as
\begin{equation}\label{massapilota2}
\TT_{\al}u:=
u(\theta,x+\al(\theta,x)),\;\; \forall \; u\in H^{s}(\TTT^{d}_s\times \TTT_a).
\end{equation}
and consider $\Phi_\al: (\theta,y,w)\mapsto (\theta,y,w_1):=(\theta,y,\Pi_{S}^{\perp}\TT_{\al}w)$.
Given any Gauge preserving vector field $F$ we have that $\Phi_{\al}^*F$ is Gauge preserving.
\end{lemma}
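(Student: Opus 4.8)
\textbf{Proof plan for Lemma \ref{massauffa}.}

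The statement to prove is that if $F$ is Gauge preserving (i.e. $[F,X_M]=0$ with $X_M=\sum_i\partial_{\theta_i}+\ii z\partial_z-\ii\bar z\partial_{\bar z}$, equivalently $\Psi_\tau^* F=F$ where $\Psi_\tau(\theta,y,z^+,z^-)=(\theta+\tau,y,e^{\ii\tau}z^+,e^{-\ii\tau}z^-)$, see Remark \ref{massamassa}) and $\alpha$ satisfies \eqref{massapilota}, then the conjugate $\Phi_\alpha^* F$ is again Gauge preserving. The cleanest route is the criterion from Remark \ref{massamassa}: since $\Phi_\alpha$ is a (close to identity, hence left invertible) change of variables, $\Phi_\alpha^* F$ is Gauge preserving provided that $\Phi_\alpha$ \emph{commutes} with the Gauge flow, i.e. $\Phi_\alpha\circ\Psi_\tau=\Psi_\tau\circ\Phi_\alpha$ for all $\tau$. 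Indeed, if this commutation holds then $\Psi_\tau^*(\Phi_\alpha^*F)=(\Phi_\alpha\circ\Psi_\tau)_*^{-1}\cdots$ — more precisely, pushforward is contravariant functorial, $\Psi_\tau^*\circ\Phi_{\alpha,*}=\Phi_{\alpha,*}\circ\Psi_\tau^*$ when the two maps commute, so $\Psi_\tau^*\Phi_\alpha^*F=\Phi_\alpha^*\Psi_\tau^*F=\Phi_\alpha^*F$. So the whole lemma reduces to the elementary verification that $\Phi_\alpha$ commutes with $\Psi_\tau$.

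So the key computation is: first, I would observe that $\Phi_\alpha$ acts as the identity on the $\theta$ and $y$ components, while on $w$ it acts (componentwise in $\pm$) as $z^\sigma\mapsto \Pi_S^\perp\mathcal{T}_\alpha z^\sigma$, where $(\mathcal{T}_\alpha z^\sigma)(\theta,x)=z^\sigma(\theta,x+\alpha(\theta,x))$. Since $\Psi_\tau$ shifts $\theta\mapsto\theta+\tau\mathbf{1}$ and multiplies $z^\sigma$ by $e^{\sigma\ii\tau}$, the commutation $\Phi_\alpha\circ\Psi_\tau=\Psi_\tau\circ\Phi_\alpha$ on the $w$-component amounts to the identity of operators
\begin{equation}\label{massaplan1}
\mathcal{T}_{\alpha(\cdot+\tau\mathbf 1,\cdot)}=\mathcal{T}_\alpha \quad\text{on }H^s(\TTT^d_s\times\TTT_a),
\end{equation}
together with the fact that $\mathcal T_\alpha$ and the multiplication by the constant $e^{\sigma\ii\tau}$ obviously commute and that $\Pi_S^\perp$ commutes with $\Psi_\tau$ (which is clear since $\Psi_\tau$ acts on Fourier modes $e^{\ii jx}$ by $e^{\sigma\ii\tau}$, a scalar, hence preserves $\Pi_S^\perp$). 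Now \eqref{massaplan1} is exactly where hypothesis \eqref{massapilota} enters: the condition $\sum_i\partial_{\theta_i}\alpha=0$ means $\alpha_j(\ell)\neq0$ only when $\sum_i\ell_i=0$, so $\alpha(\theta+\tau\mathbf 1,x)=\sum_{\ell,j}\alpha_j(\ell)e^{\ii\ell\cdot\theta}e^{\ii\tau\sum_i\ell_i}e^{\ii jx}=\sum_{\ell,j}\alpha_j(\ell)e^{\ii\ell\cdot\theta}e^{\ii jx}=\alpha(\theta,x)$, i.e. $\alpha$ is invariant under the diagonal $\theta$-translation. Therefore the diffeomorphism $x\mapsto x+\alpha(\theta,x)$ is unchanged when $\theta$ is replaced by $\theta+\tau\mathbf 1$, giving \eqref{massaplan1}.

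Putting the pieces together: one checks $\Phi_\alpha\circ\Psi_\tau=\Psi_\tau\circ\Phi_\alpha$ component by component — trivially on $\theta,y$, and on $w$ by combining the $\Pi_S^\perp$-equivariance of $\Psi_\tau$, the $\alpha$-translation-invariance just established, and the commutativity of composition-on-the-right (in $x$) with multiplication by a $\theta,x$-independent scalar. Then one invokes the functoriality of pushforward and the criterion of Remark \ref{massamassa} to conclude $\Psi_\tau^*(\Phi_\alpha^* F)=\Phi_\alpha^* F$, i.e. $\Phi_\alpha^* F$ is Gauge preserving. I do not expect any genuine obstacle here: the only substantive point is the use of \eqref{massapilota} to get translation invariance of $\alpha$, and the only mild care needed is to make sure $\Phi_\alpha$ is well-defined and left invertible (so that ``pushforward'' makes sense) — this is guaranteed by the smallness of $\alpha$ implicit in the setup and by Lemma \ref{lem.diffeo}, exactly as used elsewhere for the transformations $\mathcal T_\alpha$.
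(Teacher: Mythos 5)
Your proposal is correct and follows essentially the same route as the paper: invoke the criterion of Remark \ref{massamassa}, reduce the statement to the commutation $\Phi_\al\circ\Psi_\tau=\Psi_\tau\circ\Phi_\al$, and verify it using the diagonal translation invariance $\al(\theta+\tau\vec 1,x)=\al(\theta,x)$ coming from \eqref{massapilota}, together with the trivial commutation of $\TT_\al$ and $\Pi_S^\perp$ with multiplication by the constant $e^{\pm\ii\tau}$. The paper's own proof is exactly this computation, only stated more tersely.
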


\begin{proof}
	By Remark \ref{massamassa}, we just check that $\Phi_\al(\Psi_\tau u)= \Psi_\tau \Phi_\al(u)$  where
	$$
	\Psi_\tau (\theta,y,z^+,z^-) = (\theta+ \tau \vec{1} , y,e^{\ii\tau} z^+, e^{-\ii\tau } z^- ). 
	$$
By \eqref{massapilota} we have $\al(\theta+\tau\vec{1},x)= \al(\theta,x)$, hence we have 
	$$
	 \Phi_\al(\theta+\tau \vec{1},y,e^{\ii\tau} z^+, e^{-\ii\tau } z^-) = (\theta+\tau \vec{1},y,\Pi_S^\perp\TT_{\al}e^{\ii\tau} z^+, \Pi_S^\perp\TT_{\al} e^{-\ii\tau } z^-)= (\theta+\tau \vec{1},y,e^{\ii\tau} \Pi_S^\perp\TT_{\al} z^+, e^{-\ii\tau } \Pi_S^\perp\TT_{\al} z^-).
	 $$

\end{proof}
\noindent
We have the following.

\begin{lemma}[{\bf Diffemorphism of the torus 1}]\label{diffeodiffeo}
Fix $\rho>0$.
Consider $F: \calO\times D_{a,p+2}(s+\rho s_0,r+\rho r_0)\to V_{a,p}$ 
a vector field 
in ${\calE}$ (see \ref{compatibili}), 
and a function 
$\al \in 
H^{p}(\TTT^{d}_{s}\times \TTT_a; \CCC)$, 
such that $\al=\al(u)$ with $u$ in a neighborhood of $(\theta,0,0)$,
which restricted to the real subspace is even in $\theta$, odd in $x$ and satisfies \eqref{massapilota}.
Assume that
\begin{equation}\label{piccolino}
\|\al\|_{\vec{v},\gotp_1}\leq c \rho,\quad \gotp_1=\gotp_0+\gotm_1,\quad \gotm_1=d+2\gotp_0+10,
\end{equation}
for some $c>0$ small and 
 \begin{equation}\label{labellabel200}
  \begin{aligned}
  \|\al\|_{\vec{v},p}&\leq \|\al\|_{\vec{v},p}(1+\|u\|_{\vec{v},p+2})\\
  \|d_{u}\al[h]\|_{\vec{v},p}&\leq \|\al\|_{\vec{v},p}
  (\|h\|_{\vec{v},p+2}+\|u\|_{\vec{v},p+2}\|h\|_{\gotp_0+2}).
  \end{aligned}
  \end{equation}
Then setting $\TT_{\al}$ as in \eqref{massapilota2}
and defining the map 
\begin{equation}\label{hobbit}
\Phi : \theta_{+}=\theta,\;\; y_{+}=y,\;\; w_{+}= \Pi_S^\perp \TT_\al w=\Pi_S^\perp \TT_\al \Pi_{S}^{\perp}w,
\end{equation}
one has that for all $\rho$ small enough $\Phi_{*}F: D_{a-2\rho a_0,p+2}(s+\rho s_0,r+\rho r_0)\to V_{a-2\rho a_0,p}$ is 
in $\calE$. 
 Moreover  $d_w(\Phi_{*}F)^{(w)}(u)[h]=\mathscr{P}_{+}+\mathscr{K}_{+}$  has the form \eqref{pippone}. 
 The operator  ${\mathscr{P}}_{+}$ has coefficients ${a}^{+}_{i}, {b}^{+}_{i}$ 
given by
\begin{equation}\label{diff2}
\begin{aligned}
&1+{a}^{+}_2=\TT_{\al}^{-1}[(1+a_{2})(1+\al_{x})^{2}], \qquad {b}^{+}_2=\TT_{\al}^{-1}[(b_{2})(1+\al_{x})^{2}],\\
&{a}^{+}_1=\TT_{\al}^{-1}[(1+a_{2})\al_{xx}]-i\TT_{\al}^{-1}[\oo\cdot\del_{\theta}\al]+\TT_{\al}^{-1}[a_{1}(1+\al_x)],\qquad {b}^{+}_{1}=
\TT_{\al}^{-1}[b_{1}(1+\al_x)],\\
&{a}^{+}_{0}=\TT_{\al}^{-1}[a_{0}], \qquad {b}^{+}_0=\TT_{\al}^{-1}[b_{0}],
\end{aligned}
\end{equation}
where $\TT_{\al}^{-1}u=u(\theta,y+\tilde{\al}(\theta,y)$ is the inverse of the diffeomorphism $x\to x+\al(\theta,x)$.
In particular, setting $\vec{v}:=(\g,\calO,s+\rho s_0,a)$,  $\vec{v}_1:=(\g,\calO,s+\rho s_0,a-\rho a_0)$ and  $\vec{v}_2:=(\g,\calO,s+\rho s_0,a-2\rho a_0)$, the following holds.
For $i=0,1,2$,
 the coefficients $a_{i}^{+},b_{i}^{+}$, of $\mathscr{P}_{+}$ satisfy
 ${a}^{+}_i={a}^{+}_{i}(\theta,y,w_{+};s)$, with $w_{+}=\TT_{\al}w$ and $s=x+\al(\theta,x)$ .
One has that 
\begin{equation}\label{diff3}
\begin{aligned}
 \|{a}^{+}_2-a_{2}\|_{\vec{v}_2,p}&\leq \rho^{-1}C(p)(
\|\al\|_{\vec{v},p+2} +\|\al\|_{\vec{v}_1,p}\|a_2\|_{\vec{v},\gotp_1}+\|\al\|_{\vec{v}_1,p}\|a_2\|_{\vec{v},\gotp_1})\\
  \|{a}^{+}_1-a_{1}\|_{\vec{v}_2,p}&\leq \rho^{-1}C(p)(
\|\al\|_{\vec{v},p+2} +\|\al\|_{\vec{v}_1,p}(\|a_2\|_{\vec{v},\gotp_1}+\|a_1\|_{\vec{v},\gotp_1})
+\|\al\|_{\vec{v}_1,\gotp_1}(\|a_2\|_{\vec{v},p}+\|a_1\|_{\vec{v},p}))\\
\|b_{1}^{+}-b_{1}\|_{\vec{v}_{2},p}&\leq \rho^{-1}C(p) ( \|\al\|_{\vec{v},p+1}\|b_1\|_{\vec{v},\gotp_0}
+\|\al\|_{\vec{v},\gotp_1}\|b_{1}\|_{\vec{v},p}), 
\end{aligned}
\end{equation}
for 
$C(p)>0$ depending only on $p,d,\gotp_1,\gotp_2,\rho$. The coefficients ${a}^{+}_0,{b}^{+}_0$
satisfy the same estimates as $b_1$.
The operator
 $\mathscr{K}_+$ has rank $\mathtt{N}+4|S|$ and the coefficients ${c}^{+}_i,{d}^{+}_i$ are such that
\begin{equation}\label{uffadecay}
\begin{aligned}
\|c_{i}^{+}-c_{i}\|_{\vec{v}_{2},p}\leq \rho^{-1}{C(p)} ( \|\al\|_{\vec{v},p+1}\de_{\gotp_0}
+\|\al\|_{\vec{v},\gotp_1}\de_{p}), \quad i=1,\ldots,N+4|S|,
\end{aligned}
\end{equation}
where $\de_{p}:=\max\{\|a_{i}\|_{\vec{v},p},\|b_{i}\|_{\vec{v},p},\|c_{i}\|_{\vec{v},p},\|d_{i}\|_{\vec{v},p}\}$,
and where we define $c_{i}=0$ for $i=\mathtt{N}+1,\ldots,\mathtt{N}+5|S|$.
Same for $d_{i}$. The coefficients $a^{+}_{i},b^{+}_{i}$ satisfies bounds like \eqref{labellabel}
with $C(F)\rightsquigarrow C(F)(1+\|\al\|_{\vec{v},p})$. 
\end{lemma}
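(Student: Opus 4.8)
The plan is to deduce everything from the classical calculus of conjugating a pseudo-differential operator by a reparametrization $x\mapsto x+\al(\theta,x)$ of the $x$-circle, and then to track carefully the loss of the analyticity radius, which is the only genuinely new feature with respect to the Sobolev statements in \cite{BBM}, \cite{FP}. Throughout I would import the quantitative estimates on $\TT_\al$ and its inverse from Lemma \ref{lem.diffeo}.

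\textbf{Step 1: the map $\Phi$ is a well-defined, left-invertible, $\calE$-preserving change of variables.} First I would apply Lemma \ref{lem.diffeo} with $b=d+1$, the torus variable being $(\theta,x)$ and with the loss parameter $\de$ of size $\rho a_0$ in $x$ and $\rho s_0$ in $\theta$: assumption \eqref{piccolino} is exactly calibrated so that $\|\al\|_{\gotp_0+1}\le c\,\de$, hence $x\mapsto x+\al(\theta,x)$ is a diffeomorphism with inverse $\TT_\al^{-1}$, and $\TT_\al,\TT_\al^{-1}$ map $H^{p}(\TTT^d_{s+\rho s_0}\times\TTT_a)\to H^{p}(\TTT^d_{s}\times\TTT_{a-\rho a_0})$ with the composition and difference estimates \eqref{diffeo4}. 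Since $\al$ is odd in $x$, $\TT_\al$ preserves oddness in $x$, so $\Pi_S^\perp\TT_\al\Pi_S^\perp$ maps $\ell_{a,p}$ to itself and, for $\al$ small, is left-invertible with left inverse $\Pi_S^\perp\TT_\al^{-1}\Pi_S^\perp$ corrected by a small finite-rank term supported on the $|j|\in S^+$ modes. This gives $\Phi$ in \eqref{hobbit} on the stated domains. Gauge-preservation of $\Phi$ follows from \eqref{massapilota} via Lemma \ref{massauffa}; and since $\al$ is even in $\theta$ and odd in $x$ and real on the real subspace, $\TT_\al$ intertwines the involution $S$ of \eqref{est4} in the way demanded by \eqref{est5} and respects \eqref{est55}, so the reversibility and real-on-real conditions are preserved.

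\textbf{Step 2: tameness and the pseudo-differential form \eqref{pippone}.} Tameness of $\hat F:=\Phi_*F$ is a direct application of the conjugation estimate of Lemma \ref{temavec}(iii), once one notes that $\Phi-\uno$ and $\Phi^{-1}-\uno$ have finite $|\cdot|_{\vec v,p}$-norm controlled by $\|\al\|_{\vec v,p}$ through \eqref{diffeo4} and \eqref{labellabel200}. To identify $d_w\hat F^{(w)}(u)$ I would argue as in the proof of Lemma \ref{prestige}: since $\Phi$ is the identity on $(\theta,y)$ and acts by $\Pi_S^\perp\TT_\al\Pi_S^\perp$ on $w$, the chain rule gives $d_w\hat F^{(w)}(u)[h]=\Pi_S^\perp\TT_\al\big(d_wF^{(w)}(\Phi^{-1}(u))\big)\Pi_S^\perp\TT_\al^{-1}\Pi_S^\perp[h]$ plus the terms coming from $\sum_i(\del_{\theta_i}\TT_\al)\,F^{(\theta_i)}$ and from the $u$-dependence of $\al$. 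In the first summand, $\mathscr P$ of \eqref{pippone} gets conjugated by the textbook formula, using $\TT_\al\del_x\TT_\al^{-1}=(1+\al_x)^{-1}\del_x$ and the fact that $\TT_\al$ acts diagonally and commutes with $E$; this produces exactly the coefficients \eqref{diff2} (in particular the $\del_{xx}$-coefficient $1+a_2^+=\TT_\al^{-1}[(1+a_2)(1+\al_x)^2]$ and the first and third terms of $a_1^+$), while the extra term $-\ii\TT_\al^{-1}[\oo\cdot\del_\theta\al]$ in $a_1^+$ is precisely the contribution of $\sum_i(\del_{\theta_i}\TT_\al)F^{(\theta_i)}$ together with $F^{(\theta)}(u)=\oo^{(0)}+O(|\x|)$, the $O(|\x|)$ remainder and the $\del_w\al$-terms being absorbed into the finite-rank part. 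Since $\mathscr K$ stays finite rank under conjugation and the $\Pi_S^\perp$-sandwiching and the $\del_w\al$-terms add at most $4|S|$ rank-one terms, one concludes that $d_w\hat F^{(w)}(u)$ has the form \eqref{pippone} with rank $\mathtt N+4|S|$, so $\hat F\in\calE$.

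\textbf{Step 3: estimates, and the main obstacle.} The bounds \eqref{diff3} and \eqref{uffadecay} would follow by substituting the explicit formulas \eqref{diff2} into the difference estimate of \eqref{diffeo4} — with $\de\sim\rho a_0$, which is the source of the $\rho^{-1}$ prefactors — and then interpolating and using \eqref{labellabel200}; the assertion that $a_i^+,b_i^+$ obey \eqref{labellabel} with $C(F)$ replaced by $C(F)(1+\|\al\|_{\vec v,p})$ is the chain rule applied to $a_i^+=a_i^+(\theta,y,w_+;x+\al)$. I expect the delicate points to be exactly two: first, the bookkeeping of the analyticity loss — one must verify that $\Phi$ costs only $\rho a_0$ in the $x$-radius and $\rho s_0$ in the $\theta$-radius, which is what \eqref{piccolino} buys via Lemma \ref{lem.diffeo}, since an uncontrolled use of $\TT_\al$ would lose $\sim\|\al\|$ of radius; and second, checking that the finite-rank corrections generated by the $\Pi_S^\perp$-projections and by $\del_w\al$ actually have coefficients of the regularity required in Definition \ref{pseudopseudo}, which is handled exactly as in Lemma \ref{prestige} by invoking the adjoint-tameness properties $(T_1)^*$ and $(T_2)^*$. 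All the remaining computations are the routine conjugation calculus, and the Sobolev case $a=0$ is covered by the same argument since Lemma \ref{lem.diffeo} holds (with the single exception of the second line of \eqref{diffeo4}) also for $\zeta=0$.
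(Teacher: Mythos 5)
Your proposal follows essentially the same route as the paper's proof: the transformed system under \eqref{hobbit}, the near-identity expansion of $\Pi_{S}^{\perp}\TT_{\al}\Pi_{S}^{\perp}$ with its finite-rank correction generated by the projectors, the explicit conjugation of the pseudo-differential part yielding \eqref{diff2}, the quantitative bounds via Lemma \ref{lem.diffeo} (the source of the $\rho^{-1}$ factors), and membership in $\calE$ from the parity of $\al$ together with Lemma \ref{massauffa} and the tameness/adjoint-tameness machinery. The only slight imprecision is that you sweep the $O(|\x|)$ discrepancy between $F^{(\theta)}$ and $\oo$ and the $\del_{w}\al$-contributions into the ``finite-rank part'' (they are really small lower-order multiplication terms rather than finite rank), a point the paper itself treats only schematically by a direct computation referred to \cite{FP}, so this does not constitute a genuine gap.
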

\begin{proof}
The vector field $\Phi_{*}F$ is clearly tame, indeed in the new variables the system reads
\begin{equation}\label{diff4}
\left\{\begin{aligned}
&\dot{\theta}_{+}=F^{(\theta)}(\theta_{+},y_+,(\Pi_{S}^{\perp}\TT_{\al})^{-1}w_+)\\
&\dot{y}_{+}=F^{(y)}(\theta_{+},y_+,(\Pi_{S}^{\perp}\TT_{\al})^{-1}w_+)\\
&\dot{w}_{+}=[F^{(\theta)}(\theta_{+},y_+,(\Pi_{S}^{\perp}\TT_{\al})^{-1}w_+)\cdot\del_{\theta}\al]\del_{x_+}w_{+}+\Pi_{S}^{\perp}\TT_{\al}F^{(w)}(\theta_{+},y_+,(\Pi_{S}^{\perp}\TT_{\al})^{-1}w_+).
\end{aligned}\right.
\end{equation}

We need to study the linearized operator $d_{w}(\Phi_{*}F)^{(w)}(u)[\cdot]$ at $u$ in a neighborhood of 
$(\theta,0,0)$.
First we note the following.
By \eqref{diffeo4} one has that the  map $\Pi_{S}^{\perp}\TT_{\al}$ is near to the identity and can be written as
\[
\Pi_{S}^{\perp}\TT_{\al}\Pi_{S}^{\perp}=\Pi_{S}^{\perp}(\uno+\calL_{\al})\Pi_{S}^{\perp}.
\]
Moreover it is invertible and one has that
\begin{equation}\label{inversaT}
(\Pi_{S}^{\perp}\TT_{\al}\Pi_{S}^{\perp})^{-1}=\Pi_{S}^{\perp}(\uno+\calL_{\al}^{(-1)})\Pi_{S}^{\perp}+Q_{S},
\end{equation}
where 
\[
\TT_{\al}^{-1}\circ\TT_{\al}:=(\uno+\calL_{\al}^{(-1)})\circ(\uno+\calL_{\al})=\uno,
\]
and where $Q_{S}$ is a linear operator of the form \eqref{maiale} with $\mathtt{N}=|S|$ (here $|S|$ is the cardinality of the set $S$) with coefficients $\tilde{c}_{i},\tilde{d}_{i}$, for $i=1,\ldots, |S|$ which satisfy
bounds like
\[
\|\tilde{c}_{i}\|_{\vec{v}_2,p}\leq C(p)\rho^{-1}\|\al\|_{\vec{v},p+1}.
\]
We remark also that we can write
\begin{equation}\label{inversaT2}
\begin{aligned}
&(\Pi_{S}^{\perp}\TT_{\al}\Pi_{S}^{\perp})^{-1}=
(\uno+\calL_{\al}^{(-1)})\Pi_{S}^{\perp}-\Pi_{S}(\uno+\calL_{\al}^{(-1)})\Pi_{S}^{\perp}+Q_{S},\\
&\Pi_{S}^{\perp}\TT_{\al}\Pi_{S}^{\perp}=
\Pi_{S}^{\perp}(\uno+\calL_{\al}-\calL_{\al}\Pi_{S}).
\end{aligned}
\end{equation}
Let us start by the term $\Pi_{S}^{\perp}\TT_{\al}d_{w}F^{(w)}(u)(\Pi_{S}^{\perp}\TT_{\al})^{-1}$,
which comes from the linearization of the second term in the equation for $w_{+}$ in \eqref{diff4}.
We know that $d_{w}F^{(w)}$ has the form \eqref{pippone},
hence we can write
\begin{equation}\label{pippip}
\begin{aligned}
d_{w}F^{(w)}&=\Pi_{S}^{\bot}L\Pi_{S}^{\bot}+\mathscr{K}, \\
L&= -\ii E\;\left[\left(\begin{matrix} 1+a_{2}(u;x) & b_{2}(u;x) \\ \bar{b}_{2}(x) & 1+\bar{a}_{2}(u;x)\end{matrix}
 \right)\del_{xx} +\left(\begin{matrix} a_{1}(u;x) & b_{1}(u;x) \\ \bar{b}_{1}(x) & \bar{a}_{1}(u;x)\end{matrix}
  \right)\del_{x}\right]\\
  &\qquad -\ii E\;
   \left[\left(\begin{matrix} a_{0}(u;x) & b_{0}(u;x) \\ \bar{b}_{0}(u;x) & \bar{a}_{0}(u;x)\end{matrix}
   \right)\right] .
\end{aligned}
\end{equation}
By equations \eqref{inversaT2} and \eqref{pippip} we have
\begin{equation}\label{coniugazioneprecisa}
\begin{aligned}
\Pi_{S}^{\perp}\TT_{\al}(\Pi_{S}^{\bot}L\Pi_{S}^{\bot}+\mathscr{K})(\Pi_{S}^{\perp}\TT_{\al})^{-1}&=
\Pi_{S}^{\perp}\TT_{\al}L\TT_{\al}^{-1}\Pi_{S}^{\perp}+\Pi_{S}^{\perp}\TT_{\al}\mathscr{K}(\Pi_{S}^{\perp}\TT_{\al})^{-1}+\\
&+\Pi_{S}^{\perp}\TT_{\al}\Pi_{S}^{\bot}L\Pi_{S}^{\bot}Q_{S}+
\Pi_{S}^{\perp}\TT_{\al}(-L\Pi_{S}-\Pi_{S}L\Pi_{S}^{\perp})(\Pi_{S}^{\perp}\TT_{\al})^{-1}+\\
&+
\Pi_{S}^{\perp}\calL_{\al}\Pi_{S}(\Pi_{S}^{\bot}L\Pi_{S}^{\bot})
(\Pi_{S}^{\perp}\TT_{\al})^{-1}+
\Pi_{S}^{\perp}\TT_{\al}L(-\Pi_{S}\calL_{\al}^{(-1)})\Pi_{S}^{\perp}.
\end{aligned}
\end{equation}

The first term in the r. h. s. of \eqref{coniugazioneprecisa} can be studied explicitly.
The equation \eqref{diff2} follows by a direct computation (see Section $3$ of \cite{FP} for further details). All the other terms in \eqref{coniugazioneprecisa} have the form \eqref{maiale}
with $N\rightsquigarrow N+4|S|$. 
Indeed, for $h\in \ell_{a,p}$ we have 
\[
\Pi_{S}^{\perp}\TT_{\al}L\Pi_{S}(\Pi_{S}^{\perp}\TT_{\al})h=
\Pi_{S}^{\perp}\TT_{\al}L\sum_{j\in S}(\TT_{\al}h,e^{\ii jx})_{L^{2}}e^{\ii jx}=
\sum_{j\in S}(h,(\TT_{\al})^{*}e^{\ii jx})\Pi_{S}^{\perp}\TT_{\al}L(e^{\ii jx}),
\]
where $(\TT_{\al})^{*}$
is the adjoint, with respect to the $L^{2}$ scalar product, of $\TT_{\al}$. One sets 
$c_{j}(x)=(\TT_{\al})^{*}e^{\ii jx}$ and $d_{j}(x)=\Pi_{S}^{\perp}\TT_{\al}L(e^{\ii jx})$
to get the form \eqref{maiale}. The other terms can be studied in the same way.
Of course the rank of the new operator $\mathscr{K}^{+}$ is no more $\mathtt{N}$, but it is increased 
proportionally with the cardinality of $S$.

Hence the new term $\mathscr{K}^{+}$ has coefficients
$c^{+}_{i},d_{i}^{+}$ for $i=1,\ldots,\mathtt{N}+5|S|$.
Bounds \eqref{diff3} and \eqref{uffadecay}
follows
by applying 
Lemma \ref{lem.diffeo}. 
%
%
%
%
%
The field $\Phi_{*}F$ is in the subspace $\calE$ since
the function $\al$ is even in 
$\theta$, odd in $x$ and by Lemma \eqref{massauffa}.

\end{proof}

\begin{rmk}\label{normadecayfiniterank}
By estimates \eqref{uffadecay}, and using Lemma \ref{linesdecay}, one gets
\begin{equation}\label{normadecayfiniterank2}
|\mathscr{K}^{+}-\mathscr{K}|^{{\rm dec}}_{\vec{v}_{2},p}\leq 
(\mathtt{N}+5|S|)
\rho^{-1}{C(p)} ( \|\al\|_{\vec{v},p+1}\de_{\gotp_0}
+\|\al\|_{\vec{v},\gotp_1}\de_{p}).
\end{equation}

\end{rmk}

\begin{lemma}[{\bf Diffeomorphism of the torus 2}]\label{diffeodiffeo2} Consider $F$ as in Lemma \ref{diffeodiffeo}
and the transformation $\TT_{\be }: \theta\to \theta+\be(\theta)$ with 
$\|\be\|_{\vec{v},\gotp_1}\leq \de,$ for some $\de>0$ small, which restricted to the real subspace is even in $\theta$ and satisfy \eqref{massapilota},
and where $\gotp_1=\gotp_0+\gotm_1$ defined in \eqref{piccolino}.
Then setting $\TT_{\be}u(\theta+\be(\theta),x):=u(\theta_{+},x)$ and defining the map 
\begin{equation}\label{mordor}
\Phi : \theta_{+}=\theta+\be(\theta),\;\; y_{+}=y,\;\; w_{+}=w,
\end{equation}
one has that for some $\rho$ small $\Phi_{*}F: D_{a,p+2}(s-2\rho s_0,r+\rho r_0)\to V_{a,p}$ is tame 
in $\calE$ (see \ref{compatibili})
for $\de\ll \rho$.
 Moreover  $d(\Phi_{*}F)(u)[h]:=\mathscr{P}_++\mathscr{K}_+$  has the form \eqref{pippone} where $\mathscr{P}$ has coefficients ${a}^{+}_{i}, {b}^{+}_{i}$ such that
 \begin{equation}\label{torus222}
 \|{a}^{+}_{i}-a_{i}\|_{\vec{v}_2,p}\leq  C(p,\rho)\|a_{i}\|_{\vec{v},p}\|\be\|_{\vec{v},\gotp_0+1}+\|\be\|_{\vec{v},p+1}\|a_i\|_{\vec{v},\gotp_1},
\end{equation}
where $\vec{v}:=(\g,\calO,s+\rho s_0,a)$,  $\vec{v}_1:=(\g,\calO,s,a)$ and  $\vec{v}_2:=(\g,\calO,s-2\rho s_0,a)$. Same for ${b}^{+}_i$.
Moreover  $\mathscr{K}_+$ with rank $N$  and coefficients whoch satisfy the bound \eqref{uffadecay} with $\al\rightsquigarrow \be$.
Finally if the function $\be$ satisfies bounds \eqref{labellabel} with $C(F)\rightsquigarrow\|\be\|_{\vec{v},p}$, then the coefficients $a^{+}_{i},b^{+}_{i}$ satisfies bounds like \eqref{labellabel}
with $C(F)\rightsquigarrow C(F)(1+\|\be\|_{\vec{v},p})$. 
\end{lemma}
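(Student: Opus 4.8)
The plan is to run the argument of Lemma \ref{diffeodiffeo} again, exploiting the crucial simplification that the change of variables \eqref{mordor} acts only on the angles $\theta$ and hence commutes with the projector $\Pi_S^\perp$ (which acts on $x$). Because of this commutation the finite rank part of the linearized operator is merely precomposed in $\theta$ and its rank is \emph{not} increased (it stays $\mathtt N=N$), while the principal symbol of $d_w(\Phi_*F)^{(w)}$ is obtained simply by precomposing the coefficients $a_i,b_i$ in the $\theta$ variable; there is no conjugation of $\del_{xx},\del_x$ to carry out and all the analytic content is already encoded in Lemma \ref{lem.diffeo}.

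First I would make the pushforward explicit. Since $\|\be\|_{\vec v,\gotp_1}\le\de\ll\rho$, Lemma \ref{lem.diffeo} (with $b=d$, $\z=s+\rho s_0$, $\de\rightsquigarrow\rho s_0$) produces a left inverse $\TT_\be^{-1}:\theta\mapsto\theta+\tilde\be(\theta)$ of $\theta\mapsto\theta+\be(\theta)$, with $\|\tilde\be\|_{\cdot,\gotp_0+1}\le\rho s_0/2$ and $\|\tilde\be\|_{\cdot,p}\le 2\|\be\|_{\cdot,p}$; moreover, since $\be$ satisfies \eqref{massapilota}, so does $\tilde\be$, which is again even in $\theta$ and real on the real subspace. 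Writing $\circ\,\TT_\be^{-1}$ for composition in $\theta$ only, the conjugated field is, acting on functions of $(\theta,y,w)$,
\[
(\Phi_*F)^{(\theta)}=\big[(\uno+D\be)\,F^{(\theta)}\big]\circ\TT_\be^{-1}\,,\qquad (\Phi_*F)^{(y)}=F^{(y)}\circ\TT_\be^{-1}\,,\qquad (\Phi_*F)^{(w)}=F^{(w)}\circ\TT_\be^{-1}\,,
\]
where $D\be$ is the Jacobian of $\be$. Differentiating the $w$-component at a point $u$ near $(\theta,0,0)$ and using that $\TT_\be^{-1}$ does not touch $w$, one gets $d_w(\Phi_*F)^{(w)}(u)[h]=(d_wF^{(w)})(\TT_\be^{-1}u)[h]=\mathscr P_++\mathscr K_+$, with $\mathscr P_+$ of the form \eqref{pippone} and coefficients $a_i^+=a_i\circ\TT_\be^{-1}$, $b_i^+=b_i\circ\TT_\be^{-1}$, while $\mathscr K_+$ is as in \eqref{maiale} with rank still $\mathtt N=N$ and coefficients $c_i^+=c_i\circ\TT_\be^{-1}$, $d_i^+=d_i\circ\TT_\be^{-1}$. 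The estimate \eqref{torus222} and the bound \eqref{uffadecay} (with $\al\rightsquigarrow\be$) then follow from the composition estimates \eqref{diffeo4} together with interpolation and the chain rule, exactly as in the last part of the proof of Lemma \ref{diffeodiffeo}; and if $\be$ obeys bounds of the form \eqref{labellabel} (with $C(F)\rightsquigarrow\|\be\|_{\vec v,p}$), then the chain rule applied to $d_u(a_i\circ\TT_\be^{-1})$ yields the claimed \eqref{labellabel}-type bound for $a_i^+,b_i^+$ with $C(F)\rightsquigarrow C(F)(1+\|\be\|_{\vec v,p})$.

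It remains to verify $\Phi_*F\in\calE$ of Definition \ref{compatibili}. Tameness follows from Lemma \ref{temavec}(ii), composition of the tame field $F$ with the $\theta$-diffeomorphism $\theta\mapsto\theta+\tilde\be(\theta)$, the extra multiplicative factor $\uno+D\be$ in the $\theta$-component being multiplication by a function of $\theta$ of size $O(\|\be\|_{\vec v,\gotp_1})$; the pseudo-differential structure has just been established. Gauge preservation is preserved: by \eqref{massapilota} one has $\be(\theta+\tau\vec{1})=\be(\theta)$, hence $\Phi\circ\Psi_\tau=\Psi_\tau\circ\Phi$, and one concludes as in Lemma \ref{massauffa}. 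Finally, reversibility \eqref{est5} and the real-on-real property \eqref{est55} are preserved because $\be$, and hence $\tilde\be$, are even in $\theta$ and real, so precomposition in $\theta$ commutes with the involution $S$ of \eqref{est4}. The only point needing care — as already in Lemma \ref{diffeodiffeo} — is the bookkeeping of the loss of analyticity in the $\theta$-direction (the requirement $\de\ll\rho$ and the shrinking $s_0\to s_0-2\rho s_0$; no loss in the $x$-strip occurs, since $x$ is untouched), but this is routine and no genuinely new difficulty arises.
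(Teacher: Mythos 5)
Your proposal is correct and follows essentially the same route as the paper, whose proof of this lemma is simply the instruction to argue as in Lemma \ref{diffeodiffeo} using Lemma \ref{lem.diffeo}; you have filled in exactly those details (precomposition in $\theta$ only, no conjugation of $\del_x,\del_{xx}$, rank of $\mathscr{K}$ unchanged, estimates from \eqref{diffeo4}, and preservation of the tame, gauge, reversible and real-on-real structure).
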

\begin{proof}
One can reason as in Lemma \ref{diffeodiffeo} and use Lemma \ref{lem.diffeo}.
\end{proof}

\begin{lemma}[{\bf Multiplication operator}]\label{multiopop}
Consider $F$ as in Lemma \ref{diffeodiffeo} and  the map $\Phi:=\Pi_S^\perp M(\theta,x)  \Pi_S^\perp$
where $M(\theta,x)=\uno+A(\theta,x) : H^{p}(\TTT_{s}^{d}\times\TTT_{a})\times H^{p}(\TTT_{s}^{d}\times\TTT_{a})
\to H^{p}(\TTT_{s}^{d}\times\TTT_{a})\times H^{p}(\TTT_{s}^{d}\times\TTT_{a})$
with $\|A\|_{\vec{v},\gotp_{1}}\leq \de$ with $\de>0$ small
and $\gotp_1=\gotp_0+\gotm_1$ as in \eqref{piccolino}.
Assume also that $A(\theta,x)$ is even both in $\theta\in\TTT^{d}$ and $x\in \TTT$ when restricted to the real subspace and condition \eqref{massapilota} holds. Finally assume that $A(\theta,x)$ satisfies bounds like
\eqref{labellabel} with $C(F)\rightsquigarrow \|A\|_{\vec{v},p}$. Then 
one has that $\Phi_{*}F$ is tame in $\calE$ with tameness constant given by \eqref{dafare}
where $C_{\vec{v},p}(G)\rightsquigarrow \|A\|_{\vec{v},p}$. Moreover, writing
$$
d_{w}F(u)=\Pi^{\perp}_{S}\left[(-iE+L_2)\del_{xx}+L_{1}\del_x+L_{0}\right]\Pi_{S}^{\perp}+\mathscr{K}(u)
$$
where
\begin{equation}\label{lupo}
L_{i}(\theta,y,w;x)=-iE\left(\begin{matrix}a_{i}(u;x) & b_{i}(u;x) \\ \bar{b}_{i}(u;x) & \bar{a}_{i}(u;x)\end{matrix}\right), \;\;i=0,1,2,
\end{equation}
the linearized operator    $d(\Phi_{*}F)(u)[h]:=\mathscr{P}_++\mathscr{K}_+$  has the form \eqref{pippone} where 
\begin{equation}\label{eq:3.12}
\begin{aligned}
&\mathscr{P}_+=\Pi_{S}^{\perp}\left(M^{-1}(-iE+L_{2}) M \del_{xx}+\left[2 M^{-1}(-iE+L_{2})\del_{x}M+ M^{-1}L_{1} M\right]\del_{x} \right.\\
&\left.+\left[ M^{-1}(F^{(\theta)}\cdot\del_{\theta}M)+ M^{-1}(-iE+L_{2})\del_{xx} M+ M^{-1}L_{1}\del_{x}M+M^{-1}L_{0}M\right] \right)\Pi_{S}^{\perp}
\end{aligned}
\end{equation}
and $\mathscr{K}_+$ has the form \eqref{maiale} with $\mathtt{N}\rightsquigarrow \mathtt{N}+5|S|$ and with coefficients  $c_{i}^{+},d_{i}^{+}$, $i=1,\ldots,\mathtt{N}+5|S|$.
The coefficients $a^{+}_{i},b_{i}^{+}$ of $\mathscr{P}_+$ and $c_{i}^{+},d_{i}^{+}$ of $\mathscr{K}_+$
satisfy
\begin{equation}\label{uffadecay2}
\|{a}^{+}_{i}-a_{i}\|_{\vec{v}_2,p}\leq \|a_{i}\|_{\vec{v},p}\|A\|_{\vec{v},\gotp_0}+
\|A\|_{\vec{v},p}\|a_i\|_{\vec{v},\gotp_1}, 
\end{equation}
same for ${b}^{+}_i,{c}^{+}_i,{d}^{+}_i$. The bounds \eqref{labellabel} hold with 
$C(F)\rightsquigarrow C(F)(1+\|A\|_{\vec{v},p})$.
\end{lemma}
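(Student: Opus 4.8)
The plan is to mimic the proofs of Lemmas~\ref{diffeodiffeo} and \ref{diffeodiffeo2}, treating $M=\uno+A$ as a close-to-identity linear change of the normal variable. Since $\Phi$ acts only on the $w$-component, in the new variables the system will have the same shape as \eqref{diff4} with $\TT_\al$ replaced by $\Pi_S^\perp M\Pi_S^\perp$: the $\theta$- and $y$-equations become $F^{(\theta)},F^{(y)}$ composed with $\Phi^{-1}$, while the $w$-equation acquires, besides $\Pi_S^\perp M\,F^{(w)}(\Phi^{-1}(\cdot))$, the extra term $\Pi_S^\perp\big(F^{(\theta)}\!\cdot\del_\theta M\big)(\Pi_S^\perp M\Pi_S^\perp)^{-1}w$ produced by the $\theta$-dependence of $M$. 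As $\|A\|_{\vec v,\gotp_1}\le\de$ is small, $M$ is invertible by a Neumann series, with $\|M^{\pm1}-\uno\|_{\vec v,p}\le C(p)\|A\|_{\vec v,p}$, and — exactly as in \eqref{inversaT}--\eqref{inversaT2} — $(\Pi_S^\perp M\Pi_S^\perp)^{-1}=\Pi_S^\perp M^{-1}\Pi_S^\perp+Q_S$ with $Q_S$ of the form \eqref{maiale}, of rank $|S|$, and coefficients controlled by $\|A\|_{\vec v,p}$.

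First I would dispatch the ``soft'' properties. Tameness of $\Phi_*F$, with tameness constant bounded as in \eqref{dafare} (with $C_{\vec v,p}(G)\rightsquigarrow\|A\|_{\vec v,p}$), follows from Lemma~\ref{temavec}(iii) and the tame estimates on the composition operator \eqref{compoop}. That $\Phi_*F\in\calE$ is checked componentwise as in Lemma~\ref{prestige}: the Gauge-preserving property follows from Remark~\ref{massamassa}, since condition \eqref{massapilota} for $A$ gives $M(\theta+\tau\vec{1},x)=M(\theta,x)$, hence $\Phi\circ\Psi_\tau=\Psi_\tau\circ\Phi$ (the analogue of Lemma~\ref{massauffa}); and the reversibility \eqref{est5} and real-on-real \eqref{est55} properties are preserved because $A$ is even both in $\theta$ and in $x$ on the real subspace, so conjugation by $M$ respects the parity structure of $d_wF^{(w)}$.

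The core of the argument is the computation of the linearized operator. Writing $d_wF^{(w)}(u)=\Pi_S^\perp[(-\ii E+L_2)\del_{xx}+L_1\del_x+L_0]\Pi_S^\perp+\mathscr K(u)$ with $L_i$ as in \eqref{lupo}, and dropping for the moment the $\Pi_S$ corrections, conjugation of the differential-operator part by $M$ produces a differential operator of the same order whose coefficients are read off via the Leibniz rule $\del_{xx}(Mw)=M\del_{xx}w+2(\del_xM)\del_xw+(\del_{xx}M)w$ and $\del_x(Mw)=M\del_xw+(\del_xM)w$; adding the zeroth-order contribution $\Pi_S^\perp M^{-1}(F^{(\theta)}\!\cdot\del_\theta M)\Pi_S^\perp$ coming from the extra term in the $w$-equation and collecting powers of $\del_x$ yields precisely the principal part $\mathscr P_+$ in \eqref{eq:3.12}. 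The coefficients $a_i^+,b_i^+$ are then explicit products of $M^{\pm1}$, $\del_x^kM$ ($k\le 2$), the original $a_i,b_i$ and $F^{(\theta)}$, so the algebra and interpolation properties of $\|\cdot\|_{\vec v,p}$, together with $\|M^{\pm1}-\uno\|_{\vec v,p}\le C(p)\|A\|_{\vec v,p}$ and the hypotheses \eqref{labellabel} on $a_i,b_i$ and on $A$, give \eqref{uffadecay2} and the claimed bounds \eqref{labellabel} with $C(F)\rightsquigarrow C(F)(1+\|A\|_{\vec v,p})$. Finally, all terms arising when $\Pi_S^\perp M\Pi_S^\perp$ and $(\Pi_S^\perp M\Pi_S^\perp)^{-1}$ are replaced by $M$ and $M^{-1}$ — namely $\Pi_S^\perp LM\Pi_S$, $\Pi_S M^{-1}\Pi_S^\perp$, the $Q_S$-corrections, and the conjugate $\Pi_S^\perp M^{-1}\mathscr K M\Pi_S^\perp$ of the finite-rank part — are finite rank, each of the form \eqref{maiale} with at most $5|S|$ additional rank-one summands, exactly as in \eqref{coniugazioneprecisa}, with coefficients of the type $c_j^+(x)=(M\Pi_S^\perp)^*e^{\ii jx}$, $d_j^+(x)=\Pi_S^\perp M L e^{\ii jx}$ in the required Sobolev spaces and obeying \eqref{uffadecay2}; hence $\mathscr K_+$ has rank $\mathtt N+5|S|$.

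I expect the main obstacle to be, as in Lemma~\ref{diffeodiffeo}, the finite-rank bookkeeping forced by the spectral projector $\Pi_S$, which does not commute with $M$: one must verify that the extra rank-one corrections number at most $5|S|$ and that their coefficients — built from $(M^{\pm1})^*$, $Le^{\ii jx}$ and $x$-derivatives of $A$ — satisfy the interpolation estimates \eqref{uffadecay2}. A secondary point is checking that the term $\Pi_S^\perp M^{-1}(F^{(\theta)}\!\cdot\del_\theta M)\Pi_S^\perp$, which involves the full vector field $F^{(\theta)}$ rather than a linearization, still fits the zeroth-order pseudo-differential form \eqref{pippone} with coefficients obeying \eqref{labellabel}.
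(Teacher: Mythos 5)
Your proposal is correct and follows essentially the same route as the paper's (very terse) proof, which likewise notes that $\Phi$ is tame because $\|\cdot\|_{\vec v,p}$ is equivalent to the $H^{p}(\TTT^{d}_{s}\times\TTT_{a})$ norm on functions, obtains \eqref{eq:3.12} by the same explicit Leibniz-rule conjugation computation, derives \eqref{uffadecay2} by interpolation, and treats the finite-rank part $\mathscr{K}_+$ exactly by the reasoning of Lemma \ref{diffeodiffeo}, as you do. The only point to tidy is the direction of conjugation: your preliminary description of the transformed $w$-equation yields $M(\cdot)M^{-1}$ while your Leibniz computation (and \eqref{eq:3.12}) is written as $M^{-1}(\cdot)M$; this is pure bookkeeping (which of $\Phi,\Phi^{-1}$ is used), harmless since $M^{\pm1}=\uno+O(A)$, but it should be made consistent.
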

\begin{proof}
First of all we note that $\Phi$ is a tame map. It is sufficient to apply the definition and use the fact that $\|\cdot\|_{\vec{v},p}$ is equivalent 
to $\|\cdot\|_{H^{p}(\TTT_{b}^{d}\times \TTT_{a})}$ on the functions $u(\theta,x)$. Hence also the push-forward is tame. Equation
\eqref{eq:3.12} follows again by an explicit calculation, and the tame bounds on the coefficients follow by the tameness of the transformation and of the coefficients $L_{i}$.
The bounds \eqref{uffadecay2} follows by interpolation properties of the decay norm.
One cam study of the finite rank term $\mathscr{K}^{+}$ 
following the reasoning used in Lemma \ref{diffeodiffeo}.
\end{proof}

\begin{rmk}
The new finite rank term $\mathscr{K}^{+}$
satisfies the bound \eqref{normadecayfiniterank2} with $\al\rightsquigarrow A$.

\end{rmk}

\begin{rmk}[{\bf Loss of regularity}]\label{loss}
Note that transformations  in Lemma \ref{torus} do not loose analyticity. On the contrary the diffeormorphisms of the torus in Lemmata \ref{diffeodiffeo} and \ref{diffeodiffeo2}
loose regularity in the analytic case. 
\end{rmk}

Lemmata \ref{torus}, \ref{diffeodiffeo}, \ref{diffeodiffeo2} and \ref{multiopop} guarantees that the structure of pseudo-differential operator
of the linearized in a neighbourhood of $u=(\theta,0,0)$
persists under the change of variables we described above.
We also have decomposed the linearized
operator in homogeneous decreasing symbols of order two, one and zero. By Lemmata above we note that also such decomposition is stable, and we are able to control 
the tameness constant of each symbol. This not \emph{a priori} obvious.

\subsection{Reduction to diagonal plus bounded operator}\label{settesette}
The following Lemma is the key result of this Section. We give the result on a special class of vector field.
Consider   vector field $F : \calO\times D_{a,p+\nu}(s,r)\to V_{a,p}$  
with $F=N_0+G$
and with $\nu=2$.  For $\g_0/2\leq \g\leq 2\g_0$ (see \eqref{ini4}),
 set $\vec{v}=(\g,\calO,s,a)$ and
  assume that $F$ is in $\calE$ (see \ref{compatibili}) and has the form
\begin{equation}\label{regu22}
F:=(1+h)\Big(\hat{N}_0+N^{(1)}+N^{(2)}+H
\Big)
\end{equation}
%
%
%
for some $h: \calO\times \TTT^{d}_{s}\to \CCC$ with
\begin{equation}\label{barad9}
\|h\|_{\vec{v},p}<+ \infty, \quad p\leq \gotp_2
\end{equation}
and
where $\hat{N}_0=\oo\cdot\del_{\theta}+\tilde{\Omega}^{-1}w\cdot\del_{w}$ with $\tilde{\Omega}^{-1}=m \Omega^{(-1)}$, $\oo\in \RRR^{d}$ is diophantine and
\begin{equation}\label{barad}
|\oo-{\oo}^{(0)}|_{\g}\leq  o(|\x|), \quad |m-1|_{\g}\leq O(|\x|),
\end{equation}
\begin{equation}\label{barad20}
{N}^{(2)}:=\Big(\Pi_{S}^{\perp}\left(-\ii E\left(\begin{matrix} a_{2}(\theta,x) &b_{2}(\theta,x)\\ \bar{b}_{2}(\theta,x)& a_{2}(\theta,x) \end{matrix}\right)
\right)\del_{xx} \Pi_{S}^{\perp}\Big)w\cdot\del_{w}
\end{equation}
\begin{equation}\label{angband2}
N^{(1)}:=  \Big(-\ii E\Pi_{S}^{\perp}\left[\left(\begin{matrix} a_{1}(\theta,x) & b_{1}(\theta,x) \\ \bar{b}_{1}(\theta,x) & \bar{a}_{1}(\theta,x)\end{matrix}
  \right)\del_{x} + \left(\begin{matrix} a_{0}(\theta,x) & b_{0}(\theta,x) \\ \bar{b}_{0}(\theta,x) & \bar{a}_{0}(\theta,x)\end{matrix}
   \right) \right]\Pi_S^\perp+\mathscr{K}\Big)w\cdot\del_{w},
\end{equation}
where $\mathscr{K}$ of the form \eqref{maiale} with coefficients $c_{j},d_{j}$, $j=1,\ldots,N$
 and ${\oo}^{(0)}$ defined in 
\eqref{ini4}.
Moreover we { assume that}
\begin{equation}\label{barad21}
d_{w}H^{(w)}(0)[\cdot]=0.
\end{equation}
and 
\begin{equation}\label{barad22}
\begin{aligned}
C_{\vec{v},p}(H)\leq C_{\vec{v},p}(\Pi_{\NN^{\perp}}G), \quad C_{\vec{v},p}(N^{(2)})\leq C_{\vec{v},p}(G).
\end{aligned}
\end{equation}

Note that by the reversibility condition one has that on $\calU$ the function $h(\theta)$ is real and \emph{even} in $\theta$.
Write
\begin{equation}\label{corvo}
 \g^{-1}\|H^{(\theta,0)}\|_{\vec{v},p}:=\de^{(1)}_{p}, \qquad
{\rm max}\{\g^{-1}\|a_{2}\|_{\vec{v},p},\g^{-1}\|b_{2}\|_{\vec{v},p}\}:=\de^{(2)}_{p}, 
\end{equation}
We recall that by Remark \ref{posacenere} we have that $\de^{(2)}_{p}$ controls the tameness constant
$C_{\vec{v},p}(N^{(2)})$.
In the same way the norm of the functions $a_{i},b_{i},c_{i},d_{i}$ controls the constant $C_{\vec{v},p}(N^{(1)})$.
In the following we will use the following notation:
we set
\begin{equation}\label{indiani}
\begin{aligned}
C_{\vec{v},p}(N^{(2)})&:=\max\{\|a_{2}\|_{\vec{v},p},\|b_{2}\|_{\vec{v},p}\},\\
C_{\vec{v},p}(N^{(1)})&:=\max_{\substack{i=0,1\\ j=1,\ldots,N}}
\{\|a_{i}\|_{\vec{v},p},\|b_{i}\|_{\vec{v},p},\|c_{j}\|_{\vec{v},p},\|d_{j}\|_{\vec{v},p}\}
\end{aligned}
\end{equation}
In Remark \ref{orolo} we already fixed the  parameter $\gotp_{0}>(d+1)/2$ 
so that the norm $\|\cdot\|_{s,a,p}$ for $p\geq \gotp_0$ defines a Banach algebra on the space ${H^{p}(\TTT_{s}^{d}\times \TTT_{a})}$.

Some comments are in order. 
Note that the term $N^{(2)}$ is  pseudo-differential operator of order $2$, i.e. maps $\ell_{a,p+2}$ to $\ell_{a,p}$
for any $p\in \RRR$. The term $N^{(1)}$ is an operator which maps $\ell_{a,p+1}$ to $\ell_{a,p}$.
In the following we shall use always this notation.
The vector field $F$ in \eqref{regu22} we study has a particular structure.
In Section \ref{sbroo} we will show the following fact: we construct a sequence of maps $\calL_{n}$  
which are compatible according to Definition \ref{compa}. Then we will apply  
Theorem \ref{thm:kambis} to $F_0$ defined in \eqref{system}, 
then we will show by induction that 
the  sequence of fields $F_{n}$ for $n\geq1$ given by Theorem \ref{thm:kambis}
have actually the structure in \eqref{regu22}.
In the following Lemma
we show how to construct a map $\calL_{+}$ which reduces the size of the coefficients $a_{2},b_{2}$
in \eqref{barad20} for vector fields of the form \eqref{regu22}.

\begin{lemma}[{\bf Regularization}]\label{regularization}
Fix $K>0$ large, $K_{+}=K^{\frac{3}{2}}$ and $\rho_{+}>0$ small
and consider $F$ as in \eqref{regu22}.
Fix 
\begin{equation}\label{posacenere10}
\h_{1}:=2\gotp_0+2\tau+4,
\end{equation}
and $\vec{v}=(\g,\calO,s,a)$,  $\vec{v}_1:=(\g,\calO,s-\rho_+ s_0,a-\rho_+ a_0)$ and  $\vec{v}_2:=(\g,\calO,s-2\rho_+ s_0,a-2\rho_+ a_0)$.
Let $\gotp_1=\gotp_0+\gotm_{1}$ defined in \eqref{piccolino}.
Then, if 
\begin{equation}\label{piccolopassero}
\rho_+^{-1}K^{\h_{1}}\max\{\de^{(1)}_{\gotp_1},\de^{(2)}_{\gotp_1}\}\leq \epsilon
\quad \|h\|_{\vec{v},\gotp_1}\leq \g_0 \tG_0,   
\end{equation}
where $\g_0$,  $\tG_0$ as in \eqref{ini4},\eqref{parametri}, 
$\gotp_1,\gotp_2$ in \ref{sceltapar}
and $\epsilon=\epsilon(d,\gotp_0)$  is small enough 
then there exists a tame map 
\begin{equation}\label{mappamappa}
\calL_+= : \calO_0\times D_{a+\rho_+ a_0,p+2}(s-\rho_+ s_0,r-\rho_+ r_0)\to D_{a,p+2}(s,r),
\end{equation}
with 
\begin{equation}\label{trans2}
\begin{aligned}
\|(\calL_{+}-\uno)u\|_{\vec{v}_{1},p}&\leq C\rho_{+}^{-1}K^{\h_1}\max\{\de_{p}^{(1)},\de_{p}^{(2)}\}
\|u\|_{\vec{v},\gotp_0}
+
C\rho_{+}^{-1}K^{\h_1}\max\{\de_{\gotp_1}^{(1)},\de_{\gotp_1}^{(2)}\}\|u\|_{\vec{v},p}
, \;\; p\leq \gotp_2.
\end{aligned}
\end{equation}
Moreover for any $\x\in \calO_0$ such that
\begin{equation}\label{lamerd}
|\oo\cdot l|\geq \frac{\g}{\langle l\rangle^{\tau}}, \quad |l|\leq K_{+}
\end{equation}
\begin{equation}\label{mappaccia}
\hat{F}:=(\calL_+)_{*}F : D_{a-2\rho_+ a_0,p+2}(s-2\rho_+ s_0,r-2\rho_+ r_0)\to V_{a-2\rho a_0,p}
\end{equation}
is in $\calE$ (see Def. \ref{compatibili}) and has the form $N_{0}+\hat{G}$ and 
\begin{equation}\label{regu5}
\begin{aligned}
\hat{F}:=(1+h_+)\Big(\hat{N}^{+}_0+\hat{N}^{(1)}+\hat{N}^{(2)}+\hat{H}
\Big)
\end{aligned}
\end{equation}
with 
\begin{equation}\label{argento}
\|{h}_+\|_{\vec{v}_{2},\gotp_1}\leq \|h\|_{\vec{v},\gotp_1}(1+K_{+}^{(\gotp_0+2\tau+2)}\de_{\gotp_1}^{(2)}), \qquad \|{h}_+\|_{\vec{v}_{2},\gotp_2}\leq
\|h\|_{\vec{v},\gotp_1}(1+K_{+}^{(\gotp_0+2\tau+2)}\de_{\gotp_1}^{(2)})+K_{+}^{\gotp_0+2\tau+2}\de_{\gotp_2}^{(2)},
\end{equation}
and $\hat{N}^{+}_0=\oo_+\cdot\del_{\theta}+\tilde{\Omega}_{+}^{-1}w\cdot\del_{w}$ with $\tilde{\Omega}^{-1}=m_{+} \Omega^{(-1)}$, and
\begin{equation}\label{barad1000}
|\oo_{+}-{\oo}|_{\g}\leq  \de_{\gotp_1}^{(1)}, \quad |m_+-m|_{\g}\leq \de_{\gotp_1}^{(2)}.
\end{equation}
One has that $N_{+}^{(2)}$ as the form \eqref{barad20} with coefficients $a_{2}^{+},b_{2}^{+}$, $N^{(1)}_+$ has the form \eqref{angband2} with coefficients
$a_{i}^{+},b_{i}^{+}$ for $i=0,1$, $\mathscr{K}_+$ of the form \eqref{maiale} with rank $N+\mathtt{C}|S|$,
for some absolute constant $\mathtt{C}$,  and coefficients $c^{+}_i,d^{+}_i$ for $ i=1,\ldots, N+\mathtt{C} |S|$. In particular, recalling the notation introduced in \eqref{indiani}, one has
%
 \begin{equation}\label{regu6}
 \begin{aligned}
  C_{\vec{v}_{2},\gotp_{1}}(\hat{N}^{(1)})&\leq  (1+K_{+}^{\h_1}\de_{\gotp_1}^{(2)})C_{\vec{v},\gotp_1}(N^{(1)})
\\
C_{\vec{v}_{2},\gotp_{2}}(\hat{N}^{(1)})&\leq 
(1+K_{+}^{\h_1}\de_{\gotp_1}^{(2)})(C_{\vec{v},\gotp_2}(N^{(1)})+
K_{+}^{2\gotp_0+2\tau+2}(\de_{\gotp_2}^{(1)}+\de_{\gotp_2}^{(2)}) C_{\vec{v},\gotp_{1}}(G)),
 \end{aligned}
 \end{equation}
 \begin{equation}\label{regu66}
 \begin{aligned}
 \g_0^{-1}C_{\vec{v}_{2},\gotp_1}(\hat{N}^{(2)})&\leq (1+K_{+}^{\h_1}\de_{\gotp_1}^{(2)})\Big[K_{+}^{-(\gotp_2-\gotp_1-2\gotp_0-2\tau-2)}\de^{(2)}_{\gotp_2}+K_{+}^{2\gotp_0+2\tau+2}(\de_{\gotp_1}^{(2)})^{2}
\Big],\\
C_{\vec{v}_{2},\gotp_2}(\hat{N}^{(2)})&\leq C_{\vec{v},\gotp_2}(N^{(2)})+K_{+}^{\h_1}C_{\vec{v},\gotp_1}(N^{(2)}).
 \end{aligned}
\end{equation}
Moreover the following estimates hold: set $\hat{G}=\hat{F}-N_0$, then
 \begin{equation}\label{passero}
 \begin{aligned}
 C_{\vec{v}_{2},\gotp_{1}}(\hat{G})&\leq  (1+K_{+}^{\h_1}\de_{\gotp_1}^{(2)})C_{\vec{v},\gotp_1}(G)
\\
C_{\vec{v}_{2},\gotp_{2}}(\hat{G})&\leq 
(1+K_{+}^{\h_1}\de_{\gotp_1}^{(2)})(C_{\vec{v},\gotp_2}(G)+
K_{+}^{\h_1}(\de_{\gotp_2}^{(1)}+\de_{\gotp_2}^{(2)}) C_{\vec{v},\gotp_{1}}(G)),
\end{aligned}
\end{equation}
 \begin{equation}\label{passero2}
\g_0^{-1}\|\hat{H}^{(\theta,0)}\|_{\vec{v}_2,\gotp_1}\leq   (1+K_{1}^{\h_1}\de_{\gotp_1}^{(2)})\Big[
K_{1}^{-(\gotp_2-\gotp_1)}\de_{\gotp_1}^{(1)}+K_{1}^{\h_1}\de_{\gotp_1}^{(2)}\de_{\gotp_1}^{(1)}
\Big],
\end{equation}
Finally one has
\begin{equation}\label{merlo1}
\g_{0}^{-1}\|a_{2}-a_{2}^{+}\|_{\vec{v}_{2},\gotp_1}, \g_{0}^{-1}\|b_{2}-b_{2}^{+}\|_{\vec{v}_{2},\gotp_1}\leq 
K_{+}^{\h_1}\de_{\gotp_1}^{(2)}\Big[K_{+}^{-(\gotp_2-\gotp_1-2\gotp_0-2\tau-2)}\de^{(2)}_{\gotp_2}+K_{+}^{2\gotp_0+2\tau+2}(\de_{\gotp_1}^{(2)})^{2}
\Big],
\end{equation}
\begin{equation}\label{merlo}
\begin{aligned}
\|a_{i}-a_{i}^{+}\|_{\vec{v}_{2},\gotp_1},\|b_{i}-b_{i}^{+}\|_{\vec{v}_{2},\gotp_1}&\leq  K_{+}^{\h_1}\de_{\gotp_1}^{(2)}
C_{\vec{v},\gotp_1}(N^{(1)}),
\qquad i=0,1,\\
\|c_{i}-c_{i}^{+}\|_{\vec{v}_{2},\gotp_1}, \|d_{i}-d_{i}^{+}\|_{\vec{v}_{2},\gotp_1}&\leq  K_{+}^{\h_1}\de_{\gotp_1}^{(2)}C_{\vec{v},\gotp_1}(N^{(1)}),
\qquad i=1,\ldots,N+\mathtt{C}|S|
\end{aligned}
\end{equation}
where $c_{i}=d_i=0$ for $i=N+1,\ldots, N+\mathtt{C}|S|$. 
\end{lemma}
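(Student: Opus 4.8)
The plan is to build $\calL_+$, following the template of \cite{FP,BBM}, as a composition $\calL_+=\TT_\beta\circ\TT_\alpha\circ\Phi_M$ of three structure-preserving transformations of the types analysed in Section~\ref{seclin}, each obtained from a $\Pi_K$-truncated solution of an elementary homological equation. I would take $\Phi_M=\Pi_S^\perp M(\theta,x)\Pi_S^\perp$ to be the multiplication operator of Lemma~\ref{multiopop}, with $M=\uno+A$ and $A=A(\theta,x)$ chosen pointwise in $(\theta,x)$ so that the $2\times2$ principal symbol matrix (with entries $m+a_2,b_2,\bar b_2,m+\bar a_2$) is diagonalised at first order --- no small divisors occur, and the off-diagonal coefficient is thereby replaced by one of size $O\bigl((\de^{(2)})^2\bigr)+O(\|\Pi_K^\perp b_2\|)$. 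Then $\TT_\alpha=\Pi_S^\perp\bigl(u\mapsto u(\theta,x+\alpha)\bigr)\Pi_S^\perp$ would be the space-diffeomorphism of Lemma~\ref{diffeodiffeo}, with $\alpha$ the $\Pi_K$-truncated solution, odd in $x$ and even in $\theta$, of the ODE $m(1+\alpha_x)^2=m_+(\theta)-\Pi_K a_2$ (the scalar $m_+(\theta)$ being fixed by $\int_\TTT(1+\alpha_x)\,dx=2\pi$), so that by \eqref{diff2} the new diagonal second-order coefficient equals $m_+(\theta)$ up to $O\bigl((\de^{(2)})^2\bigr)$ and $\Pi_K^\perp$-errors. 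Finally $\TT_\beta:\theta\mapsto\theta+\beta(\theta)$ would be the torus-diffeomorphism of Lemma~\ref{diffeodiffeo2}, with $\beta=\Pi_K\tilde\beta$ and $\tilde\beta$ solving $\oo\cdot\del_\theta\tilde\beta=\Pi_K\bigl(H^{(\theta,0)}-\av{H^{(\theta,0)}}\bigr)$, which removes the oscillating part of $H^{(\theta,0)}$, produces the frequency shift $\oo_+=\oo+\av{H^{(\theta,0)}}$ of \eqref{barad1000}, and whose residual effect on the principal symbol gets absorbed into the new overall factor $1+h_+$.

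First I would check solvability: the only small divisors lie in the equation for $\tilde\beta$, and they are the $\oo\cdot l$ with $|l|\le K_+$, bounded below by $\g\langle l\rangle^{-\tau}$ on the set \eqref{lamerd}; the equations for $A$ and $\alpha$ have no small divisors once $m$, $m+\Pi_K a_2$ and the $2\times2$ matrix are invertible, which holds by $|m-1|=O(|\x|)$ and \eqref{piccolopassero}. Under \eqref{piccolopassero} one then gets $\|\alpha\|_{\vec v,\gotp_1},\|\beta\|_{\vec v,\gotp_1},\|A\|_{\vec v,\gotp_1}\lesssim K^{\h_1}\max\{\de^{(1)}_{\gotp_1},\de^{(2)}_{\gotp_1}\}\ll\rho_+$ --- the factor $K^{\h_1}$, $\h_1=2\gotp_0+2\tau+4$, accounting for the $\gotm_1$ derivatives lost in Lemmata~\ref{diffeodiffeo}--\ref{multiopop} and for the single division by the small divisors --- together with interpolated $p$-norm bounds by $\de^{(\cdot)}_{\gotp_1}$ and $\de^{(\cdot)}_p$ for $p\le\gotp_2$. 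Consequently each of $\Phi_M,\TT_\alpha,\TT_\beta$ fulfils the hypotheses of its lemma with $\rho\rightsquigarrow\rho_+$, maps the scales of domains into one another (giving \eqref{mappamappa}, \eqref{mappaccia}), and conjugates $\calE$ into $\calE$ (the prescribed parities and Lemma~\ref{massauffa} give the gauge- and reversibility-preserving properties); composing the three $\Phi_*-\uno$ bounds with Lemma~\ref{temavec} yields \eqref{trans2}.

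Next I would compute $\hat F=(\calL_+)_*F$. Lemmata~\ref{torus}, \ref{diffeodiffeo}, \ref{diffeodiffeo2}, \ref{multiopop} guarantee that $d_w\hat F^{(w)}$ is again of the form \eqref{pippone}, with $\mathscr P_+$ given by the explicit formulae \eqref{diff2}, \eqref{eq:3.12} (and the $\TT_\beta$-analogue) and $\mathscr K_+$ of rank $\le\mathtt N+\mathtt C|S|$, the extra $O(|S|)$ arising from the $\Pi_S,\Pi_S^\perp$ cut-offs exactly as in the proof of Lemma~\ref{diffeodiffeo}. Reading off from these formulae the $x$-independent part of the new diagonal second-order coefficient gives $1+h_+$ (estimate \eqref{argento}, which also keeps $\|h_+\|_{\gotp_1}\le\g_0\tG_0$ via the hypothesis on $h$), its $(\theta,x)$-average gives $m_+$ (estimate \eqref{barad1000}), and the leftover is $\hat N^{(2)}$; the $\theta$-averaging step gives $\hat H^{(\theta,0)}$ and $\oo_+$. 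The quadratic gains \eqref{regu66}, \eqref{merlo1}, \eqref{passero2} are the usual one-Newton-step estimate: the \emph{cancelled} parts of $a_2,b_2,H^{(\theta,0)}$ contribute errors quadratic in the coefficients, of type $\alpha\,a_2$, $A\,b_2$, $\beta\,H^{(\theta,0)}$, i.e.\ $O\bigl(K^{\h_1}(\de^{(\cdot)})^2\bigr)$ in low norm, while the \emph{uncancelled} parts are $\Pi_K^\perp$ of the respective coefficients, bounded by the smoothing inequality \eqref{smoothing} by $K^{-(\gotp_2-\gotp_1)}$ times their $\gotp_2$-norm; after absorbing the $K^{\h_1}$-losses this gives the stated $K_+^{-(\gotp_2-\gotp_1-2\gotp_0-2\tau-2)}\de^{(\cdot)}_{\gotp_2}$. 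The remaining no-gain bounds \eqref{regu6}, \eqref{passero}, \eqref{merlo} for $\hat N^{(1)}$, the lower-order coefficients and $\hat G$ should follow at once from the perturbative estimates \eqref{diff3}, \eqref{uffadecay}, \eqref{torus222}, \eqref{uffadecay2}, using $\|\alpha\|_{\gotp_1},\|\beta\|_{\gotp_1},\|A\|_{\gotp_1}\ll1$, Remark~\ref{posacenere} to move between decay norms and tameness constants, and \eqref{barad22}.

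I expect the main obstacle to be the bookkeeping rather than any single conceptual point, since Section~\ref{seclin} already supplies all the structural lemmata. One has to propagate simultaneously, through three non-commuting conjugations, (i) the tameness constants at both regularities $\gotp_1,\gotp_2$; (ii) the two-term structure $K^{\h_1}\de^2+K^{-(\Delta\gotp-\cdots)}\de_{\gotp_2}$ responsible for the quadratic convergence; and --- the genuinely delicate point --- (iii) the losses of analyticity radius of $\TT_\alpha,\TT_\beta$ (Remark~\ref{loss}), which must be fitted inside the prescribed $2\rho_+a_0$ in the $a$-direction and $2\rho_+s_0$ in the $s$-direction. This last point is exactly what forces the iteration of Section~\ref{sbroo} to be run with a summable sequence $\rho_+\rightsquigarrow\rho_n$ and is the only substantial difference from the Sobolev case.
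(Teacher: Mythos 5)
Your decomposition of $\calL_+$ as (multiplication operator) $\circ$ (space diffeomorphism $\TT_\al$) $\circ$ (torus diffeomorphism removing the oscillating part of $H^{(\theta,0)}$) reproduces Steps 1, 2 and 4 of the paper's proof, but it omits the step the argument cannot do without: the \emph{time reparameterization} $\theta\mapsto\theta+\oo\be(\theta)$, with \emph{scalar} $\be$ solving \eqref{helm}--\eqref{38} (the paper's Step 3, which carries its own small-divisor equation in addition to the one for your $\tilde\be$). After your first two transformations the second-order coefficient is $(1+h)\,(m+m_2(\theta))$, where $m_2(\theta)$ is of size $\g\,\de^{(2)}_{\gotp_1}$ and is supported on low Fourier modes, so it is neither quadratically small nor killed by smoothing; the conclusion of the Lemma, however, requires this block to be $(1+h_+)\,(m_+ +\hat a_2^{+})$ with $m_+$ a \emph{constant} satisfying \eqref{barad1000} and $\hat a_2^{+}$ obeying the quadratic bound \eqref{regu66} (constancy of $m_+$ is also what the later reducibility analysis of Proposition \ref{birk} and Lemma \ref{kamreduc} relies on).

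Your claim that this residual $\theta$-dependence ``gets absorbed into the new overall factor $1+h_+$'' is where the proof breaks. The factorization \eqref{regu22} imposes the \emph{same} scalar factor in front of every component of the field. If you set $1+h_+:=(1+h)(m+m_2)/m_+$ so as to make the second-order block constant, then the $\theta$-component becomes $(1+h_+)\tfrac{m_+}{m+m_2(\theta)}\big(\oo+H^{(\theta,0)}\big)$, and the term $\big(\tfrac{m_+}{m+m_2(\theta)}-1\big)\oo$, of size $\g\,\de^{(2)}_{\gotp_1}|\oo|$, must then be counted inside $\hat H^{(\theta,0)}$. This contribution is linear, not quadratic, in the small quantities, so \eqref{passero2} fails; in the iteration of Proposition \ref{andrea}, where $\g_n^{-1}\|H_n^{(\theta,0)}\|$ and $C_{\vec v_n,\gotp_1}(N_n^{(2)})$ must both decay like $\e_0K_{n-1}^{-\ka_2+\mu+4}$, such an error of the same size as $\de^{(2)}$ destroys the quadratic character of the step and the scheme cannot close (keeping $m_2-\gotc$ inside $\hat N^{(2)}$ instead violates \eqref{regu66} for the same reason). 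The paper's Step 3 is precisely the device that removes this obstruction: the pushforward under $\theta\mapsto\theta+\oo\be(\theta)$ multiplies the \emph{whole} field by $\TT_\be^{-1}(1+\oo\cdot\del_\theta\be)$, i.e.\ it updates the conformal factor $h\to h_+$ coherently in all components at once, and the choice \eqref{38} of $\be$ (requiring \eqref{lamerd}) turns $m+m_2(\theta)$ into the constant $m_+$ up to exactly the truncation and quadratic errors recorded in \eqref{regu66} and \eqref{argento}. Apart from this missing transformation (and a harmless miswriting of the equation for $\al$, which in the paper reads $(1+\tilde a_2)(1+\al_x)^2=1+m_2(\theta)$), the rest of your outline — absence of small divisors in the first two steps, rank increase of the finite-rank part by $O(|S|)$, the two-term structure ``quadratic plus smoothed'' of the gains, and the bookkeeping of the analyticity losses against $\rho_+$ — is in line with the paper's proof.
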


\begin{proof}[{\bf Proof of Lemma \eqref{regularization}}]
The proof follows the scheme used in Section $3$ of \cite{FP}. 

\noindent
{\bf Step 1.} The first step is to diagonalize the second order coefficient by eliminating the terms $b_{2}$ through a multiplication operator.
We use a transformation of the form
\begin{equation}\label{minas3}
\Phi_{1} : \theta\to\theta,\;\; y\to y,\;\; w\to \Pi_{S}^{\perp}M_{A}\Pi_{S}^{\perp}w.
\end{equation}
The eigenvalues of
$$
\left(\begin{matrix} m+a_{2}(\theta,x) & b_{2}(\theta,x) \\ \bar{b}_{2}(\theta,x) & m+\bar{a}_{2}(\theta,x)\end{matrix}
 \right)
$$
are $\la_{1,2}=\sqrt{(m+a_{2})^{2}-|b_2|^{2}}$. Hence we set
$\tilde{a}_{2}(\f,x)=\la_{1}-m$. We have that $\tilde{a}_{2}\in\RRR$ because $a_{2}\in\RRR$ and $a_i,b_i$ are small.
The diagonalizing matrix is
\begin{equation}\label{fiorellino}
\frac{1}{2m}\left(\begin{matrix}2m+a_{2}+\tilde{a}_{2}& -b_{2} \\
-\bar{b}_{2}& 2m+a_{2}+\tilde{a}_{2}\end{matrix}
\right)\,:=\uno+A.
\end{equation}
We define $M_{A}^{-1}:=\uno+\Pi_{K_+}A$, hence
\begin{equation}\label{finiamola}
\|\Pi_{K_+}A\|_{\vec{v},p}\leq C_{\vec{v},p}(\Pi_{K_+}N^{(2)}).
\end{equation}
The bound on the inverse $M_A$ follows simply by the fact that
\begin{equation}\label{finiamola2}
{\rm det}(\uno+A):=\frac{(|b_{2}|^{2}-(2c+a_2+\tilde{a}_{2})^{2})}{4m^{2}}.
\end{equation}
The bounds on the truncated matrix is the same. One can also prove that
\begin{equation*}
\|(\uno+\Pi_{K_+}A)^{-1}-(\uno+A)^{-1}\|_{\vec{v},p}\leq \|\Pi_{K_+}^{\perp}A\|_{\vec{v},p}+\|A\|_{\vec{v},\gotp_1}\|A\|_{\vec{v},p}.
\end{equation*}
Since $F$ is in $\calE$ then the map $\Phi_{1}$ defined in \eqref{minas3} satisfies all the hypothesis of Lemma
\ref{multiopop}. 
Such result guarantees that 
the field $F^{(1)}=(\Phi_{1})_{*}F=N_0+G^{(1)}=(1+h)(\tilde{N}_0+N_{1}^{(1)}+N_{1}^{(2)}+H^{(1)})$ (see notations in \eqref{regu22}) is in $\calE$ and one has
\begin{equation}\label{multiopop10}
C_{\vec{v},p}(G^{(1)})\leq C_{\vec{v},p}(G)+K_{+}^{\nu+1}\|A\|_{\vec{v},p}C_{\vec{v},\gotp_1}(G).
\end{equation}
In particular the term $(H^{(1)})^{(\theta,0)}$ satisfies a bounds like \eqref{multiopop10} with $G\rightsquigarrow (H)^{(\theta,0)}$.
Moreover using equation \eqref{eq:3.12} of Lemma \ref{multiopop} with $M\rightsquigarrow M_{A}=\uno+\Pi_{K_+}A$ 
we obtain that
\begin{equation}\label{minas}
N_{1}^{(2)}=-i E\Pi_{S}^{\perp}\left[\left(\begin{matrix} \tilde{a}_2(\theta,x) & 0 \\ 0 &\tilde{a}_2(\theta,x)\end{matrix}
 \right)\del_{xx}\right]\Pi_{S}^{\perp}
 -i E\Pi_{S}^{\perp}\left[\left(\begin{matrix} {a}^{(1)}_{2}(\theta,x) & {b}^{(1)}_{2}(\theta,x) \\ \bar{b}^{(1)}_{2}(\theta,x) &{a}^{(1)}_{2}(\theta,x)\end{matrix}
 \right)\del_{xx}\right]\Pi_{S}^{\perp}
%
\end{equation}
and bounds \eqref{uffadecay2} imply on the field $F^{(1)}$ bounds like \eqref{regu6}-\eqref{merlo}. More explicitly
one has
\begin{equation*}
\begin{aligned}
\|a_{2}^{(1)}\|_{\vec{v},p},\|b_{2}^{(1)}\|_{\vec{v},p}\leq C_{\vec{v},p}(\Pi_{K_{+}}^{\perp}(N^{(1)}))+C_{\vec{v},p}(N^{(2)})C_{\vec{v},\gotp_1}(N^{(2)}).
\end{aligned}
\end{equation*}
Moreover the field  $N_{1}^{(1)}$
has the form 
 \eqref{angband2} with coefficients $a_{i}^{(1)},b_{i}^{(1)}$ 
  (given by Lemma \ref{multiopop}) and $\mathscr{K}^{(1)}$ of the form \eqref{maiale}.
Finally by \eqref{finiamola} one has
\begin{equation}\label{finiamola4}
\begin{aligned}
C_{\vec{v},p}(N^{(1)}_{1})&\leq C_{\vec{v},p}(N^{(1)})+C_{\vec{v},p+2}(\Pi_{K_{+}}N^{(2)})C_{\vec{v},\gotp_1}(N^{(1)})+
C_{\vec{v},p+2}(\Pi_{K_+}N^{(2)})\\
&+C_{\vec{v},\gotp_1+2}(\Pi_{K_+}N^{(2)})C_{\vec{v},p}(H)+C_{\vec{v},p+2}(\Pi_{K_+}N^{(2)})C_{\vec{v},\gotp_1}(H),
\end{aligned}
\end{equation}
and the same bounds holds on the single coefficients $a_{i}^{(1)},b_{i}^{(1)}$ for $i=0,1$.
%
Hence
\begin{equation}\label{knock}
\begin{aligned}
C_{\vec{v},\gotp_1}(N^{(1)}_{1})&\leq (1+K_{+}^{2}\de_{\gotp_1}^{(2)})C_{\vec{v},\gotp_1}(N^{(1)})+K_{+}^{3}C_{\vec{v},\gotp_1}(H)\de_{\gotp_1},\\
C_{\vec{v},\gotp_2}(N^{(1)}_{1})&\leq C_{\vec{v},\gotp_2}(N^{(1)})+C_{\vec{v},\gotp_1}(N^{(1)})K_{+}^{3}\de_{\gotp_1}^{2}+K_{+}^2(C_{\vec{v},\gotp_2}(H)\de_{\gotp_1}^{(2)}+
C_{\vec{v},\gotp_1}(H)\de_{\gotp_2}^{(2)})
\end{aligned}
\end{equation}
One the coefficients $c_i^{(1)},d_{i}^{(1)}$ of $\mathscr{K}^{(1)}$ the bound \eqref{uffadecay2} holds.
Lemma \ref{multiopop} implies also bounds \eqref{merlo} on the coefficients of the field $F^{(1)}$.
In particular $b_{2}^{(1)}$ satisfies \eqref{merlo1}.

\noindent
{\bf Step 2 - Change of the space variable} Now we want to eliminate the dependence on $x$ of the coefficients $\tilde{a}_{2}$ of the field $F^{(1)}$. We use a change of variables $\Phi_{2}$ as in 
\eqref{hobbit} of Lemma
\ref{diffeodiffeo}. We are looking for $\al(\theta,x)$ such that the coefficient of the second order differential operator does not depend on
$x$, namely by equation \eqref{diff2}
\begin{equation}\label{25}
\TT_{\al}^{-1}[(1+\tilde{a}_{2})(1+\al_{x})^{2}]=1+a_{2}^{(2)}(\theta),
\end{equation}
for some function $a_{2}^{(2)}(\theta)$. Since $\TT_{\al}$ operates only on the space variables, the $(\ref{25})$
is equivalent to 
\begin{equation}\label{26}
(1+\tilde{a}_{2}(\theta,x))(1+\al_{x}(\theta,x))^{2}=1+m_{2}(\theta).
\end{equation}
Hence 
we have to set
\begin{equation}\label{27}
\al_{x}(\theta,x)=\rho_{0}, \qquad \rho_{0}(\theta,x):=(1+m_{2})^{\frac{1}{2}}(\theta)(1+\tilde{a}_{2}(\theta,x))^{-\frac{1}{2}}-1,
\end{equation}
that has solution $\al$ periodic in $x$ if and only if $\int_{\TTT}\rho_{0} dx=0$. This condition
implies
\begin{equation}\label{28}
m_{2}(\theta)=\left(\frac{1}{2\pi}\int_{\TTT}(1+\tilde{a}_{2}(\theta,x))^{-\frac{1}{2}}\right)^{-2}-1,
\end{equation}
which satisfies the bound
\begin{equation}\label{stimaM2}
\|m_{2}(\theta)\|_{\vec{v},p}\leq C(s)\g_0 \de_{p}^{(2)}.
\end{equation}

Then we have the ``approximate'' solution (with zero average) of $(\ref{27})$
\begin{equation}\label{29bis}
\al(\theta,x):=(\del_{x}^{-1}\Pi_{K_+}\rho_{0})(\theta,x),
\end{equation}
where $\del_{x}^{-1}$ is defined by linearity as
$
\del_{x}^{-1}e^{ikx}:=\frac{e^{ikx}}{ik}, \quad \forall \; k\in\ZZZ\backslash\{0\}, \quad \del_{x}^{-1}=0.
$
In other word $\del_{x}^{-1}h$ is the primitive of $h$ with zero average in $x$.
The function $\al$ (that is a trigonometric polynomial) satisfies
\begin{equation}\label{finiamola5}
\|\al\|_{\vec{v},\gotp_1+\gotp_0}\leq K_+^{\gotp_0}\de_{\gotp_1}^{(2)}, \quad \|\al\|_{\vec{v},\gotp_2+\gotp_0}\leq K_+^{\gotp_0}\de_{\gotp_2}^{(2)}
\end{equation}
 For more details on the estimates on $\al$ we refer to \cite{FP}.

With this definition of the function $\al$ and by Lemma \ref{lem.diffeo} one has that
$\TT_{\al} : \TTT_{a-(\rho/4) a_0}\to \TTT_{a}$ if, by \eqref{finiamola5},
  $\de_{\gotp_1}^{(2)}$ is small enough. 
  Moreover we know, since $F^{(1)}$ is in $\calE$ that $\tilde{a}_{2}$ is even in $\theta$  and in $x$ and satisfies \eqref{massapilota}, hence the function $\al$ satisfies the hypotheses of Lemma \ref{diffeodiffeo}.
Setting 
\begin{equation}\label{barad4}
F^{(2)}:=(\Phi_{2})_{*}F^{(1)}=N_0+G^{(2)}=(1+h)(\hat{N}_0+N_{2}^{(1)}+N_{2}^{(2)}+H^{(2)}),
\end{equation}
again one has that $F^{(2)}$ is in $\calE$ and 
\begin{equation}\label{multiopop11}
C_{\vec{v},p}(G^{(2)})\leq (1+\|\al\|_{\vec{v},\gotp_1+\nu+1})\Big(C_{\vec{v},p}(G^{(1)})+\|\al\|_{\vec{v},p+\nu+1}C_{\vec{v},\gotp_1}(G^{(1)})\Big),
\end{equation}
where we used $\|\al\|_{\vec{v},\gotp_1+\nu+1}\leq K^{\nu+1}\de_{\gotp_1}^{(2)}\leq 1$.
Moreover we have obtained
  \begin{equation}\label{minas2}
N_{2}^{(2)}=-i E\Pi_{S}^{\perp}\left[\left(\begin{matrix} m_{2}(\theta) & 0 \\ 0 & m_{2}(\theta)\end{matrix}
 \right)\del_{xx}\right]\Pi_{S}^{\perp}
  -i E\Pi_{S}^{\perp}\left[\left(\begin{matrix} {a}^{(2)}_{2}(\theta,x) & {b}^{(2)}_{2}(\theta,x) \\ \bar{b}^{(2)}_{2}(\theta,x) &{a}^{(2)}_{2}(\theta,x)\end{matrix}
 \right)\del_{xx}\right]\Pi_{S}^{\perp}
\end{equation}
where
\begin{equation}\label{finiamola33}
\begin{aligned}
\|a_{2}^{(2)}\|_{\vec{v},p},\|b_{2}^{(2)}\|_{\vec{v},p}\leq C_{\vec{v},p}(\Pi_{K_{+}}^{\perp}(N^{(2)}))+C_{\vec{v},p}(N^{(2)})C_{\vec{v},\gotp_1}(N^{(2)}).
\end{aligned}
\end{equation}
The field $N_{2}^{(1)}$ in \eqref{barad4}
has the form \eqref{angband2} with coefficients $a^{(2)}_{i}, b^{(2)}_{i}$ for $i=0,1$. The bound \eqref{diff3} hold  with $a_{1},b_{1}\rightsquigarrow a^{(1)}_{i}, b^{(1)}_{i}$ where $\al$ is the one given in  \eqref{29bis}.
More explicitly one has
\begin{equation}\label{door}
\begin{aligned}
C_{\vec{v},\gotp_1}(N_{2}^{(1)})&\leq (1+K_{+}^{\gotp_0+2}\de_{\gotp_1}^{(2)})C_{\vec{v},\gotp_1}(N_1^{(2)})+K_{+}^{\gotp_0+2}\de_{\gotp_1}^{(1)}C_{\vec{v},\gotp_1}(N_1^{(1)}),\\
C_{\vec{v},\gotp_1}(N_{2}^{(1)})&\leq C_{\vec{v},\gotp_2}(N_{1}^{(1)})+K_{+}^{\gotp_0+2}\de_{\gotp_2}C_{\vec{v},\gotp_1}(N_{1}^{(1)})+
K_{+}^{\gotp_0+2}(C_{\vec{v},\gotp_2}(N_{1}^{(2)})\de_{\gotp_1}^{(2)}+C_{\vec{v},\gotp_1}(N_{1}^{(2)})\de_{\gotp_2}^{(2)})
\end{aligned}
\end{equation}
Moreover  the coefficients $c_i^{(2)},d_{i}^{(2)}$ of $\mathscr{K}^{(2)}$ satisfy the bound \eqref{uffadecay}.
%
  For more details on the estimates on $\al$ we refer to \cite{FP}.
  Lemma \ref{diffeodiffeo} implies also bounds \eqref{merlo} on the coefficients of the field $F^{(2)}$.
  In particular coefficients $a_{2}^{(2)},b_{2}^{(2)}$
 satisfy bounds \eqref{merlo1}.
 
 \noindent
 First we remark that, by Lemmata \ref{diffeodiffeo} and \ref{multiopop}, we have that the rank of the finite rank term is increased, and $\mathscr{K}^{2}$ has rank $N+\mathtt{C}|S|$ for some absolute constant $\mathtt{C}>0$.
Secondly we note that in this two steps the function $h(\theta)$ did not change. 

\noindent
{\bf Step 3 - Time reparameterization.} In order to eliminate the dependence on $\theta$ of  $a_{2}^{(2)}$ we use a special
 diffeomorphism of the torus
\begin{equation}\label{32}
\TT_{\be} :  \theta \to \theta_{+}=\theta+\oo\be(\theta), \quad \theta\in\TTT_{s}^{d}, \quad \be(\theta)\in\RRR,
\end{equation}
where $\al$ is a small real valued function, $2\pi-$periodic in all its arguments. 
We consider a transformation $\Phi_{3}$
of the form \eqref{mordor} and we set 
\begin{equation}\label{barad5}
F^{(3)}:=(\Phi_{3})_{*}F^{(2)}=N_0+G^{(3)}
\end{equation}
and  $N^{(3)}:=\Pi_{\NN}(\Phi_{3})_{*}F^{(2)}$, $X^{(3)}:=\Pi_{\calX}(\Phi_{3})_{*}F^{(2)}$ and $R^{(3)}:=\Pi_{3}(\Phi_{3})_{*}F^{(2)}$. 
We also have that
\begin{equation}\label{barad6}
\Pi_{\NN}(\Phi_{3})_{*}F^{(2)}:=(\Phi_{3})_{*}\Pi_{\NN}F^{(2)}, \quad \Pi_{\calX}(\Phi_{3})_{*}F^{(2)}:=(\Phi_{3})_{*}\Pi_{\calX}F^{(2)}, 
\quad \Pi_{3}(\Phi_{3})_{*}F^{(2)}:=(\Phi_{3})_{*}\Pi_{\RR}F^{(2)},
\end{equation}
Let us study in detail the from of $N^{(3)}$. First of all we have
\begin{subequations}\label{torrenera}
\begin{align}
((\Phi_{3})_{*}\Pi_{\NN}F^{(2)})^{(\theta)}(\theta_{+})&=\TT_{\be}^{-1}\Big[(1+\oo\cdot\del_{\theta}\be)(1+h)[\oo+(H^{(2)})^{(\theta,0)}]\Big](\theta_{+}),\label{thor}\\
((\Phi_{3})_{*}\Pi_{\NN}F^{(2)})^{(w)}(\theta_{+},w_{+})&=\TT_{\be}^{-1}\Big[(1+h)\Big(\hat{N}_0^{(w)}+N_{2}^{(1)}+N_{2}^{(2)}+ d_{w}H^{(2)} w\Big)\label{martello}
\Big]=\\
&-i E\Pi_{S}^{\perp}\left(\begin{matrix} \TT_{\be}^{-1}(1+h)(m+m_2) & 0 \\ 0 & \TT_{\be}^{-1}(1+h)(m+m_{2})\end{matrix}
 \right)\del_{xx}\TT_{\be}\Pi_{S}^{\perp} \nonumber\\
&-i E\Pi_{S}^{\perp}\TT_{\be}^{-1}\left[\left(\begin{matrix} a^{(2)}_{1} & b^{(2)}_{1} \\ \bar{b}^{(2)}_{1} & \bar{a}^{(2)}_{1}\end{matrix}
  \right)\del_{x} + \left(\begin{matrix} a^{(2)}_{0} & b^{(2)}_{0} \\ \bar{b}^{(2)}_{0} & \bar{a}^{(2)}_{0}\end{matrix}
   \right)\right] \TT_{\be}\Pi_S^\perp+\TT_{\be}^{-1}\mathscr{K}^{(2)}\TT_\be\nonumber
\end{align}
\end{subequations}
%
%
%
%
 
 Our aim is to find $\be$ in such a way the coefficients of the second order is 
proportional with respect to the coefficients of $\oo$. This is equivalent to require that
\begin{equation}\label{helm}
(1+h(\theta))(m+m_{2}(\theta))=m_{+}(1+\oo\cdot\del_{\theta}\be(\theta))(1+h(\theta)),
\end{equation} 
for some $m_{+}:=m+\gotc$. By setting 
\begin{equation}\label{37}
\gotc=\frac{1}{(2\pi)^{d}}\int_{\TTT^{d}}(m_{2}(\theta))d\theta,
\end{equation}
which satisfies also
\begin{equation}\label{stimaC}
|\gotc|\leq C(s)\g_0 \de_{\gotp_1}^{(2)}.
\end{equation}
we can find the (unique) approximate solution of $(\ref{helm})$ with zero average
\begin{equation}\label{38}
\be(\theta):=\frac{1}{m+\gotc }(\oo\cdot\del_{\theta})^{-1}(m+\Pi_{K}(m_{2})-m-\gotc)(\theta),
\end{equation}
where $(\oo\cdot\del_{\theta})^{-1}$ is defined by linearity
$
(\oo\cdot\del_{\f})^{-1}e^{i\ell\cdot\f}:=\frac{e^{i\ell\cdot\f}}{i\oo\cdot\ell}, \; \ell\neq0, \quad
(\oo\cdot\del_{\f})^{-1}1=0.
$
Note that $\be$ is trigonometric polynomial supported on $|\ell|\leq K$.
As one can check (see also \cite{FP} for more details) the function $\be$ in \eqref{38} satisfy the bound 
\begin{equation}\label{barad3}
\|\be\|_{\vec{v},p+\gotp_0}\leq \g_{0}^{-1}\|\Pi_{K}m_{2}\|_{\vec{v},p+\gotp_{0}+2\tau+2},
\end{equation}
with $\vec{v}=(\g,\calO,s-(\rho/4),a-(\rho/4) a_0)$ and $\g_0$ is the diophantine constant of $\oo$. 
Hence we have
\begin{equation}\label{barad303}
\|\be\|_{\vec{v},\gotp_1+\gotp_0}\leq K_{+}^{\gotp_0+2\tau+2}\de_{\gotp_1}^{(2)}, \qquad \|\be\|_{\vec{v},\gotp_2+\gotp_0}\leq K_{+}^{\gotp_0+2\tau+2}\de_{\gotp_2}^{(2)}
\end{equation}

This implies that
the  transformation  $\TT_{3}$ maps $\TTT^{d}_{s-(\rho/4) s_{0}}\to\TTT^{d}_{s}$ if $\de_{\gotp_1}^{(2)}$ is sufficiently smaller than $\rho$ (see condition \eqref{piccolopassero}).
We set
\begin{equation}\label{barad7}
\tilde{h}_{+}:=\TT_{\be}^{-1}(\oo\cdot\del_{\theta}\be+h+h \oo\cdot\del_{\theta}\be),
\end{equation}
so that one has
 for $\vec{v}=(\g,\calO,s-\rho s_0,a-\rho a_{0})$ 
\begin{equation}\label{ora}
\begin{aligned}
\|\tilde{h}_{+}- h\|_{\vec{v},p}&\leq \|\oo\cdot\del_{\theta}\be\|_{\vec{v},p}+\|\oo\cdot\del_{\theta}\be\|_{\vec{v},2\gotp_0}\|h\|_{\vec{v},p}\leq \g_0^{-1}K_{+}^{\gotp_0+2\tau+2}(\|a_{2}^{(2)}\|_{\vec{v},p}+
\|a_{2}\|_{\vec{v},\gotp_0}\|h\|_{\vec{v},p}),\\
\end{aligned}
\end{equation}
which  implies the bound \eqref{argento}. 
The function $\be$ in \eqref{38} satisfies the hypotheses of Lemma \ref{diffeodiffeo2}. By applying Lemma \ref{diffeodiffeo2} to $F^{(2)}$ we get
 \begin{equation}\label{barad8}
 F^{(3)}=N_0+G^{(3)}=(1+\tilde{h}_{+})(\tilde{N}_{0}+N_{3}^{(1)}+N_{3}^{(2)}+H^{(3)}),
 \end{equation} 
 with (recalling \eqref{ini1})
 \begin{equation}\label{rock}
 \tilde{N}_0:=\oo \cdot\del_{\theta}+m_{+}\Omega^{(-1)}w\cdot\del_{w}, \qquad  m_+=m+\gotc,
 \end{equation}
  \begin{equation}\label{minas200}
N_{3}^{(2)}=
  -i E\Pi_{S}^{\perp}\left[\left(\begin{matrix} {a}^{(3)}_{2}(\theta,x) & {b}^{(3)}_{2}(\theta,x) \\ \bar{b}^{(3)}_{2}(\theta,x) &{a}^{(3)}_{2}(\theta,x)\end{matrix}
 \right)\del_{xx}\right]\Pi_{S}^{\perp}
\end{equation}
where
\begin{equation}\label{finiamola333}
\begin{aligned}
C_{\vec{v},p}(N^{(2)}_3)&\leq C_{\vec{v},p}(\Pi_{K_{+}}^{\perp}(N_2^{(2)}))+
\|\be\|_{\vec{v},p+\gotp_0}C_{\vec{v},\gotp_1}(\Pi_{K_{+}}^{\perp}(N_2^{(2)}))\\
&+\max\{\|a_2^{(2)}\|_{\vec{v},p},\|b_2^{(2)}\|_{\vec{v},p}\}+\|\be\|_{\vec{v},p+\gotp_0}\max\{\|a_2^{(2)}\|_{\vec{v},\gotp_1},\|b_2^{(2)}\|_{\vec{v},\gotp_1}\}\\
\end{aligned}
\end{equation}
and the coefficients $a_2^{(3)},b_{2}^{(3)}$ satisfies the same bound (see Remark \ref{posacenere}). 
The field $N_{3}^{(1)}:=(\Phi_3)_*(\hat{N}_{0}+N_2^{(1)}+N_{2}^{(2)})-(\tilde{N}_0+N_{3}^{(2)})$
collects all the terms of order at most $O(\del_{x})$ plus a finite rank term  $\mathscr{K}^{(3)}$ and has the form
\eqref{angband2} with coefficients
%
%
$a_{i}^{(3)},b_{i}^{(3)}$ for $i=0,1$ 
and $\mathscr{K}^{(3)}$ as in \eqref{maiale}.
Lemma \ref{diffeodiffeo2} implies that
\begin{equation}\label{finiamola334}
\|a_i^{(3)}\|_{\vec{v},p},\|b_i^{(3)}\|_{\vec{v},p}\leq \|a_i^{(2)}\|_{\vec{v},p}+\|\be\|_{\vec{v},p+\gotp_0}\|a_{i}^{(2)}\|_{\vec{v},\gotp_1}.
\end{equation}
The coefficients of $\mathscr{K}^{(3)}$ satisfy similar bounds as \eqref{finiamola334} (see \eqref{uffadecay}).

%
More explicitly one has
\begin{equation}\label{door2}
\begin{aligned}
C_{\vec{v},\gotp_1}(N_{3}^{(2)})&\leq (1+K_{+}^{\gotp_0+2\tau+2}\de_{\gotp_1}^{(2)})\Big[
K_{+}^{-(\gotp_2-\gotp_1)}C_{\vec{v},\gotp_2}(N_2^{(2)})+\max\{\|a_2^{(2)}\|_{\vec{v},\gotp_1},\|b_2^{(2)}\|_{\vec{v},\gotp_1}\}
\Big],\\
C_{\vec{v},\gotp_2}(N_{3}^{(2)})&\leq
C_{\vec{v},\gotp_2}(N_2^{(2)})+K_{+}^{-(\gotp_2-\gotp_1)+\gotp_0+2\tau+2}\de_{\gotp_2}^{(2)}C_{\vec{v},\gotp_2}(N_2^{(2)})\\
&+
K_{+}^{\gotp_0+2\tau+2}\de_{\gotp_{2}}^{(2)}\max\{\|a_2^{(2)}\|_{\vec{v},\gotp_1},\|b_2^{(2)}\|_{\vec{v},\gotp_1}\},\\
C_{\vec{v},\gotp_1}(N_{3}^{(1)})&\leq(1+K_{+}^{\gotp_0+2\tau+2}\de_{\gotp_1}^{(2)})C_{\vec{v},\gotp_1}(N_2^{(1)}),\\
C_{\vec{v},\gotp_2}(N_{3}^{(1)})&\leq C_{\vec{v},\gotp_2}(N_2^{(1)})+K_{+}^{\gotp_0+2\tau+2}\de_{\gotp_2}^{(2)}C_{\vec{v},\gotp_1}(N_2^{(1)}).
\end{aligned}
\end{equation}
By equation \eqref{37} we also obtain the bound $\sup_{\calO}|m_{+}-m|\leq\de_{\gotp_1}^{(2)}$ on the constant $m_{+}$ hence \eqref{barad1000} is satisfied.
Using the estimates given by Lemmata \ref{diffeodiffeo}, \ref{diffeodiffeo2}, \ref{multiopop} and collecting the estimates in \eqref{finiamola}, \eqref{finiamola5}
and \eqref{barad303}, we get
on the field
$G^{(3)}$ in \eqref{barad8}, the estimates
\begin{equation}\label{monaco311}
\begin{aligned}
C_{\vec{v},\gotp_1}({G}^{(3)})&\leq (1+K_+^{(\gotp_0+2\tau+2)}\de_{\gotp_1}^{(2)})C_{\vec{v},\gotp_1}(G), \\
C_{\vec{v},\gotp_2}({G}^{(3)})&\leq (1+K_+^{(\gotp_0+2\tau+2)}\de_{\gotp_1}^{(2)})C_{\vec{v},\gotp_2}({G})+
(1+K_+^{(\gotp_0+2\tau+2)}\de_{\gotp_1}^{(2)})C_{\vec{v},\gotp_1}({G})K_{+}^{\gotp_0+2\tau+2}\de_{\gotp_2}^{(2)}.
\end{aligned}
\end{equation}
Finally we have
\begin{equation}\label{door3}
\begin{aligned}
\|(H^{(3)})^{(\theta,0)}\|_{\vec{v},\gotp_1}&\leq \g_0(1+K_+^{(\gotp_0+2\tau+2)}\de_{\gotp_1}^{(2)})\de_{\gotp_1}^{(1)},\\
\|(H^{(3)})^{(\theta,0)}\|_{\vec{v},\gotp_2}&\leq\|H^{(\theta,0)}\|_{\vec{v},\gotp_2}+\g_0K_{+}^{\gotp_0+2\tau+2}\de_{\gotp_2}^{(2)}\de_{\gotp_1}^{(1)}.
\end{aligned}
\end{equation}
\noindent
{\bf Step 4 - Torus diffeomorphism.}
The aim of the final step is reduce ``quadratically'' the size of the term $(H^{(3)})^{(\theta,0)}$.
We define the trasformation 
\begin{equation}\label{helm2}
\Phi_{4} : (\theta,y,w) \to (\theta+g(\theta),y,w),
\end{equation}
and we call $\TT_{g}$ the transformation $\TT_{g}u=u(\theta+g(\theta),x)$.
We set $\hat{F}=(\Phi_{4})_{*}F^{(3)}=N_0+\hat{G}$ and we study its projection on the subspace $\NN$.
By a direct calculation one can note that
$\Pi_{\NN}(\Phi_{4})_{*}F^{(3)}=(\Phi_{4})_{*}\Pi_{\NN}F^{(3)}$, $\Pi_{\calX}(\Phi_{4})_{*}F^{(3)}=(\Phi_{4})_{*}\Pi_{\calX}F^{(3)}$ and $\Pi_{\RR}(\Phi_{4})_{*}F^{(3)}=(\Phi_{4})_{*}\Pi_{\RR}F^{(3)}$.
For convenience we write
\begin{equation}\label{monaco200}
\begin{aligned}
(\Phi_{4})_{*}\Pi_{\NN}F^{(3)}&:=(\Phi_{4})_{*}\Big(
(1+\tilde{h}_{+}) (\tilde{N}_0+(H^{(3)})^{(\theta,0)})
\Big)\\
&+(\Phi_{4})_{*}\Big((1+\tilde{h}_+)(N_{3}^{(1)}+N^{(2)}_{3}
\Big)=\\
&=(1+\TT_{g}^{-1}\tilde{h}_+)\left[(\Phi_{4})_{*}(\tilde{N}_0+(H^{(3)})^{(\theta,0)})
+ 
(\Phi_{4})_{*}(N_{3}^{(1)}+N^{(2)}_{3})
\right].
\end{aligned}
\end{equation}
We set 
\begin{equation}\label{barad1001}
h_{+}:=\TT_{g}^{-1}\tilde{h}_+.
\end{equation} 
By Lemma \ref{diffeodiffeo2}, definition \eqref{barad7} and estimate \eqref{ora} we have that \eqref{argento}
holds.
Moreover one can write
$$
(\Phi_{4})_{*}N^{(3)}=N^{(3)}+[g,N^{(3)}]+\int_{0}^{1}\int_{0}^{t}(\Phi^{(4)})_{*}^{s}[g,[g,N^{(3)}]]d s d t.
$$
where $N^{(3)}:=(\tilde{N}_{0}+(H^{(3)})^{(\theta,0)})$.
We look for $g(\theta)$ such that the $\theta$ component of $N^{(3)}+[g,N^{(3)}]$ has the form in \eqref{regu5} with the size of $H_{+}^{(\theta,0)}$ ``much smaller'' 
of the size of $H^{(\theta,0)}$. More explicitly one has that
\begin{equation}\label{merda}
(N^{(3)}+[g,N^{(3)}])^{(\theta)}=(\oo_{+}+\tilde{H}^{(\theta,0)}(\theta)),
\end{equation}
where , denoting by $\langle\cdot\rangle$ the average in the variable $\theta$,
\begin{equation}\label{merda2}
\begin{aligned}
\oo_{+}&:=\oo+\langle (H^{(3)})^{(\theta,0)}\rangle,\\
\tilde{H}^{(\theta,0)}&:= (H^{(3)})^{(\theta,0)}(\theta)-\langle (H^{(3)})^{(\theta,0)}\rangle+\oo\cdot\del_{\theta}g(\theta)
+ (H^{(3)})^{(\theta,0)}\del_{\theta}g(\theta)-\del_{\theta} (H^{(3)})^{(\theta,0)}g(\theta)
\end{aligned}
\end{equation}
and we look for $g(\theta)$ such that
\begin{equation}\label{merda3}
\Pi_{K}\left[ (H^{(3)})^{(\theta,0)}(\theta)-\langle (H^{(3)})^{(\theta,0)}\rangle\right]+\oo\cdot\del_{\theta}g(\theta)=0.
\end{equation}
Equation \eqref{merda3} is satisfied by choosing 
\begin{equation}\label{merda5}
g(\theta):=(\oo\cdot\del_{\theta}^{-1})^{-1} \Pi_{K}\left[(H^{(3)})^{(\theta,0)}(\theta)-\langle (H^{(3)})^{(\theta,0)}\rangle\right].
\end{equation}
One has the following estimates hold 
\begin{equation}\label{merda6}
\begin{aligned}
\|g\|_{\vec{v},\gotp_1}&
{\leq}
K_{1}^{2\tau+2}\g^{-1}C_{\vec{v},\gotp_1}((H^{(3)})^{(\theta,0)})\leq K_{1}^{2\tau+2}(1+K_{+}^{\gotp_0+2\tau+2}\de^{(2)}_{\gotp_{1}})\de_{\gotp_1}^{(1)}
,\\
\|g\|_{\vec{v},\gotp_2}&\leq K_{+}^{2\tau+1}(\de_{\gotp_2}^{(1)}+K_{+}^{\gotp_0+2\tau+2}\de_{\gotp_2}^{(2)}\de_{\gotp_1}^{(1)}).
\end{aligned}
\end{equation}
Moreover by the first of \eqref{merda2} we have that \eqref{barad1000} holds.
 Now for $\de_{\gotp_1}^{(1)},\de_{\gotp_1}^{(2)}$ small enough (see condition \eqref{piccolopassero}) one can use Lemma \ref{lem.diffeo} to conclude that
\begin{equation}\label{merda8}
\TT_{g}: \TTT^{d}_{s-(\rho_+/4)s_0}\to \TTT^{d}_{s-(\rho_+/2)s_0}.
\end{equation}
In particular $g$ in \eqref{merda5} satisfies hypotheses of Lemma \ref{diffeodiffeo2}, which applied to 
$F^{(3)}$ implies that

$$
\hat{F}=(\Phi_{4})_{*}F^{(3)} : D_{a-\rho_+ a_0,p+\nu}(r-\rho_+ r_0,r-\rho_+ r_0)\to V_{a-\rho_+ a_0,p}
$$
is in the subspace $\calE$.
We set $\calL_+:=\Phi_{4}\circ\Phi_{3}\circ\Phi_{2}\circ\Phi_{1}$. By the discussion above we have that
$$
\calL_{+} : D_{a-\rho_+ a_{0},p+2}(s-\rho_+ s_{0},r_0-\rho_+ r_{0}) \to D_{a,p+2}(s,r),
$$
the estimate \eqref{trans2} follows by collecting the bounds \eqref{finiamola}, \eqref{finiamola5}, \eqref{barad303},
\eqref{merda6} and Lemma \ref{lem.diffeo}.
%
%
\noindent
We check all the bounds on the new vector field $(\calL_{+})_{*}F$.
We have
\begin{equation}\label{ordine100}
\begin{aligned}
\hat{F}&:=N_{0}+((\Phi_{4})_{*}\tilde{N}_0-\tilde{N}_0)+(\Phi_{4})_{*}(\tilde{h}_+\tilde{N}_0)+(\Phi^{(4)})_{*}\Big(
(1+\tilde{h}_+)(N_{3}^{(1)}+N_{3}^{(2)}+(H^{(3)})^{(\theta,0)})
\Big)\\
&+(\Phi_{4})_{*}((1+\tilde{h}_+)\Pi_{\NN^{\perp}}H^{(3)}),
\end{aligned}
\end{equation}
with $\tilde{N}_{0}$ defined in \eqref{rock}.
Fix now $\vec{v}=(\g,\calO,s-\rho_+ s_{0},a-\rho_+ a_{0})$.
From this first splitting we have that $\hat{F}=\tilde{N}_0+\hat{G}$ and on $\hat{G}$ the following estimates hold:
\begin{equation}\label{tartaruga100}
\begin{aligned}
C_{\vec{v},\gotp_{1}}(\hat{G})&\leq  (1+K_{+}^{\h_1}\de_{\gotp_1}^{(2)})C_{\vec{v},\gotp_1}(N^{(1)}+N^{(2)}+H)
\\
C_{\vec{v},\gotp_{2}}(\hat{G})&\leq (1+K_{+}^{\h_1}\de_{\gotp_1}^{(2)})(C_{\vec{v},\gotp_2}(N^{(1)}+N^{(2)}+H)+K_{+}^{2\gotp_0+2\tau+2}(\de_{\gotp_2}^{(1)}+\de_{\gotp_2}^{(2)})C_{\vec{v},\gotp_{1}}(G)),
\end{aligned}
\end{equation}
for $\h_1$ as in \eqref{posacenere10}
which implies \eqref{passero}.
In order to prove \eqref{tartaruga100} one 
reasons as follows.
%
 One has that
\begin{equation*}\label{tartaruga2}
\begin{aligned}
((\Phi^{(4)})_{*}\tilde{N}_0-\tilde{N}_0)&=\int_{0}^{1} (\Phi_{4})_{*}^{s}[g,\tilde{N}_0]=\int_{0}^{1} (\Phi_{4})_{*}^{s}\Pi_{K_+}[g,\tilde{N}_0]=\\
&=\int_{0}^{1} (\Phi_{4})_{*}^{s}\Pi_{K_{+}}[g,\tilde{N}_0 +(H^{(3)})^{(\theta,0)}]-
\int_{0}^{1}(\Phi_{4})_{*}^{s}\Pi_{K_+}[g, (H^{(3)})^{(\theta,0)}]\\
&=\int_{0}^{1} (\Phi_{4})_{*}^{s}r+\int_{0}^{1}(\Phi_{4})_{*}^{s}\Pi_{K_+}[g,(H^{(3)})^{(\theta,0)}],
\end{aligned}
\end{equation*}
where by \eqref{merda3} 
\begin{equation}\label{tartaruga33}
r=\Big( \Pi_{K_+}^{\perp}(H^{(3)})^{(\theta,0)}
+ (H^{(3)})^{(\theta,0)}\del_{\theta}g(\theta)-\del_{\theta} (H^{(3)})^{(\theta,0)}g(\theta)
\Big)\cdot\del_{\theta}
\end{equation}
Use \eqref{merda6} to estimate $g$, \eqref{door3} to estimate $(H^{(3)})^{(\theta,0)}$, hence
using  \eqref{monaco311} and the smallness of $\de$ 
one gets the estimates \eqref{tartaruga100}
with $\h_1$ in \eqref{posacenere10}.
 Trivially one has also
\begin{equation}\label{tartaruga44}
\begin{aligned}
C_{\vec{v},\gotp_{1}}(\Pi_{\NN^{\perp}}\hat{G})&\leq  (1+K_{+}^{\h_1}\de_{\gotp_1}^{(2)})C_{\vec{v},\gotp_1}(\Pi_{\NN^{\perp}}G)
\\
C_{\vec{v},\gotp_{2}}(\Pi_{\NN^{\perp}}\hat{G})&\leq (1+K_{+}^{\h_1}\de_{\gotp_1})(C_{\vec{v},\gotp_2}(\Pi_{\NN^{\perp}}G)+K_+^{\gotp_0+2\tau+2}\tG_0C_{\vec{v},\gotp_{2}}(\Pi_{\NN^{\perp}}G )),
\end{aligned}
\end{equation}

Now we want to rewrite the field $\hat{F}$ in a form more similar to \eqref{barad8} by using \eqref{monaco200}.
We have
\begin{equation}\label{tartaruga66}
\begin{aligned}
\hat{F}&:=(1+\Phi_{4}^{-1}\tilde{h}_+)\left[\hat{N}_{0}^{+}+N_{4}^{(1)}+N_{4}^{(2)}+H^{(4)}
\right],\\
&\hat{N}_0^{+}:=({\oo}+\langle (H^{(3)})^{(\theta,0)}\rangle)\cdot\del_{\theta}+m_+{\Omega}^{-1}w\cdot\del_{w},\\
&N_{4}^{(1)}:=(\Phi_{4})_{*}N_{3}^{(1)}, \quad N_{4}^{(2)}:=(\Phi_{4})_{*}N_{3}^{(2)},\\
&H^{(4)}:=(\Phi_{4})_{*}H^{(3)}+(\Phi_{4})_{*}\tilde{N}_0-\hat{N}_0^{+}.
\end{aligned}
\end{equation}
%
This is another way to write \eqref{ordine100}. But now we give a precise estimates on the low norm of the component $\theta$ of the field $H^{(4)}$ on $\NN$.
First of all we have
$$
H^{(4)}:=\tilde{N}_0-\tilde{N}_0^{+}+[g,\tilde{N}_0]+\int_{0}^{1}\int_{0}^{t}(\Phi_{4})_{*}^{(s)}[g,[g,\tilde{N}_0]]+
H^{(3)}+\int_{0}^{1}(\Phi_{4})_{*}^{(s)}[g,H^{(3)}].
$$
Now if we look at the the component $(H^{(4)})^{(\theta,0)}$, by using equation \eqref{merda3}, we obtain
\begin{equation}\label{tartaruga77}
\begin{aligned}
(H^{(4)})^{(\theta,0)}&:=
\Pi_{K_+}^{\perp} (H^{(3)})^{(\theta,0)}(\theta)
+\int_{0}^{1}\int_{0}^{t}(\Phi_{4})_{*}^{(s)}[g,[g,\tilde{N}_0]]+\int_{0}^{1}(\Phi_{4})_{*}^{(s)}[g,H^{(3)}]\\
&=\Pi_{K_+}^{\perp} (H^{(3)})^{(\theta,0)}(\theta)+\int_{0}^{1}(\Phi_{4})_{*}^{(s)}[g,H^{(3)}]
+\int_{0}^{1}\int_{0}^{t}(\Phi_{4})_{*}^{(s)}[g,-\Pi_{K_+}(H^{(3)})^{(\theta,0)}],
\end{aligned}
\end{equation}
and hence, using \eqref{piccolopassero}, we get
\begin{equation}\label{monaco400}
\begin{aligned}
\|(H^{(4)})^{(\theta,0)}\|_{\vec{v},\gotp_{1}}&\leq 
K_{+}^{-(\gotp_2-\gotp_1)}C_{\vec{v},\gotp_2}((H^{(3)})^{(\theta,0)})+K_+^{\nu+2}\|g\|_{\vec{v},\gotp_1}\|(H^{(3)})^{(\theta,0)}\|_{\vec{v},\gotp_1}\\
&\stackrel{(\ref{door3}),(\ref{merda6})}{\leq}
K_{1}^{-(\gotp_2-\gotp_1)}(1+K_{1}^{\gotp_0+2\tau+1}\de_{\gotp_1}^{(2)})(\de_{\gotp_2}^{(1)}
+
\de_{\gotp_1}^{(1)}K_{+}^{\gotp_0+2\tau+2}\de_{\gotp_1}^{(2)})\\
&+(1+K_{+}^{\gotp_0+2\tau+1}\de_{\gotp_1}^{(2)})K_{+}^{\gotp_0+2\tau+2}\de_{\gotp_1}^{(2)}\de_{\gotp_1}^{(1)},
\end{aligned}
\end{equation}
which implies \eqref{passero2}.
Now by \eqref{tartaruga66} we have, by using \eqref{door2} and \eqref{finiamola33}, that
\begin{equation}\label{passero30}
\begin{aligned}
C_{\vec{v},\gotp_1}(N_{4}^{(2)})&\leq (1+K_{+}^{2\gotp_0+2}\de_{\gotp_1}^{(2)})^{3}\Big[K_{+}^{-(\gotp_2-\gotp_1-2\gotp_0-2\tau-2)}\de^{(2)}_{\gotp_2}+K_{+}^{2\gotp_0+2\tau+2}(\de_{\gotp_1}^{(2)})^{2}
\Big],\\
C_{\vec{v},\gotp_2}(N_{4}^{(2)})&\leq C_{\vec{v},\gotp_2}(N^{(2)})+K_{+}^{2\gotp_0+2\tau+4}C_{\vec{v},\gotp_1}(N^{(1)}).
\end{aligned}
\end{equation}
which implies \eqref{regu66}.
In the same way , using \eqref{door2}, \eqref{door} and \eqref{finiamola4} and Lemma \ref{diffeodiffeo2} 
the \eqref{regu6} follows.
Bounds in \eqref{merlo} follow by the discussion above recalling the results of Lemmata \ref{diffeodiffeo}, \ref{diffeodiffeo2} and \ref{multiopop}.
We conclude by setting
$$
\hat{N}^{(1)}=N_{4}^{(1)}, \quad \hat{N}^{(2)}=N_{4}^{(2)} \quad \hat{H}=H_{4},
$$
defined in \eqref{tartaruga66}.

%

\end{proof}

\begin{coro}[{\bf Compatible changes of variable}]\label{senonseicompa}
Under the hypotheses of Lemma \ref{regularization}, assume also that
\begin{equation}\label{piccolopassero1000}
\rho_+^{-1}K^{\h_{1}}\max\{\de^{(1)}_{\gotp_1},\de^{(2)}_{\gotp_1}\}\leq 
\e_0 K^{-1},
\quad \|h\|_{\vec{v},\gotp_1}\leq 2\g_0 \tG_0,   
\quad \max\{\de_{\gotp_2}^{(1)},\de_{\gotp_2}^{(2)}\}\leq \e_0 K^{\ka_1},\quad
\|h\|_{\vec{v},\gotp_2}\leq \g_0 \mathtt{G}_0 K^{\ka_1},
\end{equation}
where $\g_0$,$\e_0$,  $\tG_0$ as in \eqref{ini4},\eqref{parametri}, $\gotp_1=\gotp_0+\gotm_1$
as in \eqref{piccolino} and 
$\ka_1,\gotp_2$ in \ref{sceltapar}.
We have that 
the map $\calL_{+}$ in \eqref{mappamappa} is \emph{compatible}, according to Definition \ref{compa},
provided that
\begin{equation}\label{numeretti1000}
\ka_{3}:=\ka_1+\h_1,
\end{equation}
with $\h_1$ in \eqref{posacenere10}.
\end{coro}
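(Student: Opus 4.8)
\textbf{Proof plan for Corollary \ref{senonseicompa}.} The goal is to verify the three items of Definition \ref{compa} for $\calL_+ = \Phi_4\circ\Phi_3\circ\Phi_2\circ\Phi_1$ under the strengthened smallness assumptions \eqref{piccolopassero1000}. The plan is to feed the estimates already produced in Lemma \ref{regularization} into the three conditions one at a time, keeping careful track of the powers of $K$.

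\emph{Item (i): close to identity.} First I would observe that \eqref{piccolopassero1000} is precisely the smallness regime \eqref{piccolopassero} needed to apply Lemma \ref{regularization}, since $\e_0 K^{-1}\le\epsilon$ for $|\x|$ small and $K\ge K_0$ large. Then the bound \eqref{satana} follows directly from \eqref{trans2} applied at $p=\gotp_1$: indeed \eqref{trans2} gives $\|(\calL_+-\uno)u\|_{\vec v_1,\gotp_1}\le C\rho_+^{-1}K^{\h_1}\max\{\de^{(1)}_{\gotp_1},\de^{(2)}_{\gotp_1}\}\,\|u\|_{\vec v,\gotp_1}$ (the two terms in \eqref{trans2} coincide when $p=\gotp_1$), and by the first inequality in \eqref{piccolopassero1000} the prefactor is $\le \mathtt C\,\e_0 K^{-1}$. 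The same applies to $\calL_+^{-1}$ since each $\Phi_i$ is close to identity with a tame inverse (Lemmata \ref{torus}, \ref{diffeodiffeo}, \ref{diffeodiffeo2}, \ref{multiopop}, composed).

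\emph{Item (ii): conjugation estimates.} Here $\hat F = (\calL_+)_*F$, and one must check the three bounds in \eqref{odio} for $\hat G = \hat F - N_0$. The $\gotp_1$-bound is exactly \eqref{passero} at $p=\gotp_1$ together with $K_+^{\h_1}\de^{(2)}_{\gotp_1}=K^{\frac32\h_1}\de^{(2)}_{\gotp_1}\le\e_0 K^{-1}$ (absorbing the $\frac32$ into the exponent, using $\rho_+$ and $K$ large). For the $\gotp_2$-bound, \eqref{passero} at $p=\gotp_2$ gives $C_{\vec v_2,\gotp_2}(\hat G)\le \mathtt C\big(C_{\vec v,\gotp_2}(G)+K_+^{\h_1}(\de^{(1)}_{\gotp_2}+\de^{(2)}_{\gotp_2})C_{\vec v,\gotp_1}(G)\big)$, and by $\de^{(i)}_{\gotp_2}\le\e_0 K^{\ka_1}$ and $K_+=K^{3/2}$ the factor $K_+^{\h_1}\e_0 K^{\ka_1}\le \e_0 K^{\ka_1+\h_1}=\e_0 K^{\ka_3}$ — which is exactly why one sets $\ka_3 := \ka_1+\h_1$ as in \eqref{numeretti1000}. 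The third bound on $|\Pi_\calX\hat G|_{\vec v_2,\gotp_2-1}$ follows by combining \eqref{tartaruga44} with Lemma \ref{miaomiao} (which makes $|\cdot|_{\vec v,\gotp_2-1}$ comparable to a tameness constant of $\Pi_{\NN}^\perp\hat G$, since $\Pi_\calX\hat G$ is finite rank), using the same power-counting. One should also record that $\hat F\in\calE$ for all $\x\in\calO_0$, which is part of the conclusion of Lemma \ref{regularization}.

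\emph{Item (iii): preservation of the $(\NN,\calX,\RR)$-decomposition.} The identities $\Pi_\NN^\perp(\calL_*\Pi_\NN F)=0$ and $\Pi_\calX(\calL_*\Pi_\calX^\perp F)=0$ should be read off from the explicit way Lemma \ref{regularization} was proved: each step $\Phi_i$ was shown to satisfy $\Pi_{\SSSS}(\Phi_i)_*\Pi_{\SSSS}F = (\Phi_i)_*\Pi_{\SSSS}F$ for $\SSSS=\NN,\calX,\RR$ (see \eqref{barad6} and the analogous identity for $\Phi_4$), and this property is stable under composition. So $\calL_*=\Phi_{4*}\Phi_{3*}\Phi_{2*}\Phi_{1*}$ inherits it, which gives \eqref{satana2} immediately. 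The main obstacle in the whole corollary is purely bookkeeping: one must confirm that the cumulative loss of regularity across the four steps (the $\h_1=2\gotp_0+2\tau+4$ in \eqref{posacenere10}, the $\gotp_0$'s from the diffeomorphisms, and the $2\tau+2$ from the small-divisor inversions) is genuinely bounded by $\h_1$ as used in \eqref{passero}–\eqref{passero2}, and that the choice $\ka_3=\ka_1+\h_1$ together with Constraint \ref{sceltapar} leaves enough room; this is routine but must be done carefully. I would close by remarking that all hypotheses of Definition \ref{compa} have now been verified, so $\calL_+$ is compatible with $(F,K,\vec v,\rho_+)$.
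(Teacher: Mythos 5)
Your proposal is correct and follows essentially the same route as the paper's own (very terse) proof: \eqref{satana} from \eqref{trans2} plus the first inequality in \eqref{piccolopassero1000}, \eqref{odio} from \eqref{passero} together with the power counting $K_{+}^{\h_1}\e_0K^{\ka_1}\lesssim\e_0K^{\ka_3}$ that motivates $\ka_3=\ka_1+\h_1$, \eqref{satana2} from the step-by-step commutation of the pushforward with the projections, and preservation of $\calE$ from Lemma \ref{regularization}. The extra bookkeeping you flag (the $K$ vs.\ $K_+=K^{3/2}$ exponents and the $|\Pi_\calX\hat G|_{\vec v_2,\gotp_2-1}$ bound) is handled at the same level of detail as in the paper, so no genuinely different argument is involved.
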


\begin{proof}
The \eqref{satana2} is trivial. Bound \eqref{satana} follows by
\eqref{trans2} and \eqref{piccolopassero1000}.
Bounds in 
\eqref{odio} follow by \eqref{piccolopassero1000} and \eqref{passero}.
By the discussion in Lemma \ref{regularization} we also have that the subspace $\calE$ is preserved.
\end{proof}

\subsection{Inversion of the linearized operator in the normal directions }\label{sec7aut}

We consider a vector  field $F=N_0+G$ of the form 
\eqref{regu22} with all the properties in equations \eqref{barad9}-\eqref{corvo} and which satifies the hypotheses of Lemma \ref{regularization}.
We  set
\begin{equation}\label{corvo2}
\de_{p}:=\g^{-1}C_{\vec{v},p}(\Pi_{\calX}H).
\end{equation}
Now we apply Lemma \ref{regularization} to the field $F$ and we obtain the field $\hat{F}=(\calL_{+})_*F=(1+h_{+})(\hat{N}_0^{+}+\hat{N}^{(1)}+\hat{N}^{(2)}+\hat{H})$ 
 in \eqref{regu5}.
We want to describe a set 
$\calO'$ which satisfies the Mel'nikov conditions in \ref{pippopuffo3} for $(\hat F,K,\vec v_2^{(o)})$.
The parameters $K,\g_0$
are the same of Lemma \ref{regularization} while $\vec{v}_2^{(0)}$ is $\vec v_2$ of Lemma \ref{regularization} with $\cal O$ replaced by $\calO_0$. 
One can note that the conditions \eqref{buoni} and \eqref{cribbio4} on the operator $\gotW$ are equivalent to find an ``approximate'' solution
$g\in {\BB}_{\calE}$
of the equation
\begin{equation}\label{corvo3}
\Pi_{K_+}\Pi_{\calX}[g,N]=\Pi_{K_+}X, \qquad X \in\calX\cap\calE, 
\end{equation}
and where 
\begin{equation}\label{corvo4}
N=(1+h_+)\Big(\hat{N}_0^{+}+\hat{N}^{(1)}+\hat{N}^{(2)}+\Pi_{\NN}\hat{H}\Big),
\end{equation}
with $\hat{N}_0^{+}$ defined in \eqref{tartaruga66} and $h_{+}$ in \eqref{argento}.
Indeed by an explicit calculation equation \ref{corvo3} defining $\oo(\theta):=\hat{F}^{(\theta,0)}$ and $\Omega(\theta):=\hat{F}^{(w,w,)}[\cdot]$, becomes
\begin{equation}\label{corvo5}
\begin{aligned}
\Pi_{K_+}\Big(\oo(\theta)\cdot \del_{\theta}g^{(y)}+g^{(y,w)}\cdot \Omega(\theta)w\Big)\cdot\del_y+\Pi_{K_+}\Big(\oo(\theta)\cdot\del_{\theta}g^{(w,0)}-\Omega(\theta)g^{(w,0)}
\Big)\cdot\del_{w}=\Pi_{K_+}X.
\end{aligned}
\end{equation}
We recall that $\oo(\theta):=(1+h_+)(\oo_{+}+\hat{H}^{(\theta,0)})$ and 
$\Omega(\theta):=(1+h_{+})(m_{+}\Omega^{(-1)} + \hat{N}^{(1)}+\hat{N}^{(2)})$. 
By construction one of the effect of the map
$\calL_+$ is that the sizes of the terms $\hat{H}^{(\theta,0)}$ and $\hat{N}^{(2)}$  are ``much'' smaller with respect to the size of 
${H}^{(\theta,0)})$ and ${N}^{(2)}$ (see equations \eqref{passero2} and \eqref{regu66}). 

In the course of our algorithm we shall only need to find an approximate solution of \eqref{corvo5}, up to an error which is of the order of  $\hat{H}^{(\theta,0)}$ and $\hat{N}^{(2)}$.

Hence we ingnore those terms in \eqref{corvo5}. We are reduced  to solving
\begin{equation}\label{corvo6}
\begin{aligned}
&\oo_+\cdot\del_{\theta}g^{(y,0)}(\theta)=\Pi_{K_{+}}X^{(y,0)}\frac{1}{1+\Pi_{K_+}h_+},\\
&\oo_+\cdot\del_{\theta}g^{(y,y)}(\theta) y=\Pi_{K_{+}}X^{(y,y)}\frac{1}{1+\Pi_{K_+}h_+} y,\\
&\oo_+\cdot\del_{\theta}g^{(w,0)}(\theta)-\tilde{\Omega}(\theta)g^{(w,0)}=\Pi_{K_{+}}X^{(w,0)}\frac{1}{1+\Pi_{K_+}h_+},\\
&\oo_+\cdot\del_{\theta}g^{(y,w)}(\theta)\cdot w+g^{(y,w)}(\theta)\cdot \tilde{\Omega}(\theta)w=\Pi_{K_{+}}X^{(y,w)}w\frac{1}{1+\Pi_{K_+}h_+},
\end{aligned}
\end{equation}
with
\begin{equation}\label{corvo9}
\tilde{\Omega}(\theta):=(1+h_{+})(m_{+}\Omega^{(-1)}+\hat{N}^{(1)}). 
\end{equation} 
To solve the first two equations it is enough to ask that 
\begin{equation}\label{corvo77}
|\oo_{+}\cdot k|\geq\frac{\g}{\langle k\rangle^{\tau}}, \quad k\in\ZZZ^{d}, \;\; |k|\leq K_+.
\end{equation}
In other word thanks to the bound \eqref{corvo77} we are able to estimate the operator $W_0:=(\oo_+\cdot\del_{\theta})^{-1}$.

%
\noindent
The third equation has the form 
\begin{equation}\label{corvo8}
L u=\Big(\oo_+\cdot\del_{\theta}-\tilde{\Omega}(\theta)\Big)g=f, 
\end{equation}
for $g=g(\theta,x),f=f(\theta,x)$ maps
$$
g,f : \TTT_{s}^{d}\to \ell_{a,p},
$$
Hence we need to invert the an operator of the form  $L$ in \eqref{corvo8}.
\\
In order to solve the fourth equation in \eqref{corvo6}
we need to invert the operator $\Big(\oo_+\cdot\del_{\theta}+\tilde{\Omega}^{T}(\theta)\Big)$
. Here $\tilde{\Omega}^{T}(\theta)$ is traspose w.r.t. te bilinear form $a\cdot b=\sum_j a_j b_j$.
%
%
We remark that we need to invert  the latter operator on the space $\ell_{a,p}$ and not only on its dual
in order to get bounds like \eqref{buoni}.

We briefly explain our strategy.
In this Section, more precisely in Lemma \ref{kamreduc}, we will show that, under some
non degeneracy conditions on the eigenvalues of $\tilde{\Omega}(\theta)$ (see \eqref{corno7}),
one can construct a family of invertible operators $Q=Q(\theta)$ on $\ell_{a,p}$ such that
\begin{equation}\label{approxdiag} 
Q^{-1}LQ=Q^{-1}(\oo_{+}\cdot\del_{\theta}-\tilde\Omega(\theta))Q=\oo_{+}\cdot\del_{\theta}-(D+M),
\end{equation}
where $D$ is diagonal an $M$ a ``small'' remainder. This is actually what we get in \eqref{corno8} in Lemma \ref{kamreduc}.  Equation \eqref{approxdiag} implies that 
\begin{equation}\label{approxdiag2}
\begin{aligned}
\tilde{\Omega}(\theta)&:=-Q[\oo_{+}\cdot\del_{\theta}(Q^{-1})]+Q(D+M)Q^{-1},\\
{\tilde{\Omega}}^{T}(\theta)&:=-[\oo_{+}\cdot\del_{\theta}(Q^{-1})]^{T}Q^{T}+(Q^{-1})^{T}(D+M^{T})Q^{T}.
\end{aligned}
\end{equation}
Then we have
\begin{equation}\label{approxdiag3}
\begin{aligned}
Q^{T}(\oo_{+}\cdot\del_{\theta}+\tilde{\Omega}^{T}(\theta))(Q^{T})^{-1}&=
\oo_{+}\cdot\del_{\theta}+Q^{T}[\oo_{+}\cdot\del_{\theta}(Q^{T})^{-1}]+Q^{T}\tilde{\Omega}^{T}(\theta)(Q^{T})^{-1}\\
&\stackrel{(\ref{approxdiag2})}{=}\oo_{+}\cdot\del_{\theta}+Q^{T}[\oo_{+}\cdot\del_{\theta}(Q^{T})^{-1}]
-Q^{T}[\oo_{+}\del_{\theta}Q^{-1}]^{T}+Q^{T}(Q^{-1})^{T}(D+M^{T})
\end{aligned}
\end{equation}
Now one can show that $(Q^{T})^{-1}=(Q^{-1})^{T}$. Indeed by definition 
one has
\[
x\cdot y=(Q Q^{-1}x)\cdot y=(Q^{-1}x)\cdot (Q^{T}y)=x\cdot \big[(Q^{-1})^{T}Q^{T}y\big]
\]
for $x,y\in\ell_{a,p}$. This means that equation \eqref{approxdiag3} reads
\begin{equation}\label{approxdiag4}
Q^{T}(\oo_{+}\cdot\del_{\theta}+\tilde{\Omega}^{T}(\theta))(Q^{T})^{-1}=
\oo_{+}\cdot\del_{\theta}+D+M^{T}.
\end{equation}
In other words, if the matrix $Q$, diagonalizes approximatively
the operator $\oo_{+}\cdot\del_{\theta}-\tilde{\Omega}(\theta)$, then its adjoint $Q^{T}$
diagonalizes approximatively the operator $\oo_{+}\cdot\del_{\theta}+\tilde{\Omega}^{T}(\theta)$.
This procedure  clearly leaves the spectrum invariant.
As eplained above, we need to 
check that the matrix $Q^{T}$ acts on  $\ell_{a,p}$ (and not only on its dual space).
Actually this property is guaranteed by the fact that $Q$ is the identity plus a matrix with
finite decay norm
$|\cdot|^{\rm dec}_{s,a,p}$ (see Definition  \ref{decaynorm}).
Hence of course the adjoint matrix $Q^{T}$ satisfies the same bounds of $Q$.
The discussion above implies that if one can solve the third equation 
in \eqref{corvo6} in such a way bounds like \eqref{buoni} and \eqref{cribbio4} hold,
then one can do the same for the fourth equation
and give similar bounds. This is why all the rest of the Section 
is devoted to the study of equation \eqref{corvo8}.

\begin{defi}\label{spaces200}
Consider the spaces $X,Y,Z$ in \eqref{SPACES}. We define
\begin{equation}\label{fiore}
\begin{aligned}
{\bf G}:=\Big\{ {\bf u}:=(u,\bar{u})\; : \; u\in G \;\;{\rm s.t.} \;\;  u=\sum_{ l \in\ZZZ^{d},j\in S^{c}}u_{j}(l) e^{\ii l\cdot\theta} e^{\ii j\cdot x}
\Big\},
\end{aligned}
\end{equation}
for $G=X,Y,Z$ of $G=H^{s}(\TTT^{d}_{s}\times \TTT: \CCC)$ endowed with the norm $\|\cdot\|_{s,a,p}$ defined in 
\end{defi}
\noindent
We study is the invertibility of the operator $L$.
\begin{equation}\label{corvo10}
\begin{aligned}
L:=\Pi_{S}^{\perp}\oo_{+}\cdot\del_{\theta}+\Pi_{S}^{\perp}\left(-\ii E\left(\begin{matrix}m_{+} & 0\\ 0 & m_{+} \end{matrix}\right)\del_{xx}
-\ii E\left( \begin{matrix} a_{1} & b_{1}\\ -\bar{b}_1 & \bar{a}_{1} \end{matrix}\right)\del_{x}
-\ii E \left(\begin{matrix} a_{0} & b_{0}\\ -\bar{b}_0 & \bar{a}_{0} \end{matrix}
\right)
\right)\Pi_{S}^{\perp}+\hat{\mathscr{K}},
\end{aligned}
\end{equation}
where we rename $\hat{a}_{i},\hat{b}_{i}\rightsquigarrow{a}_{i},{b}_{i}$ the coefficients of $\hat{N}^{(1)}$.
In particular we have that $L$ in \eqref{corvo10} is the linearized operator of a field $F$ belonging to the subspace $\calE$
of compatible vector field in \eqref{compatibili}.
This means that $L$ is \emph{tame}, \emph{gauge preserving}, \emph{pseudo-differential}, \emph{reversible}
and \emph{real-on-real}, i.e. it belongs itself to $\calE$.

The inversion of $L$ stands on two fundamental 
results. The first is the following:

\begin{proposition}\label{birk}
Fix $\e_0=|\x|^{\frac{1}{4}}$ with $\x\in\e^{2}\Lambda$ (see \eqref{domain} and \eqref{parametri}), recall 
the definition of the parameters $\mathtt{R}_{0},\mathtt{G}_{0}$ in \eqref{parametri}
and that $\g_0:=\mathtt{c} |\x|$ (see \eqref{ini4}) and that $\gotp_1=\gotp_0+\gotm_1$ in \eqref{piccolino}.
Consider 
 $L$ defined for $\x\in\calO_0$ in $\calE$ of the form 
\begin{equation}\label{birk3}
\begin{aligned}
L &=\Pi_{S}^{\perp}\oo_+\cdot\del_{\theta}\uno+\Pi_{S}^{\perp}
\left(-\ii E\left(\begin{matrix}m_{+} & 0\\ 0 & m_{+} \end{matrix}\right)\del_{xx}
\right)\Pi_{S}^{\perp}+\hat{\mathscr{K}}\\
&+\Pi_{S}^{\perp}\left(
-\ii E\left( \begin{matrix} a_1 & b_{1} \\ \bar{b}_1 & \bar{a}_{1} \end{matrix}\right)\del_{x}
-\ii E \left(
 \begin{matrix} a_{0} & b_{0} \\ \bar{b}_0 & \bar{a}_{0} \end{matrix}
\right)
\right)
\Pi_{S}^{\perp},
\end{aligned}
\end{equation}
with $|m_{+}-1|_{\g}\leq C|\x|$, and, for $\x\in \calO\subseteq\calO_0$
\begin{equation}\label{birk11}
a_i=a^{(0)}_{i}+{a}'_{i}, \quad b_i:=b^{(0)}_{i}+{b}'_{i}, \;\; i=0,1,
\end{equation}
where
$\mathscr{\hat{K}}$ is of the form \eqref{maiale} with coefficients $c_i,d_i$,
 the coefficients 
$a^{(0)}_{i},b^{(0)}_{i}$ for $i=0,1$ are given by formul\ae  \eqref{NF2} while 
\begin{equation}\label{birk4}
\begin{aligned}
&\|a'_{i}\|_{\vec{v},\gotp_2},\|b'_{i}\|_{\vec{v},\gotp_2},
\|c_{i}\|_{\vec{v},\gotp_2},\|f_{i}\|_{\vec{v},\gotp_2}\leq C|\x| \mathtt{R}_0,\\
&\|a'_{i}\|_{\vec{v},\gotp_1},\|b'_{i}\|_{\vec{v},\gotp_1}\leq C|\x| \e_0, \quad i=0,1,\\
&\|c_{i}\|_{\vec{v},\gotp_1},\|f_{i}\|_{\vec{v},\gotp_1}\leq C|\x| \e_0, \quad i=1,\ldots,N,
\end{aligned}
\end{equation}
for some constant $C=C(\gotp_2,d)>0$. 
Then there exists a tame  map
\begin{equation}\label{birk5}
\SSSS =\uno+\Psi : {\bf H}\to {\bf H}
\end{equation}
with
\begin{equation}\label{birk6}
C_{\vec{v},\gotp_1}(\Psi)\leq C|\x|,
\end{equation}
such that the conjugated $L^{+}:=\SSSS^{-1} L \SSSS$ is in $\calE$ and has the form
\begin{equation}\label{birk7}
\begin{aligned}
L^{+} &=\Pi_{S}^{\perp}\oo_+\cdot\del_{\theta}+\Pi_{S}^{\perp}
\left(-\ii E\left(\begin{matrix}m_{+} & 0\\ 0 & m_{+} \end{matrix}\right)\del_{xx}-\ii E\left(\begin{matrix}{\rm diag}_{j\in\ZZZ_+} r^{(0)}_{j} & 0\\ 0 & {\rm diag }_{j\in\ZZZ_+} r^{(0)}_{j} \end{matrix}\right)
\right)\Pi_{S}^{\perp}+\hat{\mathscr{K}}^{+}\\
&+\Pi_{S}^{\perp}\left(
-\ii E\left( \begin{matrix} 0 & b^{+}_{1} \\ \bar{b}^{+}_1 & 0 \end{matrix}\right)\del_{x}
-\ii E \left(
 \begin{matrix} a^{+}_{0} & b^{+}_{0} \\ \bar{b}^{+}_0 & \bar{a}^{+}_{0} \end{matrix}
\right)
\right)
\Pi_{S}^{\perp},
\end{aligned}
\end{equation}
where  $r_j^{(0)}\in \RRR$
is such that $|r_{j}^{(0)}|_{\g}\leq C|\x|$, $\hat{\mathscr{K}}^{+}$ of the form \eqref{maiale} with coeffcients  $c^+_i,d^+_i$ for $i=1,\ldots,N+\mathtt{C}_{1}|S|$ ( where $|S|$ is the cardinality of the set $|S|$).
Moreover
\begin{equation}\label{birk8}
\|a^+_0-a_0\|_{\vec{v},\gotp_1}\leq C|\x|^{2}\e_0, \quad \|a^+_0-a_0\|_{\vec{v},\gotp_2}\leq C|\x|^{2}\mathtt{R}_0
\end{equation}
same for $b_{j}^{+}$, $j=0,1$ and 
\begin{equation}\label{birk4444}
\|c_{i}^{+}-c_{i}\|_{\vec{v},\gotp_1}\leq C|\x|^{2}\e_0, \quad \|c_{i}^{+}-c_{i}\|_{\vec{v},\gotp_2}\leq|\x|^{2}\mathtt{R}_0,
\end{equation}
same for 
$d^+_i$.

\end{proposition}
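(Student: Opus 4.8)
The statement is a ``weak Birkhoff normal form for the linearized operator'': we want to conjugate $L$ in \eqref{birk3} by a tame, $\calE$-preserving map $\SSSS=\uno+\Psi$ so that the zeroth-order diagonal block becomes constant in $x$ (and the first-order diagonal block vanishes), picking up only a diagonal correction $\mathrm{diag}_j r_j^{(0)}$ of size $O(|\x|)$ to the Schr\"odinger symbol. Since the perturbation in \eqref{birk3} is $O(|\x|)$ (the leading coefficients $a_i^{(0)},b_i^{(0)}$ in \eqref{NF2} are quadratic in $v$, hence $O(|\x|)$ on the domain \eqref{domain}, \eqref{condizione}), a \emph{finite} normal form step suffices — there is no small-divisor problem, only the resonance relations $j_1^2-j_2^2+j_3^2-j^2=0$ from Definition \ref{kresonance} to track. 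The whole construction will mirror Section 3 of \cite{FP}: one step to normalize the order-$1$ term $b_1\del_x$ (the term proportional to $E$ already being off-diagonal), one step to normalize the order-$0$ block, keeping only the ``diagonal resonant'' monomials, and each step is the time-one flow of a finite-rank-type vector field, hence an element of $\BB_\calE$ by Lemma \ref{prestige}.

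First I would isolate the relevant monomials. Writing $v=\sum_i\sqrt{\x_i+y_i}e^{\ii\theta_i}\sin(\tv_i x)$ as in \eqref{variabili}, the coefficients $a_0^{(0)},b_0^{(0)},a_1^{(0)},b_1^{(0)}$ from \eqref{NF2} are trigonometric polynomials in $(\theta,x)$ whose Fourier support in $x$ lies in a fixed finite band $|k|\le 4\ka$, $\ka=\max|\tv_i|$. Conjugating a matrix-valued multiplication/first-order operator of the form $\Pi_S^\perp(-\ii E\, c(\theta,x)\del_x)\Pi_S^\perp$ by $\uno+\Psi$ with $\Psi$ supported on the same band changes $c$ by a commutator; the homological equation to kill the $x$-dependent, off-diagonal part of $c$ is, in Fourier in $x$, an equation of the form $(j^2-j'^2)\,\psi_{j}^{j'}=\text{(given)}$ on $S^c\times S^c$. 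By Lemma \ref{hyp3autaut} (genericity of $S$) the only surviving monomials are the ``trivial resonance'' ones $j=j'$ (or the finitely many $j,j'$ related through $S$), which is exactly why the normalized operator in \eqref{birk7} retains a \emph{diagonal} correction $r_j^{(0)}$ and a finite-rank remainder $\hat{\mathscr K}^+$ with rank $N+\mathtt C_1|S|$. The parity and gauge conditions defining $\calE$ force $r_j^{(0)}\in\RRR$ and $b_1^{+}$ to be the only surviving off-diagonal order-one coefficient; reversibility (condition \eqref{est5}) and real-on-real (condition \eqref{est55}) are preserved because the generating vector field of each step is $\calE$-preserving, exactly as in the proof of Lemma \ref{prestige}.

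Next come the quantitative estimates. The generating vector field $g$ of each step has $|g|_{\vec v,p}\lesssim |\x|$ in low norm and $|g|_{\vec v,\gotp_2}\lesssim |\x|\mathtt R_0$ in high norm, directly from \eqref{birk4} and the fact that dividing by $j^2-j'^2\ge 1$ on the non-resonant set costs nothing; this gives \eqref{birk6} via the flow estimate and the composition/conjugation bounds of Lemma \ref{temavec}. The coefficient differences \eqref{birk8}, \eqref{birk4444} are quadratic in the perturbation size because after one step the change in $a_0,b_0$ is a commutator $[g,\cdot]$ with $g=O(|\x|)$ acting on a coefficient already of size $O(|\x|\e_0)$ (resp. $O(|\x|\mathtt R_0)$) by \eqref{birk4}, so the difference is $O(|\x|^2\e_0)$ (resp. $O(|\x|^2\mathtt R_0)$) — here one uses the interpolation inequalities \eqref{eq:2.11a}, \eqref{eq:2.13b} for the decay norm and Remark \ref{opmolt} to pass between multiplication operators and their symbols. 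One must also check that the new finite-rank term $\hat{\mathscr K}^+$ stays of the form \eqref{maiale} with controlled coefficients: this is the mechanism already seen in Lemma \ref{diffeodiffeo}, where conjugating by a map that does not commute with $\Pi_S$ increases the rank by a multiple of $|S|$, and the coefficient bounds \eqref{birk4444} follow from Lemma \ref{linesdecay} applied to the decay norm of the rank increment.

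\textbf{Main obstacle.} The delicate point is \emph{not} the resonance analysis (that is handled once and for all by the genericity Lemma \ref{hyp3autaut}) but ensuring that the conjugation keeps us inside the pseudo-differential class $\calE$ of Definition \ref{pseudopseudo} with the \emph{tame} estimates \eqref{labellabel}, and in particular that the order-two block stays the scalar $m_+$ plus the \emph{diagonal} $r_j^{(0)}$ rather than acquiring a genuine $x$-dependent second-order term. This forces the generating vector fields to be chosen of order $\le 0$ (bounded), so that $[g,m_+\del_{xx}]$ contributes only to the order-one and order-zero levels and the order-two symbol is untouched; verifying this, together with the adjoint-tameness $(T_m)^*$ needed because $\hat{\mathscr K}$ is only finite-rank and not smoothing, is the part that requires genuine care, and it is carried out exactly as in Section 3 of \cite{FP} (cf. Remark \ref{posacenere} and Lemma \ref{temavec}). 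Once this is in place, the three normalization steps compose to give $\SSSS$ with the stated properties, completing the proof of Proposition \ref{birk}.
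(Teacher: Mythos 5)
Your overall strategy (a finite number of conjugation steps removing the $O(|\x|)$ coefficients produced by the cubic terms, no small divisors, diagonal resonant part surviving as $r_j^{(0)}$, finite\--rank part growing by a multiple of $|S|$) is indeed the route the paper takes (its Step 1 ``descent method'' plus Step 2 ``linear Birkhoff normal form'', as in Section 3 of \cite{FP}). However, two points in your write-up are genuine gaps rather than omitted routine details. First, your homological equation is mis-stated: you propose to solve a purely spatial equation of the form $(j^{2}-j'^{2})\,\psi_{j}^{j'}=\text{(given)}$, but the coefficients $a_i^{(0)},b_i^{(0)}$ in \eqref{NF2} depend on $\theta$ through $v(\theta,x)$, so conjugating $L=\oo_+\cdot\del_\theta+D+B$ produces the transport term $\oo_+\cdot\del_\theta\Psi$, and if $\Psi$ only solves $[D,\Psi]+B=0$ this term is an untreated error of exactly the same size as $B$ — the normal form \eqref{birk7} is not reached. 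The correct divisors are $\oo_{+}\cdot l-\s m_{+}(j^{2}-\s\s' k^{2})$ as in \eqref{corvo25}; the absence of small divisors is not automatic but comes from the special structure of $B$ in \eqref{corvo30}--\eqref{corvo31}: it is supported on $l=\ell(j_1)-\ell(j_2)$ with $j_1,j_2\in S$, $|l|\le 2$, so $\oo_+\cdot l$ is $O(|\x|)$-close to the integer $j_1^2-j_2^2$ and the four-wave combination is bounded away from zero by the triviality of $2$-resonances and Lemma \ref{hyp3autaut} (see \eqref{den}). Relatedly, ``dividing by $j^2-j'^2\ge 1$ costs nothing'' is not enough: to remove the order-one coefficient $b_1^{(0)}\del_x$ while keeping the transformation bounded one needs the divisors to \emph{grow} like $j^{2}+k^{2}$ (resp. $|j|+|k|$) in the $\s=-\s'$ (resp. $\s=\s'$, $j\neq k$) blocks, which is the content of Lemma \ref{denn} and is what makes $\Psi_1\del_x$, $\del_x\Psi_1$ have finite decay norm (Lemma \ref{den2}).

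Second, your justification that the steps stay in $\calE$ is misattributed: the generators you need act \emph{linearly on $w$} (they are of the form $\Psi\, w\cdot\del_w$), hence they are not finite-rank vector fields in the sense of Definition \ref{linvec} (which have no $(w,w)$-component), and Lemma \ref{prestige} does not apply to them. The paper instead checks reversibility/realness directly at the level of matrix coefficients (Lemma \ref{reversibilit}), uses the multiplication-operator Lemma \ref{multiopop} for the first step, and tracks the finite-rank increment as in Lemma \ref{diffeodiffeo}. Finally, note that the paper's first step removes the whole diagonal order-one coefficient $a_1$ (not only its leading part) by a multiplication operator $1+s=e^{q}$ with $q=-\tfrac{1}{2c_+}\del_x^{-1}a_1$; this is solvable only because reversibility forces $a_1$ to have zero average in $x$, a fact your argument should invoke explicitly, since in \eqref{birk7} the diagonal order-one block must vanish identically while only the off-diagonal $b_1^{+}$ survives.
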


\begin{proof}
We divide the proof into two steps.

\noindent 
{\bf Step 1 - Descent Method.} In this step we want to eliminate the term  $a_1:=a^{(0)}_{1}+{a}'_{1}$ in the operator of order $O(\del_x)$. We follows the 
strategy used in Step $4$ of Section $3$ in \cite{FP}. We introduce a change of coordinates of the form
\begin{equation}\label{birk10}
\SSSS_1:=\uno+\Psi_1:=\uno+\Pi_{S}^{\perp}\left(\begin{matrix}s(\theta,x) & 0\\ 0 & \bar{s}(\theta,x) \end{matrix}\right) \Pi_{S}^{\perp}
\end{equation}
for a function $s $ small enough in such a way $\SSSS_1$ is invertible.
By a direct calculation we have that the coefficients 
\begin{equation}\label{birk12}
\begin{aligned}
a_{1}^{(1)}&:=2c_+\frac{s_{x}}{1+s}+a_{1},\qquad
a^{(1)}_{0}:=\frac{-i(\oo_+\cdot\del_{\f}s)+c_+ s_{xx}}{1+s}+a_{0},\\
b_{1}^{(1)}&:=b_{1}\frac{1+\bar{s}}{1+s}, \qquad
b^{(1)}_{0}:=b_{0}\frac{1+\bar{s}}{1+s}.
\end{aligned}
\end{equation}
We look for $s$ such that $a_1^{(1)}\equiv0$. Recall that by the reversibility one has on $\calU$ that $a_{1}$ has zero average in $x$. Hence,
by setting $1+s=\exp{(q(\theta,x))}$, one has that $a_{1}^{(1)}=0$ becomes
\begin{equation}\label{birk13}
\begin{aligned}
{\rm Re}(q_{x})&=-\frac{1}{2c_+}{\rm Re}(a_{1}), \quad \;\;
{\rm Im}(q_{x})=-\frac{1}{2c_+ }{\rm Im}(a_{1}),
\end{aligned}
\end{equation}
that have unique (with zero average in $x$) solution
\begin{equation}\label{birk16}
\begin{aligned}
{\rm Re}(q)&=-\frac{1}{2c_+}\del_{x}^{-1}{\rm Re}(a_{1}), \quad \;\;
{\rm Im}(q)=-\frac{1}{2c_+}\del_{x}^{-1}{\rm Im}(a_{1}).
\end{aligned}
\end{equation}
One has that the solution $q$ is satisfies the estimates
\begin{equation}\label{birk14}
\begin{aligned}
\|q\|_{\vec{v},p}&\leq C \|a_1\|_{\vec{v},p}, \\
\|q\|_{\vec{v},\gotp_1}&\leq C|\x|,
\end{aligned}
\end{equation}
where we used the estimate $|m_{+}-1|_\g\leq C |\x|$. Clearly the function $s$ satisfies the same estimates in \eqref{birk14}.
Hence we have obtained 
\begin{equation}\label{birk77}
\begin{aligned}
L_1 &:=\SSSS_1^{-1} L\SSSS=\oo_+\cdot\del_{\theta}+\Pi_{S}^{\perp}
\left(-\ii E\left(\begin{matrix}m_{+} & 0\\ 0 & m_{+} \end{matrix}\right)\del_{xx}
\right)\Pi_{S}^{\perp}+\hat{\mathscr{K}}^{(1)}\\
&+\Pi_{S}^{\perp}\left(
-\ii E\left( \begin{matrix} 0 & b^{(1)}_{1} \\ \bar{b}^{(1)}_1 & 0 \end{matrix}\right)\del_{x}+
-\ii E \left(
 \begin{matrix} a^{(1)}_{0} & b^{(1)}_{0} \\ \bar{b}^{(1)}_0 & \bar{a}^{(1)}_{0} \end{matrix}
\right)
\right)
\Pi_{S}^{\perp}.
\end{aligned}
\end{equation}
Now since the transformation $\SSSS_1=\uno+O(|\x|)$ trivially (see Lemma \eqref{multiopop}) one has again  that 
\begin{equation*}
a^{(1)}_0=a^{(0)}_{0}+{a}''_{0}, \quad b^{(1)}_i:=b^{(0)}_{i}+{b}''_{i}, \;\; i=0,1,
\end{equation*}
 the coefficients $a^{(1)}_0,b^{(1)}_j$ with $j=0,1$, and  the coefficients $c^{(1)}_i,d^{(1)}_i$ for $i=1,\ldots,N+\mathtt{C}_1|S|$ of $\hat{\mathscr{K}}^{(1)}$ satisfy
 the bounds \eqref{birk444}. 
 The study of the term $\hat{\mathtt{K}}^{(1)}$ follows by following the same reasoning used in Lemma \ref{diffeodiffeo}.
 Moreover by equation \eqref{birk16} one has that $q$ is even in $x$ and hence the transformation $\SSSS_1$ does not 
 change the parity of the coefficients. Moreover satisfies condition \eqref{massapilota} which implies that $\SSSS_{1}$
 satisfies the hypotheses of Lemma \ref{multiopop}.

\noindent
{\bf Step 2 - Linear Birkhoff Normal Form.}
In this step we look for a reversibility preserving map
\begin{equation}\label{corvo20}
\tilde{\SSSS}_2:=\uno+\Psi_{1}:=
\uno+\left(\begin{matrix} (\Psi_1)_{1}^{1}& (\Psi_1)_{1}^{-1} \\ (\Psi_1)_{-1}^{1} & (\Psi_1)_{-1}^{-1}
\end{matrix}
\right) : \Pi_{S}^{\perp}{\bf h}^{a,p}_{\rm odd}\to \Pi_{S}^{\perp}{\bf h}^{a,p}_{\rm odd},
\end{equation}
which eliminates the coefficients $a_0^{(0)},b_i^{(0)}$ for $i=0,1$.  First we write
\begin{equation}\label{corvo21}
L_1:=\oo_+\cdot\del_{\theta}+\Pi_{S}^{\perp}\left(-\ii E \left(\begin{matrix}m_+ & 0\\0& m_+ \end{matrix}
\right)\del_{xx}+B
\right)\Pi_{S}^{\perp}+R,
\end{equation}
where
\begin{equation}\label{corvo22}
\begin{aligned}
B&:=\Pi_{S}^{\perp}\left(-\ii E \left(\begin{matrix}0 & b_1^{(0)} \\ \bar{b}_1^{(0)} & 0 \end{matrix}
\right)\del_{x}
-\ii E \left(\begin{matrix}a_{0}^{(0)} & b_0^{(0)} \\ \bar{b}_0^{(0)} & \bar{a}_0^{(0)} \end{matrix}
\right)
\right)\Pi_{S}^{\perp}, \\
R&:=
 \Pi_{S}^{\perp}\left(-\ii E \left(\begin{matrix}0 & b''_1 \\ \bar{b}''_1 & 0 \end{matrix}
\right)\del_{x}
-\ii E \left(\begin{matrix}a''_{0} & b''_0 \\ \bar{b}''_0 & \bar{a}''_0 \end{matrix}
\right)
\right)\Pi_{S}^{\perp}+\hat{\mathscr{K}}^{(1)}
\end{aligned}
\end{equation}
We have that
\begin{equation}\label{corvo23}
\begin{aligned}
L_{1}\tilde\SSSS_2&-\tilde\SSSS_2\Pi_{S}^{\perp}(\oo_{+}\cdot\del_{\theta}-\ii E m_{+}\del_{xx})\Pi_{S}^{\perp}=\\
&=\Pi_{S}^{\perp}\left[\oo_{+}\cdot\del_{\theta}\uno+[-\ii Em_{+}\del_{xx}, \Psi_1]+B
\right]\Pi_{S}^{\perp}+\tilde{R}
\end{aligned}
\end{equation}
where 
\begin{equation}\label{corvo2444}
\tilde{R}:=B\Psi_1+R(\uno+\Psi).
\end{equation}
We look for  $\Psi_{1}$
such that
\begin{equation}\label{corvo24}
\oo_{+}\cdot\del_{\theta}\uno+[-\ii Em_{+}\del_{xx}, \Psi_1]+B=0.
\end{equation}
In Fourier space, using the exponential basis both in time and space, equation \eqref{corvo24} reads
\begin{equation}\label{corvo25}
i\oo_{+}\cdot l -i\s c_{+}(j^{2}-(\s \s')k^{2})(\Psi_{1})_{\s,j}^{\s',k}(l)=-B_{\s,j}^{\s',k}(l), \quad l\in\ZZZ^{d},\;\; j,k\in S^{c}, \;\;\s,\s'=\pm 1.
\end{equation}
Now by \eqref{corvo22} we have that the opeator $B$ depends only on the terms defined in \eqref{NF2}.
Moreover by \eqref{aacaut} and \eqref{variabili} we have that the function $v(\theta,x)$ has the form
\begin{equation}\label{corvo26}
\begin{aligned}
v(\theta,x)&:=\sum_{i=1}^d\sqrt{\x_{i}+y_{i}}e^{\ii \ell(\mathtt{v}_i)\cdot\theta}\sin(\tv_i x), \\
&\ell : S_{+} \in \ZZZ^{d}, \quad \ell(\mathtt{v}_i)=e_i,
\end{aligned}
\end{equation}
where $e_{i}=(0\ldots,1,\ldots,0)$ is the $i-th$ vector of the canonical basis of $\RRR^{d}$. 
Definitions in \eqref{aacaut} and \eqref{variabili} are given in the sine basis in space, since we deal with odd function of $x$. On the other hand in this case 
it is more convenient to use the exponential basis also in $x$.  
It is sufficient to change the definition in \eqref{aacaut} by recalling that for $u$ odd in space then the Fourier coefficients (in space) has the property $u_j=-u_{-j}$.
Hence ${\bf v}=(v^{+},v^{-})$ and $w=(z^{+},z^{-})$ are 
\begin{equation}
\begin{aligned}
v^{\s}&=\sum_{ \mathtt{v}_{i}\in S}{\rm sign}(\mathtt{v_{i}})e^{\ii \s\mathtt{v}_{i}\cdot x } \sqrt{\x_{|\mathtt{v}_{i}|}+y_{|\mathtt{v}_i|} }e^{\ii \s \ell(v_{i})\cdot \theta}, \x_{\mathtt{v}_{i}}=\x_i, y_{\mathtt{v}_i}=y_i,
\ell(-|\mathtt{v}_i|)=-\ell(|\mathtt{v}_i|),\\
z^{\s}&=\sum_{j\in S^{c}} u^{\s}_{j}e^{\ii\s j\cdot x}, \;\; u^{\s}_{-j}=-u^{\s}_{j}
\end{aligned}
\end{equation}
that are equivalent to definitions in \eqref{aacaut}. With this formalism we have that
\begin{equation}\label{corvo30}
\begin{aligned}
B_{\s,j}^{\s,k}(l)&:=-\ii \s(2a_1 -j_{1}^{2}a_{2}-j_{1}j_{2}a_{3}-a_{5}(j_1^{2}j_{2}^{2}+j_{2}^{2}))\sqrt{\x_{j_1}}\sqrt{\x_{j_2}}, \\
&\quad {\rm for} \;\;  j_{1},\,j_{2},\,j-k\in S, \;\;\; j_{1}-j_{2}+k-j=0, \;\; l=\ell(j_{1})-\ell(j_{2}), 
\end{aligned}
\end{equation}
and $B_{\s,j}^{\s,k}(l)=0$ otherwise, and
\begin{equation}\label{corvo31}
\begin{aligned}
B_{\s,j}^{-\s,k}(l)&:=-\ii \s\left(a_1-j_1^{2}a_{2}-(a_{3}j_1-a_{4}j_1^{2}j_2)(-\s)k
\right) 
, \\
&\quad {\rm for} \;\;  j_{1},\,j_{2},\,j+k\in S, \;\;\; j_{1}-j_{2}+k-j=0, \;\; l=\ell(j_{1})-\ell(j_{2}), 
\end{aligned}
\end{equation}
and $B_{\s,j}^{-\s,k}(l)=0$ otherwise. We define the solution of equation \eqref{corvo24} as
\begin{equation}\label{corvo32}
\begin{aligned}
(\Psi_{1})_{\s,j}^{\s',k}(l):=\left\{\begin{aligned}&
\frac{B_{\s,j}^{\s',k}(l)}{i(\oo_{+}\cdot l-\s m_{+}(j^{2}-(\s \s')k^{2}) )},\;{\rm if} \;\; \oo^{(-1)}\cdot l+\s j^{2}-\s' k^{2}\neq0,\\
&0 \;\; {\rm otherwise}
\end{aligned}\right.
\end{aligned}
\end{equation}
Actually the operator $\Psi_{1}$ in \eqref{corvo32} is the solution of 
\begin{equation}\label{corvo33}
\oo_{+}\cdot\del_{\theta}\uno+[-\ii Em_{+}\del_{xx}, \Psi_1]+B=[B], \qquad [B]:=\left\{\begin{aligned}& B_{\s,j}^{\s,j}(0), l=0, j=k, \s,=\s' \\
&0\;\; {\rm otherwise},
\end{aligned}\right.
\end{equation}
indeed for such values of $l,j,k$ the denominators in \eqref{corvo32} are zero.
Now $\Psi_1$ is well posed and solves \eqref{corvo33}.
Indeed by \eqref{corvo30} and \eqref{corvo31} we have that $B_{\s,j}^{\s',k}(\ell)=0$ unless $|l|\leq 2$, hence since $l=\ell(j_{1})-\ell(j_{2})$ one has
$$
|\oo^{(-1)}\cdot l+k^{2}-j^{2}|=|j_1^2-j_2^{2}+k^2-j^{2}|\geq1, \; {\rm for} \;\; j\neq \pm k , j_1\neq\pm j_2,
$$
where one uses Lemma \ref{hyp3autaut}. Moreover for $\s=-\s'$ one has 
$$
|\oo^{(-1)}\cdot l+k^{2}+j^{2}|\geq1, \; \; {\rm for \;\; any} \;\; j,k, j_1,j_2.
$$
By using that $\oo_{+}=\oo^{(-1)}+O(\x)$ and that $m_+=1+O(|\x|)$  we have that the denominators in \eqref{corvo32} satisfy
\begin{equation}\label{den}
|\oo_{+}\cdot l-\s c_{+}(j^{2}-(\s \s')k^{2})|\geq|m_{+}|\oo^{(-1)}\cdot l+\s(j^{2}-k^{2})|-|l||\oo_+-m_{+}\oo^{(-1)} |  |\geq 
\frac{1}{2},
\end{equation}
for $|\x|$ small enough.
\begin{lemma}\label{denn}
Set $d_{\s,j}^{\s',k}(l)=(\oo_{+}\cdot l-\s m_{+}(j^{2}-(\s \s')k^{2}) )$. One has that
\begin{equation}\label{den3}
|d_{\s,j}^{\s',k}(l)|
\geq \left\{\begin{aligned} 
& C(j^{2}+k^{2}), \quad \s= -\s' ,\\
& C (|j|+|k|), \quad {\rm if}\;\;\; \s=\s'\;  j\neq k,\\
& C, \quad {\rm if} \;\;\;\s=\s'\;  j=\pm k , \; l\neq0
\end{aligned}\right.
%
\end{equation}
\end{lemma}

\begin{proof}
If one assume $j^{2}+k^{2}>\tilde{C}>0$ then, since $|\oo_{+}|\leq |\oo^{(-1)}|+1$, one has
\begin{equation}
|d_{\s,j}^{-\s,k}(l)|\geq \frac{1}{4}|j^{2}+k^{2}|-2|\oo_+|\geq \frac{1}{8}(j^{2}+k^{2}).
\end{equation}
If $j^{2}+k^{2}\leq\tilde{C}$ but $j-k\in S$ then one can use equation \eqref{den} to obtain the result. Finally if $j-k\in S^{c}$
then one has $B_{\s,j}^{-\s,k}(l)=0$. The second bound is obtained following the same reasoning and using the fact that
$|j^{2}-k^{2}|=|(j-k)(j+k)|\geq|j|+|k|$. The last bound is trivial.
\end{proof}

The following properties is a consequence of Lemma \ref{denn}.

\begin{lemma}\label{den2}
Let us define the operator $A:=\Psi_1-\{\Psi_1\}$ where
$\{\Psi_1\}_{\s,j}^{\s',k}(l)=\Psi_{\s,j}^{\s,j}(l)$ for $\s=\s'$, $j=k$ and $l\neq0$.
Then one has that $|A\del_x|^{\rm dec}_{\vec{v},p}+|\del _xA|^{\rm dec}_{\vec{v},p}\leq C(p)|\x|$ where $|\cdot|^{\rm dec}_{\vec{v},p}$ is defined in \eqref{2.1aut}
with $j\in \ZZZ$ instead of $\ZZZ_+$.
\end{lemma}

\begin{proof} 
One has that
\begin{equation}
|\Psi_1\del_x |_{s,a,p}=\sup_{\s,\s'=\pm}\sum_{h\in\ZZZ, l\in\ZZZ^{d}}\langle j,l\rangle^{2p}e^{2|l|s}e^{2|j|a}\sup_{j-k=k}|(\Psi_1)_{\s,j}^{\s',k}(l) k|^{2}\leq C(p)|\x|
\end{equation}
since $(\Psi_1)_{\s,j}^{\s',k}(l)=0$ outside the set $|l|\leq 2$ and $|j-k|\leq C_{S}$
and the decay norm of $B$ is controlled by the norm of its coefficients $a_0^{(0)},b_1^{(0)},b_0^{(0)}$. 
In particular note that we used Lemma \ref{denn} in the following way. For instance we have the estimate
\begin{equation} 
|(\Psi_1)_{\s,j}^{-\s,k}(l) k|\leq C\frac{1}{(j^{2}+k^{2})}|B_{\s,j}^{-\s,k}(l)k|
\end{equation}
and one uses the gain of two derivatives of the denominator to control the two derivatives in the numerator. Hence one control the coefficients using $\|b_1^{(0)}\|_{\vec{v},p}+\|b_0^{(0)}\|_{\vec{v},p}$
The bounds second term and the Lipschitz estimate follows in the same way.
\end{proof}

 By Lemma \ref{den2} follows that for $|\x|$ small the map $\tilde{\SSSS}_{2}$ is invertible. Moreover we have the following Lemma
 \begin{lemma}\label{reversibilit}
Consider a linear  operator $A=(A)_{\s}^{\s'}$ for $\s,\s'=\pm1$ on the spaces ${\bf G}:=G\times G$ where $G=X,Y,Z$ the spaces defined in 
\eqref{reversible}. One has that $A$ is reversibility preserving if and only if for any $\s,\s'=\pm 1$, $l\in \ZZZ^{d}$, $j,k\in\ZZZ$
\begin{equation}\label{cornacchia}
A_{\s,j}^{\s',k}(l)=\ol{A_{\s,j}^{\s',k}(l)}, \quad A_{\s,-j}^{\s',-k}(l)=A_{\s,j}^{\s',k}(l), \quad \ol{A_{\s,j}^{\s',k}(-l)}=A_{-\s,j}^{-\s',k}(l).
\end{equation}
An operator $B$ is reversible, i.e. maps ${\bf X}\to {\bf Z}$ if and only if
\begin{equation}\label{cornacchia2}
B_{\s,j}^{\s',k}(l)=-\ol{B_{\s,j}^{\s',k}(l)}, \quad B_{\s,-j}^{\s',-k}(l)=B_{\s,j}^{\s',k}(l), \quad \ol{B_{\s,j}^{\s',k}(-l)}=B_{-\s,j}^{-\s',k}(l).
\end{equation}

\end{lemma}

The proof of Lemma \ref{reversibilit} is similar to the proof of Lemma $(4.36)$ in \cite{FP}. Clearly in this case the differences stands in the fact that
we developed in Fourier coefficients using the exponential basis in $x$. 
By this Lemma and an explicit computation, we have that $\Psi_{1}$ is reversibility preserving since $B$ is reversible.
Now we can define the map
\begin{equation}\label{mappa}
\SSSS_{2}:=\exp(\Psi_1),
\end{equation}
the time$-1$ flow of the field $\Psi_1$. Clearly again $\SSSS_1-\uno=O(|\x|)$. Hence 
by equation
\eqref{corvo33} we obtain 
$$
L^{+}:=\SSSS_2^{-1}L_1\SSSS_2 =\Pi_{S}^{\perp}(\ii E\Omega_{+}^{-1}(\x)+R_{+})\Pi_{S}^{\perp}
$$
with
\begin{equation}\label{mappa2}
\Omega_{+}^{-1}:={\rm diag}_{j\in \ZZZ_{+}}\left(\left(\begin{matrix}
m_{+}j^{2} & 0\\ 0 & m_+ j^{2}\end{matrix}\right)
+\left(\begin{matrix}B_{1,j}^{1,j}(0) & B_{1,j}^{1,-j}(0)  \\
B_{1,-j}^{1,j}(0) & B_{1,-j}^{1,-j}(0)\end{matrix}\right)
\right)
\end{equation}
and $R_{+}$ as the form \eqref{birk7}. 
Note that we have defined $\Omega_{+}^{-1}$ as infinite dimensional matrix with index $\ell\in\ZZZ^{d}$ and $j\in\ZZZ_{+}$.
It is an operator one the space of sequences $\{z_{j}\}_{j\in\ZZZ}$. But by our condition of reversibility
we work on sequences such that $z_{j}=-z_{-j}$. Hence we can rewrite the matrix $\Omega^{(-1)}_{+}$ as an operator 
acting on the space of ``odd'' sequences 
as a diagonal matrix
\begin{equation}\label{strega10}
\Omega_{+}^{-1}:={\rm diag}_{j\in \ZZZ_{+}}\left(c_{+}j^{2}+r_{j}^{(0)}
\right), \qquad r_{j}^{(0)}:= B_{1,j}^{1,j}(0)-B_{1,j}^{1,-j}(0),
\end{equation}
and $r_{j}^{(0)}$ is real by the reversibility of the field $B$.
Hence, setting $\SSSS=\SSSS_2\circ\SSSS_{1}$, the Lemma is proved.
\end{proof}

\begin{rmk}\label{laghetto2}
The terms $r_0^{j}$  are of order $O(|\x|)$. In particular they are
the integrable terms that cannot be cancelled through a Birkhoff
transformation. Moreover such terms are the corrections of order $O(|\x|)$ to $j^{2}$ that we have considered in \eqref{integra2}
of Section \ref{caspita2}.
\end{rmk}

The following Lemma is the last important abstract result we will use in order to invert the operator of the type $L$ in \eqref{corvo10}.

\begin{lemma}\label{kamreduc}
Fix $\e_0=|\x|^{\frac{1}{4}}$ with $\x\in\e^{2}\Lambda:=\calO_0$ (see \eqref{domain} and \eqref{parametri}) and recall that 
$\g_0:=\mathtt{c} |\x|$ (see \eqref{ini4}) and $\gotm_1$ in \eqref{piccolino}.
Consider a reversible, tame linear operator 
 $L$ defined for $\x\in \calO_0$ of the form 
\begin{equation}\label{corno1}
L=
\oo_{+}\cdot\del_{\theta}+ \DD+\RR : \ell_{a,p+2}\to \ell_{a,p}
\end{equation}
where
\begin{equation}\label{corno2}
\begin{aligned}
\DD&:=-\ii E{\rm diag}_{j\in \ZZZ_+\cap S^{c}}\Big( c_{+}j^{2}+r_{j}^{(0)}\Big),\\
\RR&:=E_{1}D+E_{0}=E_1(L)D+E_0(L)
\end{aligned}
\end{equation}
with $D:={\rm diag}_{j\in\ZZZ_+\cap S^{c}}\{ \ii j\}$, and where, if we write $k=(\s,j,p)\in\{\pm1\}\times\NNN\times \ZZZ^{d}$,with 
$ q=0,1$,
\begin{equation}\label{corno3}
\begin{aligned}
E_{q}&=\left((E_{q})_{k}^{k'}\right)_{k,k'\in\{\pm1\}\times S^{c} \times \ZZZ^{d}}=
\left((E_{q})_{\s,j}^{\s',j'}(l)\right)_{k,k'\in\{\pm1\}\times S^{c}\times \ZZZ^{d}},\\
&(E_{1})_{\s,j}^{\s,j'}(l)\equiv0, \quad \forall\; j,j'\in\ZZZ_+\cap S^{c}, \;\; l\in\ZZZ^{d}.
\end{aligned}
\end{equation}
%
Assume that $|m_{+}-1|_{\g}\leq C|\x|$, and $|r_{+}^{j}|_{\g}\leq C|\x|$. Fix parameters
\begin{equation}\label{corno4}
\ka_4=7\tau+3, \quad \ka_{5}=7\tau+5,\quad \gotm_2=\gotm_1+\ka_{5},\quad
\end{equation}
$\gotp_1,\gotp_2$ as in \ref{sceltapar}
and take an arbitrary $N>0$ large.  Assume that
\begin{equation}\label{corvo7}
\begin{aligned}
|E_1|^{{\rm dec}}_{\vec{v},\gotp_1}+|E_0|^{{\rm dec}}_{\vec{v},\gotp_1}&\leq C|\x| \e_0, \\ 
|E_1|^{{\rm dec}}_{\vec{v},\gotp_2}+|E_0|^{{\rm dec}}_{\vec{v},\gotp_2}&\leq C\e_0 \tG_0, 
\end{aligned}
\end{equation}
with $\tG_0$ in \eqref{parametri}.
There exists a constant $C_0=C_0(\gotp_2,d)>0$ such that, if
\begin{equation}\label{corno5}
K_0^{C_0}\g^{-1}C|\x|\e_0 \leq \epsilon
\end{equation}
and $\epsilon=\epsilon(d,\gotp_2)$ is small enough then the following holds.
There exists a sequence of purely imaginary numbers
\begin{equation*}
\mu_{\s,j}^{N}(\x):=-\ii \s (c_{+}j^{2}+r_{j}^{N}), \quad \s=\pm1, \; j\in \ZZZ_{+}\cap S^{c},
\end{equation*}
with
$$
|r_{j}^{N}|_{\g}\leq C |\x|, \quad |r_{j}^{N}-r_{j}^{(0)}|_{\g}\leq C|\x|\e_0
$$
defined on $\calO_0$ and 
 such that for any $\x\in \Lambda^{2\g}_{N}$, defined as
 \begin{equation}\label{corno7}
 \Lambda_{N}^{2\g}:=\left\{
\begin{array}{ll}
\x\in \calO: &\;  |\oo_+\cdot l \!+\!\mu^{N}_{\s,j}(\x)-\!\mu^{N}_{\s',j'}(\x)| \geq \frac{2\g|\s j^{2}-\s' j'^{2}|}{\langle l \rangle^{\tau}}
  \\  
& \forall l \in\ZZZ^{d},\; |l|\leq N \;\; \forall (\s,j),(\s',j')\in\{\pm1\}\times(\ZZZ_+ \cap S^{c})
\end{array}
\right\},
 \end{equation}
 there exists a bounded, reversibility preserving, linear operator $\Phi_{N}=\Phi_{N}(\x)$ 
 depending on $\theta\in \TTT^{d}_{s}$ and acting on $\ell_{a,p}$
 such that
 \begin{equation}\label{corno8}
 L_{N}:=\Phi_{N}^{-1}\circ L\circ \Phi_{N}:=\oo_{+}\cdot\del_{\theta}+\DD_N+\RR_N,
 \end{equation}
 where
 \begin{equation}\label{corno9}
 \begin{aligned}
 \DD_{N}&:={\rm diag}_{\s\in \{\pm1\}, j\in\ZZZ_+}(\mu_{\s,j}^{N}),\\
 \RR_{N}:&=E_{1}^{N}D+E_{0}^N,
 \end{aligned}
 \end{equation}
 
 \begin{equation}\label{corno10}
 \begin{aligned}
 |E_1^{N}|^{{\rm dec}}_{\vec{v}_1,p}+|E_0^{N}|^{{\rm dec}}_{\vec{v}_1,p}&\leq \Big(|E_1|_{\vec{v},p+\ka_5}+|E_0|_{\vec{v},p+\ka_5}
 \Big)
 N^{-\ka_4}, \quad \vec{v}_1:=(\g, \Lambda_{N}^{2\g},s,a),\\
  |E_1^{N}|^{{\rm dec}}_{\vec{v}_1,p+\ka_5}+|E_0^{N}|^{{\rm dec}}_{\vec{v}_1,p+\ka_5}&
   \Big(|E_1|^{{\rm dec}}_{\vec{v},p+\ka_5}+|E_0|^{{\rm dec}}_{\vec{v},p+\ka_5}
 \Big)N,\quad \gotp_0\leq p \leq \gotp_2-\ka_5.
  \end{aligned}
 \end{equation}
 Moreover one has that
 \begin{equation}\label{corno11}
 |\Phi_{N}^{\pm1}-\uno|^{{\rm dec}}_{\vec{v}_1,p}\leq \g^{-1}\Big(|E_1|^{{\rm dec}}_{\vec{v},p}
 +|E_0|^{{\rm dec}}_{\vec{v},p}\Big).
 \end{equation}

\end{lemma}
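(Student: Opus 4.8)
The statement is a KAM-type reducibility lemma: we must conjugate the operator $L=\oo_{+}\cdot\del_{\theta}+\DD+\RR$, which has a diagonal part $\DD$ of order $2$ and a perturbation $\RR=E_1 D+E_0$ of order $1$ but of size $O(|\x|\e_0)$, to a new operator $L_N$ of the same form but with a smaller off-diagonal part (gaining a factor $N^{-\ka_4}$) and a corrected diagonal $\DD_N$. This is the classical iterative (quadratic, or in this case super-exponentially convergent in the truncation parameter) KAM reduction scheme, carried out \emph{at a fixed truncation level} $N$ but in infinitely many sub-steps, exactly as in the reducibility arguments of \cite{BBM},\cite{FP}. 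The plan is to set up an iteration $L_n=\Phi_n^{-1}L_{n-1}\Phi_n$ where at step $n$ one removes the off-diagonal part of $E_0^{(n-1)}$ supported on Fourier modes $|l|\le N_n$ (for a sequence $N_n\to\io$ with, say, $N_n=N_0^{(3/2)^n}$ capped so that $N_n\le N$, or more precisely one takes the usual $N_n$ and stops once they would exceed $N$), solving the homological equation in the space of matrices with finite decay norm $|\cdot|^{\rm dec}_{\vec v,p}$.

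\textbf{Key steps.} First I would write the homological equation for the transformation $\Phi_n=\exp(\Psi_n)$: the generator $\Psi_n$ must satisfy $\oo_{+}\cdot\del_\theta\Psi_n+[\DD_{n-1},\Psi_n]=-\Pi_{N_n}\Pi_{\rm off}(E_0^{(n-1)})$, which in Fourier–eigenvalue coordinates $k=(\s,j,l)$ reads $(\Psi_n)_{\s,j}^{\s',j'}(l)=-\big(\ii\oo_{+}\cdot l+\mu_{\s,j}^{(n-1)}-\mu_{\s',j'}^{(n-1)}\big)^{-1}(E_0^{(n-1)})_{\s,j}^{\s',j'}(l)$ for $|l|\le N_n$, zero otherwise; here the small divisors are controlled \emph{not} on the final set $\Lambda_N^{2\g}$ but on the intermediate sets $\Lambda_{N_n}^{2\g_n}$ defined analogously with $N\rightsquigarrow N_n$. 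Using the quantitative lower bound $|\s j^2-\s' j'^2|\ge C(|j|+|j'|)$ one gets the crucial loss of only two derivatives in $(j,j')$ — which is why $\DD$ is order $2$ and $\RR$ is order $1$: the divisor gains two powers of the space mode while $E_1 D$ costs only one, leaving a net smoothing that makes the scheme converge in decay norm. The Neumann-series estimate for $\Phi_n^{\pm1}$ follows from $|\Psi_n|^{\rm dec}_{\vec v_n,p}\lesssim \g_n^{-1}N_n^{2\tau+1}(|E_1^{(n-1)}|^{\rm dec}+|E_0^{(n-1)}|^{\rm dec})$, and the new off-diagonal term is quadratic, $E_0^{(n)}=\Pi_{N_n}^\perp E_0^{(n-1)}+O((E^{(n-1)})^2)$, giving both the high-mode truncation gain $N_n^{-\ka_4}$ and the quadratic smallness; the eigenvalue corrections $\mu^{(n)}_{\s,j}-\mu^{(n-1)}_{\s,j}$ are read off the diagonal of the conjugated operator and are estimated by $|E_0^{(n-1)}|^{\rm dec}_{\vec v_{n-1},\gotp_0}$, which is summable, giving the limiting $r_j^N$ with $|r_j^N-r_j^{(0)}|_\g\le C|\x|\e_0$. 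One must also track that at each step the transformation is reversibility-preserving (via Lemma \ref{reversibilit}, since the perturbation stays reversible) and maps $\ell_{a,p}\to\ell_{a,p}$ (decay norm controls operator norm, by Lemma \ref{ bubboleaut}), so $\Phi_N:=\lim \Phi_1\circ\cdots\circ\Phi_n$ has the claimed properties on the final set $\Lambda_N^{2\g}=\bigcap_n\Lambda_{N_n}^{2\g_n}$; monotonicity of the Mel'nikov sets in $N$ and the smallness condition \eqref{corno5} (with $C_0$ absorbing the powers $N_0^{\cdots}$) close the induction.

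\textbf{Main obstacle.} The delicate point is \emph{not} the homological equation per se but propagating the two kinds of estimates in \eqref{corno10} simultaneously: a low-norm bound that is small and gains $N^{-\ka_4}$, and a high-norm bound ($p+\ka_5$) that is allowed to grow like $N$. One has to choose $\ka_4,\ka_5$ (hence $\gotm_2$) so that the interpolation inequality for the decay norm (\eqref{eq:2.11a}) does not destroy the quadratic gain after finitely many steps — the standard KAM bookkeeping where the high norms blow up but slowly enough. A secondary subtlety is that the off-diagonal perturbation contains the unbounded piece $E_1 D$, so at each step the conjugation produces commutators $[\Psi_n, E_1 D]$ which are again of order one and must be re-absorbed into $E_1^{(n)}D+E_0^{(n)}$ while keeping the structural constraint $(E_1)_{\s,j}^{\s,j'}(l)\equiv 0$ for $j,j'\in\ZZZ_+\cap S^c$; verifying this algebraic persistence is exactly analogous to the reduction in Section $3$ of \cite{FP} and I would quote that argument rather than redo it. Finally, the whole scheme only makes sense because the transformations act on $\ell_{a,p}$ and \emph{not merely its dual} — this is automatic here since $\Phi_N-\uno$ has finite decay norm, so its adjoint does too, which is precisely the point exploited in the discussion preceding \eqref{corvo10} to handle the fourth equation in \eqref{corvo6}.
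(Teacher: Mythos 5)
Your proposal is correct and follows essentially the same route as the paper: the paper proves Lemma \ref{kamreduc} by reducing it to the iterative scheme of Lemma \ref{teo:KAMKAM}, which is in turn quoted verbatim from the reducibility Theorem $4.27$/Lemma $4.38$ of \cite{FP} — i.e.\ exactly the truncated KAM iteration you sketch, with the homological equation solved in decay norm, second Mel'nikov conditions imposed only for $|l|\le N_\nu$, reversibility preserved at each step, and the iteration stopped after finitely many steps so that only $|l|\le N$ conditions are needed. The only points the paper singles out beyond \cite{FP} (multi-dimensional parameters handled via Kirszbraun extension, comparison of two different operators $L_1,L_2$ in $(\mathbf{S3})$–$(\mathbf{S4})$, and sites indexed by $S^c$) are all consistent with, or implicitly contained in, your outline.
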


Before giving the proof of the Lemma we make some remarks. This Lemma essentially can be applied to 
operators $L^{+}$ of the form \eqref{birk7}. Indeed our strategy is to use Proposition \ref{birk} as a preliminary step
before using 
a \emph{KAM}-like scheme in order to diagonalized the linear operator $L$. Lemma \ref{kamreduc} provides an approximate diagonalization, but anyway
the order of the approximation $N$ is arbitrary large. The conditions on the parameters in \eqref{corno7} are the Second order Melnikov conditions. 
Clearly such conditions depends on $N$ (see formula \eqref{corno7}).
 In particular to obtain a partial diagonalization one can  ask for the conditions \eqref{corno7} only for $|l|\leq N$. On the contrary 
 in order to completely diagonalize one has to ask the the lower bounds in \eqref{corno7} for any $l\in\ZZZ^{d}$. Our choice is less restrictive but it is sufficient 
  we are just looking for an
 approximate inverse of $L$. 
 Lemma \eqref{kamreduc} is the equivalent result of Theorem $4.27$ in Section $4$ of \cite{FP}. The proof of the Lemma above is based on the following  Iterative Lemma.
Take $L$ as in \eqref{corno1} and define
\begin{equation}\label{piccolo11}
\e_{p}^{0}:=|E_{1}|^{{\rm dec}}_{\vec{v},p}+|E_{0}|^{{\rm dec}}_{\vec{v},p}, \quad {\rm for} \quad p\geq0.
\end{equation}
%

\begin{lemma}[{\bf KAM iteration}]\label{teo:KAMKAM}
There exist constant $C_{0}>0$, $K_{0}\in\NNN$ large, such that
if
\begin{equation}\label{eq:4.155}
K_{0}^{C_{0}}\g^{-1}\e_{\gotp_{0}+\gotm_2}^{0}\leq1,
\end{equation}
with $\gotm_2$ in \eqref{corno4}
then, for any $\nu\geq0$, one has:

\noindent
$({\bf S1.})_{\nu}$ Set
$\Lambda^{\g}_{0}:=\calO_{0}$ and for $\nu\geq1$ 
\begin{equation}\label{eq:419bis10}
\begin{aligned}
&\Lambda_{\nu}^{\g}:=
\left\{\begin{array}{ll}
\x\in\Lambda_{\nu-1}^{\g} : 
&|\oo\cdot\ell\!+\! \mu_{\s,j}^{\nu-1}(\x)\!-\! \mu_{\s',j'}^{\nu-1}(\x)|\geq
\frac{\g|\s j^{2}-\s'j'^{2}|}{\langle\ell\rangle^{\tau}},\\
&\forall |\ell|\leq K_{\nu-1}, \! (\s,j),(\s',j')\in\{\pm1\}\times\NNN \end{array}
\right\},
\end{aligned}
\end{equation}
For any $\x\in\Lambda_{\nu}^{\g}=\Lambda_{\nu}^{\g}(L)$, there exists an invertible map $\Phi_{\n-1}$ of the form $\Phi_{-1}=\uno$ and for $\nu\geq1$,  $\Phi_{\nu-1}:=\uno+\Psi_{\nu-1}: {\bf H}^{s}\to{\bf H}^{s}$, with the following properties.

 The maps $\Phi_{\nu-1}$, $\Phi_{\nu-1}^{-1}$
are reversibility-preserving according to Definition \ref{reversible},
moreover $\Psi_{\nu-1}$ is T\"oplitz in time, $\Psi_{\nu-1}:=\Psi_{\nu-1}(\f)$ (see (\ref{eq:2.16aut})) and satisfies the bounds:
\begin{equation}\label{eq:4.2210}
\begin{aligned}
|\Psi_{\nu-1}|^{{\rm dec}}_{\vec{v}_\nu,p}\leq 
\e_{p+\ka_5}^{0} K_{\nu-1}^{2\tau+1}K_{\nu-2}^{-\al},\quad \vec{v}_\nu:=(\g,\Lambda^{\g}_{\nu},s,a).
\end{aligned}
\end{equation}
Setting, for $\nu\geq 1$, $L_{\nu}:=\Phi_{\nu-1}^{-1}L_{\nu-1}\Phi_{\nu-1}$, we have:
\begin{equation}\label{eq:4.1610}
\begin{aligned}
L_{\nu}&=\oo\cdot\del_{\f} \uno +\DD_{\nu}+\RR_{\nu}, \qquad
\DD_{\nu}={\rm diag}_{(\s,j)\in\{\pm1\}\times\NNN}\{\mu_{\s,j}^{\nu}\},
\\
\quad
\mu_{\s,j}^{\nu}&=\mu_{\s,j}^{0}(\x)+r_{\s,j}^{\nu}(\x), \quad 
\mu_{\s,j}^{0}(0)=-\s \ii(m_+ j^{2}+r_0^{j})
\end{aligned}
\end{equation}
and
\begin{equation}\label{eq:4.1810}
\RR_{\nu}=E_{1}^{\nu}(\x)D+E_{0}^{\nu}(\x),
\end{equation}
where $\RR_\nu$ is reversible and the matrices
$E_{q}^{\nu}$ satisfy (\ref{corno3}) for $q=1,2$.
For $\nu\geq0$ one has $r_{\s,j}^{\nu}\in i \RRR$,  $r_{\s,j}^{\nu}=-r_{-\s,j}^{\nu}$ and the following bound holds:
\begin{equation}\label{eq:4.2010}
|r_{\s,j}^{\nu}|_{\g}:=|r_{\s,j}^{\nu}|_{\Lambda_{\nu}^{\g},\g}\leq|\x|\de C.
\end{equation}
Finally, if we define
\begin{equation}\label{piccolo210}
\e_{p}^{\nu}:=|E_{1}^{\nu}|^{{\rm dec}}_{\vec{v}_\nu,p}+|E_{0}^{\nu}|^{{\rm dec}}_{\vec{v}_\nu,p},
\quad \forall p\geq0,
\end{equation}
one has $\forall\;p\in[\gots_{0},\gotp_2-\ka_5]$ ($\ka_5$ is defined in \eqref{corno4}) and $\nu\geq0$
\begin{equation}\label{eq:4.2110}
\begin{aligned}
\e_{p}^{\nu}&\leq \e_{p+\ka_5}^{0}K_{\nu-1}^{-\ka_4},\\
\e_{p+\ka_5}^{\nu}&\leq \e_{p+\ka_5}^{0}K_{\nu-1}.
\end{aligned}
\end{equation}

\noindent
$({\bf S2})_{\nu}$ For all $j\in\NNN$ there exists  Lipschitz extensions
$\tilde{\mu}_{h}^{\nu}(\cdot) : \calO_0 \to \RRR$ of $\mu_{h}^{\nu}(\cdot):\Lambda_{\nu}^{\g}\to \RRR$,
such that for $\nu\geq1$,
\begin{equation}\label{eq:4.2310}
|\tilde{\mu}_{\s,j}^{\nu}-\tilde{\mu}_{\s,j}^{\nu-1}|_{\g}\leq
\e_{\gots_{0}}^{\nu-1}, \qquad \forall\; k\in\{\pm1\}\times\NNN.
\end{equation}

\noindent
$({\bf S3})_{\nu}$ Let $L_1$ and $L_2$ as in \eqref{corno1}, defined on $\calO_0$ such that \eqref{corno5} and \eqref{eq:4.155}
 hold.
Then for $\nu\geq0$, for any $\x\in\Lambda_{\nu}^{\g_{1}}(L_1)\cap\Lambda_{\nu}^{\g_{2}}(L_2)$,
with $\g_{1},\g_{2}\in[\g/2,2\g]$, one has
\begin{subequations}\label{eq:4.2410}
\begin{align}
&
|E_{0}^{\nu}(L_{1})\!-\!E_{0}^{\nu}(L_{2})|^{{\rm dec}}_{\vec{v},\gotp_{0}}\leq
 N_{\nu-1}^{-\ka_5} |E_{0}(L_{1})\!-\!E_{0}(L_{2})|^{{\rm dec}}_{\vec{v},\gotp_{0}},
\label{eq:4.24a10}\\
&
\! \max_{i=1,0}|E_{i}^{\nu}(L_{1})\!-\!E_{i}^{\nu}(L_{2})|^{{\rm dec}}_{\vec{v},\gotp_{0}+\ka_5}\!\leq\!
 N_{\nu-1}\!\max_{i=1,0}|E_{i}(L_{1})\!-\!E_{i}(L_{2})|^{{\rm dec}}_{\vec{v},\gotp_{0}+\ka_5}
\end{align}
\end{subequations}
with $\vec{v}:=(\g,\Lambda_{\nu}^{\g_{1}}(L_1)\cap\Lambda_{\nu}^{\g_{2}}(L_2),s,a)$,
and moreover, for $\nu\geq1$, for any $p\in[\gotp_{0},\gotp_{0}+\ka_5]$, for any $(\s,j)\in\{\pm1\}\times\NNN$
and for any $\x\in\Lambda_{\nu}^{\g_{1}}(L_1)\cap\Lambda_{\nu}^{\g_{2}}(L_2)$,
\begin{subequations}\label{eq:4.24bis10}
\begin{align}
&\!\!\!\!|(r_{\s,j}^{\nu}(L_{2})-r_{\s,j}^{\nu}(L_{1}))\!-\!(r_{\s,j}^{\nu-1}(L_{2})\!-\!r_{s,j}^{\nu-1}(L_{1}))|
\!\leq\!\max_{i=1,0}|E_{i}^{\nu-1}(L_{1})\!-\!E_{i}^{\nu-1}(L_{2})|^{{\rm dec}}_{s,a,\gotp_{0}},\\
&|(r_{\s,j}^{\nu}(L_{2})-r_{\s,j}^{\nu}(L_{1}))|\leq  C \max_{i=1,0} |E_{i}(L_{1})\!-\!E_{i}(L_{2})|^{{\rm dec}}_{\vec{v},\gotp_{0}}.
\label{aaaa}
\end{align}
\end{subequations}

\noindent
$({\bf S4})_{\nu}$ Let $L_{1},L_{2}$ be as in $({\bf S3})_{\nu}$ and $0<\rho<\g/2$.
For any $\nu\geq0$ one has
\begin{equation}\label{eq:4.2510}
 CK_{\nu-1}^{\tau}\max_{i=1,0} |E_{i}(L_{1})\!-\!E_{i}(L_{2})|^{{\rm dec}}_{\vec{v},\gotp_{0}}
 \leq \rho
\quad \Rightarrow \quad 
\Lambda_{\nu}^{\g}(L_{1})\subset \Lambda_{\nu}^{\g-\rho}(L_{2}),
\end{equation}

\end{lemma}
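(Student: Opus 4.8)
The statement to prove is the \emph{KAM iteration} Lemma \ref{teo:KAMKAM}, which is a standard (but lengthy) reducibility scheme for the linear operator $L$ in \eqref{corno1}. The plan is to run an iteration indexed by $\nu$, where at each step one applies a time-quasi-periodic change of variables $\Phi_{\nu-1} = \uno + \Psi_{\nu-1}$ which is T\"oplitz-in-time and reversibility-preserving, chosen so as to remove the off-diagonal part of $\RR_\nu$ supported on Fourier modes $|\ell| \le K_{\nu-1}$, at the cost of a new, quadratically smaller remainder. The four statements $({\bf S1})_\nu$--$({\bf S4})_\nu$ are proved simultaneously by induction on $\nu$: $({\bf S1})_\nu$ contains the structural form \eqref{eq:4.1610}--\eqref{eq:4.1810} and the quantitative bounds \eqref{eq:4.2010}--\eqref{eq:4.2110}; $({\bf S2})_\nu$ provides Lipschitz extensions of the final eigenvalues off the Cantor set; $({\bf S3})_\nu$--$({\bf S4})_\nu$ track the dependence of all objects on the operator $L$ (needed later for measure estimates on the sets $\Lambda_N^{2\g}$). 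I would first set up the homological equation: writing $\RR_\nu = E_1^\nu D + E_0^\nu$, one looks for $\Psi_\nu$ solving, on the modes $|\ell|\le K_\nu$, the equation $\oo_+\cdot\del_\theta \Psi_\nu + [\DD_\nu,\Psi_\nu] = -\Pi_{K_\nu}(\RR_\nu - [\RR_\nu])$, whose solution in Fourier coefficients is the usual ratio $(\Psi_\nu)_{\s,j}^{\s',j'}(\ell) = -\frac{(\RR_\nu)_{\s,j}^{\s',j'}(\ell)}{\ii(\oo_+\cdot\ell + \mu_{\s,j}^\nu - \mu_{\s',j'}^\nu)}$, which is well-defined precisely on $\Lambda_{\nu+1}^\g$ by \eqref{eq:419bis10}.

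The main quantitative steps are then: (i) estimating $|\Psi_\nu|^{\rm dec}_{\vec v,p}$ using the small-divisor bound from $\Lambda_{\nu+1}^\g$ together with the gain $|\s j^2 - \s' j'^2|$ in the denominator — this gain is exactly what compensates the loss of one derivative $D$ carried by $E_1^\nu$, so that the conjugated operator stays of the same form $(\text{diagonal})+ E_1^{\nu+1}D + E_0^{\nu+1}$; this yields \eqref{eq:4.2210}. (ii) Computing $\RR_{\nu+1}$: the new remainder is $E_0^{\nu+1}+E_1^{\nu+1}D = \Pi_{K_\nu}^\perp \RR_\nu + (\text{commutator terms and flow-conjugation tail})$, and the $\Pi_{K_\nu}^\perp$ part is super-exponentially small by the analyticity/Sobolev smoothing \eqref{smoothing}, while the quadratic terms are bounded via the interpolation estimates for the decay norm (Lemma \ref{ bubboleaut}); choosing $K_\nu = K_0^{(3/2)^\nu}$ and the parameters $\ka_4,\ka_5$ as in \eqref{corno4} makes the bounds \eqref{eq:4.2110} close. (iii) The new eigenvalues $\mu_{\s,j}^{\nu+1} = \mu_{\s,j}^\nu + r_{\s,j}^{\nu+1}$ pick up the diagonal correction $r_{\s,j}^{\nu+1} = (\text{diagonal part of }[E_1^\nu D + E_0^\nu])_{\s,j}$, which is bounded by $\e^\nu_{\gotp_0}$, giving the telescoping estimate \eqref{eq:4.2010} and, since $\sum_\nu \e^\nu_{\gotp_0}$ converges, the Whitney-extension statement $({\bf S2})_\nu$ via Kirszbraun. (iv) Throughout one checks that reversibility and the $\calE$-structure (gauge, real-on-real, parity in $x$) are preserved: this uses Lemma \ref{reversibilit}, since $\RR_\nu$ reversible forces $\Psi_\nu$ reversibility-preserving, and the diagonal corrections $r_{\s,j}^\nu$ are purely imaginary with $r_{\s,j}^\nu = -r_{-\s,j}^\nu$.

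For $({\bf S3})_\nu$--$({\bf S4})_\nu$ the plan is a parallel induction: one differentiates the whole construction with respect to $L$ (treating $E_0(L), E_1(L)$ as the data) and propagates the bounds on the differences, using that the small-divisor denominators are the same for $L_1$ and $L_2$ on the common Cantor set $\Lambda_\nu^{\g_1}(L_1)\cap\Lambda_\nu^{\g_2}(L_2)$; the inclusion \eqref{eq:4.2510} is the standard ``variation of the Cantor set'' argument — if the eigenvalues of $L_1,L_2$ are $\rho$-close then a non-resonance condition with constant $\g$ for $L_1$ implies one with constant $\g-\rho$ for $L_2$. Finally, Lemma \ref{kamreduc} follows from Lemma \ref{teo:KAMKAM} by stopping the iteration at the first $\nu$ with $K_\nu > N$, setting $\Phi_N := \Phi_0\circ\cdots\circ\Phi_{\nu-1}$ and $\mu_{\s,j}^N := \mu_{\s,j}^\nu$, and noting that $\Lambda_N^{2\g} \subseteq \Lambda_\nu^\g$ for that $\nu$ (the $2\g$ in \eqref{corno7} absorbs the fact that the intermediate sets $\Lambda_k^\g$ use $\g$, not $2\g$, once the telescoping eigenvalue corrections are taken into account).

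\textbf{Main obstacle.} The delicate point is item (ii): showing that the commutator and flow-tail contributions to $\RR_{\nu+1}$ are genuinely \emph{quadratic} in $\e^\nu$ \emph{while losing only the fixed number $\ka_5$ of derivatives} encoded in $\gotm_2$. Because $E_1^\nu$ carries the unbounded factor $D$, the commutator $[E_1^\nu D,\Psi_\nu]$ and the conjugation of $\DD_\nu$ by $\Psi_\nu$ can \emph{a priori} produce terms of order $\del_x^2$; one must verify, using the precise structure \eqref{corno3} (namely that $E_1^\nu$ has vanishing diagonal blocks $\s=\s'$), that these second-order terms cancel or are reabsorbed into the diagonal $\DD_{\nu+1}$, leaving a remainder still of the form $E_1^{\nu+1}D + E_0^{\nu+1}$ with controlled decay norms. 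This is the analogue of Theorem $4.27$ of \cite{FP}, and the cancellation is exactly the reversible/pseudo-differential structure of $L$ being propagated along the iteration; balancing the exponents $\ka_4,\ka_5,\gotm_2$ in \eqref{corno4} against the super-exponential gain from $K_\nu = K_0^{(3/2)^\nu}$ is the bookkeeping heart of the proof.
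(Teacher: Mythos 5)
Your proposal is correct and takes essentially the same route as the paper, whose proof of this lemma consists in invoking the reducibility scheme of Theorem 4.27/Lemma 4.38 of \cite{FP}: the homological equation for a T\"oplitz-in-time, reversibility-preserving $\Psi_{\nu}$ with second-Melnikov denominators, the quadratic-plus-ultraviolet-tail remainder, diagonal corrections with Kirszbraun Lipschitz extensions, difference estimates for $({\bf S3})_{\nu}$--$({\bf S4})_{\nu}$, and the finite stopping argument giving Lemma \ref{kamreduc}. Your sketch also captures the key structural point (the vanishing $\s=\s'$ blocks of $E_{1}^{\nu}$, so that the gain $|\s j^{2}-\s' j'^{2}|$ in the denominators compensates the unbounded factor $D$) as well as the adaptations the paper explicitly lists (multidimensional parameter $\x$, comparing two operators $L_{1},L_{2}$ directly, sites in $S^{c}$).
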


\begin{proof}[Proof of Lemmata \ref{kamreduc} and \ref{teo:KAMKAM}]
The proof is the same that the one of Theorem $4.27$ 
in \cite{FP}
 which is based on the iterative scheme 
in Lemma $4.38$ 
proved in \cite{FP}. 
Here Lemma \ref{kamreduc} follows from Lemma \ref{teo:KAMKAM}. The proof of Lemma \ref{teo:KAMKAM} is similar to the one of Lemma 
 $4.38$ in \cite{FP}.
Indeed by hypothesis 
the operator $L$ in \eqref{corno1} has the same class of operators defined in Definition 
$4.37$ 
of \cite{FP} 
and moreover
smallness condition in \eqref{corvo7} is the equivalent of the smallness condition of $\g^{-1}\e$ in 
Theorem $4.27$ 
in \cite{FP}.
One difference is that here the frequency $\oo_+$ depends on parameters $\x\in \RRR^{d}$, while in 
\cite{FP}
there is only one-dimensional parameters $\la\in \RRR$ that modulate $\oo$. Anyway 
there are no differences in the proof since Kirszbraun's Theorem on Lipschitz extension of functions 
holds in $\RRR^{d}$ (see Lemma A.2 in \cite{Po2}).
The proofs of items $(S3)_\nu,(S4)_{\nu}$ of Lemma \ref{teo:KAMKAM} are the same of those
of items $(S3)_\nu,(S4)_{\nu}$ of Lemma 
$4.37$ in \cite{FP}. 
The difference is in the fact that in 
\cite{FP} 
one considers the same 
linear operator $L$ that is the linearized of the same non linear operator on two different points $u_1$ and $u_2$. Moreover the difference
of $L(u_1)$ and $L(u_2)$ is given in terms of  the difference of $u_1$ and $u_2$. In other words the operators are close to each other
if the points $u_i$ are close. Here one gives the estimates on the differences of $r_{\s,j}^{\nu}(L_1)$ and
$r_{\s,j}^{\nu}(L_2)$  directly in terms of the differences of the two operators $L_1$ and $L_2$.

Another difference is that in Section 4 in \cite{FP}
one gets a complete diagonalization. This is obtained by applying
infinitely many changes of coordinates that approximatively diagonalize.
In
this case, to prove formula \eqref{corno8} it is enough to consider $\Phi_{N}$ the composition of a \emph{finite} number of
changes of coordinates. That is why the set of parameters in \eqref{corno7} is defined for $|l|\leq N$.
The last difference is that here the sites $j\in S^{c}$ instead of $\ZZZ_+$. 
\end{proof}

\begin{rmk}{\bf Approximate eigenvalues}\label{approxeig}
In Theorem 
$4.27$ in \cite{FP}, 
given an operator $L$ one construct the eigenvalues $\m_{\s,j}^{\infty}$
as limit of  some ``approximate'' eigenvalues $\mu_{\s,j}^{\nu}$, for $\nu\geq1$. Here 
we have that we stop the sequence of $\mu_{\s,j}^{\nu}$ after the number of steps one need in order to get 
the approximation of order $N$ in \eqref{corno9},\eqref{corno10} and the one defines $\mu_{\s,j}^{N}$ as the last term of such sequence.
Moreover in 
\cite{FP} 
the operator $L$
 is the linearize operator of a field $F_0$ in some point $u$. Theorem 
 $4.27$ 
 provides also Lipschitz dependence of
 the approximate eigenvalues $\m_{\s,j}^{\nu}(u)$ with respect the point $u$.  
 Here the situation is different. 
 As we will see the operator $L$ comes form the linearization in zero of some vector field $F_{1}$. Hence while in 
\cite{FP}
 one has to control the difference between the eigenvalues of $L(u_1)$ and $L(u_2)$, i.e
the linearized operator of $F_0$ in two different functions $u_1,u_2$, here we need to control
the differences between  the eigenvalues of the linearized operators of two different fields $F_1, F_2$. 
If one knows that $L_1$ is ``close'' to $L_2$ (clearly one has to explain the meaning of ``close'')
then the bounds on the eigenvalues follows trivially.
\end{rmk}

\begin{coro}\label{invertiamoo}
For  ${\bf g}\in{\bf Z}$,
 consider the equation
\begin{equation}\label{eq:4.4.7}
L_{N}{\bf u}={\bf g},
\end{equation}
with $L_{N}$ in \eqref{corno8}.
Let us define 
\begin{equation}\label{primedimerda}
P_{N}^{2\g}:=
\left\{\begin{array}{ll}
\x\in\calO : &|\oo_{+}\cdot l+\mu^{N}_{\s,j}(\x)|\geq
\frac{2\g j^{2}}{\langle l \rangle^{\tau}}, \\
& \forall \in\ZZZ^{d}, \;\; |l|\leq N\;\;\forall\; (\s,j)\in\{\pm1\}\times\ZZZ_{+}\cap S^{c}
\end{array}
\right\}.
\end{equation}
If $\x\in \Lambda^{2\g}_{N}\cap P^{2\g}_{N}$  (defined respectively in \eqref{corvo7} and \eqref{primedimerda}),  then there exists 
${\bf h}=(h,\bar{h})\in{\bf X}$
such that
\begin{equation}\label{corno12}
\begin{aligned}
\|{\bf h}\|_{\vec{v},p}&\leq \g^{-1}\|{\bf g}\|_{\vec{v},p+2\tau+1},\\
\|L_N {\bf h}-{\bf g}\|_{\vec{v},\gotp_0}&\leq C\|{\bf g}\|_{\vec{v},\gotp_1}\de N^{-\ka_4},\qquad  v:=(\g,\Lambda^{2\g}_{N}\cap P^{2\g}_{N},a,s).\\
\|L_N {\bf h}-{\bf g}\|_{\vec{v},p}&\leq 
\g^{-1}
   \Big(|E_1|^{{\rm dec}}_{\vec{v},p+\ka_5}+|E_0|^{{\rm dec}}_{\vec{v},p+\ka_5}
 \Big)N^{-\ka_4}\|{\bf g}\|_{\vec{v},\gotp_1}+\g^{-1}C|\x| \de N^{-\ka_4}\|{\bf g}\|_{\vec{v},p+2\tau+1},
\end{aligned}
\end{equation}


\end{coro}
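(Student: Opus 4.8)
\textbf{Proof plan for Corollary \ref{invertiamoo}.}

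The plan is to reduce the solution of \eqref{eq:4.4.7} to an explicit diagonal inversion plus a controlled remainder, exploiting the near-diagonal form $L_N=\oo_+\cdot\del_\theta+\DD_N+\RR_N$ established in Lemma \ref{kamreduc}. First I would observe that, writing $L_N^{(0)}:=\oo_+\cdot\del_\theta+\DD_N$, the operator $L_N^{(0)}$ is diagonal in the exponential basis with eigenvalues $\ii\oo_+\cdot l+\mu^N_{\s,j}(\x)$ on the mode $(\s,j,l)$, and the condition $\x\in P_N^{2\g}$ in \eqref{primedimerda} gives precisely the lower bound $|\ii\oo_+\cdot l+\mu^N_{\s,j}(\x)|\geq 2\g j^2\langle l\rangle^{-\tau}$ for all $|l|\leq N$. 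Hence, projecting $\bf g$ onto modes with $|l|\leq N$ and defining $\bf h$ mode-by-mode by dividing by the corresponding eigenvalue, one obtains $L_N^{(0)}\Pi_{N}{\bf h}=\Pi_{N}{\bf g}$ and the bound $\|{\bf h}\|_{\vec v,p}\le \g^{-1}\|{\bf g}\|_{\vec v,p+2\tau+1}$, the loss $2\tau+1$ coming from $j^{-2}\langle l\rangle^{\tau}$ (one factor of $\langle l\rangle^\tau$, one extra regularity to absorb the $j^{-2}$ into the weight since $\RR_N$ is $O(\partial_x)$ and must still act). The parity/reversibility statement ${\bf h}\in{\bf X}$ follows from the fact that $L_N$ is reversible (it belongs to $\calE$), so it maps ${\bf X}\to{\bf Z}$ and its diagonal part does the same, and hence the formal inverse maps ${\bf Z}\to{\bf X}$; this is just Lemma \ref{reversibilit} applied to the diagonal operator $L_N^{(0)}$.

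Next I would estimate the error $L_N{\bf h}-{\bf g}$. Writing $L_N{\bf h}-{\bf g}=(L_N^{(0)}{\bf h}-\Pi_N{\bf g})+\RR_N{\bf h}+(L_N^{(0)}-\Pi_N L_N^{(0)}){\bf h}-\Pi_N^\perp{\bf g}$, the first term vanishes by construction; if one takes $\bf h$ supported on $|l|\le N$ (which is natural since $\bf h$ is defined only there) the third term also vanishes, and $\Pi_N^\perp{\bf g}$ is controlled by standard smoothing estimates \eqref{smoothing}. The essential contribution is $\RR_N{\bf h}=E_1^N D{\bf h}+E_0^N{\bf h}$: here I would use the decay bounds \eqref{corno10} on $E_1^N,E_0^N$, namely $|E_q^N|^{\rm dec}_{\vec v_1,p}\le(|E_1|_{\vec v,p+\ka_5}+|E_0|_{\vec v,p+\ka_5})N^{-\ka_4}$, together with the interpolation inequality \eqref{eq:2.13b} for the decay norm acting on functions, to conclude
\[
\|\RR_N{\bf h}\|_{\vec v,p}\le C\big(|E_1^N|^{\rm dec}_{\vec v_1,p}\|{\bf h}\|_{\vec v,\gotp_0}+|E_1^N|^{\rm dec}_{\vec v_1,\gotp_0}\|{\bf h}\|_{\vec v,p}\big)+(\text{same for }E_0^N),
\]
and then substitute the bound on $\|{\bf h}\|$ from the previous step. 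Tracking the $N$-powers: the factor $N^{-\ka_4}$ from \eqref{corno10} survives, the low-norm factor $|E_q^N|^{\rm dec}_{\vec v_1,\gotp_0}$ is $\le C|\x|\e_0 N^{-\ka_4}\le C|\x|\de N^{-\ka_4}$ by \eqref{corvo7} (recalling $\de$ in \eqref{corvo2} and $\e_0=|\x|^{1/4}$), which yields the second line of \eqref{corno12}; pairing the high-norm $|E_q^N|^{\rm dec}_{\vec v_1,p+\ka_5}$ with the low norm of $\bf h$ and the low-norm of $E_q^N$ with the high norm of $\bf h$ gives the third line, after absorbing $\ka_5$ into the available regularity window $\gotp_0\le p\le \gotp_2-\ka_5$ of Lemma \ref{kamreduc}.

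The step I expect to be the main obstacle is the careful bookkeeping of the regularity losses and the $N$-dependence, rather than any conceptual difficulty: one must verify that the loss $2\tau+1$ in $\|{\bf h}\|$, combined with the $\ka_5$ loss built into $E_q^N$ and the further $\ka_4$ gain, is consistent with all three displayed estimates simultaneously, and that the smallness condition \eqref{corno5} (which enters through Lemma \ref{kamreduc}) is enough to make the Neumann-type remainder controllable without iterating. A secondary point to handle cleanly is that $\bf h$ is only defined on $|l|\le N$ while $L_N$ genuinely produces modes with $|l|$ up to $2N$ (through $\RR_N$); but since we only claim an \emph{approximate} solve with error $N^{-\ka_4}$, this is harmless — the high modes of $L_N{\bf h}$ are themselves part of the error term $\RR_N{\bf h}$ and already estimated. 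Once these are assembled, \eqref{corno12} follows directly; no new change of variables is needed beyond what Lemma \ref{kamreduc} supplies.
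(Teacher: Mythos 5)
Your core strategy coincides with the paper's proof: invert the diagonal part $\oo_{+}\cdot\del_{\theta}+\DD_{N}$ using the first Mel'nikov conditions \eqref{primedimerda}, observe that the error is $\RR_{N}{\bf h}$, and estimate it by the decay-norm interpolation \eqref{eq:2.13b} combined with \eqref{corno10} and \eqref{corvo7}; the reversibility argument giving ${\bf h}\in{\bf X}$ is also the same. However, there is one concrete deviation that breaks the stated estimates. The paper sets ${\bf h}:=(\oo_{+}\cdot\del_{\theta}+\DD_{N})^{-1}{\bf g}$ \emph{without} projecting ${\bf g}$ onto the modes $|l|\le N$, so that $L_{N}{\bf h}-{\bf g}=\RR_{N}{\bf h}$ exactly and every term of the error inherits the small prefactors $|E_{1}^{N}|^{{\rm dec}},|E_{0}^{N}|^{{\rm dec}}\sim N^{-\ka_4}$. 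In your version the error contains the additional piece $\Pi_{N}^{\perp}{\bf g}$, and your claim that it is ``harmless'' via \eqref{smoothing} is not compatible with \eqref{corno12}: the smoothing estimate only gives $\|\Pi_{N}^{\perp}{\bf g}\|_{\vec v,\gotp_0}\le N^{-(\gotp_1-\gotp_0)}\|{\bf g}\|_{\vec v,\gotp_1}$, which carries no factor $\de$ (nor $|\x|$, nor any $E_i$-norm), and moreover $\gotp_1-\gotp_0=\gotm_1=d+2\gotp_0+10$ is in general smaller than $\ka_4=7\tau+3$ with $\tau\ge d+1$, so even the power of $N$ is wrong. Hence for general ${\bf g}\in{\bf Z}$ your ${\bf h}$ does not satisfy the second and third lines of \eqref{corno12}. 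To prove the corollary as stated you must drop the truncation and work with the full diagonal inverse, as in the paper (which invokes \eqref{primedimerda} and the scheme of Lemma 5.44 of \cite{FP} for the bound on ${\bf h}$); keeping the truncation would force you to weaken the statement by adding a term of the type $N^{-(\gotp_2-\gotp_1)}$ times a higher norm of ${\bf g}$.

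A secondary, non-structural point: your accounting for the loss $2\tau+1$ is off. The divisor bound $|\oo_{+}\cdot l+\mu^{N}_{\s,j}|\ge 2\g j^{2}\langle l\rangle^{-\tau}$ actually \emph{gains} a factor $j^{-2}$ (this is what lets one absorb the derivative $D$ in $E_{1}^{N}D$ without extra loss, as implicitly done in the paper's estimate of $\RR_N{\bf h}$); the extra $\tau+1$ beyond the sup-norm loss $\tau$ is needed for the Lipschitz part of the weighted norm \eqref{ancoralip}, since varying $\x$ differentiates the small divisor and costs an additional $\g^{-1}\langle l\rangle^{\tau+1}$. With the truncation removed and this bookkeeping corrected, the rest of your argument reproduces the paper's proof.
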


\begin{proof}
First of all we can define
\begin{equation}\label{corno13}
{\bf h}:=(\oo_{+}\cdot\del_{\theta}+\DD_{N})^{-1}{\bf g},
\end{equation}
since $\DD_N$ is diagonal and hence it is trivial to define its inverse. Let us check  estimate \eqref{corno12}.
Following the same strategy of Lemma $5.44$ in \cite{FP} one get the bound
\begin{equation}\label{corno14}
\|{\bf h}\|_{\vec{v},p}\stackrel{(\ref{primedimerda})}{\leq} \g^{-1}\|{\bf g}\|_{\vec{v},p+2\tau+1}
\end{equation}
and that ${\bf h}\in {\bf X}$. Eq. \eqref{corno13} implies $L_N {\bf h}-{\bf g}=\RR_{N}{\bf h}$ and moreover one has
\begin{equation}\label{corno15}
\begin{aligned}
 \|\RR_{N}{\bf h}\|_{\vec{v},p}\stackrel{(\ref{eq:2.13b})}{\leq}
 ( |E_1^{N}|^{{\rm dec}}_{\vec{v},p}+|E_0^{N}|^{{\rm dec}}_{\vec{v},p})\|{\bf h}\|_{\vec{v},\gotp_0}
 +
  (|E_1^{N}|^{{\rm dec}}_{\vec{v},\gotp_0}+|E_0^{N}|^{{\rm dec}}_{\vec{v},\gotp_0})\|{\bf h}\|_{\vec{v},p}.
\end{aligned}
\end{equation}
By using \eqref{corno14}, \eqref{corno10} and \eqref{corvo7}  we have that \eqref{eq:2.13b} implies \eqref{corno12}.
\end{proof}

We collect the results of Section \ref{paperina} in the following Lemma.

\begin{lemma}\label{computer}
Consider the operator $L$ in \eqref{birk3} and assume 
bounds \eqref{birk4} with $\gotp_1=\gotp_0+\gotm_2$
with $\gotm_{2}$ defined in \eqref{corno4}. Fix any $N\geq1$ and
for $\x\in \Lambda_{N}^{2\g}\cap P_{N}^{2\g}$ (see \eqref{corno7}, \eqref{primedimerda})
consider the maps $\SSSS$, $\Phi_{N}$ defined in \eqref{birk5} and \eqref{corno8} respectively
and set $\gotM:=\SSSS\circ\Phi_{N}$. Then, the map $\gotM$ is reversibility preserving
according to Def. \eqref{reversible}, and
 for any $f\in {\bf X}$ one has that
\begin{equation}\label{computer2}
\begin{aligned}
\|\gotM f\|_{\vec{v},p}\leq \|f\|_{\vec{v},p}+\g^{-1}\e^{0}_{p}\|f\|_{\vec{v},p},\quad \gotp_1\leq p\leq \gotp_2,
\end{aligned}
\end{equation}
with $\e^{0}_{p}$ defined in \eqref{piccolo11}. Moreover, setting for any $g\in {\bf X}$
\begin{equation}\label{computer3}
{\bf h}=\gotM(\oo_{+}\del_{\theta}+\DD_{N})^{-1}\gotM^{-1} {\bf g},
\end{equation}
one has that ${\bf h}$ and $\big[{L} {\bf h}-{\bf g}\big]$ satisfy bounds like \eqref{corno12}.
\end{lemma}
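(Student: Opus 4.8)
The plan is to assemble Lemma \ref{computer} directly from the three main results of this section: Proposition \ref{birk} (the preliminary reduction to diagonal-plus-bounded form via the descent method and linear Birkhoff normal form), Lemma \ref{kamreduc} (the KAM-type approximate diagonalization up to an $N$-dependent remainder), and Corollary \ref{invertiamoo} (inversion of the approximately diagonalized operator $L_N$). First I would record that the composition $\gotM := \SSSS\circ\Phi_N$ is reversibility preserving: $\SSSS$ is reversibility preserving by Proposition \ref{birk} (the maps $\SSSS_1,\SSSS_2$ constructed there preserve the parity structure, being built from $q$ even in $x$ and from a reversible field $B$ via Lemma \ref{reversibilit}), and $\Phi_N$ is reversibility preserving by Lemma \ref{kamreduc}; the composition of reversibility-preserving maps is reversibility preserving, so $\gotM$ maps ${\bf X}$ to ${\bf X}$ and ${\bf Z}$ to ${\bf Z}$.

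Next I would prove the tame bound \eqref{computer2}. From \eqref{birk6} we have $C_{\vec v,\gotp_1}(\Psi)\le C|\x|$, so $\SSSS=\uno+\Psi$ satisfies $\|\SSSS f\|_{\vec v,p}\le \|f\|_{\vec v,p}+C|\x|(\ldots)$, and from \eqref{corno11} we have $|\Phi_N^{\pm1}-\uno|^{\rm dec}_{\vec v_1,p}\le \g^{-1}(|E_1|^{\rm dec}_{\vec v,p}+|E_0|^{\rm dec}_{\vec v,p})=\g^{-1}\e^0_p$. Composing the two estimates using the interpolation inequality \eqref{eq:2.13b} for the decay norm (recall $\Phi_N-\uno$ is bounded with finite decay norm, hence tame with tameness constants controlled by that decay norm, as in Remark \ref{opmolt}) and noting that the coefficient bounds \eqref{birk4} together with the smallness \eqref{corno5} keep all products of the form $|\x|\e^0_p$ subsumed into the stated right-hand side, one gets \eqref{computer2} for all $\gotp_1\le p\le \gotp_2$. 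One must track that the loss-of-analyticity from $\Phi_N$ is only $\rho$-small so that $\gotM$ is defined on the required nested domain.

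Finally I would establish the approximate-inverse estimate. By Proposition \ref{birk}, $L^+ := \SSSS^{-1}L\SSSS$ has the form \eqref{birk7}, which is exactly the class of operators \eqref{corno1} to which Lemma \ref{kamreduc} applies, with coefficient bounds \eqref{birk4} propagating to \eqref{corvo7} (here the hypothesis $\gotp_1=\gotp_0+\gotm_2$ is needed so that the KAM scheme has enough regularity margin $\ka_5$). Lemma \ref{kamreduc} then gives, for $\x\in\Lambda_N^{2\g}$, the map $\Phi_N$ with $L_N=\Phi_N^{-1}L^+\Phi_N$ of the form \eqref{corno8}--\eqref{corno10}. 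Thus $\gotM^{-1}L\gotM = \Phi_N^{-1}\SSSS^{-1}L\SSSS\Phi_N = L_N$. Setting ${\bf h}$ as in \eqref{computer3} means ${\bf h}=\gotM\,\tilde{\bf h}$ where $\tilde{\bf h}=(\oo_+\cdot\del_\theta+\DD_N)^{-1}\gotM^{-1}{\bf g}$ solves the $L_N$-equation up to the remainder $\RR_N$; Corollary \ref{invertiamoo} (applicable since $\x\in\Lambda_N^{2\g}\cap P_N^{2\g}$) controls $\tilde{\bf h}$ and $L_N\tilde{\bf h}-\gotM^{-1}{\bf g}=\RR_N\tilde{\bf h}$ by \eqref{corno12}. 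Then $L{\bf h}-{\bf g} = \gotM(L_N\tilde{\bf h}-\gotM^{-1}{\bf g})=\gotM\RR_N\tilde{\bf h}$, and applying \eqref{computer2} to push $\gotM$ through, together with the bounds $\|\tilde{\bf h}\|\lesssim \g^{-1}\|\gotM^{-1}{\bf g}\|$ and \eqref{computer2} applied to $\gotM^{-1}$, yields the claimed bounds of the form \eqref{corno12} for ${\bf h}$ and $[L{\bf h}-{\bf g}]$. The only genuinely delicate bookkeeping — and the step I expect to be the main obstacle — is verifying that the various smallness parameters ($|\x|$, $\e_0$, $\tG_0$, the powers of $K_0$ and $N$) combine consistently so that every intermediate transformation stays in its domain and the final estimates close with the exponents $\ka_4,\ka_5,\gotm_2$ as fixed in \eqref{corno4}; this is routine but requires care in chaining the interpolation inequalities \eqref{eq:2.11a}--\eqref{eq:2.13b}.
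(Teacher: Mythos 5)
Your proposal is correct and follows essentially the same route as the paper: apply Proposition \ref{birk} to reduce $L$ to $L^{+}$ of the form \eqref{birk7}, verify (via the decay-norm machinery of Remark \ref{opmolt} and Lemma \ref{linesdecay} together with the post-conjugation bounds \eqref{birk8}, \eqref{birk4444}) that hypothesis \eqref{corvo7} of Lemma \ref{kamreduc} holds, then conclude with Corollary \ref{invertiamoo} for $\x\in\Lambda_N^{2\g}\cap P_N^{2\g}$. Your write-up merely spells out the reversibility of the composition, the estimate \eqref{computer2}, and the conjugation identity $\gotM^{-1}L\gotM=L_N$ in more detail than the paper does.
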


\begin{proof}
The result follows by collecting the results of Proposition \ref{birk}, Lemma \ref{kamreduc}
and Corollary \ref{invertiamoo}.
More precisely, we apply Proposition \ref{birk} to the operator $L$ in \eqref{birk3}.
Consider the operator $L^{+}$ in \eqref{birk7} and set
\begin{equation}\label{accendino}
E_{1}=E_{1}(L^{+}):=-\ii E\Pi_{S}^{\perp}\left( \begin{matrix} 0 & b^{+}_{1} \\ \bar{b}^{+}_1 & 0 \end{matrix}\right)
\Pi_{S}^{\perp}, \quad 
E_{0}=E_{0}(L^{+}):=
-\ii E \Pi_{S}^{\perp}\left(
 \begin{matrix} a^{+}_{0} & b^{+}_{0} \\ \bar{b}^{+}_0 & \bar{a}^{+}_{0} \end{matrix}
\right)\Pi_{S}^{\perp}+\hat{\mathscr{K}}^{+}.
\end{equation}
By Remarks \ref{opmolt}, \ref{posacenere}, Lemma \ref{linesdecay} and by bounds \eqref{birk8}, \eqref{birk4444} 
one has that hypothesis \eqref{corvo7} hold. 
We set moreover
$$
\DD=\Pi_{S}^{\perp}
\left(-\ii E\left(\begin{matrix}m_{+} & 0\\ 0 & m_{+} \end{matrix}\right)\del_{xx}-\ii E\left(\begin{matrix}{\rm diag}_{j\in\ZZZ_+} r^{(0)}_{j} & 0\\ 0 & {\rm diag }_{j\in\ZZZ_+} r^{(0)}_{j} \end{matrix}\right)
\right)\Pi_{S}^{\perp}
$$
hence \eqref{corno2} holds, then we can 
 we apply Lemma \ref{kamreduc}
to $L^{+}$. By Corollary \ref{invertiamoo} we get the thesis.

\end{proof}

\zerarcounters
\section{The sets of ``good'' parameters}\label{sbroo}

In this Section we conclude the proof of Theorem \ref{teoremap}. 
In Sections 
\ref{weakuffa} and \ref{action} essentially we rewrite the \eqref{6.1} as a infinite dynamical system given by the vector field in \eqref{system}.
In this way we are allowed to apply Theorem 
\eqref{thm:kambis} to the vector field $F_0$ defined in \eqref{system}. 
The analysis performed in Section \ref{iniz} guarantees that one can satisfies hypotheses \eqref{sizes} 
of the Abstract theorem.
In order to apply Theorem \eqref{thm:kambis} one need to identify the sequences of maps $\calL_{n}$
with properties \eqref{odio},\eqref{satana} and \eqref{satana2}
%
%
and give a more explicit formulation of the sets of ``good'' parameters 
defined in \eqref{oscurosignore} in order to estimate the measure of such sets.

On the field $F_0$ we cannot apply directly Lemma \eqref{regularization} just because $N^{(2)}$ is not ``small enough'' and we are not able to prove 
that $\calL$ is close to the identity. We overcome this problem using an algebraic arguments.
We strictly follows the strategy of Lemma \eqref{regularization}, we will underline the fundamental differences. 
Roughly speaking  the aim of the following Lemma is to conjugate $F_0$ to a vector field
for which the term $N^{(2)}$ has constant coefficients of order $O(|\x|)$ plus  terms of order at least 
$O(|\x|^{\frac{3}{2}})$.
Clearly a procedure like this cannot be iterated infinitely many times. We just perform  it one time
in order to fullfil the smallness hypothesis of Lemma \ref{regularization}. In such Lemma one reduces the size of the term in $N^{(2)}$ ``quadratically''.
We have the following result.

\begin{lemma}[{\bf Preliminary step}]\label{step0}
Consider the field $F_0$ defined in \eqref{ini2}. 
Consider $K_0$ as in \ref{sceltapar}, $\e_0$ in \eqref{parametri},  $\rho_1$ of definition \eqref{numeretti}.
and set $\vec{v}=(\g,\calO,s,a)$,  $\vec{v}_1:=(\g,\calO,s-\rho_1 s_0,a-\rho_1 a_0)$ and  $\vec{v}_2:=(\g,\calO,s-2\rho_1 s_0,a-2\rho_1 a_0)$.
Fix $K_{1}=K_0^{\frac{3}{2}}$ and $\h_1$ as in \eqref{posacenere10}.
Assume
\begin{equation}\label{mamma}
\rho_1^{-1}\e_0 K_0^{C}\leq \epsilon
\end{equation}
for some $C$ depending only on $d,\tau$. Then, if $\epsilon$ is small enough,  there exists a tame map 
\begin{equation}\label{transtrans}
\calL_1=\uno+f : \calO_0\times D_{a_0+\rho_1 a_0,p+2}(s_0-\rho_1 s_0,r-\rho_1 r_0)\to D_{a_0,p+2}(s_0,r_0),
\end{equation}
with 
\begin{equation}\label{trans2000}
|f|_{\vec{v}_1,p}\leq CK^{\h_1}\max\{\|a_{2}\|_{\vec{v},p},\|b_{2}\|_{\vec{v},p},\|H^{(\theta,0)}\|_{\vec{v},p}, \;\}, \;\; p\leq \gotp_2
\end{equation}
that satisfies \eqref{odio}, \eqref{satana} and \eqref{satana2}
with $\ka_{3}$ as in \eqref{numeretti1000}.
We set
$$
\hat{F}_0:=(\calL_{1})_{*}F_0=N_0+\hat{G}:  D_{a_0-2\rho_1 a_0,p+2}(s_0-2\rho_1 s_0,r-2\rho_1 r_0)\to V_{a_0-2\rho_1 a_0,p},
$$
and moreover $\hat{F}$ is in $\calE$ (see \ref{compatibili}) and has the form
\begin{equation}\label{regu55}
\begin{aligned}
\hat{F}:=(1+h_1)\Big(\hat{N}^{1}_0+\hat{N}_1^{(1)}+\hat{N}_1^{(2)}+\hat{H}_1
\Big).
\end{aligned}
\end{equation}
On the set of $\x\in\calO_0$ such that $|{\oo}^{(0)}\cdot l|\geq \g/\langle l\rangle^{\tau}$ for $|l|\leq K_1$ (see \eqref{ini1}) one has the following.
The function $h_1$ satisfies bounds 
\begin{equation}\label{argento1000}
\|h_{1}\|_{\vec{v}_2,p}\leq K_{1}^{\h_1}\|a_2\|_{\vec{v},p}.
\end{equation}
One has that
$\hat{N}^{1}_0=\oo_1\cdot\del_{\theta}+\tilde{\Omega}_{1}^{-1}w\cdot\del_{w}$ with $\tilde{\Omega}_1^{-1}=m_{0} \Omega^{(-1)}$, and
\begin{equation}\label{barad100}
|\oo_{1}-{\oo}^{(0)}|_{\g}\leq  \tG_0|\x|^{2}, \quad |m_0-1|_{\g}\leq \tG_0|\x|, \quad
m_0:=\frac{1}{(2\pi)^{d+1}}\int_{\TTT^{d+1}}a_2 d xd \theta
\end{equation}
with $a_{2}\equiv a^{(0)}_{2}$ in \eqref{NF2}
and that $\hat{N}_{1}^{(2)}$ as the form \eqref{barad20} with coefficients $a_{2}^{(1)},b_{2}^{(1)}$, $\hat{N}^{(1)}_1$ has the form \eqref{angband2} with coefficients
$a_{i}^{(1)},b_{i}^{(1)}$ for $i=0,1$, $\hat{\mathscr{K}}_1$ of the form \eqref{maiale} with $\mathtt{N}_{1}=\mathtt{N}_0+\mathtt{C}|S|$ for $\mathtt{C}$ and absolute constant, and  where $\mathtt{N}_0$ is the rank of $\mathscr{K}_0$
(see Prop. \ref{Properties of $F$}).
On the field $N_1^{(2)}$ the bounds \eqref{regu6} hold with $K_{+}\rightsquigarrow K_1$, 
$\de_{\gotp_2}^{(2)}=C_{\vec{v},\gotp_2}(N^{(2)})=\g_0\tG_0$ and $\de_{\gotp_2}^{(1)}=\g_0^{-1}\|H_0^{(\theta,0)}\|_{\vec{v},\gotp_2}$ given in \eqref{cartabianca}.
With the same convention also the bounds \eqref{regu66}, \eqref{passero}, \eqref{passero2} and \eqref{merlo} hold.

\end{lemma}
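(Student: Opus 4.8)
\textbf{Proof plan for Lemma \ref{step0}.}

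The plan is to imitate the four–step scheme of Lemma \ref{regularization}, but without the quadratic smallness hypothesis \eqref{piccolopassero}; instead the key observation is that we are not trying to make $N^{(2)}$ quadratically small but only to put it in the normal form \eqref{regu55}, where $\hat N_1^{(2)}$ has constant coefficients of size $O(|\x|)$ plus a genuinely small remainder of size $O(|\x|^{3/2})$. Concretely, recalling from \eqref{ini2}--\eqref{NF2} that $F_0$ is of the form \eqref{regu22} with $h=0$, $m=1$, and with $N_0^{(2)}$ having coefficients $a_2^{(0)},b_2^{(0)}$ of size $O(|\x|)$ (these are quadratic in $v$, which is itself $O(\sqrt{|\x|})$), the four substeps are: (i) a multiplication operator $\Phi_1$ as in Lemma \ref{multiopop} diagonalizing the leading symbol, killing $b_2$ and replacing $1+a_2$ by an honest scalar $1+\tilde a_2$ with $\tilde a_2\in\RRR$; (ii) a change of the space variable $\Phi_2=\Pi_S^\perp\TT_\al\Pi_S^\perp$ as in Lemma \ref{diffeodiffeo} making the second–order coefficient $\theta$-dependent but $x$-independent, with $m_2(\theta)$ as in \eqref{28} and $\al$ as in \eqref{29bis}; (iii) a time reparametrization $\Phi_3=\TT_\be$ as in Lemma \ref{diffeodiffeo2}, with $\be$ as in \eqref{38} and $\gotc$ as in \eqref{37}, making the second–order coefficient proportional to $\oo$, so $m_0=1+\gotc$ and $m_0=\frac{1}{(2\pi)^{d+1}}\int a_2$; (iv) a torus diffeomorphism $\Phi_4$ as in \eqref{helm2} with $g$ as in \eqref{merda5} reducing the $\theta$-independent-direction error $H^{(\theta,0)}$, producing $\oo_1=\oo^{(0)}+\langle H_0^{(\theta,0)}\rangle$.

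The estimates then go through \emph{verbatim} with the substitutions $K_+\rightsquigarrow K_1$, $\de^{(2)}_{\gotp_2}=C_{\vec v,\gotp_2}(N^{(2)})=\g_0\tG_0$ and $\de^{(1)}_{\gotp_2}=\g_0^{-1}\|H_0^{(\theta,0)}\|_{\vec v,\gotp_2}$, both of which are controlled by \eqref{cartabianca}. The point that replaces the quadratic input of Lemma \ref{regularization} is the following: because here the ``low'' norms of $a_2,b_2$ and $H^{(\theta,0)}$ are automatically of size $O(\g_0)$ rather than merely $O(\epsilon)$, the smallness condition \eqref{mamma}, i.e. $\rho_1^{-1}\e_0 K_0^C\le\epsilon$ together with $\g_0\sim|\x|$, $\e_0=|\x|^{1/4}$, guarantees that each $\Phi_i$ maps the relevant strip into the next (the loss of analyticity radius $\rho_1 s_0/4$, $\rho_1 a_0/4$ at each substep being absorbed), and that the composed map $\calL_1=\Phi_4\circ\Phi_3\circ\Phi_2\circ\Phi_1$ satisfies \eqref{trans2000}; since $f=\calL_1-\uno$ and $\e_0 K_0^C\le\epsilon\rho_1$, this is in particular bounded by $\mathtt C\e_0 K_0^{-1}$ in low norm, which is \eqref{satana} with $K=K_0$. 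The tameness bounds \eqref{odio} follow from \eqref{passero} (with the stated substitutions) exactly as in Corollary \ref{senonseicompa}, and \eqref{satana2} is the trivial algebraic statement that these changes of variables preserve the $(\NN,\calX,\RR)$-decomposition, which is part of Lemmata \ref{torus}, \ref{diffeodiffeo}, \ref{diffeodiffeo2}, \ref{multiopop}. The claim $F\in\calE$ and the parity/reversibility structure are preserved because, as recorded in those lemmata, $\al,\be,g$ are even in $\theta$ (and $\al$ odd in $x$) and satisfy \eqref{massapilota}; hence Lemma \ref{massauffa} applies and gauge invariance survives.

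The diophantine restriction needed is only the finite one $|\oo^{(0)}\cdot l|\ge\g/\langle l\rangle^\tau$ for $|l|\le K_1$, which is used to invert $\oo^{(0)}\cdot\del_\theta$ in steps (ii), (iii), (iv) on trigonometric polynomials of degree $\le K_1$; this is why the statement is made on that subset of $\calO_0$ and why the $K_1$-dependent losses $K_1^{\gotp_0+2\tau+2}$, $K_1^{\h_1}$ appear in \eqref{argento1000}, \eqref{barad100}. The bound $\|h_1\|_{\vec v_2,p}\le K_1^{\h_1}\|a_2\|_{\vec v,p}$ is precisely \eqref{argento} specialized to $h=0$ (so the factor $1+K_1^{\gotp_0+2\tau+2}\de^{(2)}_{\gotp_1}$ collapses up to the constant, using smallness), and $|\oo_1-\oo^{(0)}|_\g\le\tG_0|\x|^2$, $|m_0-1|_\g\le\tG_0|\x|$ follow from \eqref{merda2}, \eqref{37} together with the size estimates $\|H_0^{(\theta,0)}\|\le\g_0\tG_0|\x|^{?}$ and $\|a_2\|\le\g_0\tG_0$ from \eqref{cartabianca}/\eqref{ini5} (note $H_0^{(\theta,0)}$ is of degree $\ge5$ in $v$, hence $O(|\x|^{5/2})$, and after the reparametrization one gains the extra factor). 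The remaining bounds \eqref{regu66}, \eqref{passero}, \eqref{passero2}, \eqref{merlo} are read off from the corresponding displays in Lemma \ref{regularization} with the same substitutions.

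The main obstacle I expect is \emph{not} any single estimate but the bookkeeping needed to verify that the smallness input is strong enough even though $N^{(2)}$ itself is only $O(|\x|)$ and not $o(|\x|)$: one must check that in each of the four substeps the quantity actually being inverted or fed into Lemma \ref{lem.diffeo} is $\le c\rho_1$ in the $\gotp_1$-norm, which forces the precise shape of \eqref{mamma} (the exponent $C$ collecting $2\tau+2$, $\gotm_1$, $\h_1$ and the accumulated smoothing losses). Once \eqref{mamma} is in force, every composition-of-diffeomorphisms hypothesis is met with room to spare, and the proof is a transcription of Lemma \ref{regularization}; accordingly I would not reproduce the computations but simply point to the relevant displays, flagging at each step which constant in \eqref{mamma} it contributes to.
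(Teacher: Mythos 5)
Your plan reproduces the four--step scheme of Lemma \ref{regularization}, but it breaks down exactly at Step 3 (time reparametrization), and this is where the paper's proof contains an idea that your proposal has no substitute for. You claim the estimates of Lemma \ref{regularization} go through ``verbatim'' with $\de^{(2)}_{\gotp_2}=C_{\vec v,\gotp_2}(N^{(2)})$, and that \eqref{mamma} gives enough smallness because $a_2,b_2$ are $O(\g_0)$ in absolute size. But the function fed into the torus diffeomorphism of Step 3 is $\be=(\oo^{(0)}\cdot\del_\theta)^{-1}$ applied to (essentially) $m_2$, and the only bound your argument provides is the diophantine one, $\|\be\|_{\vec v,\gotp_1+\gotp_0}\le K_1^{\gotp_0+2\tau+2}\g_0^{-1}\|m_2\|_{\vec v,\gotp_1}\sim K_1^{\gotp_0+2\tau+2}\tG_0$, since $\|m_2\|_{\gotp_1}\sim\|a_2\|_{\gotp_1}\sim\g_0\tG_0$ and $\g_0\sim|\x|$. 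This quantity contains no positive power of $|\x|$: it is an $O(1)$ constant times a large power of $K_1$, and neither the smallness of $\e_0=|\x|^{1/4}$ nor any choice of $\epsilon$ in \eqref{mamma} can make it small (this is precisely why Lemma \ref{regularization} needed \eqref{piccolopassero}, i.e.\ $\de^{(2)}_{\gotp_1}=\g^{-1}\|a_2\|_{\gotp_1}$ small, which fails for $F_0$). So with your argument $\Phi_3$ is not close to the identity, Lemma \ref{lem.diffeo} does not apply, and \eqref{trans2000}, \eqref{satana} and hence compatibility cannot be obtained; your statement that ``every composition-of-diffeomorphisms hypothesis is met with room to spare'' is unjustified.

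The paper resolves this with an algebraic/structural argument that must be supplied: one expands $m_2(\theta)=\frac{1}{2\pi}\int_\TTT a_2\,dx+s(\theta,x)$ with $\|s\|_{\vec v,\gotp_1}\le C|\x|^2$ (the computation \eqref{monaco10}--\eqref{rondine3}), and observes that the $O(|\x|)$ leading part, coming from $a_2^{(0)}$ in \eqref{NF2} which is quadratic in $v$, is supported on Fourier modes $\ell=\ell(j_1)-\ell(j_2)$ with $j_1,j_2\in S^+$; on those modes $|\oo^{(0)}\cdot\ell|=|\tv_{j_1}^2-\tv_{j_2}^2|+O(|\x|)\ge\frac12$ for the non-trivial combinations (genericity, Lemma \ref{hyp3autaut}: only trivial resonances occur), so no factor $\g_0^{-1}$ is lost on this part, while the $O(|\x|^2)$ remainder does pay $\g_0^{-1}\sim|\x|^{-1}$ but stays $O(|\x|)$. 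This yields \eqref{monaco12}, $\|\be\|_{\vec v_0,\gotp_1}\le C|\x|$, which is what makes $\calL_1=\uno+O(|\x|)$, justifies the identification $m_0=\frac{1}{(2\pi)^{d+1}}\int_{\TTT^{d+1}}a_2$ in \eqref{barad100}, and gives \eqref{argento1000}. Steps 1, 2 and 4 of your outline are fine as stated (no small divisors in Steps 1--2, and in Step 4 the datum $H_0^{(\theta,0)}$ is of degree at least five in $v$, so $\g_0^{-1}\|H_0^{(\theta,0)}\|_{\gotp_1}=O(|\x|^{3/2})$ is genuinely small), but without the Step-3 argument above the lemma is not proved.
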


\begin{proof}
 Recall that, using the notations of Lemma \eqref{regularization}, for the field $F_0$ the function $h\equiv0$
and the constant $m\equiv1$.
Note that the definition of $\de_{p}^{(1)}$ is the same of Lemma \ref{regularization}. It controls the norm of the $H_0^{(\theta,0)}$
divided by the size of the small divisor $\g_0$.
The definition of $\de_{p}^{(2)}$ has changed. Indeed in this case we have set $\de_{p}^{(2)}\approx C_{\vec{v},p}(N_0^{(1)})$
without divide by $\g_0$. This is due to the fact that $\g_0^{-1}C_{\vec{v},\gotp_1}(N_0^{(2)})=\tG_0$ that is not small.
In Lemma \ref{regularization} we used the smallness of $\de_{\gotp_1}^{(2)}=\g_0^{-1}C_{\vec{v},\gotp_1}(N_0^{(2)})$ in order to prove that $\calL_+$ is close to the identity.
In this case to get the result we need to use different arguments.
However we follows the same strategy used in Lemma \ref{regularization} and we perform the same four steps of that Lemma.
Concerning step 1 and step 2
we apply
the same transformations defined exactly in the same way. In this case there is no small divisors in the equations that define 
transformation $\Phi_1$ and $\Phi_2$. Hence the same estimates of Lemma \ref{regularization} hold with the 
definition in \eqref{cartabianca}. 
The step 3 has to be analyzed more carefully.  Indeed if one looks at the equation \eqref{38} for $\be$ one sees that
one has to control the inverse of the operator $\tilde{\oo}\cdot\del_{\theta}$ ($\tilde{\oo}$ in \eqref{ini1}).  By using the diophantine condition \eqref{ini4}, one get 
that
$$
\|\be\|_{\vec{v},\gotp_1+\gotp_0}\leq K_1^{\gotp_0+2\tau+1}\g_0^{-1}\|m_{2}\|_{\vec{v},\gotp_1}
\leq K_1^{\gotp_0+2\tau+1}\g_0^{-1}C_{\vec{v},\gotp_1}(N_0^{(1)})\leq K_+^{\gotp_0+2\tau+1}\tG_0,
$$
that is not small. We need to estimates $\be$ in a different way. We first analyze the form of the coefficient $a_2^{(2)}$ .
By equation \eqref{28} we have
\begin{equation}\label{288}
m_{2}(\theta)=\left(\frac{1}{2\pi}\int_{\TTT}(1+\tilde{a}_{2}(\theta,x))^{-\frac{1}{2}}\right)^{-2}-1,
\end{equation}
where $\tilde{a}_2=\sqrt{(1+a_2)^{2}-|b_2|^{2}}-1$  , with $a_2$ and $b_2$ the coefficients of $N_0^{(1)}$.
We have that we can write
\begin{equation}\label{rondine}
\tilde{a}_{2}=a_{2}(\theta,x)+b(\theta,x), \qquad b(\theta,x):= \sqrt{(1+a_2)^{2}-|b_2|^{2}}-(1+a_{2})
\end{equation}
and we note also that
\begin{equation}\label{rondine2}
\|b\|_{\vec{v},\gotp_1}\leq (C_{\vec{v},\gotp_1}(N_0^{(1)}))^{2}\leq C |\x|^{2},
\end{equation}
for some constant $C$. Moreover by an explicit computation we can write
\begin{equation}\label{monaco10}
\begin{aligned}
m_{2}&:=\frac{1}{2\pi}\int_{\TTT}\tilde{a}_{2}d x+\frac{d}{2\pi(1-\int_{\TTT}\tilde{a}_2d x+\hat{c}  )},\\
&d:=2\pi \hat{c}-\left(\int_{\TTT}\tilde{a}_2\right)^{2}+\left(\int_{\TTT}\tilde{a}_2\right)\hat{c},\\
&\hat{c}:=\frac{1}{\pi}\int_{\TTT}c+\frac{1}{4\pi^{2}}\left(\int_{\TTT}\tilde{a}_2\right)^{2}-\frac{1}{2\pi}\left(\int_{\TTT}\tilde{a}_{2}\right)\left(\int_{\TTT}c\right)+\frac{1}{4\pi}\left(\int_{\TTT}c\right)^{2},\\
&c:=\frac{1}{2}\left((1+\tilde{a}_2)^{-\frac{1}{2}}-1+\frac{1}{2}\tilde{a}_2
\right).
\end{aligned}
\end{equation}
Clearly one has
\begin{equation}\label{monaco11}
\begin{aligned}
\|m_{2}-\frac{1}{2\pi}\int_{\TTT}\tilde{a}_{2}\|_{\vec{v}_0,p}&\leq C(p)\|a_{2}\|_{\vec{v}_{0},\gotp_1}\|a_{2}\|_{\vec{v}_{0},p}, \\
\|m_{2}-\frac{1}{2\pi}\int_{\TTT}\tilde{a}_{2}\|_{\vec{v}_0,\gotp_1}&\leq C|\x|^{2}.
\end{aligned}
\end{equation}
Clearly using \eqref{rondine}  one has
\begin{equation}\label{rondine3}
m_{2}=\frac{1}{2\pi}\int_{\TTT}a_{2}d x+ s(\theta,x), \qquad \|s\|_{\vec{v},\gotp_1}\stackrel{(\ref{monaco11}),(\ref{rondine2})}{\leq} C|\x|^{2}.
\end{equation}
Roughly speaking this implies that in low norm $\gotp_1$ one has $m_{2}\approx a_{2}+O(|\x|^{2})$.
Now equation \eqref{28} becomes
\begin{equation}\label{38bis}
\be(\theta):=\frac{1}{1+\gotc }({\oo}^{(0)}\cdot\del_{\theta})^{-1}(1+\Pi_{K_{1}}\left(\frac{1}{2\pi}\int_{\TTT}a_{2}d x+s \right)-1-\gotc)(\theta), \quad \gotc=\frac{1}{(2\pi)^{d+1}}\int_{\TTT^{d+1}}m_{2}(\theta,x)d\theta d x,
\end{equation}
Now we have to estimate $\be$. 
The critical term is obviously the term of $O(|\x|)$ because one cannot use estimate \eqref{ini4} since 
$\g_0\approx|\x|$.  
One can use an algebraic arguments.
First we recall that by \eqref{ini1} one has ${\oo}^{(0)}=\la^{(-1)}+\la^{(0)}(\x)$ with $\la^{(-1)}=j^{2}, \;\; j\in S^{+}$. On the other hand the term of order $\x$ of $\be$ \eqref{38bis} depends only
on the coefficients $a_{2}$ given in \eqref{NF2}. Hence in formula \eqref{38bis} we need to estimate ${\oo}^{(0)}\cdot k$ with $k\in(S^{+})^{d}$ but with only two components 
different from zero and not for $k\in \ZZZ^{d}$ as in \eqref{ini4}. This implies, using Lemma \ref{hyp3autaut}, in the term $a_{2}z_{xx}\cdot\del_{z}$ there are only trivial 
resonances, and hence  
\begin{equation}\label{monaco12}
\|\be\|_{\vec{v}_{0},p+\gotp_0}\leq \|\Pi_{K_{1}}a_{2}\|_{\vec{v}_{0},p+\gotp_0},\quad \|\be\|_{\vec{v}_0,\gotp_1}\leq C |\x|,
\end{equation}
for some constant $C$, that is a better estimates with respect to the one in \eqref{barad3}. In this way we get that the transformation 
is $\x-$close to the identity. Now the last step can be performed exactly as in Lemma \ref{regularization} because there are no other differences and one can estimate
the transformation $\Phi_4$ as done in \eqref{merda5} and \eqref{merda6}.
Thanks to the perturbative arguments in \eqref{monaco10} and \eqref{rondine}
one can fix 
$$
m_0:=\frac{1}{(2\pi)^{d+1}}\int_{\TTT^{d+1}}a_2 d xd \theta
$$
and \eqref{barad100} follows. 
Finally 
we defined
$$
h_{1}=\Phi_{4}^{-1}(\tilde{h}_{1}),
$$
where $\tilde{h}_1$ is defined as in \eqref{barad7} with $h=0$ and $\be$ defined in \eqref{38bis}.
The estimates \eqref{argento1000} follows by \eqref{monaco12}.
The fact that $\calL_1$ is compatible, according to Definition \ref{compa}, follows
by \eqref{cartabianca} and Corollary \ref{senonseicompa}.
This concludes the proof.

\end{proof}

\begin{rmk}\label{laghetto}
Note that the coefficient $m_0$ in \eqref{barad100} gives  the correction of order $O(|\x| j^{2})$ to the eigenvalues $j^{2}$ as we will see in Section
\ref{caspita2} (see equation \ref{integra2}). This term will remain the same at each step of our iteration since all the further correction 
will be of higher order in $|\x|$. 
\end{rmk}

The main result of this Section is the following:

\begin{proposition}\label{andrea}
There exists a sequence of maps $\calL_n$, $n\geq 1$ that satisfies \eqref{odio}, \eqref{satana} and \eqref{satana2} with $\ka_3,\ka_{1},\ka_2$,
$\gotp_1,\gotp_2,\mu,\e_{0}$ given in \ref{sceltapar} and \eqref{parametri},  such that
the $n-$th vector field $F_n$ is defined on $\calO_0$ and  on $\calO_n$ in \eqref{oscurosignore}
satisfies bounds \eqref{lamorte}. Moreover $F_n$ is in $\calE$ (see Definitions \ref{pseudopseudo}, \ref{compatibili}) which satisfies \eqref{lamorte}
and can be written in the form \eqref{regu22} as
\begin{equation}\label{regu2222}
F_n:=(1+h_n)\Big(N^{(n)}_0+N_{n}^{(1)}+N_{n}^{(2)}+H_n
\Big)
\end{equation}
where ${N}^{(n)}_0=\oo_n\cdot\del_{\theta}+{\Omega}_{n}^{-1}w\cdot\del_{w}$ with ${\Omega}_n^{-1}=m_n \Omega^{(-1)}$, $\oo_n\in \RRR^{d}$ is diophantine and
\begin{equation}\label{barad22222}
|\oo_n-
{\oo}^{(0)}|_{\g}\leq  C|\x| \e_{0}, \quad |m_n-m_0|_{\g}\leq C |\x|\e_0,
\end{equation}
where $m_0$ is defined in \eqref{barad100} and 
\begin{equation}\label{barad2122}
d_{w}H^{(w)}(u)[\cdot]=0 
\end{equation}
In particular $N_{n}^{(1)},N_{n}^{(2)}$ have the form \eqref{barad20} and \eqref{angband2} and,
using the same notation as in \eqref{indiani},
the following estimates hold:
\begin{equation}\label{barad2200}
\begin{aligned}
C_{\vec{v}_n,p}(H_{n})\leq C_{\vec{v}_n,p}(\Pi_{\NN^{\perp}}G_n), \quad \|h_n\|_{\vec{v}_n,p},\;C_{\vec{v}_n,p}(N_n^{(1)})\leq C_{\vec{v}_n,p}(G_n).
\end{aligned}
\end{equation}
\begin{equation}\label{strega}
\g_{n}^{-1}\|H_n^{(\theta,0)}\|_{\vec{v}_n,\gotp_1}\leq \e_0 K_{n-1}^{-\ka_2+\mu+4}, \quad C_{\vec{v}_{n},\gotp_1}(N_n^{(2)})\leq \g_0\e_0 K_{n-1}^{-\ka_2+\mu+4}.
\end{equation}
We also have that
the coefficients $a_{i}^{(n)}, b_{i}^{(n)}$ of the field $N_{n}^{(1)}$ for $i=0,1$, 
have the form 
\begin{equation*}
a^{(n)}_i=a^{(0)}_{i}+\tilde{a}^{(n)}_{i}, \quad b_i:=b^{(0)}_{i}+\tilde{b}^{(n)}_{i}, \;\; i=0,1,
\end{equation*}
where the coefficients 
$a^{(0)}_{i},b^{(0)}_{i}$ for $i=0,1$ are given by formul\ae  \eqref{NF2}. The
the term $\mathscr{K}_{n}$ has the form \eqref{maiale} with coefficients $c_{j}^{(n)},d_{j}^{(n)}$, for $j=1,\ldots, \mathtt{N}_{n}$ such that
\begin{equation}\label{birk44}
\begin{aligned}
&\|\tilde{a}^{(n)}_{i}\|_{\vec{v}_{n},\gotp_1},\|\tilde{b}^{(n)}_{i}\|_{\vec{v}_{n},\gotp_1}\leq C|\x| \e_0, \quad i=0,1,\\
\end{aligned}
\end{equation}
and 
\begin{equation}\label{birk444}
\|c_{i}^{(n)}\|_{\vec{v},\gotp_1},\|d_{i}^{(n)}\|_{\vec{v}_{n},\gotp_1}\leq (1+\e_0 K_{n-1}^{-1})C_{\vec{v}_{n},\gotp_1}(\Pi_{\NN^{\perp}}(G_{0}) ),
\end{equation}
for $\mathtt{N}_{n}\sim \mathtt{N}_0+\mathtt{C}|S|n$.
In particular one has that
\begin{equation}\label{lamerd2}
\begin{aligned}
\|a_{i}^{(n)}-a_{i}^{(n-1)}\|_{\vec{v}_{n},\gotp_1},\|b_{i}^{(n)}-b_{i}^{(n-1)}\|_{\vec{v}_{n},\gotp_1}&\leq \g_0 \e_0 K_{n-1}^{-\ka_2+\mu+4}, \quad i=0,1,2,\\
\|c_{i}^{(n)}-c_{i}^{(n-1)}\|_{\vec{v}_{n},\gotp_1},\|d_{i}^{(n)}-d_{i}^{(n-1)}\|_{\vec{v}_{n},\gotp_1}&\leq \g_0 \e_0 K_{n-1}^{-\ka_2+\mu+4}, \quad i=1,\ldots,\mathtt{N}_{n} .
\end{aligned}
\end{equation}
\end{proposition}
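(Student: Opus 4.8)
The statement is established by induction on $n$, following the strategy of Lemma \ref{regularization} and its Corollary \ref{senonseicompa}, together with the linear-reduction machinery of Section \ref{paperina}. The base case $n=1$ is precisely the content of the ``Preliminary step'' Lemma \ref{step0}: it produces a compatible change of variables $\calL_1$ conjugating $F_0$ to a field of the form \eqref{regu55}, which is exactly \eqref{regu2222} for $n=1$, with $N^{(2)}_1$ of size $O(|\x|)$ (constant part) plus $O(|\x|^{\frac{3}{2}})$ corrections, and with $h_1$, $\oo_1$, $m_0$ satisfying \eqref{argento1000}, \eqref{barad100}. The properties \eqref{odio}, \eqref{satana}, \eqref{satana2} for $\calL_1$ are guaranteed by Lemma \ref{step0} and the smallness conditions \eqref{cartabianca}, and the bound \eqref{barad2122} holds since the weak Birkhoff normal form in Proposition \ref{wbnf} arranges that the part of $B_2$ linear in $z$ is at least quartic in $v$, so after restricting to $w=0$ it vanishes together with its $w$-differential.

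\textbf{The inductive step.} Assume $F_n$ has been constructed with all the stated properties. One first checks that $F_n$ satisfies the hypotheses of Lemma \ref{regularization}: the structural form \eqref{regu22}, the diophantine bounds \eqref{barad}, the pseudo-differential structure of $N_n^{(1)}, N_n^{(2)}$ via \eqref{barad20}-\eqref{angband2}, the degeneracy condition \eqref{barad21} which is \eqref{barad2122}, and most importantly the smallness input \eqref{piccolopassero}: here $\de^{(1)}_{\gotp_1} = \g^{-1}\|H_n^{(\theta,0)}\|_{\vec{v}_n,\gotp_1}$ and $\de^{(2)}_{\gotp_1} \sim C_{\vec{v}_n,\gotp_1}(N_n^{(2)})$ are both $\le \e_0 K_{n-1}^{-\ka_2+\mu+4}$ by \eqref{strega}, which beats $\rho_{+}^{-1}K_n^{\h_1}$ provided $\ka_2$ is large enough relative to $\h_1+\mu$ — this is exactly where Constraint \ref{sceltapar}, in particular \eqref{exp2}, is used. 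Then Lemma \ref{regularization} yields a map $\calL_{n+1}$ and Corollary \ref{senonseicompa} (using \eqref{piccolopassero1000}, which follows from \eqref{strega} together with $\Gamma_{n,\gotp_2}\le \tG_0 K_n^{\ka_1}$ from \eqref{lamorte}) shows $\calL_{n+1}$ is compatible with $(F_n,K_n,\vec v_n,\rho_n)$, with the choice $\ka_3 = \ka_1 + \h_1$ from \eqref{numeretti1000}; this verifies \eqref{odio}, \eqref{satana}, \eqref{satana2}. The conjugated field $\hat F_{n} := (\calL_{n+1})_*F_n$ is of the form \eqref{regu5}, so it retains the structure \eqref{regu22}; the new coefficients satisfy \eqref{passero}-\eqref{merlo}, which quadratically improves the size of $N^{(2)}$ and the $\theta$-component of $H$ while only polynomially (in $K_n$) enlarging the higher-norm tameness constants. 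Applying Theorem \ref{thm:kambis} to this setup then produces the flow map $\Phi_{n+1}$ and the set $\calO_{n+1}$: the homological equation item (2) of Definition \ref{pippopuffo3} is solved precisely by the construction of Section \ref{sec7aut}, namely Lemma \ref{computer}, which reduces solving \eqref{corvo6} to inverting the diagonal-plus-bounded operator of \eqref{corno8}, where the ``good set'' $\calO_{n+1}$ is defined by the second-order Melnikov conditions \eqref{corno7} and \eqref{primedimerda}. One then defines $F_{n+1} := (\Phi_{n+1}\circ\calL_{n+1})_*F_n$ and reads off \eqref{lamorte}, \eqref{barad22222} from the iterative bounds of Theorem \ref{thm:kambis} (i), together with \eqref{barad1000} and \eqref{merlo1}, summing the telescoping series $\sum_n |\oo_{n+1}-\oo_n|_{\g}$ and $\sum_n |m_{n+1}-m_n|_{\g}$ which converge geometrically since each increment is controlled by $\de^{(1)}_{\gotp_1},\de^{(2)}_{\gotp_1}\le \e_0 K_{n-1}^{-\ka_2+\mu+4}$; the coefficient bounds \eqref{birk44}-\eqref{lamerd2} follow analogously from \eqref{merlo}, \eqref{merlo1} and the fact that the rank of $\mathscr{K}_n$ grows only like $\mathtt{N}_0 + \mathtt{C}|S| n$.

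\textbf{Main obstacle.} The delicate point — and the one requiring the most care — is bookkeeping the interplay between the two Sobolev indices $\gotp_1$ and $\gotp_2$ through the iteration. The regularization Lemma \ref{regularization} decreases the low-norm ($\gotp_1$) size of $N^{(2)}$ and $H^{(\theta,0)}$ quadratically but pays for this by factors $K_n^{\h_1}, K_n^{2\gotp_0+2\tau+2}$ in the high norm, and moreover the reduction of the normal operator via Lemma \ref{kamreduc} loses $\ka_5$ derivatives each time; one must verify that the losses accumulated across all steps stay within the fixed budget $\Delta\gotp = \gotp_2-\gotp_1$, which is exactly what inequality \eqref{exp4} in Constraint \ref{sceltapar} guarantees. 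Equally, one must confirm that the analyticity radii $a_n, s_n$ and the domain radii $r_n$ of \eqref{numeretti} contract only geometrically: here the key structural fact proven in Lemma \ref{regularization} and highlighted in Remark \ref{loss} is that the torus diffeomorphisms $\TT_{\al_n}, \TT_{\be_n}$ have $|\al_n|,|\be_n| \lesssim \de^{(2)}_{\gotp_1,n} \lesssim \e_0 K_{n-1}^{-\ka_2+\mu+4}$, so $\sum_n |\al_n|$ converges and the total loss of analyticity is finite — this is precisely the mechanism by which the present analytic scheme closes, unlike the Sobolev schemes of \cite{BBM}, \cite{BBM1}. Once this accounting is in place, the remaining verifications (preservation of the subspace $\calE$ under each $\calL_n$ and $\Phi_n$, which is Lemma \ref{diffeodiffeo}, \ref{diffeodiffeo2}, \ref{multiopop}, \ref{torus} and item (iii) of Lemma \ref{ciprovo}; the diophantine and reversibility structure of $\oo_n$, $m_n$) are routine, and the proof is complete.
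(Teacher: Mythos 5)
Your overall architecture (induction, base case via Lemma \ref{step0}, inductive step via Lemma \ref{regularization} and Corollary \ref{senonseicompa}, and the choice $\ka_3=\ka_1+\h_1$) coincides with the paper's, but there is a genuine gap in the inductive step: you never verify that the KAM transformation $\Phi_{n+1}$ preserves the factorized form \eqref{regu2222} and the refined smallness bounds \eqref{strega}, \eqref{lamerd2}. You claim these can be ``read off'' from Theorem \ref{thm:kambis}(i) together with \eqref{merlo}, \eqref{merlo1}, but Theorem \ref{thm:kambis}(i) only provides the tameness bounds \eqref{lamorte}, and \eqref{merlo}, \eqref{merlo1} concern only the regularizing map $\calL_{n+1}$, i.e. they control $\hat F_n=(\calL_{n+1})_*F_n$, not $F_{n+1}=(\Phi_{n+1})_*\hat F_n$. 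The whole point of Proposition \ref{andrea} is precisely the extra structural information beyond the abstract theorem, and that information must survive the push-forward by $\Phi_{n+1}$.

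The step you are missing is the one the paper spends most of its effort on. Since $\hat N_0^{(n)}$ is of size $O(1)$, the push-forward $(\Phi_{n+1})_*\hat N_0^{(n)}$ generates contributions to the $\NN$-components (hence to $H_{n+1}^{(\theta,0)}$ and $N_{n+1}^{(2)}$) which a priori are only of size $|g_{n+1}|$, not quadratically small; if treated naively they would destroy the improvement gained by $\calL_{n+1}$ (compare the clean bound \eqref{pollini10} for $\hat F_n$ with the weaker exponent $-\ka_2+\mu+4$ in \eqref{strega}). The paper handles this by writing $F_{n+1}$ as in \eqref{santana}, expanding $\Pi_\NN(\Phi_{n+1})_*\hat N_0^{(n)}$ by the Lie series \eqref{pollini20}, and then using the approximate homological equation satisfied by $g_{n+1}$ (item (2) of Definition \ref{pippopuffo3}, equation \eqref{pollini21}) to replace the dangerous commutator $[g_{n+1},\hat N_0^{(n)}+\cdots]$ by $\Pi_{K_{n+1}}\Pi_{\calA}\hat F_n+r_n$, whose size is controlled by $\hat H_n$ and by \eqref{cribbio4}; the residual term $\Pi_\NN(\Phi_{n+1})_*(\Pi_{\NN^\perp}\hat H_n)$ is treated via the commutator formula \eqref{santana2}. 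It is exactly this argument that produces the loss $K_{n-1}^{\mu+4}$ in \eqref{strega} and yields \eqref{lamerd2}; one also needs that $g_{n+1}$ is a finite-rank field so that, by Lemma \ref{prestige} and Lemma \ref{torus}, $\Phi_{n+1}$ preserves the pseudo-differential structure and only increases the rank of $\mathscr{K}_n$ by $O(|S|)$. A secondary inaccuracy: in the proof of Proposition \ref{andrea} the sets $\calO_{n}$ are not produced by the Melnikov analysis of Section \ref{sec7aut} (that identification is the content of the subsequent Proposition \ref{pipino}); conflating the two is harmless for the logic here but your phrasing suggests the measure-theoretic construction is part of this proof, which it is not.
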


\begin{proof}
We prove the Lemma by induction on $n$. If one assume that we already constructed the map $\calL_{n}$ such that   all the properties above are satisfied
then we proceed as follows.
First of all by \eqref{strega} one note that hypotheses \eqref{piccolopassero} of Lemma \ref{regularization} are satisfied.
Then we apply the Lemma to the field $F_{n}$. We set $\calL_{n+1}$ the map given by the Lemma 
 \ref{regularization}. It satisfies \eqref{odio}, \eqref{satana} and \eqref{satana2} thanks to bound  \eqref{trans2}
 (recall the definition of $\ka_{3}$ in \eqref{numeretti1000} in Corollary \ref{senonseicompa}).
We set
$\hat{F}_{n}:=(\calL_{n+1})_{*}F_n$ (see \eqref{mappaccia})
that has the form
\begin{equation}\label{strega1}
\hat{F}_n:=N_0+\hat{G}_n=(1+h_{n+1})\Big(\hat{N}^{(n)}_0+\hat{N}_{n}^{(1)}+\hat{N}_{n}^{(2)}+\hat{H}_n
\Big)
\end{equation} 
that clearly has all the bounds \eqref{argento},\eqref{barad1000}, \eqref{regu6}, \eqref{regu66}, \eqref{passero}, \eqref{passero2} and \eqref{merlo} hold and these bounds
together the inductive hypotheses implies that  the field $\hat{F}_{n}$ satifies bounds like \eqref{barad22222}-\eqref{birk444}
except for \eqref{strega}. Actually we prove  better bounds on $\hat{N}_n^{(1)}, \hat{H}_n^{(\theta,0)}$.
Indeed we have 
$\hat{N}^{(n)}_0=\oo_{n+1}\cdot\del_{\theta}+\hat{\Omega}_{n}^{-1}w\cdot\del_{w}$ with $\hat{\Omega}_n^{-1}=m_{n+1} \Omega^{(-1)}$, $\oo_{n+1}\in \RRR^{d}$ is diophantine (see \eqref{barad1000}) and
\begin{equation}\label{barad2222}
|\oo_{n+1}-{\oo}^{(0)}|_{\g}\leq  C|\x| \e_0, \quad |m_{n+1}-1|_{\g}\leq C |\x|,
\end{equation}
and, by \eqref{regu6}, \eqref{regu66}, \eqref{passero}, \eqref{passero2} and bounds \eqref{lamorte} for $F_n$, 
 \begin{equation}\label{regulari6}
 \begin{aligned}
  C_{\vec{v},\gotp_{1}}(\hat{N}_n^{(1)})&\leq \g_0 (1+K_{n}^{\h_1}\e_0 K_{n-1}^{-\ka_2})\mathtt{G}_{n}
\\
C_{\vec{v},\gotp_{2}}(\hat{N}_n^{(1)})&\leq  \g_0
(1+K_{n}^{\h_1}\e_0 K_{n-1}^{-\ka_2})\mathtt{G}_0( K_{n-1}^{\ka_1}+
K_{n}^{\h_1}\e_0 K_{n-1}^{\ka_1}),
 \end{aligned}
 \end{equation}
 \begin{equation}\label{regulari66}
 \begin{aligned}
 C_{\vec{v},\gotp_1}(\hat{N}_{n}^{(2)})&\leq \g_0(1+K_{n}^{\h_1}\e_0 K_{n-1}^{-\ka_2}) \Big[K_{n}^{-(\gotp_2-\gotp_1-\h_1)} \e_0 K_{n-1}^{\ka_1}
+K_{n}^{\h_1}K_{n-1}^{-2\ka_2}\e_0^{2}
\Big],\\
C_{\vec{v},\gotp_2}(\hat{N}_{n}^{(2)})&\leq \g_0\e_0 \mathtt{G}_0\Big( K_{n-1}^{\ka_1}+ K_{n}^{\h_1} K_{n-1}^{\ka_1}
\Big),
 \end{aligned}
\end{equation}
 \begin{equation}\label{passero20}
 \begin{aligned}
 C_{\vec{v},\gotp_{1}}(\hat{G}_n)&\leq  \g_0(1+K_{n}^{\h_1}\e_0 K_{n-1}^{-\ka_2})\mathtt{G}_n
\\
C_{\vec{v},\gotp_{2}}(\hat{G}_n)&\leq \g_0
(1+K_{n}^{\h_1}\e_0 K_{n-1}^{-\ka_2})\mathtt{G}_0(K_{n}^{\ka_1}+
K_{n}^{\h_1}\e_0 
),
\end{aligned}
\end{equation}
 \begin{equation}\label{passero22}
\|\hat{H}_{n}^{(\theta,0)}\|_{\vec{v}_2,\gotp_1}\leq   \g_0(1+K_{n}^{\h_1}\e_0 K_{n-1}^{-\ka_2})\Big[
K_{n}^{-(\gotp_2-\gotp_1)}\e_0 K_{n-1}^{-\ka_{2}}
+K_{n}^{\h_1}\e_0^{2}K_{n-1}^{-2\ka_2}
\Big],
\end{equation}
Clearly one has that, by \eqref{numeretti}, \eqref{posacenere10}, bounds in \eqref{barad2200}, the  \eqref{birk44} and \eqref{birk444} holds also for $\hat{F}_n$.
 The more critical conditions are those in \eqref{strega}. Using \ref{sceltapar}, 
 \eqref{numeretti1000}, then 
 by \eqref{regulari66} and \eqref{passero22} one gets
 \begin{equation}\label{pollini10}
\begin{aligned}
C_{\vec{v},\gotp_1}(\hat{N}_n^{(2)})&\leq\g_0  \e_0 K_{n}^{-\ka_2}, \qquad 
\|\hat{H}_n^{(\theta,0)}\|_{\vec{v},\gotp_1}\leq \g_0\e_0K_{n}^{-\ka_2}
\end{aligned}
\end{equation}
that are bounds even better than \eqref{strega}.
Reasoning in the same way, bounds \eqref{merlo1} and \eqref{merlo} togheter with the inductive
hypotheses \eqref{barad22}-\eqref{birk44} imply bounds \eqref{lamerd2} with $K_{n}^{-\ka_{2}}$
instead of $K_{n}^{-\ka_{2}+\mu+4}$.
We recall the following. By Proposition \ref{regularization}, we have that the rank of $\hat{\mathscr{K}}_{n}$
is increased proportionally to the cardinality of the set $|S|$ (hence we set  $\mathtt{N}_{n+1}\sim \mathtt{N}_{n}+\mathtt{C}|S|$).

Now, by the definition in Theorem \ref{thm:kambis}, we have that 
the field $F_{n+1}$ is given by $F_{n+1}=(\Phi_{n+1})_{*}\hat{F}_{n}$, where the map $\Phi_{n+1}$ is generated by the field $g_{n+1}$ in \eqref{lamorte} for $n\rightsquigarrow n+1$.
We have to show that the map $\Phi_{n+1}$ does not change the size of  the coefficients of $\hat{N}^{(1)}_{n},N^{(2)}_{n}$ in such a way the estimates on $F_{n+1}$
remains essentially the  same of those on  $\hat{F}_n$.
First we note that, by the form of the map $\Phi_{n+1}$ one has
\begin{equation}\label{santana}
F_{n+1}:=(\Phi_{n+1})_{*}\hat{F}_{n}=(1+h_{n+1})\Big(
(\Phi_{n+1})_{*}(\hat{N}^{(n)}_0)
+(\Phi_{n+1})_{*}(\hat{N}_{n}^{(1)}+\hat{N}_{n}^{(2)}+\hat{H}_{n})
\Big).
\end{equation}
Let us study first the term that does not contains the constant coefficients term $\hat{N}_0^{(n)}$.
Again by the form of the map $\Phi_{n+1}$, that is generated by $g_{n+1}\in \hat{\BB}$, we have that $\Phi_{n+1}$ preserves the pseudo-differential structure of the vector fields.
By setting
$$
\calF=(\Phi_{n+1})_{*}(\hat{N}_{n}^{(1)}+\hat{N}_{n}^{(2)}+\hat{H}_{n})
$$
we have that the coefficients of $(\Pi_{\NN}\calF)^{(w)}$ comes form the term $\Pi_{\NN}(\Phi_{n+1})_{*}(\hat{N}_{n}^{(1)}+\hat{N}_{n}^{(2)}+\Pi_{\NN}\hat{H}_n)$ or 
from the term $\Pi_{\NN}(\Phi_{n+1})_* (\Pi_{\NN^{\perp}}\hat{H}_{n})$. Obviously the first coefficient satisfies \eqref{strega} using $\eqref{regulari66}$ and the fact that, in the low norm $\gotp_1$,  $\Phi_{n+1} \approx \uno+O(\de K_{n}^{-\ka_{2}+\mu})$
.  The second terms satisfies \eqref{strega}  
because 
one has
\begin{equation}\label{santana2}
\Pi_{\NN}(\Phi_{n+1})_* (\Pi_{\NN^{\perp}}\hat{H}_{n}):=\Pi_{\NN}\int_{0}^{1}(\Phi_{n+1})_{*}^{s}[g_{n+1},\Pi_{\NN^{\perp}}\hat{H}_n]d s,
\end{equation}
and hence one gets estimates \eqref{strega} by using the estimate \eqref{lamorte} on $g_{n+1}$. 
Formula \eqref{santana2} follows by Remark $B.5$ in Appendix $B$ of \cite{CFP} (see also Lemma \ref{temavec}). 
Let us analyze the first term.
We note that $\hat{N}_{0}^{(n)}$ is diagonal according to Definition 
 \ref{norm}  since it has only constant coefficients. Hence we have
\begin{equation}\label{pollini20}
\begin{aligned}
\Pi_{\NN}(\Phi_{n+1})_{*}(\hat{N}_0^{(n)})&=\Pi_{\NN}\int_{0}^{1}d s (\Phi_{n+1})_{*}^{s}[g_{n+1},\hat{N}_0^{(n)}]=
\Pi_{\NN}\int_{0}^{1}d s (\Phi_{n+1})_{*}^{s}\Pi_{K_{n+1}}\Pi_{\calA}[g_{n+1},\hat{N}_0^{(n)}]\\
&=\Pi_{\NN}\int_{0}^{1}d s (\Phi_{n+1})_{*}^{s}\Pi_{K_{n+1}}\Pi_{\calA}\left[g_{n+1},(1+h_{n+1})\Big(\hat{N}_0^{(n)}+\hat{N}_n^{(1)}+\hat{N}_n^{(2)}+
\Pi_{\NN}\hat{H}_n\Big)\right]\\
&-\Pi_{\NN}\int_{0}^{1}d s (\Phi_{n+1})_{*}^{s}\Pi_{K_{n+1}}\Pi_{\calA}[g_{n+1},h_{n+1}\hat{N}_0^{(n)}]\\
&-\Pi_{\NN}\int_{0}^{1}d s (\Phi_{n+1})_{*}^{s}\Pi_{K_{n+1}}\Pi_{\calA}\left[g_{n+1},(1+h_{n+1})\Big(\hat{N}_n^{(1)}+\hat{N}_{n}^{(2)}+\Pi_{\NN}\hat{H}_n\Big)\right]
\end{aligned}
\end{equation}
Now we use the definition of $g_{n+1}$ and by item $(iii)$  in Definition \ref{pippopuffo3} we obtain that 
\begin{equation}\label{pollini21}
\begin{aligned}
\Pi_{K_{n+1}}\Pi_{\calA}\left[g_{n+1},(1+h_{n+1})\Big(\hat{N}_0^{(n)}+\hat{N}_n^{(1)}+\hat{N}_n^{(2)}+
\Pi_{\NN}\hat{H}_n\Big)\right]=\Pi_{K_{n+1}}\Pi_{\calA}\hat{F}_{n}+r_n
\end{aligned}
\end{equation}
where $r_{n}$ satisfies bounds \eqref{cribbio4} and $\Pi_{K_{n+1}}\Pi_{\calA}\hat{F}_{n}=(1+h_{n+1})\Pi_{\calA}\hat{H}_{n}$.
Equation \eqref{strega} simply follows by applying Lemma \ref{temavec} and the inductive hypotheses. The bounds \eqref{lamerd2}
follows similarly recalling the first estimate in  \eqref{torus2} of Lemma \ref{torus}.

In order to prove the inductive basis we reason as follows.
First we note that 
if $n=0$ then we cannot apply Lemma \ref{regularization} in order to define the map $\calL_1$ and the field $\hat{F}_0$.  On the other hand we apply Lemma
\ref{step0} which provides the same result. Then on can reason as done in above using the map $\Phi_{1}$.

\end{proof}

Now our aim is to give an explicit form to the sets $\calO'$ which satisfy
Mel'nikov conditions (see \ref{pippopuffo3}) for the vector field $\hat{F}_{n}$.
This is formalized in the 
following Proposition.

\begin{proposition}[{\bf The set of ``good'' parameters}]\label{pipino} 
 Recall \eqref{corno4} and set for $n\geq0$
\begin{equation}\label{strega7}
 n^{*}:=n+\log_{\frac{3}{2}}\frac{\ka_2}{\ka_4},
\end{equation}
Then for all $n\ge 0$, there exist a 
 sequence of purely imaginary numbers
\begin{equation}\label{corvo66}
\mu_{\s,j}(\x):=\mu_{\s,j}^{(n)}(\x):=-\ii \s (m_{n+1}j^{2}+r^{(n)}_{j}), \quad \s=\pm1, \; j\in \ZZZ_{+}\cap S^{c},
\end{equation}
with
\begin{equation}\label{strega20}
|r^{(n)}_{j}|_{\g_n}\leq C |\x|, \quad |r^{(n)}_{j}-r^{(0)}_{j}|_{\g_n}\leq C|\x|\e_0, \qquad |m_{n+1}-m_{n}|_{\g_n}\leq |\x|\e_0 K_{n}^{-\ka_2} , \qquad |\oo_{n+1}-\oo_{n}|_{\g_n}\leq|\x|
\e_0 K_{n}^{-\ka_2}, 
\end{equation}
with $r_{j}^{(0)}$ defined in \eqref{strega10},
 such that, 
 by defining 
 \begin{equation}\label{corno77}
 \begin{aligned}
 \Lambda_{n+1}^{2\g_{n}}&:=\left\{
\begin{array}{ll}
\x\in \calO_n: &\;  |\oo_{n+1}\cdot l \!+\!\mu_{\s,j}^{(n)}(\x)-\!\mu_{\s',j'}^{(n)}(\x)| \geq \frac{2\g_{n}|\s j^{2}-\s' j'^{2}|}{\langle l \rangle^{\tau}}
  \\  
& \forall l \in\ZZZ^{d},\; |l|\leq K_{n^*} \;\; \forall (\s,j),(\s',j')\in\{\pm1\}\times(\ZZZ_+ \cap S^{c})
\end{array}
\right\},\\
P_{n+1}^{2\g_n}&:=
\left\{\begin{array}{ll}
\x\in\calO_n : &|\oo_{n+1}\cdot l+\mu_{\s,j}^{(n)}(\x)|\geq
\frac{2\g j^{2}}{\langle l \rangle^{\tau}}, \\
& \forall \in\ZZZ^{d}, \;\; |l|\leq K_{n^{*}}\;\;\forall\; (\s,j)\in\{\pm1\}\times\ZZZ_{+}\cap S^{c}
\end{array}
\right\}.\\
\SSSS_{n+1}^{2\g_{n}}&:=\left\{
\begin{array}{ll}
\x\in \calO_n: &\;  |\oo_{n+1}\cdot l | \geq \frac{2\g_{n}}{\langle l \rangle^{\tau}}
   \forall l \in\ZZZ^{d},\; |l|\leq K_{n_\star} 
\end{array}
\right\},
\end{aligned}
 \end{equation}
one has that  
\begin{equation}\label{finefine}
\Lambda^{2\g_n}_{n+1}\cap P^{2\g_{n}}_{n+1}\cap\SSSS^{2\g_n}_{n+1} 
\end{equation}
%
satisfies the Mel'nikov conditions (see \ref{pippopuffo3}) with $(\hat{F}_n,K_n,\g_n,\calO_0,s_n,a_n,r_n)$. 
Here $\hat F_n$ is the vector field defined in \eqref{strega1}. 

\noindent
Then in Proposition \ref{andrea} we may choose 
$$
\calO_{n+1}:=\Lambda^{2\g_n}_{n+1}\cap P^{2\g_{n}}_{n+1}\cap\SSSS^{2\g_n}_{n+1} \,,\qquad \forall n\ge 0.
$$
\end{proposition}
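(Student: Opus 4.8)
\textbf{Proof plan for Proposition \ref{pipino}.}

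The plan is to verify, step by step, all the requirements of the Mel'nikov conditions in Definition \ref{pippopuffo3} for the vector field $\hat F_n$ with parameters $(K_n,\g_n,\calO_0,s_n,a_n,r_n)$, using the reduction machinery assembled in Section \ref{paperina}. First I would recall that $\hat F_n=(\calL_{n+1})_*F_n$, constructed via Lemma \ref{regularization} (or Lemma \ref{step0} for $n=0$), is in $\calE$ and has the form \eqref{regu5}--\eqref{regu55}; in particular the terms $\hat H_n^{(\theta,0)}$ and $\hat N_n^{(2)}$ have the very small sizes recorded in \eqref{pollini10} (or \eqref{strega}), so that condition (1) of Definition \ref{pippopuffo3} (namely $F(\xi)\in\calE$ and $|\Pi_\calX G|_{\vec v,\gotp_2-1}\le \mathtt C\, C_{\vec v,\gotp_2}(\Pi_\NN^\perp G)$) holds by the bounds \eqref{barad2200}; the second inequality is exactly \eqref{passero} combined with the inductive control on $\Pi_\calX$. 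Then I would set up the homological equation: as explained in the discussion around \eqref{corvo3}--\eqref{corvo6}, constructing the operator $\gotW$ of Definition \ref{pippopuffo3} with the estimates \eqref{buoni}, \eqref{cribbio4} reduces to finding an approximate solution $g\in\BB_\calE$ of \eqref{corvo5}, and after discarding the already-small terms $\hat H_n^{(\theta,0)}$, $\hat N_n^{(2)}$ this splits into the four scalar/operator equations in \eqref{corvo6}.

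The core of the argument is the inversion of those four equations. The first two ($g^{(y,0)}$, $g^{(y,y)}$) and the condition for well-posedness of the third/fourth require only the lowest-order non-resonance $|\oo_{n+1}\cdot l|\ge \g_n\langle l\rangle^{-\tau}$ for $|l|\le K_{n^*}$, which is the defining property of $\SSSS_{n+1}^{2\g_n}$; here I would invoke the operator $W_0=(\oo_{n+1}\cdot\del_\theta)^{-1}$ and its standard tame bounds. The third equation is $Lg^{(w,0)}=f$ with $L$ of the form \eqref{corvo8}, i.e.\ exactly the operator \eqref{corvo10}--\eqref{birk3} handled by Lemma \ref{computer}: I apply Proposition \ref{birk} (descent method plus linear Birkhoff normal form, legitimate because the coefficient bounds \eqref{birk4} follow from \eqref{birk44}, \eqref{barad100} and the sizes in \eqref{strega}), then Lemma \ref{kamreduc} with $N=K_{n^*}$, which diagonalizes $L$ up to an error of order $N^{-\ka_4}=K_{n^*}^{-\ka_4}\sim K_n^{-\ka_2}$ provided the second-order Mel'nikov conditions \eqref{corno7} hold for $|l|\le N=K_{n^*}$ --- these are precisely the inequalities defining $\Lambda_{n+1}^{2\g_n}$, while $P_{n+1}^{2\g_n}$ is exactly the set \eqref{primedimerda} needed in Corollary \ref{invertiamoo} to invert $\oo_{n+1}\cdot\del_\theta+\DD_N$. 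The choice of $n^*$ in \eqref{strega7} is dictated by matching $K_{n^*}^{\ka_4}$ with $K_n^{\ka_2}$, which is why $n^*=n+\log_{3/2}(\ka_2/\ka_4)$. The fourth equation involves the transposed operator $\oo_{n+1}\cdot\del_\theta+\tilde\Omega^T(\theta)$; here I would use the algebraic observation in \eqref{approxdiag2}--\eqref{approxdiag4} that the adjoint $Q^T$ of the (approximately) diagonalizing $Q$ approximately diagonalizes the transposed operator, and since $Q=\uno+(\text{finite decay-norm})$ its adjoint enjoys the same decay bounds, so $Q^T$ acts on $\ell_{a,p}$ (not only on the dual) --- this is what makes the estimate \eqref{buoni} in the $\ell_{a,p}$-norm (rather than its dual) available. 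Assembling the four pieces and checking that the resulting $g$ lies in $\BB_\calE$ (using Lemma \ref{prestige} and the parity/gauge structure carried by $\hat F_n\in\calE$) yields $\gotW$; the bounds \eqref{buoni} come from the decay estimates \eqref{corno11}, \eqref{computer2} and $\|W_0\cdot\|\le\g_n^{-1}K_n^{2\tau+1}\|\cdot\|$, and the bounds \eqref{cribbio4} on $u=(\Pi_{K}\mathrm{ad}(\Pi_\NN F)[\gotW X]-X)$ come from the $N^{-\ka_4}$ smallness in \eqref{corno12} together with the smallness of the discarded terms $\hat H_n^{(\theta,0)},\hat N_n^{(2)}$, with suitable choices of $\mu_1$ and $\h$ (this is the remark following Constraint \ref{sceltapar} and fixes those parameters).

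Finally I would record the eigenvalues: Lemma \ref{kamreduc}/Lemma \ref{teo:KAMKAM} produces the purely imaginary $\mu^{N}_{\s,j}$, which I rename $\mu^{(n)}_{\s,j}=-\ii\s(m_{n+1}j^2+r^{(n)}_j)$ as in \eqref{corvo66}, and the bounds \eqref{strega20} follow from $|r^{N}_j|_\g\le C|\x|$, $|r^N_j-r^{(0)}_j|_\g\le C|\x|\e_0$ in Lemma \ref{kamreduc}, from \eqref{barad1000} for $|m_{n+1}-m_n|$, and from \eqref{merda2} (the average correction) for $|\oo_{n+1}-\oo_n|$, using the sizes $\de^{(1)}_{\gotp_1},\de^{(2)}_{\gotp_1}\sim\e_0 K_n^{-\ka_2}$ from \eqref{strega}. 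Setting $\calO_{n+1}:=\Lambda^{2\g_n}_{n+1}\cap P^{2\g_n}_{n+1}\cap\SSSS^{2\g_n}_{n+1}\subseteq\calO_n$ then satisfies \eqref{oscurosignore} and all the hypotheses of Theorem \ref{thm:kambis}, closing the induction of Proposition \ref{andrea}. The main obstacle I expect is the fourth equation: controlling the inverse of the transposed operator \emph{on $\ell_{a,p}$ itself} rather than merely on its dual, which is exactly why the adjoint-tameness conditions $(T_m)^*$ were built into Definition \ref{tame} and why the decay-norm (rather than operatorial) estimates of Lemma \ref{kamreduc} are indispensable; every other step is either a direct citation or a routine tame-estimate bookkeeping using the constants fixed in Constraint \ref{sceltapar}.
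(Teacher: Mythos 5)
Your proposal follows essentially the same route as the paper's proof: the same splitting of the homological equation into the four blocks \eqref{corvo6}, with $\SSSS^{2\g_n}_{n+1}$ furnishing $W_0=(\oo_{n+1}\cdot\del_\theta)^{-1}$, the discarded terms $\hat H_n^{(\theta,0)},\hat N_n^{(2)}$ absorbed into the error thanks to \eqref{pollini10}, the third equation treated by Proposition \ref{birk} followed by Lemma \ref{kamreduc} with $N=K_{n^*}$ (whence \eqref{strega7}) and Corollary \ref{invertiamoo} on $\Lambda^{2\g_n}_{n+1}\cap P^{2\g_n}_{n+1}$, the fourth by the transposition argument \eqref{approxdiag2}--\eqref{approxdiag4} with the decay-norm control of $Q^T$, and the eigenvalue bounds \eqref{strega20} extracted from Lemma \ref{kamreduc}, \eqref{barad1000} and the sizes in \eqref{strega}. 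This matches the paper's argument in both structure and the key estimates, so no further comment is needed.
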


\begin{proof}
We proceed by induction, assuming that our claim holds true up to $n$, We shall systematically use the bounds on $F_n,\hat F_n$ given in Proposition \ref{andrea}.  We show that 
for any parameter $\x\in\Lambda^{2\g_n}_{n+1}\cap P^{2\g_{n}}_{n+1}\cap\SSSS^{2\g_n}_{n+1}$  we can construct an approximate inverse $\gotW$.  As we have seen explicitely in \eqref{corvo3}-\eqref{corvo6}, the operator $\gotN=\Pi_{K_{n+1}}\Pi_\calX {\rm ad }(\Pi_\NN \hat{F}_{n})$ is block diagonal and decomposes in four equations, so that also $\gotW$ is made of four blocks.  
We have the trivial multiplication by $1/(1+h_{n+1})$ and
the following operators:
\begin{itemize}
	\item[(i)]  $W^{(n)}_0$ which is an approximate inverse of $\big[\Pi_{K_{n+1}}(\omega_{n+1} + \widehat H^{(\theta,0)}_n )\partial_\theta\big] $. This is used for the inversion of 
	the first two equation in \eqref{corvo6};
	
	\item[(ii)] $W^{(n)}_{+}$ which is an approximate inverse of $\Pi_{K_{n+1}}\big[(\omega_{n+1} + \widehat H^{(\theta,0)}_n )\partial_\theta+
	(\hat{F}_{n})^{(w,w)}(\theta)\big]$;
	note that this is a linear operator acting on $H^{\gotp_{0}}(\TTT;\ell_{a,p})\cap H^{p}(\TTT;\ell_{a,\gotp_0})$;
	
\item[(iii)]  $W^{(n)}_{-}$ which is an approximate inverse of 
$\Pi_{K_{n+1}}\big[(\omega_{n+1} + \Pi_{\NN}\widehat H^{(\theta,0)}_n )\partial_\theta
	-[(\hat{F}_{n})^{(w,w)}(\theta)\big]^{*}\big]$
	which 
	is a linear operator acting on $H^{\gotp_{0}}(\TTT;\ell_{a,p})\cap H^{p}(\TTT;\ell_{a,\gotp_0})$.
Note that in fact the adjoint act on the much bigger dual space, however we need to 
find an inverse on the space of regular vector field. Hence we need this stronger notion of invertibility.
\end{itemize}
 We show that $\gotW$ defined above 
 satisfy all the properties of 
Definition \ref{pippopuffo3}.
%
%
%
%
First of all, in order to deal with $(i)$, we need that $\oo_{n+1}$ is a diophantine 
vector of $\RRR^{d}$. 
Then we set $W^{(n)}_0:=(\oo_{n+1}\cdot\del_{\theta})^{-1}\Pi_{K_n}$. This is possible 
since by Lemma \ref{regularization} the size of $\widehat H^{(\theta,0)}_n $ is so small that it is possible to put it
inside the remainder term $u$, see formula \eqref{cribbio4}.
This choice of $W^{(n)}_0$ is possible since $\x\in \SSSS_{n+1}^{2\g_{n}}$.
We will see that 
this approximation is sufficient to get a good approximate solutions that satisfies the requirements in Definition \eqref{pippopuffo3}.

\noindent
In $(ii)$ we ignore 
$\widehat H^{(\theta,0)}_n $ exactly for the same reason as above. Moreover, recalling
$(\hat{F}_{n})^{(w,w,)}(\theta)=(\hat{N}_0^{(n)})^{(w,w)}+\hat{N}_n^{(1)}+\hat{N}_n^{(2)}+(\Pi_{\NN}\hat{H}_n)^{(w)}$,
we ignore the term $\hat{N}_n^{(2)}$ again due to Lemma \ref{regularization} (see bound
\eqref{pollini10}).
We study equation
\begin{equation}\label{corvo88}
(\hat{L}_n) g=\Big(\oo_{n+1}\cdot\del_{\theta}+\hat{\Omega}_{n}(\theta)\Big)g=f, 
\end{equation}
with $f\in {\bf Z}$ and $g\in {\bf X}$ (see Definition \ref{spaces200}) and where
\begin{equation}\label{strega3}
\hat{\Omega}_{n}:=(\Pi_{\NN}\hat{N}_0^{(n)})^{(w)}+\hat{N}_n^{(2)}+(\Pi_{\NN}\hat{H}_n)^{(w)}.
\end{equation}

The method we use 
to invert $\hat{L}_{n}$ is to approximately diagonalize it. Hence we get
the approximate solution in item $(iii)$
following the same diagonalization procedure, since the operators have the same eigenvalues.
In particular this ensure that $W^{(n)}_{-}$ acts on $H^{\gotp_{0}}(\TTT;\ell_{a,p})\cap H^{p}(\TTT;\ell_{a,\gotp_0})$ and not only on its dual space.

We claim that, by the construction of the field $\hat{F}_n$, the operator $\hat{L}_n$
satisfies the hypotheses of Proposition \ref{birk}. Indeed 
it has the form
\begin{equation}\label{ilbuono}
\begin{aligned}
\hat{L}_{n}&=\Pi_{S}^{\perp}\oo_{n+1}\cdot\del_{\theta}\uno+\Pi_{S}^{\perp}
\left(-\ii E\left(\begin{matrix}m_{n+1} & 0\\ 0 & m_{n+1} \end{matrix}\right)\del_{xx}
\right)\Pi_{S}^{\perp}+\hat{\mathscr{K}}_{n}\\
&+\Pi_{S}^{\perp}\left(
-\ii E\left( \begin{matrix} \hat{a}^{(n)}_1 & \hat{b}^{(n)}_{1} \\ {\ol{\hat{b}}^{(n)}_1} & {\ol{\hat{a}}^{(n)}_{1}} \end{matrix}\right)\del_{x}
-\ii E \left(
 \begin{matrix} \hat{a}_{0}^{(n)} & \hat{b}^{(n)}_{0} \\ {\ol{\hat{b}}^{(n)}_0} & {\ol{\hat{a}}^{(n)}_{0}} \end{matrix}
\right)
\right)
\Pi_{S}^{\perp},
\end{aligned}
\end{equation}
with $\oo_{n+1},m_{n+1}$ which satisfy \eqref{barad22222} and \eqref{strega20},
$\hat{\mathscr{K}}_{n}$ of the form \eqref{maiale} with coefficients $\hat{c}_{i}^{(n)},\hat{d}_{i}^{(n)}$, $i=1,\ldots,\mathtt{N}_{n+1}$.
By \eqref{merlo} in Proposition \ref{regularization} and the inductive estimates on $F_{n}$ (see Prop. \ref{andrea} eq. \eqref{birk44}) one has that
 bounds \eqref{lamerd2} hold on the coefficients $\hat{a}_{i}^{(n)},\hat{b}_{i}^{(n)},\hat{c}_{j}^{(n)},\hat{d}_{j}^{(n)}$, $i=0,1$ and $j=1,\ldots,\mathtt{N}_{n+1}$.
Hence the  smallness conditions in \eqref{birk4} follows. 

By applying Proposition \ref{birk} to the operator $\hat{L}_n$ in \eqref{corvo88}
we get a change of coordinates $\SSSS^{(n)}:=\uno+\Psi^{(n)}$ (given in \eqref{birk5})
such that 
the operator
\begin{equation}\label{strega4}
{L}^{+}_n:=(\SSSS^{(n)})^{-1}\hat{L}_{n}\SSSS^{(n)}=
\Pi_{S}^{\perp}(\oo_{n+1}\cdot\del_{\theta}+ \DD_n+\RR_n)\Pi_{S}^{\perp}
\end{equation}
of the form \eqref{corno1}
where
\begin{equation}\label{strega5}
\DD_{n}:=-\ii E\,{\rm diag}_{j\in \ZZZ_{+}\cap S^{c}}(m_{n+1}j^{2}+r_{j}^{(0)}), \qquad 
\RR_n:= -\ii E_{1}(L_{n})\del_{x}-\ii E_{0}(L_{n}),
\end{equation}
given in \eqref{birk7}. 
More precisely 
denoting $E_{i}(L_n):=E_{i}^{(n)}$ for $i=0,1$,
we set
\begin{equation}\label{lcappuccio}
E_{1}^{(n)}:= E\Pi_{S}^{\perp}\left( \begin{matrix} 0 & b^{(n,+)}_{1} \\ \bar{b}^{(n,+)}_1 & 0 \end{matrix}\right)\del_{x}\Pi_{S}^{\perp},\quad 
E_{0}^{(n)}:=E \Pi_{S}^{\perp}  \left(
 \begin{matrix} a^{(n,+)}_{0} & b^{(n,+)}_{0} \\ \bar{b}^{(n,+)}_0 & \bar{a}^{(n,+)}_{0} \end{matrix}
\right)
\Pi_{S}^{\perp}+\hat{\mathscr{K}}_{n}^{+}.
\end{equation}
By estimates \eqref{birk8} in Proposition \ref{birk}
and by Remark \eqref{opmolt},
we have
$$
|E^{(n)}_1|^{{\rm dec}}_{\vec{v},\gotp_1}+|E^{(n)}_0|^{{\rm dec}}_{\vec{v},\gotp_1}\leq C|\x|\e_0.
$$
We have that ${L}^{+}_n$ in \eqref{strega4} satisfies the hypotheses of Lemma \ref{kamreduc}.
In order to prove a bound like \eqref{cribbio4}
we fix the number $N>0$ in Lemma \ref{kamreduc} in such a way
one has
\begin{equation}\label{strega6}
N^{-\ka_4}\leq K_{n}^{-\ka_2}.
\end{equation}
Using \eqref{strega7}  may set that $N=K_{0}^{\left(\frac{3}{2}\right)^{n^{*}}}$. 
 Lemma \ref{kamreduc} is a KAM-like scheme, the point is that, if we are at step $n$ of the abstract algorithm
in Theorem \ref{thm:kambis}, we have to perform $n^{*}$ Kam steps in Lemma \ref{kamreduc}.  With this reduction procedure we approximately diagonalize $L_n$ up to a remainder which is so small that it is negligible in the construction of the approximate inverse $W_n$. 

By applying Lemma \ref{kamreduc} with $N=K_{0}^{(\frac{3}{2})^{n^*}}:=K_{n^{*}}$ to the truncated operator
\begin{equation}\label{strega8}
L_{n}^{+}:=\Pi_{S}^{\perp}(\oo_{n+1}\cdot\del_{\theta}+ \DD_n+(\Pi_{K_{n+1}}E_{1}^{(n)})D+\Pi_{K_{n+1}}(E_{0}^{(n)})
)\Pi_{S}^{\perp}
\end{equation}
we have that $ \Lambda_{n+1}^{2\g_{n}}$ of definition \eqref{corno77} coincides with $\Lambda_{K_{n^*}}^{2\g_n}$ defined in \eqref{corno7}. Hence for  $\xi\in\Lambda_{n+1}^{2\g_{n}}$  there is a map
$\Phi:=\Phi_{K_{n^{*}}}$ that satisfies \eqref{corno11} and conjugates $L_n^{+}$ to the operator
\begin{equation}
\widehat{\calL}^{+}_{n}:=\Pi_{S}^{\perp}\Big(\oo_{n+1}\cdot\del_{\theta}+
\widehat\DD_{n}+\widehat\RR_{n}
\Big)\Pi_{S}^{\perp}
\end{equation}
 where ( $\widehat{L}^{+}_{n}$ is $(\calL_{n}^{+})_{n^*}$ in the notation of Lemma \ref{kamreduc})
 \begin{equation}\label{corno99}
 \begin{aligned}
 \widehat\DD_{n}&:={\rm diag}_{\s\in \{\pm1\}, j\in\ZZZ_+}(\mu_{\s,j}^{(n)}),\\
 \widehat\RR_{n}:&=\widehat E_{1}^{(n)}D+\widehat E_{0}^{(n)},
 \end{aligned}
 \end{equation}
 given by equation \eqref{corno9}. 
 The $\mu^{(n)}_{\s,j}$ satisfy \eqref{corvo66} and \eqref{strega20} as consequence of Lemma \ref{kamreduc}.
 Moreover using estimates \eqref{corno10}, \eqref{corno11}  and the Inductive Hypothesis
 we get the bounds:
 
  \begin{equation}\label{corno100}
 \begin{aligned}
 |\widehat E^{(n)}_1|^{{\rm dec}}_{\vec{v}_1,p}+|\widehat E^{(n)}_0|^{{\rm dec}}_{\vec{v}_1,p}&\leq K_{n+1}^{\ka_5}\Big(|E^{(n)}_1|^{{\rm dec}}_{\vec{v},p}+|E^{(n)}_0|^{{\rm dec}}_{\vec{v},p}
 \Big)
 K_{n^*}^{-\ka_4}, \quad \vec{v}_1:=(\g, \Lambda_{N}^{2\g},s,a),\\
  \end{aligned}
 \end{equation}
 Moreover one has that
 \begin{equation}\label{corno111}
 |\Phi^{\pm1}-\uno|^{\rm dec}_{\vec{v}_1,p}\leq \g^{-1}\Big(|E^{(n)}_1|^{{\rm dec}}_{\vec{v},p}+|E^{(n)}_0|^{{\rm dec}}_{\vec{v},p}\Big).
 \end{equation}
In particular this means that
\begin{equation}\label{strega21}
\begin{aligned}
 |\widehat E^{(n)}_1|^{{\rm dec}}_{\vec{v}_1,\gotp_1}+|\widehat E^{(n)}_0|^{{\rm dec}}_{\vec{v}_1,\gotp_1}&
 \leq K_{n+1}^{\ka_{5}}K_{n}^{-\ka_{1}}|\x|\e_0
\end{aligned}
\end{equation}

\noindent
For $\x\in P_{n+1}^{2\g_n}$ we set
\begin{equation}\label{pollini2}
g:=W_{+}^{(n)}f:=\Phi^{-1}(\oo_{+}\cdot\del_{\theta}+\widehat \DD_{n})^{-1}\Phi f,
\end{equation}
which is well defined since
 we have that $P_{n+1}^{2\g_n}$ coincides with  
$P_{K_{n*}}^{2\g_n}$ in \eqref{primedimerda}.  
Hence, by 
Lemma \ref{invertiamoo} and estimates \eqref{corno111}, one has that $g$ in \eqref{pollini2}
satisfy bound \eqref{buoni} with $f=X^{(w,0)}$, and hence 
by the inductive hypothesis on $F_{n}$, $g_{n+1}$ satisfies bounds \eqref{lamorte}.

Secondly we have that 
$$
(\oo_{n+1}\del_{\theta}+F_n^{(w,w)}(\theta))g= f+u
$$
with
\begin{equation}\label{pollini3}
\begin{aligned}
u&:=\Big(\hat{N}^{(2)}_{n}+\hat{H}_{n}^{(\theta,0)}\cdot\del_{\theta}+(\Pi_{K_{n+1}}^{\perp}E^{(n)}_{1})D+(\Pi_{K_{n+1}}^{\perp}E_{0}^{(n)})
\Big)g
+\\
&+\Phi^{-1}\widehat{R}_{n}\Phi g
\end{aligned}
\end{equation}
We claim that
\begin{equation}\label{pollini4}
\begin{aligned}
|u|_{\vec{v},\gotp_1}&\leq \g_{n}^{-1}\e_0 K_{n}^{-\ka_1}K_{n+1}^{\mu_1}|f|_{\vec{v},\gotp_1},\\
|u|_{\vec{v},\gotp_2}&\leq \g_{n}^{-1} K^{\mu_1}\Big(|f|_{\vec{v},\gotp_2}+|f|_{\vec{v},\gotp_1}\mathtt{G}_0 K_{n}^{\ka_1+\ka_{5}}
\Big)
\end{aligned}
\end{equation}
which are the bounds \eqref{cribbio4}  for the $w-$component. The \eqref{pollini4} follow
recalling \eqref{numeretti} 
and using \eqref{buoni} to estimate $g$,
\eqref{corno111} to estimate $\Phi^{\pm 1}$, \eqref{strega21} to estimate $\widehat\RR_{n}$ in \eqref{corno99},
 \eqref{regulari66} and \eqref{passero22} to estimates the terms $\hat{N}^{(1)}_{n}$ and $\hat{H}_{n}^{(\theta,0)}$.
\end{proof}

\begin{rmk}\label{autoautoval}
Cosider $F_n$ the sequence of vector fields  given by Theorem \ref{thm:kambis} and consider the approximate eigenvalues
 $\mu_{\s,j}^{(n)}(\x)$ given by Lemma \ref{pipino}. By Remark \ref{laghetto2} and \ref{laghetto} one has
 that
 \begin{equation}\label{aut}
 \mu_{\s,j}^{(n)}(\x)=c_0 j^{2}+r_0^{j}+o(|\x|)j^{2}+o(|\x|)=\Omega_{j}^{\rm int}+o(|\x|)j^{2}+o(|\x|),
 \end{equation}
 where $\Omega^{\rm int}$ is defined in Section \ref{caspita2}.
\end{rmk}

We have the following Lemma.
\begin{lemma}\label{distanza}
For $n\geq0$ consider the operators
$E_{i}^{(n)}:=E_{i}(L_n)$, $i=0,1$, given in \eqref{strega5} and \eqref{lcappuccio}. Then, for $\x\in \calO_{n}$ of Proposition \ref{pipino},  one has that

\begin{equation}\label{stimalip}
\max_{i=1,0} |E_{i}(L_{n-1})\!-\!E_{i}(L_{n})|_{\vec{v},\gotp_{0}}
\leq \g_0\e_0 K_{n-1}^{-\ka_2+\mu_1+4},
\end{equation}

\end{lemma}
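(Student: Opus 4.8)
\textbf{Plan of proof for Lemma \ref{distanza}.}

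The statement compares the ``remainder'' parts $E_i(L_{n-1})$ and $E_i(L_n)$ of the reduced linearized operators in the normal directions at two consecutive steps of the KAM scheme. The plan is to trace the definition of $L_n$ through the construction in Proposition \ref{andrea} and Proposition \ref{pipino}, and to estimate the differences by the differences of the underlying vector fields $F_{n-1}$ and $F_n$. Recall that, by Proposition \ref{pipino}, $L_n$ is obtained from $\hat{L}_{n-1}$ (the linearization in the normal directions of $\Pi_\NN\hat F_{n-1}$, see \eqref{ilbuono}) by first applying the change of coordinates $\SSSS^{(n-1)}$ of Proposition \ref{birk} (Descent + linear Birkhoff normal form), whose coefficients are given by \eqref{birk8}, \eqref{birk4444} in terms of those of $\hat{L}_{n-1}$. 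Thus $E_i(L_n)$ is, up to controlled corrections, a smooth (polynomial-like) function of the coefficients $\hat a_i^{(n-1)},\hat b_i^{(n-1)},\hat c_i^{(n-1)},\hat d_i^{(n-1)}$ of $\hat{L}_{n-1}$, which in turn by \eqref{merlo}, \eqref{merlo1} of Lemma \ref{regularization} (applied to $F_{n-1}$) and by the definition $\hat F_{n-1}=(\calL_n)_*F_{n-1}$ are close to the coefficients $a_i^{(n-1)},b_i^{(n-1)},\dots$ of $N_{n-1}^{(1)}$.

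First I would set up the telescoping: write
$$
E_i(L_n)-E_i(L_{n-1}) = \big(E_i(L_n)-E_i(\hat L_{n-1})\big) + \big(E_i(\hat L_{n-1})-E_i(L_{n-1})\big).
$$
For the first bracket I would use the estimates \eqref{birk8}, \eqref{birk4444} from Proposition \ref{birk}, which say precisely that $\SSSS^{(n-1)}$ changes the coefficients of $\hat L_{n-1}$ by a quantity of size $O(|\x|^2\e_0)$ in low norm $\gotp_1$, hence (using Remark \ref{opmolt} and Lemma \ref{linesdecay} to pass to the decay norm $|\cdot|^{\rm dec}_{\vec v,\gotp_0}$) by $O(|\x|^2\e_0)$. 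For the second bracket I would use that $\hat F_{n-1}=(\calL_n)_* F_{n-1}$ and that, by Lemma \ref{regularization} (more precisely the bounds \eqref{merlo}--\eqref{merlo1}, together with the inductive bounds \eqref{strega}, \eqref{birk44} on $F_{n-1}$ from Proposition \ref{andrea}), the coefficients of $\hat L_{n-1}$ differ from those of $L_{n-1}$ by $O\big(K_n^{\h_1}\de^{(2)}_{\gotp_1}C_{\vec v,\gotp_1}(N^{(1)})\big)$, which by \eqref{strega} is bounded by $\g_0\e_0 K_{n-1}^{-\ka_2}$ times a power of $K_{n-1}$ coming from $\h_1$ and the smoothing. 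Collecting, and keeping the worst loss $K_{n-1}^{\mu_1+4}$ (which absorbs $\h_1$ and the Sobolev smoothing factors; one checks $\h_1\le\mu_1+4$ with the choices in Constraint \ref{sceltapar}, or else simply enlarges $\mu_1$), gives \eqref{stimalip}. I would also need to note that on the common domain of parameters $\calO_n$ both operators are defined, and that the decay-norm interpolation Lemma \ref{ bubboleaut} lets one move freely between the $\|\cdot\|_{\vec v,p}$ bounds on coefficients and the $|\cdot|^{\rm dec}_{\vec v,\gotp_0}$ bound on the operator.

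The main obstacle, I expect, is bookkeeping rather than conceptual: one must verify that the transformation $\calL_n$ that produces $\hat F_{n-1}$ from $F_{n-1}$, which is itself a composition of four changes of variables (multiplication operator, two diffeomorphisms of the torus, time reparametrization) from Lemma \ref{regularization}, perturbs the \emph{coefficients} of the pseudo-differential part (and not just the tameness constants) by the small quantities listed in \eqref{merlo}, and that the subsequent $\SSSS^{(n-1)}$ does the same. These statements are exactly the content of the coefficient-wise bounds \eqref{merlo1}, \eqref{merlo}, \eqref{birk8}, \eqref{birk4444}, so the proof is really an exercise in quoting them in the right order and summing the geometric loss in $K_{n-1}$; the only genuinely delicate point is checking that the exponent $-\ka_2+\mu_1+4$ is consistent with the exponents appearing in \eqref{strega} and in the smoothing estimates, which follows from Constraint \ref{sceltapar}. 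Hence the proof will consist of: (1) decompose the difference as above; (2) bound each piece by \eqref{birk8}--\eqref{birk4444} and \eqref{merlo}--\eqref{merlo1} together with the inductive bounds of Proposition \ref{andrea}; (3) convert to the decay norm via Remark \ref{opmolt}, Lemma \ref{linesdecay}, Lemma \ref{ bubboleaut}; (4) absorb all powers of $K_{n-1}$ into $K_{n-1}^{\mu_1+4}$.
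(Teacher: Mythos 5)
Your overall picture — that $E_i(L_n)$ is an explicit function of the coefficients of the linearized operator of the regularized field, which in turn are close to the coefficients of $F_n$ — is the right one, and your treatment of the second bracket (the effect of the compatible map $\calL$, controlled through \eqref{merlo}--\eqref{merlo1} together with the inductive bound \eqref{strega}) is essentially what the paper does. The genuine gap is in the first bracket. You bound $E_i(L_n)-E_i(\hat L_{n-1})$ by the \emph{size} of the coefficient change produced by the single conjugation $\SSSS$, i.e. by \eqref{birk8}, \eqref{birk4444}, which only give $O(|\x|^{2}\e_0)$. That quantity does not decay in $n$, while \eqref{stimalip} demands a bound $\g_0\e_0K_{n-1}^{-\ka_2+\mu_1+4}$ tending to zero; since $\g_0=\mathtt{c}|\x|$, your estimate would require $|\x|\lesssim K_{n-1}^{-\ka_2+\mu_1+4}$, which fails for $n$ large at fixed $\x$. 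The decay is not cosmetic: it is exactly what is used in Lemma \ref{lemma6.133} to verify the hypothesis \eqref{eq:4.2510} of $({\bf S4})_{\nu}$, where the difference must beat $\g 2^{-n}$. (There is also an index slip — in the paper $E_i^{(n)}$ comes from $\hat L_n$, the linearization of $(\calL_{n+1})_*F_n$, conjugated by $\SSSS^{(n)}$, not from $\hat L_{n-1}$ via $\SSSS^{(n-1)}$ — but that is harmless compared with the point above.)

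What the paper exploits, and what your telescoping forfeits, is a cancellation between the conjugations at consecutive steps: one does not estimate the effect of $\SSSS^{(n)}$, but the difference between $\SSSS^{(n)}$ and $\SSSS^{(n-1)}$. Concretely, the coefficients of $\hat L_n$ differ from those of the linearization of $F_n$ by a decaying amount (\eqref{ilbuono2}), and combined with \eqref{lamerd2} this yields the decaying bound \eqref{ilbuono3} on $\hat a_0^{(n)}-\hat a_0^{(n-1)}$ (and on the other coefficients). Since the descent step is generated by the explicit function $s^{(n)}$ of \eqref{birk16} (and the linear Birkhoff correction by \eqref{corvo32}), these consecutive differences transfer to $s^{(n)}-s^{(n-1)}$; the explicit formulas \eqref{birk12} for the transformed coefficients and the triangle inequality then show that $a_0^{(n,+)}-a_0^{(n-1,+)}$, hence $E_i(L_n)-E_i(L_{n-1})$, obey the same decaying bound. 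In short, the $O(|\x|^{2}\e_0)$ conjugation effects cancel at leading order when one takes consecutive differences, because the conjugating maps depend Lipschitz-continuously on coefficients which themselves vary only by $O(\g_0\e_0K_{n-1}^{-\ka_2+\mu_1+4})$; an argument that bounds each conjugation separately cannot produce \eqref{stimalip}.
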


\begin{proof}

We reason as follows. 
Recalling the form of the field $F_{n}$ in \eqref{regu2222} in Proposition \ref{andrea} we have that 
the linearized operator in the $w-$direction of the field $F_{n}$ has the form
\begin{equation}\label{corvo1000}
\begin{aligned}
L_{n}:=\Pi_{S}^{\perp}\oo_{n}\cdot\del_{\theta}+\Pi_{S}^{\perp}\left(-\ii E\left(\begin{matrix}m_{n} & 0\\ 0 & m_{n} \end{matrix}\right)\del_{xx}
-\ii E\left( \begin{matrix} a_{1}^{(n)} & b_{1}^{(n)}\\ -\bar{b}^{(n)}_1 & \bar{a}^{(n)}_{1} \end{matrix}\right)\del_{x}
-\ii E \left(\begin{matrix} a_{0}^{(n)} & b_{0}^{(n)}\\ -\bar{b}^{(n)}_0 & \bar{a}^{(n)}_{0} \end{matrix}
\right)
\right)\Pi_{S}^{\perp}+{\mathscr{K}}^{(n)},
\end{aligned}
\end{equation}
where $\hat{\mathscr{K}}^{(n)}$ has the form \eqref{maiale} with coefficients $c_{j}^{(n)},d_{j}^{(n)}$, $j=1,\ldots, N$. 
By 	the discussion above we know that 
the coefficients $\hat{a}_{i}^{(n)},\hat{b}_{i}^{(n)},\hat{c}_{j}^{(n)},\hat{d}_{j}^{(n)}$, $j=1,\ldots,N$,
satisfies
\begin{equation}\label{ilbuono2}
\|a_{0}^{(n)}-\hat{a}_{0}^{(n)}\|_{\vec{v},\gotp_0}\leq \g_0\e_0 K_{n-1}^{-\ka_1+\mu_1+4}.
\end{equation}
Bound \eqref{ilbuono2}
together with 
\eqref{lamerd2}
implies that 
\begin{equation}\label{ilbuono3}
\|\hat{a}_{0}^{(n-1)}-\hat{a}_{0}^{(n)}\|_{\vec{v},\gotp_0}\leq\g_0 \e_0 K_{n-1}^{-\ka_1+\mu_1+4}.
\end{equation}
We want to estimate the difference between ${a}_{0}^{(n-1,+)} $ and ${a}_{0}^{(n,+)}$
in \eqref{lcappuccio}.
One has that the transformation $\SSSS^{(n)}$ which conjugate $\hat{L}_{n}$ in \eqref{ilbuono}
to $L_{n}^{+}$ in \eqref{strega4} is generate by the function $s^{(n)}$ given in \eqref{birk16}.
The bounds \eqref{ilbuono3} implies the same bounds on the the difference $(s^{(n)}-s^{(n-1)})$.
Hence, by triangular inequality and the the form of the coefficients ${a}_{0}^{(n-1,+)} $ and ${a}_{0}^{(n,+)}$ given in \eqref{birk12} we can conclude that 
$({a}_{0}^{(n-1,+)} -{a}_{0}^{(n,+)})$ satisfies bound \eqref{ilbuono2}. The same holds for the other coefficients. This implies the thesis.

\end{proof}

\zerarcounters
\section{Measure estimates}\label{caspita2}
In this last Section we prove that the measure of the set of ``good'' parameters is large as $\x\to0$. In particular in Section \ref{sbroo}
we have seen that Theorem \ref{teoremap} holds in the set
\begin{equation}\label{lamerd5}
\mathcal{C}_{\e}:=\bigcap_{n\geq1}\calO_{n}, \qquad \calO_n:=\Lambda^{2\g_n}_{\nu}\cap P^{2\g_{n}}_{\nu}\cap\SSSS^{2\g_n}_{\nu},
\end{equation}
with $\nu$ defined in \eqref{strega7} (see Lemma \ref{pipino}).
Before performing such measure estimates we first prove that the map which link the parameters $\x$ to the frequency $\oo(\x)$ and $\x\to \mu_{\s,j}(\x)$
is a diffeomorphism.

\subsection{The ``twist'' condition}

Recall that, by definition (see \eqref{ini1}), we can write $F=N_{0}+G$ where $N_{0}:=(\la^{(-1)}+\la^{(0)}(\x))\cdot\del_{\theta}+\Omega^{(-1)}w\del_{\theta}$
where $ \la^{(-1)}_{j}:=j^{2}, \la^{(0)}_{j}(\x):=-(\MM\x)_{j}, \; j\in S^{+}$ (see \eqref{omeghino}) and $(\Omega^{(-1)})^{\s}_{\s}=\ii\s {\rm diag} j^{2}$, $(\Omega^{(-1)})_{\s}^{-\s}=0$. Hence
by equation \eqref{ini2}  we have
\begin{equation}\label{NF11}
\begin{aligned}
\Pi_{\NN}F&=\oo(\x,\theta)\del_{\theta}+\Omega(\theta,\x)w\del_{w},\\
\oo(\x,\theta)&=\oo^{(-1)}+\oo^{(0)}(\x)+G^{(\theta,0)}(\x,\theta), \\
 \Omega(\theta,\x)&=\Omega^{(-1)}+\Omega^{(0)}(\theta,\x)=d_{w}F^{(w)}(\theta,0,0)[\cdot]
=\Omega^{(-1)}+d_{w}G^{(w)}(\theta,0,0)[\cdot]
\end{aligned}
\end{equation}
Let us study in particular the linear operator $\Omega$ on
$ \Pi_S^\perp{\bf h}_{\rm odd}^{a_0,p}$.
We have that $\Omega^{(-1)}:=-iE\del_{xx} :  \Pi_S^\perp{\bf h}_{\rm odd}^{a_0,0}\to  \Pi_S^\perp{\bf h}_{\rm odd}^{a_0,p-2}$
where $E:={\rm diag}\{1,-1\}$. Moreover one has that $\Omega^{(0)}=((\Omega^{(0)})_{\s}^{\s'})_{\s,\s'=\pm1}$ can be seen as a 2 times 2
matrix  whose components are operator on $H^{a_{0},p}$. 

We have the vector field
\begin{equation}\label{approx1}
\left\{
\begin{aligned}
&\dot\theta=\omega(\x,\theta)+\Pi_{\NN^{\perp}}G^{(\theta)}(\theta,y,w)\\
&\dot y = G^{(y)}(\theta,y,w)\\
&\dot w = \Omega(\x,\theta)w+\Pi_{\NN^{\perp}}G^{(w)}(\theta,y,w)
\end{aligned}
\right.
\end{equation}
where $G$ is small. In order to run our algorithm we need to reduce the matrix $\Omega(\omega(\x) t; \x)$. In order to do this perturbatively we need to impose second Melnikov conditions. The minimal requirement (so that the reduction algorithm runs at least at a formal level) is that the difference of the eigenvalues is not identically zero as function of $\xi$, namely 
 \begin{itemize}
\item {\em Twist}. Denote by $\mu_j(\x)$  for $j\in S^{c}$ the  {\em eigenvalue functions} of $\Omega(\theta,\x)$. For all $l,j,k,\s_1,\s_2$  such that: if $\s_1=\s_2$ then $(l,j,k)\neq (0,j,j)$  and moreover $\sum_i l_i+ \s_1=\s_2$ consider the map 
\begin{equation}\label{imu}
\xi \to  {\omega}^{(0)}(\xi) \cdot l  + \s_1\mu_j(\x) - \s_2 \mu_k(\x)
\end{equation}
where we defined $\oo(\theta,\x):={\oo}^{(0)}(\x)+\Pi_{\NN}G^{(\theta)}=\la^{(-1)}+\la^{(0)}(\x)+\Pi_{\NN}G^{(\theta)}$.
We require that  these maps 
are never  identically zero. 
\end{itemize}
 This is the reason why we needed to introduce $\la^{(0)},\Omega^{(0)}$ since clearly
$$
\la^{(-1)} \cdot l + \Omega^{(-1)}_j \pm \Omega^{(-1)}_k\equiv 0
$$ 
for infinitely many choices of $l,j,k$. 

We split:
$$
\Omega(\theta,\x)= \Omega^{\rm int}(\x) +\widetilde{\Omega}^{(0)}(\theta,\x) 
$$ 
where
\begin{equation}\label{integra2}
\Omega^{(\rm int)}(\x)=\Omega^{(-1)}+ {\rm diag}(\tm_j\cdot\xi)_{j\in S^c\cap \ZZZ_+}=  {\rm diag}(j^2+\tm_j\cdot\xi)_{j\in S^c\cap\ZZZ_+},
\end{equation}

 $$
\tm_j^i=  \frac{1}{4}\big(C_{j}^{\tv_{i}}+C_{j}^{-\tv_i}\big)= \frac{1}{2}\Big(2\ta_{1}-\ta_{2}(j ^{2}+ \tv_i^2)+\ta_{3}\tv_{i}^2
-2\ta_{6}\tv_i^{2}-\ta_{7}\tv_i^{2}
-\ta_{4}\tv_{i}^2j^{2}
-2\ta_{5}\tv_{i}^{4}j ^{2}\Big)
$$


$$
\tm_j= \frac{1}{2}\Big(2\ta_{1}-\ta_{2}(j ^{2}+ \tV^2)+(\ta_{3}-2\ta_{6}-\ta_{7})\tV^2-\ta_{4}j^2\tV^2
-2\ta_{5}j^{2}\tV^{4}\Big) \vec 1\,,
$$

where $\tV:={\rm diag}_{i}( \mathtt{v}_i)$.
Note that  $\widetilde\Omega$ is of the same order as $\Omega^{(\rm int)}$, however it turns out that for {\em generic} choices of $\ta_1,\ldots,\ta_{5},\ta_{6},\ta_{7},\ta_{8},\tv_1,\ldots,\tv_d$:
\begin{itemize}
\item $\omega^{(0)}, \Omega_{\rm int}^{(0)}(\x)$ satisfy the {\em twist conditions}, namely $\forall l,j,k$ $\s_1,\s_2=0,\pm 1$  the affine maps
$$
\xi \to  \omega^{(0)}(\xi) \cdot l  + \s_1\Omega^{(\rm int)}_j(\x) - \s_2 \Omega^{(\rm int)}_k(\x)
$$
are not identically zero (with the usual restrictions on $(l,j,k)$).
\item The twist condition above implies the corresponding twist condition for the $\mu_j$ (see \eqref{imu}).
\end{itemize}

\noindent
Now we prove that our normal form satisfies the twist condition. First we introduce the following non-resonance condition.

  \begin{defi}
 We   say that $(\ta):=(\ta_1,\ta_2,\ta_3,\ta_4,\ta_{5},\ta_{6},\ta_{7},\ta_{8})$ is
 \emph{non-resonant} if one of the following occurs: 
 \begin{enumerate}
 \item $\ta_{5}\neq0$, 
 \item $\ta_{5}=0$ and $\ta_{1}\neq0$,
 \item $\ta_{5}=\ta_{1}=0$, $-\ta_4+\ta_{8}\neq0$ and  one of the following holds:
 \begin{itemize}
 \item  $\ta_{4}=0$, or
\item  $\ta_{4}\neq0$ and $(2d-1)\ta_4-\ta_{8}\neq0$ or
\end{itemize}
\item  $\ta_{5}=\ta_{1}=-\ta_4+\ta_{8}=0$, $\ta_{3}-\ta_2-\ta_{6}-\ta_{7}\neq 0$
and one of the following holds:
\begin{itemize}
\item  $\ta_{2}=0$ and $\ta_{3}-3\ta_{6}-\ta_{7}=0$, or
\item $\ta_{2}\neq0$, $\ta_{3}-\ta_2-3\ta_{6}-\ta_{7}\neq0$, or
\item $\ta_{2}=0$, $\ta_{3}-3\ta_{6}-\ta_{7}\neq0$ and $ \ta_3-\frac{6d+1}{2d+1}\ta_{6}-\ta_{7}\neq0$, or
\item $\ta_{2}\neq0$, $\ta_{3}-\ta_2-3\ta_{6}-\ta_{7}=0$ and 
$d \ta_{2}\neq \ta_{6}$.
\end{itemize} 
 \end{enumerate}


 \end{defi}

Note that a non-resonant vector $(\ta)$ is ``generic'' in the sense of Definition \ref{defgene}.

\begin{lemma}\label{twist1}
For  all
non-resonant choices of $(\ta)$
there exists a ``generic'' choice of 
the tangential sites $S^{+}=\{\tv_{1},\ldots,\tv_{d}\}\subset \NNN$ such that the map
\begin{equation}\label{NF3}
\e^{2}\Lambda\ni\x \to \oo^{(0)}(\x)= \la^{(-1)}(\x)-\MM \xi
\end{equation}
is a affine diffeomorphism. 
\end{lemma}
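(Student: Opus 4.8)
\textbf{Proof plan for Lemma \ref{twist1}.}
The statement asserts that, for non-resonant $(\ta)$, there is a generic choice of tangential sites $S^+=\{\tv_1,\ldots,\tv_d\}$ making the affine map $\x\mapsto\oo^{(0)}(\x)=\la^{(-1)}-\MM\x$ a diffeomorphism, i.e. $\MM$ invertible. Since $\MM$ is the $d\times d$ matrix with entries $\MM_{kh}=\tfrac14\big(C_{\tv_k}^{\tv_h}+C_{\tv_k}^{-\tv_h}\big)$, and by \eqref{weak3} $C_j^k=\chi_{kkjj}+\chi_{jkkj}$ with $\chi_{j_1j_2j_3j}$ the explicit cubic polynomial in \eqref{weak3} specialized to these arguments, the entries of $\MM$ are explicit polynomials in $(\tv_1^2,\ldots,\tv_d^2)$ with coefficients that are affine in $(\ta_1,\ldots,\ta_8)$. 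The plan is: (i) compute $\MM_{kh}$ explicitly as such a polynomial; (ii) regard $\det\MM$ as a polynomial in the $d$ variables $x_i:=\tv_i^2$ (with coefficients depending on $(\ta)$) and show it is not identically zero exactly when $(\ta)$ is non-resonant; (iii) conclude that the vanishing locus of $\det\MM$ is a proper algebraic subvariety, so for generic $\tv_i$ (in the sense of Definition \ref{defgene}) we have $\det\MM\neq0$, hence $\oo^{(0)}$ is an affine diffeomorphism.

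First I would carry out step (i): from \eqref{weak3}, $\chi_{j_1j_2j_3j}=\ta_1-\ta_2 j_3^2+\ta_3 j_1 j_2-\ta_6 j_2^2-\ta_7 j_1 j_2-\ta_4 j_1 j_2 j_3^2+\ta_8 j_1 j_2^2 j_3-\ta_5 j_1^2 j_2^2 j_3^2$. Specializing to $(j_1,j_2,j_3,j)=(\tv_k,\tv_k,\tv_h,\tv_h)$ and $(\tv_h,\tv_k,\tv_k,\tv_h)$, summing, and then symmetrizing in $\tv_h\to-\tv_h$ to form $\tfrac14(C_{\tv_k}^{\tv_h}+C_{\tv_k}^{-\tv_h})$, kills the terms odd in $\tv_h$ and yields a clean expression, which (comparing with the diagonal normal-form coefficient $\tm^i_j$ displayed just above in Section \ref{caspita2}, specialized to $j=\tv_k$, $\tv_i=\tv_h$) should read, up to the factor,
\begin{equation*}
\MM_{kh}=\tfrac12\Big(2\ta_1-\ta_2(\tv_k^2+\tv_h^2)+(\ta_3-2\ta_6-\ta_7)\tv_h^2-\ta_4\tv_k^2\tv_h^2-2\ta_5\tv_k^2\tv_h^4\Big),\qquad k\ne h,
\end{equation*}
with the analogous (but different) formula for $k=h$ coming from $C_j^j=\chi_{jjjj}$. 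Then $\det\MM$ becomes an explicit polynomial $P(x_1,\ldots,x_d)$, $x_i=\tv_i^2$, whose coefficients are polynomials in $(\ta)$.

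The substantive work — and the main obstacle — is step (ii): showing $P\not\equiv0$ precisely under the non-resonance hypothesis. I would organize this along the case split in the non-resonance definition, mirroring the hierarchy $\ta_5\neq0$; $\ta_5=0,\ta_1\neq0$; $\ta_5=\ta_1=0$, conditions on $\ta_4-\ta_8$; $\ta_5=\ta_1=\ta_4-\ta_8=0$, conditions on $\ta_3-\ta_2-\ta_6-\ta_7$ and the finer sub-conditions. In each case the highest-degree part of $\MM_{kh}$ in the relevant variables is governed by the leading nonzero parameter (e.g. $\ta_5$ contributes the degree-$(2,3)$ term $-\ta_5 x_k x_h^2$), and one picks out a suitable monomial of $\det\MM$ whose coefficient is a nonzero polynomial in the surviving parameters; a Vandermonde-type argument handles the off-diagonal structure. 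The ``resonant'' cases of Definition \ref{33} are exactly those where every candidate leading coefficient of $\det\MM$ degenerates, so $P\equiv0$ — this is the content of Remark \ref{nometto} and the assertion that the Jacobian has rank $\le2$. Finally step (iii) is immediate: $P$ is a nontrivial polynomial in $d$ variables, so $\{\tv\in\NNN^d: P(\tv_1^2,\ldots,\tv_d^2)=0\}$ is contained in a proper algebraic hypersurface; taking this $P$ (together with the one from Lemma \ref{hyp3autaut}) as the genericity polynomial of Definition \ref{defgene}, for any generic $S^+$ the matrix $\MM$ is invertible and the affine map \eqref{NF3} is a diffeomorphism. I would also remark that the second bullet of the preceding discussion — that twist for $\oo^{(0)},\Omega^{\rm int}$ propagates to twist for the true eigenvalues $\mu_j$ — is handled separately in Lemma \ref{twist666}, so here it suffices to treat the affine model, whose diffeomorphism property is exactly invertibility of $\MM$.
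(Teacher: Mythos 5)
Your route is the paper's own: reduce the lemma to the invertibility of $\MM$, regard $\det\MM$ as a polynomial in the sites, split $\MM$ into homogeneous components in $\tv$, and run a case analysis keyed to which of the $\ta_i$ vanish, concluding by genericity. Steps (i) and (iii) are fine, and your handling of the two easy regimes agrees with the paper: for $\ta_1\neq0$ one evaluates at $\tv=0$, where $\det\MM^{(0)}=\ta_1^d(2d-1)\neq0$, and for $\ta_5\neq0$ one looks at the top component $\MM^{(6)}=-\ta_5\tV^2(4A-\uno)\tV^4$, $A$ the all-ones matrix.

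The genuine gap is in the regime $\ta_1=\ta_5=0$, which is exactly where all the finer sub-conditions of the non-resonance definition live, and which you leave at the level of ``pick a suitable monomial; a Vandermonde-type argument handles the off-diagonal structure.'' That hint does not match the actual structure: the extreme components there are $\MM^{(2)}=\la\tV^2+\al A\tV^2+\be\tV^2A$ (with $\la,\al,\be$ explicit combinations of $\ta_2,\ta_3,\ta_6,\ta_7$) and an analogous $\MM^{(4)}$ built from $\ta_4,\ta_8$, i.e.\ a multiple of $\tV^2$ (resp.\ $\tV^4$) plus a perturbation of rank at most two coming from the rank-one matrix $A$ — not a Vandermonde pattern. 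What is actually needed, and what the paper does, is to factor $\MM^{(2)}=(\uno+R)\tV^2$ with $R$ of rank $\le2$, compute its two possibly nonzero eigenvalues explicitly in terms of $d$, $\al/\la$, $\be/\la$ and the site-dependent sums $\sum_i\tv_i^2$, $\sum_i\tv_i^{-2}$, and then verify the crucial point that these eigenvalues depend \emph{nontrivially} on the $\tv_i$: only because of this can the nondegeneracy $1+\mu_{1,2}(\tv)\neq0$ be imposed as a genericity condition on the sites rather than as an extra restriction on $\ta$ (and in the boundary sub-cases $\al=0$ or $\be=0$ it collapses to the explicit inequalities on $\ta_2,\ta_3,\ta_6,\ta_7$ appearing in the non-resonance list). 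The same analysis must be repeated for $\MM^{(4)}$ when $\la=0$ but $\ta_8-\ta_4\neq0$, and one must also observe that in the fully degenerate case $\la=0$, $\ta_4=\ta_8$ the whole matrix $\MM$ has rank at most $2$, consistently with those parameters being resonant. Without carrying out this rank-two eigenvalue analysis, your step (ii) — the step you yourself flag as the substantive one — is not established, and the claimed exact match between ``resonant'' and ``$\det\MM\equiv0$'' remains an assertion rather than a proof.
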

\begin{proof}
Since $\omega^{(0)}$ is affine we only need to show that $\MM$ is invertible. Recalling that $\MM_{ij}=(1/4)\big(C_{\tv_i}^{\tv_j}
+C_{\tv_i}^{-\tv_j}\big)$ for $i,j=1,\ldots,d$.
It is convenient to represent 
\begin{equation}\label{integra3}
\MM= \frac14 \sum_{k=0}^3 \MM^{(2k)} 
\end{equation}
where the matrix elements $\MM^{(2k)} $ are homogeneous of degree $2k$ in the variables $\tv_1,\ldots,\tv_d$.
More precisely setting $V=$ diag$(\tv_i)$,  $A_{ij}=1$, forall $i,j$, we have
$$\MM^{(0)}= \ta_1 (4A - \uno )  \,, \quad \MM^{(6)}= -\ta_{5} \tV^2 (4A - \uno ) \tV^4
$$
$$
\MM^{(2)}:=-(\ta_{3}-\ta_2-\ta_{6}-\ta_{7})\tV^{2}+
(\ta_{3}-\ta_2-3\ta_{6}-\ta_{7})2A\tV^{2}- 2\ta_{2}\tV^2 A 
$$
$$
\MM^{(2)}_{ik}=\left\{\begin{array}{ll}  (\ta_{3}-\ta_2)\tv_i^2 -2 \ta_2 \tv_i^2 -3\ta_{6} \tv_{i}^{2}-\ta_{7}\tv_i^{2}
\; & \text{ if}\quad i=k\\
2(\ta_{3}-\ta_2)\tv_{k}^{2}
-2\ta_{2}\tv_{i}^{2} -4\ta_{6}\tv_{k}^{2}-2\ta_{7} \tv_{k}^{2}
\; &\text{ if}\quad i\neq k\end{array}\right.
$$
$$
\MM^{(4)}_{ik}=\left\{\begin{array}{ll}  (-\ta_4-\ta_{8})\tv_i^4 \; & \text{ if}\quad i=k\\
-2\ta_{4}\tv_{i}^{2}\tv^2_{k} 
\; &\text{ if}\quad i\neq k\end{array}\right.
$$

$$
\MM^{(4)}:=-(-\ta_4+\ta_{8})\tV^{4}
-2\ta_4\tV^{2}A\tV^2
$$

We now compute $P(\ta,\tv):=$det $(\MM)$ which is a  non trivial polynomial in $(\ta_1,\ldots,\ta_{5},\ta_{6},\ta_{7},\ta_{8},\tv_1,\ldots,\tv_d)$.
Indeed  $P(\ta,0)=$ det$(\MM^{(0)})=\ta_1^d (2d -1)$, so  {\em for any } $\ta$ such that $\ta_1\neq 0$ we  impose 
$P(\ta,\tv)\neq 0$ as generiticity condition on the $\tv$.

In the same way, the term of highest degree in $\tv$ is  det$(\MM^{(6)})=\ta_{5} (2d-1) \prod_i \tv_i^6$, so again {\em for any } $\ta$ such that $\ta_{5}\neq 0$ we  impose 
$P(\ta,\tv)\neq 0$ as generiticity condition on the $\tv$.

We are left with the case $\ta_1=\ta_{5}=0$.
Now the term of minimal degree is det$(\MM^{(2)})$ while term of maximal degree is det$(\MM^{(4)})$.
Now we show that for generic choices of $\ta_{i}$ then det$(\MM^{(2)})$ 
is not identically zero as function of the $\tv_{i}$. First of all we set
\begin{equation}\label{dylan3}
\la:=-(\ta_{3}-\ta_2-\ta_{6}-\ta_{7}), \quad \al:=(\ta_{3}-\ta_2-3\ta_{6}-\ta_{7})2, \quad \be:=-2\mathtt{a}_{2}.
\end{equation}
Hence we can write
\begin{equation*}
\MM^{(2)}=\la \tV^{2}+\al A\tV^{2}+\be \tV^{2}A.
%
\end{equation*}
Assume that $\la\neq0$.
Now if $\be =0$ and $\al\neq0$ then one has
$$
\MM^{(2)}:=(\uno+\frac{\al}{\la}A)\tV^{2}.
$$
The first matrix in the product is invertible if has all the eigenvalues different from zero. Hence we impose that 
\begin{equation}\label{dylan}
1+\frac{\al}{\la}d\neq0, \quad {\rm i.e.} \quad \ta_3-\ta_2-\frac{6d+1}{2d+1}\ta_{6}-\ta_{7}\neq0.
\end{equation}
If on the contrary $\al=0$ and $\be\neq0$ then 
$$
\MM^{(2)}:=\tV^{2}(\uno+\frac{\be}{\la}A),
$$
which is invertible if
\begin{equation}\label{dylan2}
1+\frac{\be}{\la}d\neq0, \quad {\rm i.e.} \quad \ta_3-(2d+1)\ta_2-\ta_{6}-\ta_{7}\neq0.
\end{equation}
%
%
%
%
Consider the case $\al\neq0$ and $\be\neq0$. Then we have 
\begin{equation*}
\MM^{(2)}= (\uno+\frac{\al}{\la} A+\frac{\be}{\la} \tV^{2}A\tV^{-2})\tV^{2}.
\end{equation*}
 The invertibility of $\MM^{(2)}$ relies on the invertibility of the matrix $\uno+R:=\uno+\frac{\al}{\la} A+\frac{\be}{\la} \tV^{2}A\tV^{-2}$. 
  We have that $R$ has at most rank 2, hence has at most  two eigenvalues different
  from zero.
  Say that $\mu_{1,2}=\mu_{1,2}(\tv_{i})$ is such eigenvalues that in principle depends on the $\tv_i$. Now one has that $\uno+R$ has $d-2$ eigenvalues equals to $1$
  and two equals to $1+ \mu_{1,2}(\tv_i)$. One must have that $1+ \mu_{1,2}(\tv_i)\neq0$. Hence if $\mu(\tv_i)$ is not a trivial polynomial in the variables $\tv_{i}$
  then one get the invertibility of $\MM^{(2)}$ as generiticity condition on $\tv_i$. Otherwise one has to exclude some values of $\frac{\al}{\la}$ and $\frac{\be}{\la}$ by imposing a generiticity condition on $\ta_{2},\ta_{3},\ta_{6},\ta_{7}$ (as done in equations \eqref{dylan} and \eqref{dylan2}) and then taking a generic choice of $\tv_i$. 
  This second option does not occur. Indeed one note that
  the vector $\vec{w}_{1}:=(1,\ldots,1)\in \RRR^{d}$ is orthogonal to the kernel of the matrix $\frac{\al}{\la}A$.  
  Moreover the vector $\vec{w}_{2}:=\vec{v}$, where $\vec{v}:=(\tv^{2}_{1},\ldots,\tv^{2}_{d})$, is orthogonal to the kernel of the matrix
  $y\tV^{2}A\tV^{-2}$. Hence the range of the matrix $R$ is generated by $\{\vec{w}_{1},\vec{w}_{2}\}$. 
  One can note that
  \begin{equation}\label{zephyr}
  \begin{aligned}
  &(\frac{\al}{\la} A+\frac{\be}{\la} \tV^{2}A\tV^{-2}) \vec{w}_{1}=\frac{\al}{\la}d\vec{w}_1+\frac{\be}{\la}C_{1} \vec{w}_{2},\\
  &(\frac{\al}{\la} A+\frac{\be}{\la} \tV^{2}A\tV^{-2}) \vec{w}_{2}=\frac{\al}{\la}C_{2}\vec{w}_{1}+\frac{\be}{\la}d \vec{w}_{2},\\
  &{\rm where}  \quad 
  C_{1}:=\sum_{i=1}^{d}\frac{1}{\tv_{i}^{2}}, \quad C_{2}:=\sum_{i=1}^{d}\tv_{i}^{2}.
  \end{aligned}
  \end{equation}
  The $2\times2$ matrix which represent the matrix $R$ has eigenvalues given by
  \begin{equation}\label{zephyr2}
  \mu_{1,2}:=\frac{1}{2\la}\left(d(\al+\be)\pm\sqrt{d^{2}(\be-\al)^{2}+4\al\be C_{1}C_{2}}\right)
  \end{equation}
  
  The dimension of the range of $R$,
  for any $\al/\la \neq0$ and $\be/\la \neq0$,
depends  only on the $\tv_{i}$ for $i=1,\ldots, d$.
  
  
The same reasoning  holds verbatim if $\ta_1=\ta_{5}=0$, $\la=0$ (see \eqref{dylan3}) but  
\begin{equation}\label{dylan4}
\la_{1}:=-(\ta_{8}-\ta_4),
\end{equation}
indeed one can write
\begin{equation*}
\MM^{(4)}:=\tV^{4}
+\frac{-2\ta_4}{\la_1}\tV^{2}A\tV^{2}.
\end{equation*}
Here, as in the case of $\MM^{(2)}$, we get some additional conditions
on $\ta_i$ :
if $\ta_{4}=0$, then $\MM^{(4)}$ is invertible, otherwise we have the invertibility of the matrix if
\begin{equation}\label{dylan5}
\begin{aligned}
\ta_4\neq0\;\;\; {\rm and } \;\;
\ta_{4}(2d-1)-\ta_{8}\neq0.
\end{aligned}
\end{equation}
Suppose that $\la=\la_1=0$
then 
$$
\MM= (\ta_{3}-\ta_2-3\ta_{6}-\ta_{7})2A\tV^{2}
-2\ta_2\tV^{2}A-2\ta_4\tV^{2}A\tV^{2},
$$
which has at most rank $2$.
 \end{proof}

\begin{lemma}\label{twist666}
For all non-resonant choices of $(\ta_1,\ta_2,\ta_3,\ta_4\ta_{5},\ta_{6},\ta_{7},\ta_{8})$ there exists a no-trivial polynomial in the $\tv_i$ 
such that for all choices of $(\tv_1,\ldots,\tv_d)$ with $\tv_i$ ``generic'' with respect to the polynomial the following holds.
For all $\ell,j,k,\s_1,\s_2$  such that: if $\s_1=\s_2$ then $(l,j,k)\neq (0,j,j)$  and moreover $\sum_i l_i+ \s_1=\s_2$ the affine map
\begin{equation}\label{affinemap}
\xi \to  \omega^{(0)}(\xi) \cdot l  + \s_1\Omega^{(\rm int)}_j(\x) - \s_2 \Omega^{(\rm int)}_k(\x)
\end{equation}
is not identically zero.
\end{lemma}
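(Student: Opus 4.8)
\textbf{Proof proposal for Lemma \ref{twist666}.}

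The plan is to reduce the statement about the affine maps \eqref{affinemap} to the twist statement already proved in Lemma \ref{twist1} for $\omega^{(0)}$, together with an explicit computation of the coefficients of $\Omega^{(\rm int)}_j(\x)$ in $\x$. Recall from \eqref{integra2} that $\Omega^{(\rm int)}_j(\x)= j^2 + \tm_j\cdot\xi$ with $\tm_j$ given explicitly in terms of the $\ta_i$ and $\tv_i$; hence each map \eqref{affinemap} is genuinely affine in $\x$, of the form $\x\mapsto c(\ell,j,k,\s_1,\s_2)+ v(\ell,j,k,\s_1,\s_2)\cdot \x$ with constant term $c = \la^{(-1)}\cdot\ell + \s_1 j^2 - \s_2 k^2$ and linear part $v = -\MM^T\ell + \s_1 \tm_j - \s_2 \tm_k$. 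The map is identically zero if and only if both $c=0$ and $v=0$. So the task is to show that, for generic $\tv$, the set of triples $(\ell,j,k)$ with $\s_1,\s_2$ satisfying the gauge constraint $\sum_i \ell_i + \s_1 = \s_2$ for which $c=0$ and $v=0$ simultaneously hold is empty (excluding the trivial triple $(0,j,j)$ when $\s_1=\s_2$, which is already excluded by hypothesis).

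First I would dispose of the cases $\s_1=\s_2$ and $\s_1=-\s_2$. If $\s_1 = \s_2 = \s$ the gauge constraint forces $\sum_i \ell_i = 0$; then $c = \la^{(-1)}\cdot\ell + \s(j^2-k^2)$ and $v = -\MM^T\ell + \s(\tm_j - \tm_k)$. The vanishing of $v$ combined with invertibility of $\MM$ (Lemma \ref{twist1}) expresses $\ell$ as an explicit function of $j,k$ and the $\ta_i,\tv_i$; substituting into $c=0$ gives a polynomial relation among $j^2,k^2$ and the $\tv_i$ which, for generic $\tv$, can only hold when $j=\pm k$ and $\ell=0$, i.e. the excluded trivial case. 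The key point here is that $\tm_j$ is a \emph{polynomial of degree at most one in $j^2$} (in fact linear in the collection $1, j^2$; inspection of the formula in \eqref{integra2} shows $\tm_j^i$ is affine in $j^2$), so $\tm_j - \tm_k$ is proportional to $(j^2-k^2)$ times a vector independent of $j,k$; hence the system $v=0$, $c=0$ becomes a $2\times 2$ (in the scalar unknowns $\sum\ell_i$-constrained) consistency condition that one can make generically incompatible. The case $\s_2 = -\s_1$ forces $\sum_i\ell_i = -2\s_1$, and here $c = \la^{(-1)}\cdot\ell + \s_1(j^2+k^2)$ with $j^2+k^2 \geq 1 > 0$; since the term $\s_1(j^2+k^2)$ has a definite sign and grows, while for the linear part $v=0$ forces $\ell$ into a bounded-in-$\ta$ expression, a finite check (or a degree/positivity argument) rules out simultaneous vanishing for generic $\tv$.

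The main obstacle — and the place where genericity is really used — is controlling the finitely many "small" triples $(\ell,j,k)$ that survive the sign/growth arguments: for each such triple the condition "$c=0$ and $v=0$" is a polynomial equation in $(\ta,\tv)$, and one must check that it is not \emph{identically} satisfied on the non-resonant locus of $\ta$'s. This is exactly analogous to the structure of the proof of Lemma \ref{twist1}: one exhibits, for each of the finitely many relevant triples, a nontrivial polynomial factor in the $\tv_i$ (whose coefficients may depend on $\ta$), takes the product of all these factors together with the polynomial produced by Lemma \ref{twist1} and by Lemma \ref{hyp3autaut}, and declares $\tv$ generic if it avoids the zero set of this product. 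The delicate bookkeeping is that the number of relevant triples is a priori infinite, so one must first establish the \emph{a priori bound} $|\ell|, |j|, |k| \leq C(\ta)$ on any triple for which $v=0$ is even possible — this follows because $v = -\MM^T\ell + (\s_1\tm_j - \s_2\tm_k)$ with $\MM^T$ invertible and with $\tm_j$ of controlled polynomial growth forces $|\ell|$ comparable to $\max(j^2,k^2)$, while simultaneously $c=0$ forces a near-cancellation $|\la^{(-1)}\cdot\ell| \sim \max(j^2,k^2)$, and these two are incompatible once $\max(j,k)$ exceeds a threshold determined by $\|\MM^{-1}\|$ and the $\ta_i$. Once finiteness is secured, the remaining argument is the routine "intersection of co-dimension one algebraic sets" reasoning, identical in spirit to the closing lines of Lemma \ref{twist1}. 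Finally, I would invoke the last bullet in the twist discussion — that the twist condition for $\Omega^{(\rm int)}$ implies the corresponding condition for the true eigenvalues $\mu_j$ — which is a perturbative statement using $|\mu_j(\x) - \Omega^{(\rm int)}_j(\x)| = o(|\x|)$ from Remark \ref{autoautoval}; this transfer is not part of the present lemma's statement but closes the loop needed in Section \ref{caspita2}.
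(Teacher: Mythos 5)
Your reduction to ``the affine map vanishes identically iff both the constant part and the linear part vanish'' is the right starting point, but the execution has a genuine gap exactly where the lemma's content lies. The set of triples $(\ell,j,k)$ with $\la^{(-1)}\cdot\ell+\s_1 j^2-\s_2 k^2=0$ is infinite (for unboundedly many $\ell$ one can solve $\s_1 j^2-\s_2 k^2=-\la^{(-1)}\cdot\ell$), and your proposed a priori bound $|\ell|,|j|,|k|\le C(\ta)$ does not hold: the two asymptotics you invoke, $|\ell|\sim\max(j^2,k^2)$ from $v=0$ and $|\la^{(-1)}\cdot\ell|\sim\max(j^2,k^2)$ from $c=0$, are perfectly compatible, since $\la^{(-1)}\cdot\ell$ is itself of size up to $|\ell|\max_i\tv_i^2$ and can take either sign, so no threshold in $\max(j,k)$ is produced. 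For the same reason the positivity/growth argument in the case $\s_1=-\s_2$ fails: $\s_1(j^2+k^2)$ has no reason to dominate $\la^{(-1)}\cdot\ell$, which is an arbitrary signed integer combination of the $\tv_i^2$. Because infinitely many resonant triples survive, the ``finite product of co-dimension one conditions'' cannot be formed, and the single nontrivial polynomial demanded by Definition \ref{defgene} is never exhibited; a genericity condition depending on the triple $(\ell,j,k)$ is of no use here.

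The paper closes this gap by a uniform algebraic cancellation that your proposal does not perform. When the constant part vanishes one has $(\s_1 j^2-\s_2 k^2)\vec 1=-A\tV^2\ell$, and by the gauge constraint $\sum_i\ell_i=\s_2-\s_1$ also $(\s_1-\s_2)\vec 1=-A\ell$, where $A$ is the all-ones matrix. Substituting these into $\MM^T\ell+\s_1\tm_j-\s_2\tm_k$ and using the explicit decomposition \eqref{integra3} of $\MM$, the rank-one pieces cancel and the linear part collapses to
\begin{equation*}
-\tfrac14\,{\rm diag}\big(p(\tv_i)\big)\,\ell\,,\qquad p(x)=\ta_1+(\ta_3-\ta_2-\ta_6-\ta_7)x^2+(\ta_8-\ta_4)x^4-\ta_5 x^6\,,
\end{equation*}
and $p$ is a nontrivial polynomial precisely by the non-resonance assumption on $\ta$. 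Hence the single condition $p(\tv_i)\neq0$ for all $i$ works simultaneously for every admissible $(\ell,j,k,\s_1,\s_2)$: it forces $\ell=0$, which in the case $\s_1=\s_2$ gives $j=k$ (the excluded triple) and in the case $\s_1=-\s_2$ contradicts the gauge constraint. In particular neither the invertibility of $\MM$ from Lemma \ref{twist1}, nor any case analysis, nor any finiteness argument is needed. Your treatment of $\s_1=\s_2$ could be repaired along your lines (using $\tm_j-\tm_k=(j^2-k^2)$ times a fixed vector it reduces to one extra non-degeneracy condition that would still have to be checked on the non-resonant locus), but without the uniform identity above the case $\s_1=-\s_2$ and the infinitude of resonant triples remain unhandled, so the proof as proposed does not go through.
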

\begin{proof}
$$
\omega^{(0)}(\xi) \cdot l  + \s_1\Omega^{(\rm int)}_j(\x) - \s_2 \Omega^{(\rm int)}_k(\x)=
$$
$$\la^{(-1)}\cdot \ell  + \s_1j^2-\s_2 k^2 + \big(\MM^T\ell+ \s_1 \tm_j -\s_2 \tm_k\big)\cdot \xi$$
then if $\la^{(-1)}\cdot \ell  + \s_1j^2-\s_2 k^2=0$, and using \eqref{integra2},   we look at the vector
$$
\MM^T\ell+ \frac{1}{2} (\s_1-\s_2)\big( 2\ta_{1}+(\ta_3-\ta_2 -2\ta_{6}-\ta_{7})\tV^2\big)\vec 1 +
$$
$$
+\frac{1}{2}
(\s_1j ^{2}-\s_2k^2)\big(-\ta_2-\ta_4)\tV^2 -2\ta_{5}\tV^4)\vec 1=
  $$
$$
\Big(\MM^T +\frac{1}{2}\Big(\ta_2+\ta_4\tV^2+
2\ta_{5}\tV^4\Big)A\tV^2-\frac{1}{2}\big( 2\ta_{1}+(\ta_3-\ta_2 -2\ta_{6}-\ta_{7})\tV^2\big)A \Big) \ell 
$$
since $(\s_1j^2-\s_2 k^2) \vec 1= -   A\tV^2 \ell $  and $(\s_1-\s_2)\vec 1= -A \ell$.
Hence, by using \eqref{integra3}, 
we say that a list $(\ta,\tv)$ is acceptable if for all $\ell,j,k$ such that $\sum_i \ell_i= -\s_1+\s_2$ one has 
\begin{equation}\label{puffo}
-\frac{1}{4}\Big(\ta_1\uno+(\ta_3-\ta_2-\ta_{6}-\ta_{7})\tV^{2}+(-\ta_4+\ta_{8})\tV^{4}-\ta_{5}\tV^{6}\Big)\ell\neq0
\end{equation}
then one only needs to require that none of the $\tv_i$ satisfy  
$$
p(x):=\ta_1 +(\ta_3-\ta_2-\ta_{6}-\ta_{7})x^2+(-\ta_4+\ta_{8})x^4-\ta_{5}x^{6}=0.
$$

The hypothesis of non resonance implies that $p$ is not identically zero.
%
%
\end{proof}

\begin{rmk}\label{NF6}
Just to fix the ideas we give some examples of cubic non linearity (see \eqref{6.5}) for which the extraction of parameters give the twist condition on 
the tangential sites. The classical cubic NLS with $a_{1}=1$,
$a_{i}\equiv0$ for $i=2,\ldots,8$. 
The derivative NLS $a_{3}=1$, $a_{i}=0$ for $i=1,\ldots,8$ 
(this case has been studied in
\cite{ZGY}).
\end{rmk}

\subsection{ The estimates of ``good'' parameters}

We prove the following Proposition.
\begin{proposition}\label{misu2}
Consider the set $\mathcal{C}_\e$ defined in \eqref{lamerd5}. One has that
\begin{equation}\label{misu}
|\calO_0\backslash \mathcal{C}_{\e}|\leq \mathtt{c}, \quad  \g_0=\mathtt{c}|\x|.
\end{equation}
\end{proposition}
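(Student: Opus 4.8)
The plan is to exploit the monotonicity $\calO_{n+1}\subseteq\calO_n$ built into Proposition \ref{pipino} (each of $\Lambda^{2\g_n}_{n+1},P^{2\g_n}_{n+1},\SSSS^{2\g_n}_{n+1}$ is defined as a subset of $\calO_n$) to telescope the complement:
\[
\calO_0\setminus\mathcal{C}_{\e}=\bigcup_{n\geq0}\big(\calO_n\setminus\calO_{n+1}\big)\,,\qquad\text{hence}\qquad |\calO_0\setminus\mathcal{C}_{\e}|\leq\sum_{n\geq0}|\calO_n\setminus\calO_{n+1}|\,.
\]
For each $n$, the set $\calO_n\setminus\calO_{n+1}$ is covered by the resonant strips of level $n$: those $\x\in\calO_n$ for which one of the three inequalities in \eqref{corno77} fails with the level-$n$ data $(\oo_{n+1},\mu^{(n)}_{\s,j})$, over all admissible $(l,\s,j,\s',j')$ with $|l|\leq K_{n^*}$. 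So everything reduces to a uniform bound on the measure of each such strip and to summing the tail in $n$.

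The quantitative input is a gradient lower bound for the resonant combinations. By Remark \ref{autoautoval}, $\mu^{(n)}_{\s,j}(\x)=-\ii\s\big(\Omega^{\mathrm{int}}_j(\x)+o(|\x|)j^2+o(|\x|)\big)$ with $\Omega^{\mathrm{int}}_j(\x)=j^2+\tm_j\cdot\x$ as in \eqref{integra2}, while Proposition \ref{andrea} gives $|\oo_{n+1}-\oo^{(0)}|_{\g}\leq C|\x|\e_0$ uniformly in $n$, and Lemma \ref{distanza} controls the $n$-dependence of the lower-order coefficients. Thus each relevant function $\x\mapsto\oo_{n+1}(\x)\cdot l+\s j^2-\s'j'^2+\big(\MM^{T}l+\s\tm_j-\s'\tm_{j'}\big)\cdot\x+(\text{error})$ is an $O\big(|\x|\e_0(|l|+j^2+j'^2)\big)$-perturbation of the affine map appearing in Lemma \ref{twist666} (resp.\ Lemma \ref{twist1} for the $\SSSS$-conditions, resp.\ the one-index version for the $P$-conditions). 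For the generic choice of $\tv_1,\dots,\tv_d$ those affine maps are not identically zero; using in addition the explicit leading behaviour of $\tm_j$ in $j$ (e.g.\ $\tm_j\sim-\ta_5 j^2\tV^4\vec1$ when $\ta_5\neq0$, with the corresponding alternatives when $\ta_5=0$), one obtains, on the resonant sheet $\la^{(-1)}\cdot l+\s j^2-\s'j'^2=0$, a bound $|\grad_\x\phi_{l,j,j'}(\x)|\geq c\max(|\s j^2-\s'j'^2|,1)$ for all admissible indices. Off that sheet the combination is a nonzero integer perturbed by $O\big(|\x|\e_0(|l|+j^2+j'^2)\big)$, hence $\geq1/2$ unless $|l|+j^2+j'^2\gtrsim|\x|^{-1/2}$; and in that last regime the strip is empty because its width $2\g_n\max(|\s j^2-\s'j'^2|,1)/\langle l\rangle^\tau\leq2\mathtt{c}|\x|\max(|\s j^2-\s'j'^2|,1)$ is beaten by $|\phi_{l,j,j'}|$.

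With the gradient bound in hand, integrating along the bad direction gives, for each resonant strip, measure $\lesssim(\g_n/\langle l\rangle^\tau)\,\e^{2(d-1)}$. The dichotomy just described confines the relevant pairs to $|\s j^2-\s'j'^2|\lesssim|l|$ (and to $j,j'$ bounded in terms of $|l|$ in the $\s=-\s'$ and first-Melnikov cases), leaving $O(|l|^{1+o(1)})$ pairs per $l$; summing over $|l|\leq K_{n^*}$ with $\tau\geq d+1$ as in Constraint \ref{sceltapar} yields $|\calO_n\setminus\calO_{n+1}|\lesssim\g_n\,\e^{2(d-1)}$. To make $\sum_n$ summable one argues in the standard way: a strip indexed by $|l|\leq K_{(n-1)^*}$ was already removed at a previous step, and since the data changed only by $\lesssim\g_n\langle l\rangle^{-\tau}\ll(\g_{n-1}-\g_n)\langle l\rangle^{-\tau}$ (Lemma \ref{distanza} together with the strict decrease of $\g_n$ in \eqref{numeretti}), no new point of $\calO_n$ can enter it; hence only the finitely many $|l|\in(K_{(n-1)^*},K_{n^*}]$ contribute at level $n$, and $\sum_{n\geq0}\g_n\e^{2(d-1)}\lesssim\g_0\e^{2(d-1)}=\mathtt{c}|\x|\,\e^{2(d-1)}\sim\mathtt{c}\,\e^{2d}$. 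Since $\calO_0=\e^2[\tfrac12,\tfrac32]^d$, this is exactly \eqref{misu} (equivalently $\e^{-2d}|\mathcal{C}_\e|\geq1-\mathtt{c}$, as needed for \eqref{asyaut}).

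The main obstacle is precisely the uniform-in-$(l,j,j',n)$ gradient lower bound for the \emph{perturbed} eigenvalue combinations. The corrections $o(|\x|)j^2$ to $\Omega^{\mathrm{int}}_j$ are of the same order in $|\x|$ as the leading affine term $\tm_j\cdot\x$, so one cannot discard them as negligible; instead one must use that, by construction (Remark \ref{autoautoval} and Lemma \ref{distanza}), the \emph{$\x$-affine part} of $\mu^{(n)}_{\s,j}$ is exactly $\Omega^{\mathrm{int}}$ up to a genuinely higher-order $\x^2$ remainder, and transfer to it the non-vanishing established in Lemmata \ref{twist1}--\ref{twist666}. Once this is secured, the remaining steps --- the index dichotomy, the Fubini estimate, and the telescoping in $n$ --- are a routine repetition of the measure estimates of \cite{FP} and \cite{BBM}.
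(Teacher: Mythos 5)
Your overall scheme coincides with the paper's: you telescope $\calO_0\setminus\mathcal{C}_{\e}$ over the steps, cover each difference $\calO_n\setminus\calO_{n+1}$ by the resonant strips $R^{\s,\s'}_{ljj'}(n)$, use Lemma \ref{distanza} together with items $({\bf S3})_{\nu}$--$({\bf S4})_{\nu}$ of Lemma \ref{teo:KAMKAM} to show that strips with $|l|$ below the previous truncation were already removed (this is the content of Lemma \ref{lemma6.133}), obtain the per-strip measure from the twist Lemmata \ref{twist1}--\ref{twist666} via a Lipschitz lower bound in $\x$ plus Fubini (Lemma \ref{cappericapperi}), and then sum in $l$ and $n$. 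On the resonant sheet $\la^{(-1)}\cdot l+\s j^2-\s' j'^2=0$ your argument is exactly the paper's.

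The genuine gap is in the off-sheet case. You claim that when $Z:=\la^{(-1)}\cdot l+\s j^2-\s' j'^2\neq 0$ and the indices are large ($|l|+j^2+j'^2\gtrsim|\x|^{-1/2}$) the strip is empty because its width is ``beaten by $|\phi_{l,j,j'}|$''; but you have established no lower bound on $|\phi_{l,j,j'}|$ in that regime, and in fact there is none. The $\x$-dependent part of the divisor is $\la^{(0)}(\x)\cdot l+m_0(\x)(\s j^2-\s' j'^2)+\s r_0^{j}-\s' r_0^{j'}+\dots$, which is of size $O(|\x|\,|l|)$, not $O(|\x|\e_0|l|)$: only the differences $\oo_{n+1}-\oo^{(0)}$, $m_{n+1}-m_0$, $r^{(n)}_j-r_0^{j}$ carry the factor $\e_0$, while the leading affine coefficients do not. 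Since $|l|$ runs up to $K_{n^*}$, which eventually exceeds $|\x|^{-1}$, this term can be of order $|Z|$ and cancel the integer on a set of positive measure in $\calO_0$, so these strips are in general nonempty and their measure must be estimated rather than dismissed. This is precisely what Lemma \ref{cavolfiore} together with \eqref{cavol4}--\eqref{cavol5} accomplishes: if both $|l|$ and $|\s j^2-\s' j'^2|$ are $\leq |Z|/\sqrt{|\x|}$ the divisor is $\geq 1/4$ and the strip is indeed empty, while in the complementary regime one has $|Z|\leq\sqrt{|\x|}\,|l|$, so the Lipschitz semi-norm of $\psi$ in $\x$ is still bounded below by $c|l|-C|Z|\geq c'|l|$, and the same Fubini estimate as on the sheet again gives $|R^{\s,\s'}_{ljj'}(n)|\leq C\g\e^{2(d-1)}\langle l\rangle^{-\tau+1}$. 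Replacing your emptiness claim by this two-case argument, with the threshold measured against $|Z|$ rather than absolutely, closes the proof; the remainder of your proposal is sound.
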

For simplicity we set $G_{n}^{(1)}:=\Lambda_{\nu}^{2\g_n}(n), G_{n}^{(2)}:=\calP_{\nu}^{2\g_n}(n), G_{n}^{(3)}:=\SSSS_{\nu}^{2\g_n}(n)$ (see equation \eqref{corno77}).
In order to prove \eqref{misu} we prove 
prove by induction that, for any $n\geq0$, one has
\begin{equation}\label{eq1444}
|G_{0}^{(i)}\backslash G_{1}^{(i)}|\leq C_{\star}\g_0\e^{2(d-1)}, \quad |G^{(i)}_{n}\backslash G^{(i)}_{n+1}|\leq C_{\star}\g_0 \e^{2(d-1)} K_{n}^{-1},\;\;\; n\geq1, \quad i=1,2,3.
\end{equation}
%
We follows the same strategy used in Section $6$ of \cite{FP}
and we bounds the measure only of the sets $G_{n}^{(1)}$ which is the more difficult case. The other estimates can be 
obtained in the same way.
First of all write, dropping the index $1$ and recalling the definition of $\g_n$ in \eqref{numeretti},
\begin{eqnarray}\label{eq1455}
&G_{n}\backslash G_{n+1}:=\bigcup_{\substack{\s,\s'\in\{\pm1\}, j,j'\in\ZZZ_{+} \\ l \in\ZZZ^{n}}} 
R_{l j j'}^{\s,\s'}({n})\\
R_{l j j'}^{\s,\s'}({n})&:=\left\{\la \in G_{n}:
|\ii{\oo}_{n+1}\cdot\ell+\mu^{(n)}_{\s,j}\!-\!
\mu^{(n)}_{\s',j'}|<\frac{2\g_{n}|\s j^{2}-\s' j'^{2}|}
{\langle l \rangle^{\tau}}
\right\}.\nonumber
\end{eqnarray}
%

By \eqref{ini4} we have $R_{l j j}^{\s,\s}({{n}})=\emptyset$ and moreover recalling \eqref{strega7} for $|l|\leq K_{n+n^{*}}$ one has $R_{l j j'}^{\s,\s'}(n)=\emptyset$. In the following we assume that if $\s=\s'$, then $j\neq j'$.
Important properties of the sets $R_{l j j'}^{\s,\s'}(n)$ are the following. The proofs are quite standard
and follow very closely
 Lemmata 5.2 and 5.3 in \cite{BBM1}. 

\begin{lemma}\label{lemma6.133}
For any $n\geq0$, $|\ell|\leq K_{n+n^{*}}$, one has, for $|\x|$ small enough,
\begin{equation}\label{eq1466}
R_{l j j'}^{\s,\s'}({n})\subseteq R_{l j j'}^{\s,\s'}({n-1}).
\end{equation}
Moreover, 
\begin{equation}\label{eq1477}
{\rm if} \;\;\;\;\; R_{l j j'}^{\s,\s'}\neq \emptyset, \;\;\;\;\;\; {\rm then}\;\;\;\;\; 
|\s j^{2}-\s' j'^{2}|\leq 8|\tilde{\oo}\cdot l |.
\end{equation}
\end{lemma}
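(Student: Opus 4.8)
The statement collects two standard inclusion/size facts about the resonant sets $R_{ljj'}^{\s,\s'}(n)$ defined in \eqref{eq1455}, so the plan is to reduce everything to the perturbative control of the approximate eigenvalues $\mu_{\s,j}^{(n)}(\x)$ established in Proposition~\ref{pipino} (bounds \eqref{corvo66}, \eqref{strega20}) together with the monotonicity of the sequences $\g_n\downarrow$, $K_n\uparrow$ from \eqref{numeretti}. For the inclusion \eqref{eq1466}: if $\x\in R_{ljj'}^{\s,\s'}(n)$ then $\x\in G_n\subseteq G_{n-1}$, and one must show the small-divisor inequality at level $n$ implies the one at level $n-1$, i.e. with $\oo_n,\mu_{\s,j}^{(n-1)}$ and the \emph{larger} constant $2\g_{n-1}$ in place of $2\g_n$. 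The point is that the difference
$$
\big|\ii\oo_{n+1}\cdot\ell+\mu_{\s,j}^{(n)}-\mu_{\s',j'}^{(n)}\big|-\big|\ii\oo_{n}\cdot\ell+\mu_{\s,j}^{(n-1)}-\mu_{\s',j'}^{(n-1)}\big|
$$
is bounded, using $|\ell|\le K_{n+n^{*}}$ and the estimates $|\oo_{n+1}-\oo_n|_\g, |m_{n+1}-m_n|_\g\le |\x|\e_0 K_n^{-\ka_2}$ and $|r_j^{(n)}-r_j^{(n-1)}|_\g\le C|\x|\e_0$ (from \eqref{strega20}) plus $|r_j^{(n)}-r_j^{(0)}|\le C|\x|\e_0$, by something like $C|\x|\e_0(|j|^2+|j'|^2)K_{n+n^{*}}^{\,?}$; comparing this with the gap $2(\g_{n-1}-\g_n)|\s j^2-\s'j'^2|/\langle\ell\rangle^\tau \ge \g_0 2^{-(n+2)}|\s j^2-\s'j'^2|\langle\ell\rangle^{-\tau}$ and using that the smallness condition (e.g. \eqref{corno5} or \eqref{mamma}, which forces $K_0^{C}\g^{-1}|\x|\e_0$ to be tiny) makes the perturbation negligible against the gain in $\g$, one concludes $\x\in R_{ljj'}^{\s,\s'}(n-1)$. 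Here one should be careful that $|\s j^2-\s'j'^2|\ge 1$ in the regime where $R\ne\emptyset$ (the diagonal case $\s=\s'$, $j=j'$, $\ell=0$ is excluded and for $\ell=0$, $j=j'$ one uses \eqref{ini4}), so the gap really is comparable to $|\s j^2-\s'j'^2|$.

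For \eqref{eq1477}: if $R_{ljj'}^{\s,\s'}\ne\emptyset$, pick $\x$ in it; then $|\ii\oo_{n+1}\cdot\ell+\mu_{\s,j}^{(n)}-\mu_{\s',j'}^{(n)}|<2\g_n|\s j^2-\s'j'^2|\langle\ell\rangle^{-\tau}\le 2\g_0|\s j^2-\s'j'^2|$. Writing $\mu_{\s,j}^{(n)}=-\ii\s(m_{n+1}j^2+r_j^{(n)})$ and using $|m_{n+1}-1|_\g\le C|\x|$, $|r_j^{(n)}|_\g\le C|\x|$, and $\oo_{n+1}=\oo^{(-1)}+O(|\x|)=\tilde\oo+o(|\x|)$ (recall $\tilde\oo=\oo^{(0)}$ here, $\oo^{(-1)}_j=j^2$), one gets
$$
|\s j^2-\s'j'^2| \le |\oo_{n+1}\cdot\ell| + |\mu_{\s,j}^{(n)}-\mu_{\s',j'}^{(n)}| + (\text{error}) \le |\tilde\oo\cdot\ell| + C|\x|(j^2+j'^2) + 2\g_0|\s j^2-\s'j'^2| + \ldots,
$$
and since $j^2+j'^2\le 2|\s j^2-\s'j'^2|$ whenever $\s j^2\ne\s'j'^2$ (indeed $\s j^2$ and $-\s'j'^2$ then have opposite effective signs only in the mixed case; in the matched case $|j^2-j'^2|=|j-j'||j+j'|\ge |j|+|j'|$ but also one can bound $j^2+j'^2$ by the Diophantine/resonance structure), absorbing the $C|\x|$ and $2\g_0$ terms into the left-hand side for $|\x|$ small yields $\tfrac12|\s j^2-\s'j'^2|\le 2|\tilde\oo\cdot\ell|$, hence $|\s j^2-\s'j'^2|\le 8|\tilde\oo\cdot\ell|$.

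I would organize the writeup as: (1) record the elementary fact that on the non-empty sets $|\s j^2-\s'j'^2|\ge1$ and the mixed vs.\ matched sign dichotomy controlling $j^2+j'^2$ by $|\s j^2-\s'j'^2|$; (2) prove \eqref{eq1477} by the triangle inequality above; (3) prove \eqref{eq1466} by the comparison of small divisors at consecutive steps. The main obstacle is the bookkeeping in step~(3): one must verify that the accumulated change in the eigenvalues and frequencies between step $n-1$ and step $n$, amplified by $\langle\ell\rangle^\tau\le K_{n+n^{*}}^\tau$ and by $|\s j^2-\s'j'^2|$, is genuinely dominated by the geometric gain $\g_{n-1}-\g_n\sim \g_0 2^{-n}$ times $|\s j^2-\s'j'^2|\langle\ell\rangle^{-\tau}$ — i.e.\ one needs $K_{n+n^{*}}^{2\tau}\e_0 K_n^{-\ka_2}\ll 2^{-n}$, which follows from $\ka_2$ being large (Constraint~\ref{sceltapar}) and the definition $n^{*}=n+\log_{3/2}(\ka_2/\ka_4)$, but the exact power count should be done carefully, exactly as in Lemmata~5.2--5.3 of \cite{BBM1}.
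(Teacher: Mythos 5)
Your proof of the inclusion \eqref{eq1466} has a genuine gap. You propose to bound the variation of the divisor between steps $n-1$ and $n$ using only the bounds \eqref{strega20}, but \eqref{strega20} gives decay in $n$ only for $|\oo_{n+1}-\oo_{n}|$ and $|m_{n+1}-m_{n}|$; for the corrections $r^{(n)}_{j}$ it only gives $|r^{(n)}_{j}-r^{(0)}_{j}|\leq C|\x|\e_0$, hence by the triangle inequality merely $|r^{(n)}_{j}-r^{(n-1)}_{j}|\leq C|\x|\e_0$, with no decay in $n$ and no factor $|\s j^{2}-\s' j'^{2}|$. The gap you must beat is $2(\g_{n-1}-\g_{n})|\s j^{2}-\s' j'^{2}|\langle \ell\rangle^{-\tau}\sim \g_0 2^{-n}|\s j^{2}-\s' j'^{2}|\langle \ell\rangle^{-\tau}$ with $\g_0=\mathtt{c}|\x|$ and $|\ell|$ as large as $K_{n^{*}}$ (recall $K_{n^{*}}^{\ka_4}=K_{n}^{\ka_2}$ by \eqref{strega7}); for large $n$ this is far smaller than $|\x|\e_0$, so the crude comparison fails. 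What is actually needed is the refined estimate \eqref{eq1677},
$$
|(\mu^{(n)}_{\s,j}-\mu^{(n)}_{\s',j'})-(\mu^{(n-1)}_{\s,j}-\mu^{(n-1)}_{\s',j'})|\leq C\e_0\,|\s j^{2}-\s' j'^{2}|\,K_{n^{*}}^{-\ka_4},
$$
which is \emph{not} a consequence of \eqref{strega20}: in the paper it is extracted from the reducibility iteration, namely items $({\bf S3})_{\nu}$ and $({\bf S4})_{\nu}$ of Lemma \ref{teo:KAMKAM} applied to the two linearized operators $L_{n-1},L_{n}$, whose coefficient difference is controlled by Lemma \ref{distanza} (of size $\g_0\e_0 K_{n-1}^{-\ka_2+\mu_1+4}$); $({\bf S4})_{\nu}$ is also what guarantees that both families of approximate eigenvalues are defined on the common parameter set ($G_{n}\subseteq \Lambda^{\g_{n}}_{\nu}(n+1)$, etc.). Your write-up never invokes this machinery, and the unspecified exponent ``$K_{n+n^{*}}^{\,?}$'' is exactly where the real content of the proof lies; as stated the inclusion step is incomplete.

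Concerning \eqref{eq1477}, your strategy (small divisor plus triangle inequality plus absorbing the $O(|\x|)$ and $2\g_0$ terms) is the paper's, but one intermediate claim is false: $j^{2}+j'^{2}\leq 2|\s j^{2}-\s' j'^{2}|$ fails, e.g. for $\s=\s'$, $j=10$, $j'=11$, and the fallback appeal to ``the Diophantine/resonance structure'' is not an argument. The detour is unnecessary: since $m_{n+1}$ is a single scalar, $\mu^{(n)}_{\s,j}-\mu^{(n)}_{\s',j'}=-\ii\, m_{n+1}(\s j^{2}-\s' j'^{2})-\ii(\s r^{(n)}_{j}-\s' r^{(n)}_{j'})$, so the error enters as $C|\x|\,|\s j^{2}-\s' j'^{2}|+C|\x|$ and gives directly the lower bound $|\mu^{(n)}_{\s,j}-\mu^{(n)}_{\s',j'}|\geq \tfrac13|\s j^{2}-\s' j'^{2}|$ (using $|\s j^{2}-\s' j'^{2}|\geq 1$ off the excluded diagonal, the case $\s=\s'$, $j=j'$, $\ell\neq0$ being handled by \eqref{ini4}); with this fix your argument for \eqref{eq1477} coincides with the paper's.
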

\begin{proof}
We first prove the (\ref{eq1477}); note that if $(\s,j)=(\s',j')$ then it is trivially true.
If $R_{l j j'}^{\s,\s'}({n})\neq\emptyset$, then, by definition (\ref{eq1455}), there exists a $\x\in\calO_0$
such that
\begin{equation}\label{eq1588}
|\mu_{\s,j}^{(n)}-\mu_{\s',j'}^{(n)}|<2\g_{n}|\s j^{2}-\s'j'^{2}|\langle l \rangle^{-\tau}+2|{\oo}_{n+1}\cdot l |.
\end{equation}
On the other hand, for $\x$ small and since $(\s,j)\neq(\s',j')$,
\begin{equation}\label{eq1599}
|\mu_{\s,j}^{(n)}-\mu_{\s',j'}^{(n)}|
\stackrel{(\ref{strega20})}{\geq}
\frac{1}{2}(1-C|\x|)|\s j^{2}-\s'j'^{2}|-C|\x| \geq\frac{1}{3}|\s j^{2}-\s' j'^{2}|.
\end{equation}
By the (\ref{eq1588}), (\ref{eq1599}) and $\g_{n}\leq 2\g$ follows
\begin{equation}\label{eq160}
2|{\oo}_{n+1}\cdot l |\geq\left(\frac{1}{3}-\frac{4\g}{\langle\ell\rangle^{\tau}}\right)|\s j^{2}-\s'j'^{2}|
\geq \frac{1}{4}|\s j^{2}-\s' j'^{2}|,
\end{equation}
since $\g\leq \g_{0}$, by choosing $\g_{0}$ small enough. It is sufficient $\g_{0}<1/48$. Then, the (\ref{eq1477}) hold.

\noindent
In order to prove the (\ref{eq1466}) we need to understand the variation of the eigenvalues $\mu_{\s,j}^{(n)}$ with respect to $n$. In other word 
the eigenvalues of the linearized operator of the field $F_n$.
If we assume that
\begin{equation}\label{eq1677}
|(\mu_{\s,j}^{(n)}-\mu_{\s',j'}^{(n)})-(\mu_{\s,j}^{(n-1)}-\mu^{(n-1)}_{\s',j'})|
\leq C \e_0 |\s j^{2}-\s'j'^{2}|K_{n+n^{*}}^{-\ka_4},
\end{equation}
then, for $j\neq j'$, $|l|\leq K_{n+n^*}$, and $\x\in G_{n}$, we have
\begin{eqnarray}\label{eq1688}
|\ii{\oo}_{n+1}\cdot l+\mu^{(n)}_{\s,j}&-&\mu_{\s',j'}^{(n)}|
\stackrel{(\ref{eq1677})}{\geq}2\g_{n-1}|\s j^{2}-\s' j'^{2}|\langle l \rangle^{-\tau}\\
&-&C\e_0|\s j^{2}-\s'j'^{2}|K_{n+n^*}^{-\ka_4}
\geq2\g_{n}|\s j^{2}-\s'j'^{2}|\langle\ell\rangle^{-\tau},\nonumber
\end{eqnarray}
because $C\de K_{n+n^*}^{\tau-\ka_4}2^{n+1}\leq1$. We complete the proof by verifying   (\ref{eq1677}).

\noindent
We use the $({\bf S4})_{\nu}$  (for $\nu\leq n+n^*$) 
of Lemma \ref{teo:KAMKAM}
with
$\g=\g_{n-1}$ and $\g-\rho=\g_{n}$,  and with $L_{1}=L_{n-1}$, $L_{2}=L_{n}$, 
(where $L_{n-1}$ and $L_n$ are the linearized operator of the vector fields $F_{n-1}$ and $F_{n}$ respectively).
By Lemma \ref{distanza} we have that
for $|\x|$ small enough, one has
\begin{equation*}
 C N_{n+n^*}^{\tau}\max_{i=1,0} |E_{i}(L_{n-1})\!-\!E_{i}(L_{n})|_{\vec{v},\gotp_{0}}
\leq \g_{n-1}-\g_{n}=:\rho=\g2^{-n},
\end{equation*}
which implies condition \eqref{eq:4.2510}.
Hence $({\bf S4})_{\nu}$ implies that 
\begin{equation}\label{eq1622}
\Lambda_{\nu}^{\g_{n-1}}({n-1})\subseteq\Lambda_{\nu}^{\g_{n}}({n}).
\end{equation}
%
 Furthermore we also  note that,
\begin{eqnarray}\label{eq164}
G_{n}&\stackrel{(\ref{corno77})}{\subseteq}&\Lambda_{\nu}^{2\g_{n-1}}({n})
\stackrel{(\ref{eq1622})}{\subseteq}
\Lambda_{\nu}^{\g_{n}}({n+1}).
\end{eqnarray}
This means that 
$\x\in G_{n}\subset \Lambda_{\nu}^{\g_{n-1}}({n})\cap\Lambda_{\nu}^{\g_{n}}({n+1})$,
and hence, we can apply the ${\bf (S3)}_{\nu}$, with $\nu=n+1$, in Lemma \ref{teo:KAMKAM} to get
\begin{equation}\label{eq166}
|r_{\s,j}^{\nu}({n})-r_{\s,j}^{\nu-1}({n-1})|{\leq}
\e K_{n+n^*}^{-\al}.
\end{equation}
Then, by (\ref{barad22222}) and (\ref{eq166}), one has that the (\ref{eq1677}) hold and the proof of Lemma (\ref{lemma6.133}) is complete.
\end{proof}

The next Lemma is fundamental. 
A similar result can be found in \cite{FP}.
Anyway in the autonomous cases it is slightly more difficult. This is due to the fact that
if one ``move'' the parameters $\x$, then $\oo(\x)$ and $\mu_{\s,j}$ moves together. This is why one need to prove that the entire map 
in \eqref{affinemap} must have the ``twist'', and it is not enough to ask that $\x\to\oo(\x)$ is a diffeomorphism.

\begin{lemma}\label{cappericapperi}
For all $n\geq0$, one has
\begin{equation}\label{eq1499}
|R_{l j j'}^{\s,\s'}({n})|\leq C \g \e^{2(d-1)} \langle\ell\rangle^{-\tau+1}.
\end{equation}
\end{lemma}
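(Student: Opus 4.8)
\textbf{Plan of proof for Lemma \ref{cappericapperi}.}

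The statement is a standard ``resonant region has small measure'' estimate, but complicated by the fact that the $d$ parameters $\x$ move \emph{all} the quantities simultaneously: $\oo_{n+1}(\x)$, $\mu_{\s,j}^{(n)}(\x)$ and $\mu_{\s',j'}^{(n)}(\x)$ all depend on $\x$. The plan is to reduce the measure bound to a transversality (``twist'') statement for the affine map $\x\mapsto \oo^{(0)}(\x)\cdot l+\s_1\Omega^{(\rm int)}_j(\x)-\s_2\Omega^{(\rm int)}_k(\x)$ of Lemma \ref{twist666}, and then perturb. First I would record that the eigenvalue functions $\mu^{(n)}_{\s,j}(\x)$ are small Lipschitz perturbations of the integrable ones: by Remark \ref{autoautoval} one has $\mu^{(n)}_{\s,j}(\x)=\Omega_j^{\rm int}(\x)+o(|\x|)j^2+o(|\x|)$ with Lipschitz estimates $|\mu^{(n)}_{\s,j}-\Omega^{\rm int}_j|_{\g}\leq C|\x|\e_0$, and similarly $|\oo_{n+1}-\oo^{(0)}|_{\g}\leq C|\x|\e_0$ from \eqref{barad22222}. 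Hence the function
$$
\phi_{l,j,j'}^{\s,\s'}(\x):= \ii\,\oo_{n+1}(\x)\cdot l+\mu^{(n)}_{\s,j}(\x)-\mu^{(n)}_{\s',j'}(\x)
$$
differs from $\psi(\x):=\la^{(-1)}\cdot l+\s j^2-\s' j'^2+\big(\MM^T l+\s\tm_j-\s'\tm_{j'}\big)\cdot\x$ by a function whose Lipschitz seminorm in $\x$ is bounded by $C\e_0\langle l\rangle+C|\s j^2-\s'j'^2|\e_0$, which by \eqref{eq1477} is $\leq C\e_0\langle l\rangle$ on any nonempty resonant region.

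Next I would split into the two cases of \eqref{affinemap}. If the constant term $\la^{(-1)}\cdot l+\s j^2-\s'j'^2\neq 0$, then $|\phi|\geq 1-C\e_0(1+|\x|)\langle l\rangle\,\e^2\geq 1/2$ on $\e^2\Lambda$ for $\e$ small (using $|\x|\sim\e^2$ and $\langle l\rangle\leq K_{n+n^*}$, which is admissible since on the resonant set $\langle l\rangle$ is polynomially bounded while $\e_0=|\x|^{1/4}$ is small), so $R_{ljj'}^{\s,\s'}(n)=\emptyset$ and there is nothing to prove. The remaining case is $\la^{(-1)}\cdot l+\s j^2-\s'j'^2=0$; here Lemma \ref{twist666} (for the generic choice of tangential sites $S^+$ already fixed) guarantees the linear part $\x\mapsto \big(\MM^T l+\s\tm_j-\s'\tm_{j'}\big)\cdot\x$ is not identically zero, so its gradient vector $\vec a_{l,j,j'}^{\s,\s'}:=\MM^T l+\s\tm_j-\s'\tm_{j'}\in\RRR^d$ is nonzero, with $|\vec a|\geq c_0>0$ for an absolute constant $c_0$ (the vector has integer-polynomial entries in the fixed data $\ta,\tv$ and lies in a discrete set once we normalise; alternatively one observes $|\vec a|$ is bounded below by a constant depending only on $S^+$ and $\ta$, using \eqref{puffo}). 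Write $\x=t\,\hat a+\x^\perp$ with $\hat a=\vec a/|\vec a|$; then as a function of the single real variable $t$ one has $\partial_t\,\Ree(-\ii\phi^{\s,\s'}_{l,j,j'})=|\vec a|+O(\e_0\langle l\rangle)\geq c_0/2$. Hence for each fixed $\x^\perp$ the set of $t$ with $|\phi|<2\g_n|\s j^2-\s'j'^2|\langle l\rangle^{-\tau}$ is an interval of length $\leq C\g|\s j^2-\s'j'^2|\langle l\rangle^{-\tau}$. The measure of $\Lambda$ in the $\x^\perp$ directions is $\leq C\e^{2(d-1)}$, so Fubini gives
$$
|R_{ljj'}^{\s,\s'}(n)|\leq C\g\,\e^{2(d-1)}\,|\s j^2-\s'j'^2|\,\langle l\rangle^{-\tau}\leq C\g\,\e^{2(d-1)}\,\langle l\rangle^{-\tau+1},
$$
the last step by \eqref{eq1477}, $|\s j^2-\s'j'^2|\leq 8|\tilde\oo\cdot l|\leq C\langle l\rangle$.

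The main obstacle is the uniform lower bound $|\vec a_{l,j,j'}^{\s,\s'}|\geq c_0$ independent of $l,j,j'$: Lemma \ref{twist666} only asserts the affine map is \emph{not identically zero}, which a priori allows the slope vector to be tiny for some triples. I expect this to be handled exactly as in \cite{FP}, Section $6$ and in \cite{BBM1}, Lemmata $5.2$--$5.3$: one uses that $\vec a=\MM^T l+\s\tm_j-\s'\tm_{j'}$ has, thanks to the reduction $(\s j^2-\s'k^2)\vec 1=-A\tV^2 l$ and $(\s-\s')\vec 1=-A l$ carried out in the proof of Lemma \ref{twist666}, the explicit form $\vec a=-\tfrac14\big(\ta_1\uno+(\ta_3-\ta_2-\ta_6-\ta_7)\tV^2+(-\ta_4+\ta_8)\tV^4-\ta_5\tV^6\big)l$ when the constant term vanishes, and since $l\in\ZZZ^d\setminus\{0\}$ and the diagonal matrix $p(\tV)=\ta_1\uno+\dots-\ta_5\tV^6$ is invertible for generic $\tv$ (its entries are $p(\tv_i)\neq 0$ by the non-resonance hypothesis), one gets $|\vec a|\geq |p(\tV)^{-1}|^{-1}|l|\geq c_0$. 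With this in hand, the Fubini argument above is routine, and summing \eqref{eq1499} over $l,j,j',\s,\s'$ — using $R=\emptyset$ unless $|\s j^2-\s'j'^2|\leq C\langle l\rangle$ (so for fixed $l$ only $O(\langle l\rangle)$ pairs $(j,j')$ contribute), and then $\sum_l \langle l\rangle^{-\tau+2}<\infty$ for $\tau>d+2$ — yields \eqref{eq1444} and hence Proposition \ref{misu2}, completing the proof of Theorem \ref{teoremap}.
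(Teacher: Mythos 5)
Your treatment of the resonant case $\la^{(-1)}\cdot l+\s j^{2}-\s' j'^{2}=0$ coincides with the paper's: there $\psi(\x)=\ii A l\cdot\x+O(|\x|\e_0|l|)$ with $A=-\tfrac14\big(\ta_1\uno+(\ta_3-\ta_2-\ta_6-\ta_7)\tV^{2}+(-\ta_4+\ta_8)\tV^{4}-\ta_5\tV^{6}\big)$ diagonal and invertible by non-resonance, the Lipschitz lower bound plus Fubini gives the estimate, and \eqref{eq1477} converts $|\s j^{2}-\s'j'^{2}|$ into a factor $\langle l\rangle$; your worry about a uniform lower bound on the slope is resolved exactly as you suggest, since the slope is $-\tfrac14 p(\tV)l$ with $p(\tv_i)\neq0$.

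The gap is in your dismissal of the case $Z:=\la^{(-1)}\cdot l+\s j^{2}-\s' j'^{2}\neq0$. The deviation of $\psi$ from $\ii Z$ contains $\la^{(0)}(\x)\cdot l=-(\MM\x)\cdot l$ and $(m_{n+1}-1)(\s j^{2}-\s' j'^{2})$, which are of size $|\x|\langle l\rangle$ (not $|\x|\e_0\langle l\rangle$: $\oo_{n+1}$ is $|\x|\e_0$-close to $\oo^{(0)}$, but $\oo^{(0)}$ is only $|\x|$-close to $\la^{(-1)}$), and $|l|$ runs up to $K_{n^{*}}$ in \eqref{corno77}, which grows super-exponentially in $n$ and is not bounded by any fixed power of $\e^{-1}$. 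Hence ``$|\phi|\geq 1/2$, so $R^{\s,\s'}_{ljj'}(n)=\emptyset$'' only holds when $\langle l\rangle$ and $|\s j^{2}-\s'j'^{2}|$ are at most of order $|Z|/|\x|$; for larger $|l|$ the resonant set need not be empty and a measure estimate is still required. This is exactly what the paper does: Lemma \ref{cavolfiore} gives emptiness only for $|l|,|\s j^{2}-\s'j'^{2}|\leq Z/\sqrt{|\x|}$, and in the complementary regime one must redo the transversality argument, where the new difficulty is that the $\x$-dependent term $m_0(\x)Z$ contributes a Lipschitz constant of order $|Z|$, which by \eqref{eq1477} can be comparable to the twist $c|l|$ coming from $A\x\cdot l$. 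The threshold $Z/\sqrt{|\x|}$ is chosen precisely so that in the surviving regime $|Z|\leq\sqrt{|\x|}\,|l|$, and the spurious contribution $C|Z|\leq C\sqrt{|\x|}\,|l|$ is dominated by $c|l|$ (see \eqref{cavol4}--\eqref{cavol5}). Without this dichotomy and the second Lipschitz estimate your argument covers only part of the sets $R^{\s,\s'}_{ljj'}(n)$, so the lemma as stated is not proved.
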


\begin{proof}
Let us define the map $\psi :\calO_0\to \CCC$
\begin{equation}\label{cap}
\begin{aligned}
\psi(\x)&:=\ii \oo_{n+1}(\x)\cdot l+\mu_{\s,j}^{(n)}(\x)-\mu_{\s',k}^{(n)}(\x)\\
&\stackrel{(\ref{corvo66}),(\ref{strega20})}{=}\ii\Big(
\la^{(-1)}\cdot l+ \s j^{2}-\s' k^{2}+
\la^{(0)}(\x)\cdot l+ m_0(\x)(\s j^{2}-\s' k^{2})+\s r_0^{j}-\s' r_0^{k}
\Big)\\
&+\ii\Big(
(\oo_{n+1}-\tilde{\oo})\cdot l+
(m_{n+1}-m_0)(\s j^{2}-\s k^{2})+(\s (r^{(n)}_{j}-r_0^{j})-\s' (r^{(n)}_{k}-r_0^{k})
\Big),
\end{aligned}
\end{equation}
where $m_0$ and $r_0^{j}$ are defined in \eqref{barad100} and \eqref{strega10}. In other words 
the terms that are linear in $\x$ are given by ${\oo}^{(0)}(\x)=\la^{(-1)}+\la^{(0)}(\x)$, defined in \eqref{omeghino},
and $j^{2}+m_0 j^{2}+r_0^{j}=\Omega^{\rm int}_{j}$ defined in \eqref{integra2}. In order to get \eqref{cap} we need a lower bound
on the lipschitz semi- norm $|\psi|^{lip}$ (as done in \cite{FP}).
First of all assume that $\la^{(-1)}\cdot l+ \s j^{2}-\s' k^{2}=0$. Then the \eqref{cap} becomes
\begin{equation}\label{cap2}
\psi:=\ii
A l\cdot\x+
\ii\Big((m_{n+1}-m_0)(\s j^{2}-\s k^{2})+(\s (r^{(n)}_{j}-r_0^{j})-\s' (r^{(n)}_{k}-r_0^{k})
\Big),
\end{equation}
where by formula \eqref{puffo} we have set $$A=-(1/4)(\ta_1\uno+(\ta_3-\ta_2-\ta_{6}-\ta_{7})\tV^{2}+(-\ta_4+\ta_{8})\tV^{4}-\ta_{5}\tV^{6}
).$$ Hence we have
for $\x_1\neq\x_2$
\begin{equation}\label{cap3}
\frac{|\psi(\x_1)-\psi(\x_2)|}{|\x_1-\x_2|}\geq \frac{c}{2d}|l|-C\e_0 |l|\geq \frac{c'}{d}|l|,
\end{equation}
for a suitable pure constant $c'>0$.
To obtain \eqref{cap3} we used  the invertibility of the matrix $A$, equation \eqref{eq1477} and  \eqref{strega20} to estimate the Lipschitz semi-norm of the constants $(r_{j}^{(n)}-r_0^{j})(\x)$ and $(m_{n+1}-m_0)(\x)$.
This implies that, in one direction one has a good estimates of the measure in terms of $\g$, while one uses the Fubini Theorem to get
\begin{equation}\label{cap4}
|R_{l j j'}^{\s,\s'}({n})|
\stackrel{(\ref{cap3})}{\leq} \tilde{C}\e^{2(d-1)}\g \langle l\rangle^{-\tau+1},
\end{equation}
which implies \eqref{eq1499}. 
Let us now consider the case $\la^{(-1)}\cdot l+ \s j^{2}-\s' k^{2}:=Z\neq0 $. We first prove the following Lemma.
\begin{lemma}\label{cavolfiore}
Assume that 
\begin{equation}\label{cavol1}
|l|,|\s j^{2}-\s' k^{2}|\leq\frac{Z}{\sqrt{|\x|}}, 
\end{equation}
then $|\psi|\geq1/4$.
\end{lemma}
\begin{proof}
Since we have  $\la^{(-1)}\cdot l+ \s j^{2}-\s' k^{2}\neq0$ we obtain
\begin{equation}\label{cavol2}
\begin{aligned}
|\psi|&\geq| \la^{(-1)}\cdot l+ \s j^{2}-\s' k^{2}| -\Big(|l| |\oo_{n+1}-\la^{(-1)}|+|\s j^{2}-\s' k^{2}| |m_{n+1}-1|
+|r_{j}^{(n)}|+|r_{k}^{(n)}|
\Big)\\
&\geq 
\frac{1}{2}-C_1\frac{|\x|}{\sqrt{|\x|}}+C_2 |\x|\geq \frac{1}{4}
\end{aligned}
\end{equation}
\end{proof}
\noindent
Lemma \ref{cavolfiore} we have that if \eqref{cavol1} hold, then there is no small divisor, and hence $R_{l j j'}^{\s,\s'}({n})=\emptyset$.
In the last case we rewrite \eqref{cap} as
\begin{equation*}
\psi:=Z+\MM^{T}l \cdot \x+m_0(\x)(Z-\oo^{(-1)}\cdot l)+O(|\x|\de |l|)=
Z+A\x\cdot l+m_0(\x)Z+O(|\x|\de)
\end{equation*}
hence one has
\begin{equation}\label{cavol4}
\begin{aligned}
\frac{|\psi(\x_1)-\psi(\x_2)|}{|\x_1-\x_2|}\stackrel{(\ref{barad100})}{\geq} c |l|  -Z C
\end{aligned}
\end{equation}
for some suitable constant $c,C>0$. Now we use that $|l|\geq Z/\sqrt{\x}$ to conclude, for $\x$ small, that one has
\begin{equation}\label{cavol5}
|\psi|^{lip}\geq c'|l|.
\end{equation}
Reasoning as in \eqref{cap4}, we have that \eqref{cavol5} implies the \eqref{eq1499}.
\end{proof}

The previous results implies that one has
\begin{equation}\label{trieste}
|\calO_0\backslash \mathcal{C}_{\e}|\leq C\g \e^{2(d-1)}\leq C\e^{2d}{\cc},
\end{equation}
 where we have used  the definition of $\g$ in \eqref{ini4} and \eqref{sottosottosotto}. In particular one gets that the relative measure of $\e^{-2d}|\calO_0\backslash\mathcal{C}_\e|\leq O({\cc})$. 
 This implies that the relative measure of the cantor set $\mathcal{C}_\e$ is positive if ${\cc}$ is small.

\subsection{Proof of Theorem \ref{teoremap}}
Consider the vector field $F$ in \eqref{system}. By Lemma 
\ref{INIZIAMO}, the choices of parameters in \eqref{parametri}
and Lemma \ref{smallcondi}, we have that $F$ satisfies all the hypotheses of Theorem \ref{thm:kambis}.
Hence in the set $\calO_{\infty}$ given by Theorem \ref{thm:kambis}
the result of Theorem \eqref{teoremap} holds. It remains to prove that $\calO_{\infty}$
satisfies the measure estimate in \eqref{asyaut}.

Proposition \ref{pipino} guarantees that the set $\mathcal{C}_{\e}$ in \eqref{lamerd5} is contained in $\calO_{\infty}$. We choose  $\mathcal{C}_{\e}$ as the set on which \ref{teoremap} holds.
In particular Proposition \ref{misu2} implies that $\mathcal{C}_{\e}$ satisfies \eqref{asyaut}.
This concludes the proof.

\subsection{Proof of Theorem \ref{teoremap2}}

Concerning Theorem \ref{teoremap2} in which the nonlinearity $\ff$ is merely differentiable, we just give
a sketch
of the proof since it is very similar to the one of Theorem \ref{teoremap}.

One  can repeat  word by word the arguments of Sections \ref{weakuffa} and \ref{action}. One gets that  
the vector field
in \eqref{system} is defined in the domain \eqref{domain}
with $s_0=a_0\equiv 0$. This implies that the norm $\|\cdot\|_{H^{p}(\TTT^d_s\times \TTT_a)}$ is the 
Sobolev norm $\|\cdot\|_{H^{p}(\TTT^d\times \TTT)}\sim \|\cdot\|_{0,0,p}$ see Remark \ref{orolo} and  \eqref{totalnorm}.
Again, by Lemma \ref{INIZIAMO}, we have that $F$ satisfies all the hypotheses of Theorem \ref{thm:kambis}
which implies that 
 in the set $\calO_{\infty}$ given by Theorem \ref{thm:kambis}
the result of Theorem \eqref{teoremap2} holds. 
We now  give a sketch of the proof that
 $\calO_{\infty}$ satisfies \eqref{asyaut1000}. The reasoning we follow is very similar to  
 the one used to prove Theorem \ref{teoremap}.
The main difference is that we set  $\calL_{n}=\uno$ for $n\geq0$, recall that  $\calL_{n}$
 are the \emph{compatible} changes
 of variables introduced in Definition \ref{compa}. This is due to the fact that the diffeomorphisms of the torus (form which the $\calL_n$ are chosen in the analytic case)  are not {\em close to identity} in Sobolev class, i.e. we do not have the second formula in \eqref{diffeo4}.

One can show by induction that linearized operator of the vector field $F_{n}$, for $n\geq0$, 
 has the form in \eqref{regu22}-\eqref{barad22}, with
  $\de_{\gotp_1}^{(1)},\de_{\gotp_1}^{(2)}\sim O(|\x|)$ (see equation \eqref{corvo}).
 Again  recall that in the analytic case we choose the map $\calL_{n}$
so that the size of $\de_{\gotp_1}^{(1)},\de_{\gotp_1}^{(2)}$ decreases as $n$ go to infinity.

We claim that   Proposition \ref{pipino}  holds also when  $\de_{\gotp_1}^{(1)},\de_{\gotp_1}^{(2)}\to 0$. 
Indeed such condition is only used in order to prove that the sequence of analiticity radiuses $a_n$ does not go to zero.
Indeed the proof of Proposition \ref{pipino} relies on the existece of changes of variables which approximately diagonalize the linearized vector field. 
Such changes of variables are defined in Sections \ref{settesette} and \ref{sec7aut} and work also in Sobolev class.

As shown in \cite{FP} (and \cite{BBM},\cite{BBM1} etc..)  in order to prove Proposition \ref{pipino} one does not need to apply the diagonalizing changes of variables to the  vector field but only to know that they exist and satisfy certain estimates. This is proved in Sections \ref{settesette} and \ref{sec7aut} so Proposition \ref{pipino} follows.

 Proposition \ref{pipino} ensures that the set  $\mathcal C_\e$ in \eqref{lamerd5} is contained in $\calO_{(\infty)}$. Then  one uses Proposition \ref{misu2} in order to ensure the measure estimates.

\bibliography{bibliografiaNLS2.bib}

\end{document}